\newcommand{\slkz}{{\normalfont\textsl{\textsc{kz}}}}
\newcommand{\kz}{{\normalfont\textsc{kz}}}
\newcommand{\mathkz}{{\normalfont\kz}}
\numberwithin{equation}{section}
\theoremstyle{plain}
\newtheorem{thm}[equation]{Theorem}
\newtheorem{cor}[equation]{Corollary}
\newtheorem{lemma}[equation]{Lemma}
\newtheorem{prop}[equation]{Proposition}
\newtheorem*{thm*}{Theorem}
\theoremstyle{remark}
\newtheorem{rmk}[equation]{Remark}
\newtheorem{ex}[equation]{Example}
\newtheorem{notation}[equation]{Notation}
\theoremstyle{definition}
\newtheorem{df}[equation]{Definition}
\newcommand{\V}{\ensuremath{{V}}}
\newcommand{\VCat}{\ensuremath{\V\text-\mathbf{Cat}}}
\newcommand{\Cat}{\ensuremath{\mathbf{Cat}}}
\newcommand{\C}{\ensuremath{\mathcal{C}}}
\newcommand{\id}{\ensuremath{\mathrm{id}}}
\DeclareMathOperator{\dom}{dom}
\DeclareMathOperator{\cod}{cod}
\newcommand{\K}{\ensuremath{\mathscr{K}}}
\newcommand{\two}{{\mathbf{2}}}
\newcommand{\lperp}{{\pitchfork_{\mathkz}}}
\newcommand{\lperpleft}{\lperp}
\newcommand{\Sq}{\mathbb{S}\mathrm{q}}
\newcommand{\col}{\operatorname{colim}}
\newcommand{\dd}{\mathsf{Diag}}
\newcommand{\tor}{\ensuremath{\relbar\joinrel\mapstochar\joinrel\rightarrow}}
\begin{document}
% \linenumbers
\title{Lax orthogonal factorisation systems}
\author[M~M~Clementino]{Maria Manuel Clementino}
\address{CMUC, Department of Mathematics\\ Univ.~Coimbra\\3001-501 Coimbra\\ Portugal}
\thanks{
  Research partially supported by Centro de Matem\'{a}tica da Universidade de
  Coimbra -- UID/MAT/00324/2013, funded by the Portuguese Government
  through FCT/MCTES and co-funded by the European Regional Development Fund
  through the Partnership Agreement PT2020.
}
\email{mmc@mat.uc.pt}
\author[I~L\'{o}pez~Franco]{Ignacio L\'{o}pez Franco}
\address{Department of Pure Mathematics and Mathematical Statistics\\Centre for
  Mathematical Sciences\\University of Cambridge\\Wilberforce
  Road\\Cambridge\\CB2 0WB\\UK}
\thanks{The second author acknowledges the support of a Research Fellowship of
  Gonville and Caius College, Cambridge, and of the Australian Research Council Discovery Project DP1094883.}
\email{ill20@cam.ac.uk}
%\date\today
%\date{Compiled on \today. \gitAuthorIsoDate. Revision \gitAbbrevHash, \gitReferences}
\subjclass[2010]{Primary 18D05, 18A32. Secondary 55U35.}
\keywords{
  Lax idempotent algebraic weak factorisation system,
  algebraic weak factorisation system,
  weak factorisation system,
  lax idempotent 2-monad,
  simple reflection.
}
\begin{abstract}
  This paper introduces \emph{lax orthogonal algebraic weak factorisation
    systems} on 2-categories and describes a method of constructing them.  This
  method rests in the notion of \emph{simple 2-monad}, that is a generalisation
  of the simple reflections studied by Cassidy, H\'{e}bert and Kelly. Each
  simple 2-monad on a finitely complete 2-category gives rise to a lax
  orthogonal algebraic weak factorisation system, and an example of a simple
  2-monad is given by completion under a class of colimits. The notions
  of \textsl{\textsc{kz}} \emph{lifting operation}, \emph{lax natural lifting
    operation} and \emph{lax orthogonality} between morphisms are studied.
\end{abstract}

\maketitle

% \tableofcontents{}

\section{Introduction}
\label{sec:introduction}

This paper contains four main contributions: the introduction of \emph{lax
  orthogonal algebraic weak factorisation systems} (\textsc{awfs}s); the introduction of the
concept of \textsl{\textsc{kz}} \emph{diagonal fillers} and the study of their relationship to lax
orthogonal \textsc{awfs}s; the introduction of \emph{simple 2-monads}, and the proof that
each such
induces an \textsc{awfs}s; the proof that 2-monads given by completion under a
class of colimits are simple and the study of the induced factorisations.

Weak factorisation systems form the basic ingredient of \emph{Quillen model
  structures}~\cite{MR0223432}, and, as the name indicates, are a weakening of
the ubiquitous orthogonal factorisation systems. A weak factorisation
system (\textsc{wfs}) on a category consists of two classes of morphisms $\mathcal{L}$ and
$\mathcal{R}$ satisfying two properties: every morphism can be written as a
composition of a morphism in $\mathcal{L}$ followed by one in $\mathcal{R}$, and
for any commutative square, with vertical morphisms in $\mathcal{L}$ and
$\mathcal{R}$ as depicted in~\eqref{eq:16}, there exists a diagonal filler. One says that $r$ has
the right lifting property with respect to $\ell$ and that $\ell$ has the left
lifting property with respect to $r$.
\begin{equation}
  \label{eq:16}
  \xymatrixrowsep{.6cm}
  \diagram
  \cdot\ar[r]\ar[d]_{\mathcal{L}\ni\ell}&
  \cdot\ar[d]^{r\in\mathcal{R}}\\
  \cdot\ar@{..>}[ur]^\exists\ar[r]&
  \cdot
  \enddiagram
\end{equation}
When $r$ is the unique map to the terminal object, one usually says that $C$ is injective with
respect to $\ell$.

In order to unify the study of injectivity with respect to
different classes of continuous maps between $T_0$ topological spaces,
Escard\'{o}~\cite{MR1641443} employed lax idempotent 2-monads, also known as
\textsc{kz}~2-monads, on poset-enriched categories --~these are the same as 2-categories whose
hom-categories are posets. For example, if $\mathsf{T}$ is such a lax idempotent 2-monad,
the $\mathsf{T}$-algebras can be described as the objects $A$ that are injective to all the
morphisms $\ell$ such that $T\ell$ is a coretract left adjoint --~also known as a $\mathsf{T}$-embedding. A central point
is that not only each morphism $\dom(\ell)\to A$ has an extension along
$\ell$, but moreover it has a \emph{least} extension: one that is smallest
amongst all extensions.
\begin{equation}
  \label{eq:17}
  \xymatrixrowsep{.6cm}
  \diagram
  \cdot\ar[r]\ar[d]_\ell&A\\
  \cdot\ar@{..>}[ur]\urlowertwocell{\omit}\ar@{}@<-6pt>[ur]|\leq&
  \enddiagram
\end{equation}
The assignment that sends a morphism to its least extension can be
described in terms of the 2-monad $\mathsf{T}$, so one no longer has the \emph{property}
of the existence of at least one extension, but the \emph{algebraic structure}
that constructs the extension. If one wants to describe \textsc{wfs}s in this context,
instead of just injectivity, one is led to consider algebraic weak
factorisation systems (\textsc{awfs}s), to which we shall return in this introduction.

\subsection*{Injective continuous maps }
\label{sec:inject-cont-maps}
One of the basic examples that fit in the framework of~\cite{MR1641443} is that
of the filter monad on the category of $T_0$ spaces, that assigns to each space
its space of filters of open sets. It was shown in~\cite{MR0367013} that the
algebras for this monad are the topological spaces that arise as continuous
lattices with the Scott topology. These spaces were known to be precisely those
injective with respect to subspace embeddings~\cite{MR0404073}.
In~\cite{MR2927175} this and other related results are generalised,
characterising those continuous maps of $T_0$ spaces that have the right lifting
property with respect to different classes of embeddings, and exhibiting for
each a \textsc{wfs} in the category of $T_0$ spaces. A morphism
$f\colon X\to Y$ is factorised through the subspace $Kf\subseteq TX\times Y$ of
those $(\varphi,y)$ such that $Tf(\varphi)\leq \{U\in\mathcal{O}(Y):y\in
U\}$. The space $TX$ can be the topological space of filters of open sets of $X$
or a variant of it, and $i_X\colon X\to TX$ the inclusion of $X$ as the set of
principal filters, $i_X(x)=\{U\in \mathcal{O}(X): x\in U\}$. The space $Kf$ fits
in a diagram as displayed. The maps $q_f$ and
$Rf$ send $(\varphi,y)\in Kf$ to $\varphi\in TX$ and $y\in Y$
respectively. The inequality symbol inside the square denotes the fact that
$Tf\cdot q_f\leq i_Y\cdot Rf$.
\begin{equation}
  \label{eq:21}
  \xymatrixrowsep{.6cm}
  \diagram
  X\ar[dr]|{Lf}\ar@/^/[drr]^{i_X}\ar@/_/[ddr]_f&&\\
  &Kf\ar[d]_{{R}f}\ar[r]|{q_f}\ar@{}[dr]|\geq &
  TX\ar[d]^{Tf}\\
  &Y\ar[r]_-{i_Y}&
  TY
  \enddiagram
\end{equation}
Central to the arguments in~\cite{MR2927175} is the fact that the monad
$f\mapsto{R}f$ is lax idempotent or \textsc{kz}. This property is intimately linked
with the fact that $Lf$ is always an embedding of the
appropriate variant --~eg, when $TX$ is the space of all filters of open sets,
then $Lf$ is a topological embedding. % --~ie $\lambda_f$ is a $\mathsf{T}$-embedding.

The construction of the factorisation of maps just described resembles the
classical case of simple reflections~\cite{MR779198}. One of the aims of the present paper is to
show that both constructions are particular instances of a more general one.

\subsection*{Algebraic weak factorisation systems}
\label{sec:algebr-weak-fact}
Algebraic weak factorisation systems (\textsc{awfs}s) were introduced in~\cite{MR2283020}
with the name \emph{natural weak factorisation systems}, with a distributive
axiom later added in~\cite{MR2506256}. The theory of \textsc{awfs}s has been
developed in~\cite{MR2506256} and~\cite{MR3393453}, especially with respect to
their relationship to double categories and to cofibrant generation. The present
paper takes the theory in a new direction, that of \textsc{awfs}s on
2-categories whose lifting operations, or diagonal fillers, have a universal
property with respect to 2-cells.

Many of the factorisation systems that
occur in practice provide a construction for the factorisation of an arbitrary
morphism. Such a
structure on a category $\mathcal{C}$ is called a \emph{functorial
  factorisation} and can be described in several equivalent ways: as a functor
$\mathcal{C}^\two\to\mathcal{C}^{\mathbf{3}}$ that is compatible with domain and
codomain; as a codomain-preserving --~ie with identity codomain component~--
pointed endofunctor $\Lambda\colon 1\Rightarrow R$ of $\mathcal{C}^\two$; as a
domain-preserving copointed endofunctor $\Phi\colon L\Rightarrow 1$ of
$\mathcal{C}^\two$. Then, a morphism $f$ factors as $f=Rf\cdot{}Lf$. Any such
functorial factorisation has an underlying \textsc{wfs} $(\mathcal{L},\mathcal{R})$ where
$\mathcal{L}$ consists of those morphisms that admit an $(L,\Phi)$-coalgebra
structure and $\mathcal{R}$ of those that admit an $(R,\Lambda)$-algebra
structure. One usually wants, however, to guarantee that $Lf\in\mathcal{L}$ and
$Rf\in\mathcal{R}$, for which one requires extra data in the form of a
comultiplication that makes $(L,\Phi)$ into a comonad $\mathsf{L}$ and a
multiplication that makes $(R,\Lambda)$ into a monad $\mathsf{R}$.
The pair $(\mathsf{L},\mathsf{R})$ together with an extra distributivity
condition is called an \textsc{awfs}.

The underlying \textsc{wfs} of an \textsc{awfs} $(\mathsf{L},\mathsf{R})$ is an orthogonal factorisation
system precisely when $\mathsf{L}$ and $\mathsf{R}$ are
idempotent~\cite{MR2283020}; for this, it is enough if either is idempotent~\cite{MR3393453}.

All the above constructions can be performed on 2-categories instead of
categories. Two morphisms $\ell\colon A\to B$ and $r\colon C\to D$ in a 2-category
$\K$  are \emph{lax orthogonal} when the comparison morphism
\begin{equation}
  \K(B,C)\longrightarrow\K(A,C)\times_{\K(A,D)}\K(B,D)\label{eq:401}
\end{equation}
has a left adjoint coretract. --~In the
the usual definitions of weak orthogonality and orthogonality this morphism must
be an epimorphism and, respectively, an isomorphism.~-- This left adjoint
provides diagonal fillers that moreover satisfy a universal property with
respect to 2-cells. A choice of diagonal fillers like these that is in
addition natural with respect to $\ell$ and $r$ we call a \textsl{\textsc{kz}}~\emph{lifting operation}.

When the 2-category \K\ is locally a preorder, the lax orthogonality of $\ell$
and $r$ reduces to the statement, encountered before in this introduction, that
for each commutative square~\eqref{eq:16} there exists a least diagonal
filler.

The notion of \textsc{awfs} on a 2-category we choose is the straightforward
generalisation of the usual notion of \textsc{awfs} on a category. If $\K$ is a 2-category, an
\textsc{awfs} on \K\ consists of a 2-comonad $\mathsf{L}$ and a 2-monad
$\mathsf{R}$ on $\K^\two$ that form an \textsc{awfs} on the underlying category
of $\K^\two$, and that satisfy $\cod L=\dom R$ as 2-functors; the definition can
be found in Section~\ref{sec:definition}.

An interesting question is what is the property on an \textsc{awfs} that
corresponds to the existence of \textsc{kz}~lifting operations. The answer is
that both the 2-comonad and the 2-monad of the \textsc{awfs} must be \emph{lax
  idempotent} --~proved in Theorem~\ref{thm:5}. Equivalently, either the 2-comonad \emph{or} the
2-monad must be lax idempotent --~proved in Section~\ref{sec:2-comonad-lax}. This last
statement mirrors the case of \textsc{awfs}s whose underlying \textsc{wfs} is orthogonal, for
which, as mentioned earlier, it is enough that either the comonad \emph{or} the monad
be idempotent.

A basic example of a lax idempotent \textsc{awfs} is the one that factors a functor
$f\colon A\to B$ as a left adjoint coretract $A\to f\downarrow B$ followed by
the split opfibration $f\downarrow B\to B$. We refer to this \textsc{awfs} as the
coreflection--opfibration \textsc{awfs}.

\subsection*{Simple reflections}
\label{sec:simple-reflections}
The paper~\cite{MR779198} studies the relationship between orthogonal
factorisation systems, abbreviated \textsc{ofs}s, and reflections. Every \textsc{ofs}
$(\mathcal{E},\mathcal{M})$ on a category $\mathcal{C}$ induces a reflection
on $\mathcal{C}$ as long as $\mathcal{C}$ has a terminal object $1$; the reflective
subcategory is $\mathcal{M}/1$, the full subcategory of those objects
$X$ such that $X\to 1$ belongs to $\mathcal{M}$. Under certain hypotheses,
a reflection, or an idempotent monad $\mathsf{T}$ on $\mathcal{C}$, induces an \textsc{ofs}. One of the possible
hypotheses is that $\mathsf{T}$ be \emph{simple}, which means that for any morphism $f$
the dashed morphism into the pullback depicted below is inverted by $\mathsf{T}$. The
factorisation of $f$ is then given by $f={R}f\cdot{}Lf$, and the left class
of morphisms consists of those which are inverted by $\mathsf{T}$.
\begin{equation}
  \label{eq:354}
  \xymatrixrowsep{.6cm}
  \diagram
  A\ar@{-->}[dr]^{Lf}\ar@/^/[drr]\ar@/_/[ddr]_f&&\\
  &Kf\ar[r]\ar@{}[dr]|{\mathrm{p.b.}}\ar[d]_{{R}f}&
  TA\ar[d]^{Tf}\\
  &B\ar[r]&
  TB
  \enddiagram
\end{equation}

There is an alternative way of describing simple reflections which seems to be
absent from the literature. Suppose that $\mathsf{T}$ is an idempotent monad on \C\ and
denote by $\mathsf{T}\text-\mathrm{Iso}$ the category of morphisms in \C\ that
are inverted by $T$. This category fits in a pullback square
\begin{equation}
  \label{eq:32}
  \diagram
  \mathsf{T}\text-\mathrm{Iso}\ar[r]\ar[d]_U&\mathrm{Iso}\ar[d]\\
  \C^\two\ar[r]^-{T^\two}&\C^\two
  \enddiagram
\end{equation}
where both vertical functors are full inclusions.
\begin{prop}
  \label{prop:3}
  The reflection $\mathsf{T}$ is simple if and only if
  $U\colon\mathsf{T}\text-\mathrm{Iso}\hookrightarrow{} \C^\two$ is a
  coreflective subcategory.
\end{prop}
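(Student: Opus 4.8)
The plan is to exhibit the coreflection explicitly and to recognise it as the simple factorisation.

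For the ``if'' direction, suppose $\mathsf{T}$ is simple, so that for every $f\colon A\to B$ the comparison $Lf\colon A\to Kf$ into the pullback belongs to $\mathsf{T}\text-\mathrm{Iso}$. I claim the coreflection of $f$ is $Lf$, with counit the square $(\id_A,Rf)\colon Lf\to f$. Given $g\colon X\to Y$ in $\mathsf{T}\text-\mathrm{Iso}$ and a commuting square $(p,q)\colon g\to f$, a morphism $g\to Lf$ lying over $(p,q)$ is forced to have domain leg $p$ and codomain leg $Y\to Kf$ of the form $(q,\kappa)$ for a single $\kappa\colon Y\to TA$ subject to $\kappa g=\eta_A p$ and $Tf\cdot\kappa=\eta_B q$. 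Since $TA$ is a $\mathsf{T}$-algebra and $Tg$ is invertible, precomposition with $g$ is a bijection $\C(Y,TA)\to\C(X,TA)$; this gives uniqueness of $\kappa$, while taking $\kappa=Tp\cdot(Tg)^{-1}\cdot\eta_Y$ and checking the two equations by naturality of $\eta$ gives existence. Naturality of the assignment $f\mapsto Lf$ in $f$ is then forced by the universal property, so $U$ is coreflective.

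For the ``only if'' direction, suppose $U$ is coreflective with counit $\epsilon$. Testing the universal property against the identities $\id_X$, all of which lie in $\mathsf{T}\text-\mathrm{Iso}$, shows that the domain leg of $\epsilon_f$ is invertible for each $f$; replacing the coreflection by an isomorphic copy we obtain a functorial factorisation $f=\rho_f\cdot Cf$ with $Cf\colon A\to Ef$ in $\mathsf{T}\text-\mathrm{Iso}$. The heart of the matter is to prove that $\rho_f\colon Ef\to B$ is right orthogonal to every morphism of $\mathsf{T}\text-\mathrm{Iso}$. Given $g\colon X\to Y$ in $\mathsf{T}\text-\mathrm{Iso}$ and a square $\rho_f\cdot\partial=q\cdot g$ with $\partial\colon X\to Ef$, form the pushout $P=Y\sqcup_X Ef$ with coprojections $j_1,j_2$; since the reflector of $\mathsf{T}$, being a left adjoint, preserves pushouts, $j_2\colon Ef\to P$ and hence $j_2\cdot Cf\colon A\to P$ lie in $\mathsf{T}\text-\mathrm{Iso}$, and the square consisting of $\id_A$ together with the map $P\to B$ induced by $q$ and $\rho_f$ is a morphism $j_2\cdot Cf\to f$. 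Its unique factorisation through $\epsilon_f$ has codomain component $\tau\colon P\to Ef$ with $\rho_f\cdot\tau$ that induced map and $\tau\cdot j_2\cdot Cf=Cf$; feeding the endomorphism $(\id_A,\tau\cdot j_2)$ of $Cf$ back into the uniqueness clause (a terminal object has only the identity endomorphism) forces $\tau\cdot j_2=\id_{Ef}$, so that $\tau\cdot j_1\colon Y\to Ef$ is the required diagonal, and a parallel argument gives its uniqueness.

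Consequently every $f$ factors as a morphism in $\mathsf{T}\text-\mathrm{Iso}$ followed by a morphism in $(\mathsf{T}\text-\mathrm{Iso})^\perp$, so $(\mathsf{T}\text-\mathrm{Iso},(\mathsf{T}\text-\mathrm{Iso})^\perp)$ is an orthogonal factorisation system on $\C$ whose left class is exactly the class of $\mathsf{T}$-inverted morphisms; by~\cite{MR779198} the reflection $\mathsf{T}$ is then simple. The hard step is the orthogonality of $\rho_f$: the universal property of the coreflection controls $Ef$ only through the single morphism $Cf$, which is why one must first glue $Ef$ onto $Y$ along $g$ by a pushout before the terminality of $\epsilon_f$ can be applied — this is also the only place that uses existence of the relevant pushouts in $\C$, a hypothesis to be recorded (or else the orthogonality must be obtained by a more hands-on identification of $Ef$ with $Kf$). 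It is worth noting in passing the easy background fact that $Rf$ always lies in $(\mathsf{T}\text-\mathrm{Iso})^\perp$, since $Kf$ is formed as a pullback over the $\mathsf{T}$-algebras $TA$ and $TB$.
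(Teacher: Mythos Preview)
The paper states this proposition in the Introduction without proof, as a new observation motivating the later 2-categorical development; there is therefore no paper-proof to compare against directly. Your argument is essentially correct, with two remarks.

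First, your ``if'' and ``only if'' labels are swapped: assuming $\mathsf{T}$ simple and exhibiting the coreflection is the \emph{only if} direction.

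Second, and more interestingly, your first paragraph proves more than you use. The bijection $\C^\two(g,Lf)\cong\C^\two(g,f)$ induced by $(\id_A,Rf)$, for $g\in\mathsf{T}\text{-}\mathrm{Iso}$, relies only on the pullback description of $Kf$ and on the orthogonality of $\mathsf{T}$-algebras to $\mathsf{T}$-inverted maps; nowhere does it use simplicity. Thus $Lf$ together with $(\id_A,Rf)$ \emph{always} satisfies the universal property of a coreflection of $f$; what simplicity contributes is exactly that $Lf$ lies in $\mathsf{T}\text{-}\mathrm{Iso}$. This clarifies why the converse is not purely formal: one has a candidate $Lf$ and an actual coreflection $Cf$, both representing the same presheaf on $\mathsf{T}\text{-}\mathrm{Iso}$, but Yoneda cannot identify them because $Lf$ is not known to lie in the subcategory. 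Your pushout manoeuvre---gluing $Ef$ to $Y$ before invoking terminality of the counit---is a clean way to close this gap, and your honest flagging of the extra hypothesis is appropriate. The appeal to \cite{MR779198} at the end is legitimate (there the existence of the factorisation system with left class the $\mathsf{T}$-inverted maps is one of the equivalent forms of simplicity), though that reference also works under standing finite-limit hypotheses on $\C$, so the issue does not entirely vanish. Alternatively, once you know $\rho_f\in(\mathsf{T}\text{-}\mathrm{Iso})^\perp$ and recall that $Rf$ always lies there too, essential uniqueness of orthogonal factorisations identifies $Ef$ with $Kf$ directly, avoiding the citation.
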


One way of expressing the construction of the \textsc{ofs} from $\mathsf{T}$ is the
following. On any category $\mathcal{A}$ we have the \textsc{ofs}
$(\mathrm{Iso},\mathrm{Mor})$, with left class the isomorphisms and right class
all morphisms. Isomorphisms are the coalgebras for the idempotent comonad
$\mathsf{L}'$ on $\mathcal{A}^\two$ given by $L'(f)=1_{\dom(f)}$.
If $F\dashv U\colon \mathcal{A}\hookrightarrow{}\mathcal{C}$ is the
adjunction induced by the reflection $\mathsf{T}$,
the copointed endofunctor $(L,\Phi)$ defined by
pullback along the unit of the adjunction
satisfies the property that the rectangle on the right hand side below is a
pullback.
In other words, $(L,\Phi)$-coalgebras are those morphisms that are
inverted by $F$; equivalently $(L,\Phi)\text-\mathrm{Coalg}\cong\mathsf{T}\text-\mathrm{Iso}$.
\begin{equation}
  \label{eq:355}
  \xymatrixrowsep{.6cm}
  \diagram
  L\ar[d]_\Phi\ar[r]&
  U^\two L'F^\two\ar[d]^{\Phi'}\\
  1\ar[r]&
  U^\two F^\two
  \enddiagram
  \qquad
  \diagram
  (L,\Phi)\text-\mathrm{Coalg}\ar[d]\ar[r]\ar@{}[dr]|{\mathrm{p.b.}}&
  (L',\Phi')\text-\mathrm{Coalg}\ar[d]\\
  \mathcal{C}^\two\ar[r]^-{F^\two}&
  \mathcal{A}^\two
  \enddiagram
\end{equation}
Any morphism that is inverted by ${T}$ is orthogonal to $Tf$ and therefore to its
pullback ${R}f$; in particular, $Lf$ satisfies this if the reflection is
simple. Therefore, we obtain an \textsc{ofs} when $\mathsf{T}$ is simple, with left class
those morphisms that are inverted by ${T}$.

% An alternative way to
% prove that we obtain an \textsc{ofs} is to show that $(L,\Phi)$ has
% an extension to an idempotent comonad. The comultiplication $\Sigma\colon
% L\Rightarrow L^2$ is the morphism that corresponds to the pair of morphisms
% $\Sigma_0\colon L\Rightarrow U^\two L'F^\two L$ and $1\colon L\to L$, where
% $\Sigma_0$ is the transpose of the transformation
% $F^\two L\Rightarrow L'F^\two L$ with component at $f$ the
% $(L',\Phi')$-coalgebra structure of $F\lambda_f$, ie
% $(1,(F\lambda_f)^{-1})\colon 1_{\dom(f)} \longrightarrow F\lambda_f$.
% The pointed endofunctor $(R,\Lambda)$ given by $f\mapsto\rho_f$ underlies a
% monad by construction.

\subsection*{Simple 2-adjunctions and \normalfont{\textsc{awfs}s}.}
\label{sec:simple-2-adjunctions}

The above analysis can be adapted to the case where categories are substituted by
2-categories and \textsc{ofs}s by lax orthogonal \textsc{awfs}s. Reflections are substituted by lax
idempotent 2-monads, idempotent (co)monads by lax
idempotent 2-(co)monads, the simple reflections by appropriately defined simple
2-adjunctions or simple 2-monads. The reflective subcategory $\mathrm{Iso}$ of
the arrow category is substituted by the lax idempotent 2-comonad whose algebras
are coretract left adjoints, while $\mathrm{Mor}$ is substituted by the free
split opfibration 2-monad. A version of the main theorem of
Section~\ref{sec:transf-along-left}, appropriately modified for this
introduction, states:
\begin{thm*}
  If the 2-adjunction $F\dashv U\colon\K\to\mathscr{A}$ is simple, and
  $\K,\mathscr A$ are 2-categories with enough finite limits, then there is a lax
  orthogonal {\normalfont{\textsl{\textsc{awfs}}}} $(\mathsf{L},\mathsf{R})$ on
  $\K$ whose $\mathsf{L}$-coalgebras are morphisms $f$ of $\K$ with a coretract
  adjunction $Ff\dashv r$ in $\mathscr A$.
\end{thm*}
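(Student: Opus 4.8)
The plan is to 2-categorify the construction, reviewed above, of an orthogonal factorisation system from a simple reflection, with the factorisation system $(\mathrm{Iso},\mathrm{Mor})$ on $\mathscr A$ replaced by the coreflection--opfibration \textsc{awfs} $(\mathsf L',\mathsf R')$ --~which exists because $\mathscr A$ has comma objects, and which is lax idempotent. First I would build a functorial factorisation on $\K$ by pulling back $U^\two\mathsf L'F^\two$ along the unit $\eta\colon 1\Rightarrow UF$, exactly as in diagram~\eqref{eq:355}. Concretely, for $f\colon A\to B$ in $\K$ this produces $Kf=B\times_{UFB}U(Ff\downarrow FB)$, with $Rf\colon Kf\to B$ the pullback of $U R'(Ff)$ along $\eta_B$ and $Lf\colon A\to Kf$ induced by $f$ together with the composite $A\xrightarrow{\eta_A}UFA\to U(Ff\downarrow FB)$ whose second leg is $U$ of the coreflection factor of $Ff$; naturality of $\eta$ makes this pair a cone and gives $Rf\cdot Lf=f$. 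Forming the required pullbacks is where the hypothesis ``enough finite limits in $\K$'' is used, and this construction visibly subsumes the patterns of~\eqref{eq:21} and~\eqref{eq:354}, which is one of the points of the theorem.

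Next I would identify the coalgebras of the resulting copointed endofunctor $(L,\Phi)$. By the universal property of the pullback defining $L$, together with the 2-adjunction $F^\two\dashv U^\two$ between arrow 2-categories, an $(L,\Phi)$-coalgebra structure on $f$ amounts to an $\mathsf L'$-coalgebra structure on $Ff$, that is to a coretract adjunction $Ff\dashv r$; since this correspondence is compatible with the morphisms and 2-cells of $\K^\two$, the 2-category $(L,\Phi)\text-\mathrm{Coalg}$ is the pullback of $F^\two\colon\K^\two\to\mathscr A^\two$ and the forgetful 2-functor $\mathsf L'\text-\mathrm{Coalg}\to\mathscr A^\two$ --~the 2-categorical form of the right-hand square of~\eqref{eq:355}. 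Simplicity now enters exactly as in Proposition~\ref{prop:3}: the 2-adjunction $F\dashv U$ is simple precisely when every $Lf$ is itself an $(L,\Phi)$-coalgebra --~equivalently, when $F(Lf)$ is a coretract left adjoint, the analogue of ``$Lf$ is inverted by $\mathsf T$'' --~equivalently, when $(L,\Phi)\text-\mathrm{Coalg}\hookrightarrow\K^\two$ is a \textsc{kz}-coreflective sub-2-category. This equips each $Lf$ with a canonical, essentially unique coalgebra structure, i.e.\ a comultiplication promoting $(L,\Phi)$ to a lax idempotent 2-comonad $\mathsf L$: the comonad and lax idempotence axioms are forced because coalgebra structure for a lax idempotent 2-comonad is property-like, and this property-likeness transports from $\mathsf L'$ through the pullback and the adjunction $F^\two\dashv U^\two$. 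In particular $\mathsf L\text-\mathrm{Coalg}=(L,\Phi)\text-\mathrm{Coalg}$ consists of the morphisms $f$ of $\K$ equipped with a coretract adjunction $Ff\dashv r$ in $\mathscr A$, as the theorem asserts.

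It remains to produce the 2-monad $\mathsf R$ and to check that $(\mathsf L,\mathsf R)$ is an \textsc{awfs}. Dually, $Rf$ is a pullback along $\eta_B$ of $U R'(Ff)$, and $R'(Ff)$ carries its free split opfibration ($\mathsf R'$-algebra) structure; transporting this along $U^\two$ and the pullback makes $(R,\Lambda)$ into a 2-monad $\mathsf R$ for the factorisation above, by the standard machinery relating (co)monads on $\K^\two$ to algebraic weak factorisation systems (see, e.g.,~\cite{MR3393453}). The outstanding distributivity axiom linking $\mathsf L$ and $\mathsf R$ reflects the fact that coretract left adjoints in $\mathscr A$ are closed under composition --~so that $\mathsf L$-coalgebras are closed under composition in the double-categorical sense an \textsc{awfs} requires~-- together with the lax idempotence of $\mathsf L$, which supplies the coherence. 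Finally, since $\mathsf L$ is lax idempotent, Theorem~\ref{thm:5} --~or, equivalently, the ``either one suffices'' refinement proved in Section~\ref{sec:2-comonad-lax}~-- shows $(\mathsf L,\mathsf R)$ to be a lax orthogonal \textsc{awfs}.

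The delicate step, which I expect to be the main obstacle, is exactly this passage from the copointed endofunctor $(L,\Phi)$ to a genuine \textsc{awfs}: one must verify that the \textsc{kz}-coreflection onto $(L,\Phi)\text-\mathrm{Coalg}$ is honestly lax idempotent and is compatible with vertical composition of morphisms of $\K$, so that the distributivity axiom holds. This is the 2-cell-laden reworking of the Cassidy--H\'{e}bert--Kelly analysis of simple reflections, and the notion of \emph{simple 2-adjunction} is engineered precisely so that it goes through: one needs the pullback along $\eta$ to preserve enough of the universal properties in play, which in turn rests on right 2-adjoints and pullback 2-functors preserving the 2-dimensional limits and universal properties in terms of which the lax idempotence of $\mathsf L'$ is expressed.
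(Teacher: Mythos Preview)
Your overall plan is right---pull the coreflection--opfibration \textsc{awfs} $(\mathsf L',\mathsf R')$ on $\mathscr A$ back along $F\dashv U$, identify the left class as those $f$ with $Ff$ a \textsc{lari}, and use simplicity to promote the resulting copointed endofunctor to a genuine comonad---and this is indeed what the paper does. But there is a substantive error in your identification of the coalgebras, and a real gap in how you obtain lax idempotence.

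The error: you assert that an $(L,\Phi)$-coalgebra structure on $f$ amounts to an $\mathsf L'$-coalgebra structure on $Ff$, i.e.\ a coretract adjunction $Ff\dashv r$. This is false. The pullback square defining $(L,\Phi)$ gives you only an $(L',\Phi')$-coalgebra structure on $Ff$, and by Proposition~\ref{prop:19}\eqref{item:19} those are \emph{strong deformation retracts} $(Ff,v,\xi)$ with $v\cdot Ff=1$ and $\xi\cdot Ff=1$, not \textsc{lari}s---the second triangle identity $v\cdot\xi=1$ is missing. The paper keeps these straight via diagram~\eqref{eq:109}: the pullback of $(L',\Phi')\text-\mathrm{Coalg}$ along $F^\two$ is $(L,\Phi)\text-\mathrm{Coalg}$, whereas the pullback of $\mathsf L'\text-\mathrm{Coalg}=\mathbf{Lari}(\mathscr A)$ is a \emph{different} 2-category $F\text-\mathbf{Emb}$ sitting strictly above it. It is $F\text-\mathbf{Emb}$, not $(L,\Phi)\text-\mathrm{Coalg}$, that the theorem identifies with $\mathsf L\text-\mathrm{Coalg}_s$, and it is $F\text-\mathbf{Emb}\to\K^\two$ whose coreflectivity is equivalent to simplicity (Theorem~\ref{thm:4}). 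Your analogy with~\eqref{eq:355} breaks down here precisely because, unlike the idempotent case, coalgebras for the copointed endofunctor and for the comonad no longer coincide.

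The gap: your argument for lax idempotence of $\mathsf L$---that property-likeness of $\mathsf L'$-coalgebras ``transports through the pullback and the adjunction $F^\two\dashv U^\two$''---is not a proof, and it is not how the paper proceeds. The paper instead establishes lax idempotence on the $\mathsf R$ side first: it shows (Lemma~\ref{l:9}, Lemma~\ref{l:10}, Corollary~\ref{cor:6}) that the codomain-preserving coreflection $\mathsf R$ of $U^\two\mathsf R'F^\two$ is a lax idempotent 2-monad whenever the adjunction is simple, via an abstract argument about lax idempotent monoids in monoidal 2-categories. Lax idempotence of $\mathsf L$ then follows from Theorem~\ref{thm:11}. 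Your route---lax idempotence of $\mathsf L$ first---might be made to work, but it is not automatic from property-likeness alone, and you would in any case need to first establish that $F\text-\mathbf{Emb}\to\K^\two$ is comonadic with the correct underlying copointed endofunctor, which is the explicit content of Theorem~\ref{thm:4}~(\ref{item:1})$\Rightarrow$(\ref{item:4}). That proof is a hands-on construction of the right adjoint and verification of both triangle identities using the universal property of comma objects; it is not a one-line consequence of the definition of simplicity.
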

The notion of simple 2-adjunction is central to the theorem, and occupies
most of Sections~\ref{sec:extens-transf-a.f} and~\ref{sec:transf-along-left}.

A \emph{simple 2-monad} is one whose associated free algebra 2-adjunction is simple. When
all the 2-categories involved are in fact categories, lax idempotent 2-monads
reduce to reflections and our concept of simple 2-monad to the one of simple
reflection. Therefore, we know that there are lax idempotent 2-monads that are
not simple, as \cite{MR779198}~gives examples of reflections that are not
simple.

\subsection*{Examples and further work}
\label{sec:exampl-furth-work}
The main example treated in the present article arises from categories with colimits.
Given a class of colimits, there exists a 2-monad on \Cat\ whose algebras are
categories with chosen colimits of that class. We show that these 2-monads are
simple, giving rise to lax orthogonal \textsc{awfs}s on \Cat. Even though the left morphisms of this
factorisation system are described in general in the theorem above, the right
class of morphisms is more difficult to pin down. We carefully investigate the right
class of morphisms and show that they do not coincide with the
obvious candidates: the opfibrations whose fibers have chosen colimits of the
given class and whose push-forward functors between fibres preserve them.
% Another example of simple 2-monad is the one given by Cauchy completion on the
% 2-category of Lawvere metric spaces --~ categories enriched in the extended
% non-negative real numbers. To give an idea about the lax orthogonal \textsc{awfs} within
% the space constraints of this introduction, one can look at maps between
% \emph{metric} spaces. Left maps between
% metric spaces are dense isometries. Right maps $f\colon A\to B$ between metric
% spaces are those distance decreasing maps with the property that each Cauchy
% sequence in $A$, such that its image under $f$ converges to a point $b\in B$,
% converges to a point of $A$ over $b$.

There are a number of examples of lax orthogonal \textsc{awfs}s on locally
preordered 2-categories, including that on the (2-)category of $T_0$ topological
spaces mentioned earlier in this introduction, that we have had to leave out of
this article for reasons of space. These will appear in a companion paper that
will concentrate in the case of locally ordered 2-categories, which is still
rich enough to encompass a large number of examples and relates to a rich
literature on the subject of injectivity in order-enriched categories.

\subsection*{Description of sections}
\label{sec:description-sections}

Sections~\ref{sec:backgr-natur-weak} and~\ref{sec:double-categ-aspects} can be regarded as a fairly self-contained
recount of the basic definitions and properties of \textsc{awfs}s.

We put together at the beginning of Section~\ref{sec:kz-a.f.s} some facts about
lax idempotent 2-(co)monads, one of our main tools, before introducing lax
orthogonal \textsc{awfs}, our main subject of study.

Section~\ref{sec:2-comonad-lax} proves that in order for an \textsc{awfs} to be lax
orthogonal it suffices that either the 2-monad or the 2-comonad be lax idempotent.

Sections~\ref{sec:lifting-operations} and~\ref{sec:univ-categ-with} recount the
notions of lifting operations and diagonal fillers, with their relationship to
\textsc{awfs}s. Our approach uses modules or profunctors and appears to
be novel.
In a 2-category one can
consider the usual lifting operations, but also lax natural
ones. We define lax natural and \textsc{kz}~diagonal fillers in
Section~\ref{sec:lax-natur-diag} and prove that lax
orthogonal \textsc{awfs}s give rise to \textsc{kz}~diagonal fillers. Lax orthogonal functorial
factorisations are briefly considered.

In Section~\ref{sec:freeness}
we characterise lax orthogonal \textsc{awfs}s as those \textsc{awfs} $(\mathsf{L},\mathsf{R})$ for
which $\mathsf{R}$-algebras are algebraically \textsc{kz} injective
to all $\mathsf{L}$-coalgebras, or equivalently, for which natural \textsc{kz} diagonal
fillers exist for squares from $\mathsf{L}$-coalgebras to $\mathsf{R}$-algebras.

% Sections~\ref{sec:extend-copo-endof} to \ref{sec:transf-along-left} are perhaps
% more technical, and give conditions that allow the coreflection--opfibration lax
% orthogonal \textsc{awfs} on a 2-category $\mathscr A$ to be transferred along a left
% 2-adjoint $\mathscr B\to\mathscr A$ to a lax idempotent \textsc{awfs} on $\mathscr
% B$.
Section~\ref{sec:extens-transf-a.f} introduces the concept of simple adjunction
of 2-functors. One of our main
results, the construction of a lax idempotent \textsc{awfs} from a simple
adjunction, can be found in Section~\ref{sec:transf-along-left}.

Section~\ref{sec:simple-2-monads} studies the case when the simple 2-adjunction is
the free algebra adjunction induced by a 2-monad, that we call a
\emph{simple} 2-monad, as it generalises the notion of simple
reflection~\cite{MR779198}. Conditions that guarantee that a
lax idempotent 2-monad is simple are provided.

Section~\ref{sec:exampl-compl-v} studies the example of --~enriched~-- categories and completion under
colimits. We show that for a class of colimits $\Phi$, the 2-monad
whose algebras are categories with \emph{chosen} colimits of that class is
simple, whence inducing a lax orthogonal \textsc{awfs} $(\mathsf{L},\mathsf{R})$. We prove
in Section~\ref{sec:monad-split-opfibr} that $\mathsf{R}$-algebras are always
split opfibrations with fibrewise chosen $\Phi$-colimits and that the converse does
not always hold. The article concludes with a short section that comments on
further work and examples.

\section{Background on algebraic weak factorisation systems}
\label{sec:backgr-natur-weak}

In the last few years there has been much interest in algebraic weak
factorisation systems (\textsc{awfs}s) mainly due to their connection to Quillen's
model categories and the small object argument, but also due to the homotopical
approach to type theory (homotopy type theory). The basic theory of
\textsc{awfs}s appeared in~\cite{MR2283020} with the name of \emph{natural weak
  factorisation system}, and was later expanded in~\cite{MR2506256}, especially
with respect to the construction of cofibrantly generated \textsc{awfs}. Further
study appeared recently in~\cite{MR3393453}. From Section~\ref{sec:kz-a.f.s}
onwards, the present paper expands the theory in another direction, that of
\textsc{awfs} in 2-categories whose lifting operations, or diagonal fillers,
satisfy a universal property with respect to 2-cells. Before all that we need to
collect present basics of the theory of \textsc{awfs}, mostly following
\cite{MR2506256,MR3393453}.

\subsection{The definition of AWFS}
\label{sec:definition-awfs}

We denote by $\two$ the category with two objects $0$ and $1$ and only one
non-identity arrow $0\to 1$, and by $\mathbf{3}$ the category with three objects
and three non-identity morphisms $0\to 1\to 2$.
Given a category \C\ consider the functors $d_0$, $d_1$, $d_2\colon
\mathcal{C}^{\mathbf{3}}\to\mathcal{C}^\two$
that send a pair of composable morphisms $(f\colon A\to B,g\colon B\to C)$ in
\C\ to:
$d_0(f,g)=f$, $d_1(f,g)=g\cdot{}f$, $d_2(f,g)=g$.

When displaying diagrams, we shall denote an object $f\in\C^\two$ by a vertical
arrow and a morphism $(h,k)\colon f\to g$ in $\C^\two$ by a commutative square,
as shown.
\begin{equation}
  \label{eq:30}
  \diagram
  \cdot\ar[d]_f\\
  \cdot
  \enddiagram
  \qquad\qquad\qquad
  \diagram
  \cdot\ar[d]_f\ar[r]^h&\cdot\ar[d]^g\\\cdot\ar[r]^k&\cdot
  \enddiagram
\end{equation}

\begin{df}
\label{df:3}
A \emph{functorial factorisation} in \C\ is a section of the composition functor
$d_1\colon\C^{\mathbf{3}}\to\C^\two$.
This means that for each morphism $(h,k)\colon f\to g$ in $\C^{\mathbf2}$ we
have a factorisation,
functorial in $(h,k)$, as depicted.
  \begin{equation}
    \label{eq:4}
    \xymatrixrowsep{.5cm}
    \diagram
    A\ar[d]_f\ar[r]^h&C\ar[d]^g\\
    B\ar[r]_k&D
    \enddiagram
    \quad\longmapsto
    \quad
    \diagram
    A\ar[d]_{Lf}\ar[r]^h&C\ar[d]^{Lg}\\
    Kf\ar[d]_{{R}f}\ar[r]^{K(h,k)}&Kg\ar[d]^{{R}g}\\
    B\ar[r]^k&D
    \enddiagram
  \end{equation}
\end{df}

A functorial factorisation as above induces a pointed endofunctor $\Lambda\colon
1\Rightarrow R$ and a
copointed endofunctor $\Phi\colon L\Rightarrow 1$ on $\C^{\mathbf2}$.
The endofunctor $L$ is given by $f\mapsto Lf$,
and the component of the copoint $\Phi$ at the object $f$ is depicted on the
left hand side of \eqref{eq:6}. Similarly, $f\mapsto{R}f$, and the component of
the point $\Lambda$ at the object $f$ is depicted
on the right hand side of~\eqref{eq:6}.
We note that the domain component of $\Phi$ and the codomain component of
$\Lambda$ are identities, which implies
$\mathrm{dom}L=\mathrm{dom}$ and $\mathrm{cod}R=\mathrm{cod}$, as
functors $\C^{\mathbf2}\to\C$. We say that $(L,\Phi)$ is \emph{domain
  preserving} and that $(R,\Lambda)$ is \emph{codomain preserving}.
\begin{equation}
  \label{eq:6}
  \xymatrixrowsep{0.5cm}
  \diagram
  A\ar[d]_{Lf}\ar@{=}[r]&A\ar[d]^f\\
  Kf\ar[r]_{{R}f}&B
  \enddiagram
  \qquad
  \diagram
  A\ar[d]_f\ar[r]^{Lf}&Kf\ar[d]^{{R}f}\\
  B\ar@{=}[r]&B
  \enddiagram
\end{equation}
Conversely, either a domain preserving copointed endofunctor $(L,\Phi)$ or a codomain
preserving pointed endofunctor $(R,\Lambda)$ on $\C^\two$ define a functorial
factorisation, in the first case by setting ${R}f=\cod(\Phi_f)$,
and in the second case by setting $Lf=\dom(\Lambda_f)$.

\begin{df}
\label{df:10}
An \emph{algebraic weak factorisation system} \cite{MR2283020,MR2506256} is a functorial
factorisation where the copointed endofunctor $\Phi\colon L\Rightarrow 1$ is
equipped with a comultiplication $\Sigma\colon L\Rightarrow L^2$, making it into a comonad $\mathsf L$, and the
pointed endofunctor $\Lambda\colon 1\Rightarrow R$ is equipped with a
multiplication $\Pi\colon R^2\Rightarrow R$, making it into a monad $\mathsf R$,
plus a distributivity condition.
The components of this comultiplication and multiplication will be denoted as
follows.
\begin{equation}
  \label{eq:33}
  \Sigma_f=
  \xymatrixrowsep{0.5cm}
  \diagram
  {A}
  \ar@{=}[r]\ar[d]_{Lf}
  &
  {A}
  \ar[d]^{{L^2f}}
  \\
  {Kf}
  \ar[r]^-{\sigma_f}
  &
  {KLf}
  \enddiagram
  \qquad
  \Pi_f=
  \diagram
  {K{R}f}
  \ar[r]^-{\pi_f}\ar[d]_{R^2f}
  &
  {Kf}
  \ar[d]^{{R}f}
  \\
  {B}
  \ar@{=}[r]
  &
  {B}
  \enddiagram
\end{equation}
Furthermore, the monad and comonad must be related by the distributivity
condition introduced in~\cite{MR2506256} that asserts that the
natural transformation $\Delta\colon LR\Rightarrow RL$ with components
\begin{equation}
  \label{eq:98}
  \Delta_f=\quad
  \diagram
  \cdot\ar[d]_{L{{R}f}}\ar[r]^{\sigma_f}\ar[dr]^1&
  \cdot\ar[d]^{{R}{Lf}}\\
  \cdot\ar[r]_{\pi_f}&
  \cdot
  \enddiagram
\end{equation}
is a distributive law, ie that the
diagrams shown below commute. In fact, the two triangles automatically
commute as a consequence of the comonad and monad axioms for $\mathsf{L}$ and
$\mathsf{R}$.
  \begin{equation}
    \label{eq:99}
    \xymatrixrowsep{.4cm}
    \diagram
    LR\ar[rr]^{\Delta}\ar[dr]_{\Phi R}&&RL\ar[dl]^{R\Phi}\\
    &R&
    \enddiagram
    \quad
    \diagram
    &L\ar[dl]_{L\Lambda}\ar[dr]^{\Lambda L}&\\
    LR\ar[rr]^\Delta&&RL
    \enddiagram
  \end{equation}
  \begin{equation}
    \label{eq:100}
    \xymatrixrowsep{.5cm}
    \diagram
    LR\ar[rr]^\Delta\ar[d]_{\Sigma R}&&RL\ar[d]^{R\Sigma}\\
    L^2R\ar[r]^-{L\Delta}&LRL\ar[r]^-{\Delta L}&RL^2
    \enddiagram
    \diagram
    LR^2\ar[d]_{L\Pi}\ar[r]^-{\Delta R}&RLR\ar[r]^-{R\Delta}&
    R^2L\ar[d]^{\Pi L}\\
    LR\ar[rr]^\Delta && RL
    \enddiagram
  \end{equation}
\end{df}
One of the ideas behind this definition is that the $\mathsf L$-coalgebras have
the left lifting property with respect to the $\mathsf R$-algebras, as explained
below.
An $\mathsf L$-coalgebra structure on a morphism $f\colon A\to B$, respectively,
an $\mathsf R$-algebra structure on $f$, is given by morphisms in
$\C^{\mathbf{2}}$ of the form
\begin{equation}
  \label{eq:7}
  \xymatrixrowsep{0.5cm}
  \diagram
  A\ar[d]_f\ar@{=}[r]&A\ar[d]^{Lf}\\
  B\ar[r]_-s&Kf
  \enddiagram
  \quad\text{and}\quad
  \diagram
  Kf\ar[d]_{{R}f}\ar[r]^-p&A\ar[d]^f\\
  B\ar@{=}[r]&B
  \enddiagram
\end{equation}
The domain and codomain components depicted by equality symbols are identity
morphisms as a consequence of the counit axiom
of the comonad $\mathsf L$, respectively
unit axiom of the monad $\mathsf
R$. These axioms also imply
${R}f\cdot{}s=1_B$ and $p\cdot{}Lf=1_A$.

Continuing, given a morphism $(h,k)$ in $\C^{\mathbf2}$ as in~\eqref{eq:4}, we
get a diagonal filler as depicted.
\begin{equation}
  \label{eq:8}
  \xymatrixcolsep{1.5cm}
  \xymatrixrowsep{.5cm}
  \diagram
  A\ar[rr]^h\ar[dd]_f\ar[dr]^{Lf}
  &&
  C\ar[d]_{Lg}\ar@{=}[r]
  &
  C\ar[dd]^g
  \\
  &
  Kf\ar[r]^-{K(h,k)}\ar[d]^{{R}f}
  &
  Kg\ar[dr]_{{R}g}\ar[ur]_{p}
  &
  \\
  B\ar[ur]^s\ar@{=}[r]
  &
  B\ar[rr]^k
  &&
  B
  \enddiagram
\end{equation}

\begin{rmk}
\label{rmk:15}
Every \textsc{awfs} $(\mathsf{L},\mathsf{R})$
has an underlying \textsc{wfs} $(\mathcal{L},\mathcal{R})$, where $\mathcal{L}$
consists of those morphisms of \C\ that admit a coalgebra structure for the copointed endofunctor $(L,\Phi)$ and
$\mathcal{R}$ consists of those morphisms that admit an algebra structure for the pointed endofunctor
$(R,\Lambda)$.
To verify this, one can observe that both $\mathcal{L}$ and $\mathcal{R}$ are
closed under retracts, and that
each morphism $f$ factors as $f=Rf\cdot Lf$, where $Lf$ admits the $(L,\Phi)$-coalgebra
structure $\Sigma_f\colon Lf\to L^2f$ and $Rf$ admits the $(R,\Lambda)$-algebra
structure $\Pi_f\colon R^2f\to Rf$.
\end{rmk}

\subsection{Orthogonal factorisations as AWFSs}
\label{sec:orth-fact-as}

We continue with some more background, in this case, the characterisation of
orthogonal factorisation systems in terms of the associated \textsc{awfs}.
Clearly, any orthogonal factorisation system $(\mathcal E,\mathcal M)$ in a
category $\mathcal C$ induces an \textsc{awfs}. This is a consequence of the uniqueness
of the factorisations. One can easily characterise the \textsc{awfs} obtained in this way.

\begin{prop}[{\cite[Thm~3.2]{MR2283020}}]
  \label{prop:1}
  The following are equivalent for an {\normalfont\textsl{\textsc{awfs}}} $(\mathsf{L},\mathsf{R})$:
  \begin{itemize}
  \item The comonad $\mathsf L$ and the monad $\mathsf R$ are idempotent.
  \item The underlying {\normalfont\textsl{\textsc{wfs}}} is an {\normalfont\textsl{\textsc{ofs}}}.
  \end{itemize}
\end{prop}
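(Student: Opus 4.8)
The plan is to prove this by establishing the two implications separately, exploiting the general fact that an \textsc{awfs} $(\mathsf L,\mathsf R)$ factors every $f$ as $f=Rf\cdot Lf$ with $Lf$ carrying the canonical coalgebra structure $\Sigma_f$ and $Rf$ carrying $\Pi_f$. For the direction ``$\mathsf L,\mathsf R$ idempotent $\implies$ underlying \textsc{wfs} is an \textsc{ofs}'', recall that for an idempotent comonad the counit $\Phi\colon L\Rightarrow 1$ is a split epimorphism component-wise with the coalgebra structure unique when it exists, and dually for $\mathsf R$. First I would observe that idempotence of $\mathsf L$ forces $\sigma_f$ to be inverse to (a component of) $L\Phi$, so that a morphism admits at most one $(L,\Phi)$-coalgebra structure, and similarly on the $\mathsf R$ side; this gives uniqueness of diagonal fillers. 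Then, using the explicit diagonal filler from~\eqref{eq:8} together with the idempotence-induced equalities, I would show that if $\ell\in\mathcal L$ and $r\in\mathcal R$ then the filler in~\eqref{eq:16} is unique, which is exactly orthogonality $\ell\perp r$; combined with the factorisation $f=Rf\cdot Lf$ this yields that $(\mathcal L,\mathcal R)$ is an \textsc{ofs}.

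For the converse, suppose the underlying \textsc{wfs} $(\mathcal L,\mathcal R)$ is an \textsc{ofs}. Since $Lf\in\mathcal L$, $Rf\in\mathcal R$, and in an \textsc{ofs} every morphism in $\mathcal L$ is \emph{orthogonal} (not merely weakly) to every morphism in $\mathcal R$, the diagonal filler for any square between an $\mathsf L$-coalgebra and an $\mathsf R$-algebra is unique. I would use this uniqueness to show that the comultiplication $\Sigma\colon L\Rightarrow L^2$ and the second projection $L\Phi\colon L^2\Rightarrow L$ are mutually inverse: the relevant triangle identities for the comonad already give $L\Phi\cdot\Sigma=1$, and the reverse composite $\Sigma\cdot L\Phi$ is a comparison between two coalgebra structures on $L^2f$, hence equals the identity by uniqueness of fillers. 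This is precisely the statement that $\mathsf L$ is idempotent; the argument for $\mathsf R$ is formally dual, using $\Pi\cdot R\Lambda=1$ and uniqueness on the algebra side.

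I expect the main obstacle to be bookkeeping: making the abstract ``uniqueness of diagonal fillers'' argument interact cleanly with the specific structure maps $\sigma_f$, $\pi_f$, and their appearance in $\Sigma_f$, $\Pi_f$. One must be careful that the square to which uniqueness is applied genuinely has a coalgebra on the left and an algebra on the right — in the idempotence arguments this means verifying that $L^2f$ and $R^2f$ carry the expected structures (which follows from the comonad/monad axioms) — and that the two competing fillers really are both valid fillers of that square. A secondary point worth care is that one should not need the distributivity condition of Definition~\ref{df:10} for either implication; it is automatic in the idempotent case, so the proof should only invoke the comonad and monad axioms together with the \textsc{ofs}/orthogonality properties. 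Since this is the cited result~\cite[Thm~3.2]{MR2283020}, I would keep the exposition brief, emphasising the ``uniqueness of fillers $\Leftrightarrow$ idempotence'' dictionary rather than reproducing every diagram chase.
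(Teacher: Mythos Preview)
The paper does not actually prove this proposition --- it is quoted from \cite[Thm~3.2]{MR2283020} without argument (the only follow-up is the remark that idempotence of $\mathsf R$ alone suffices, and that too is deferred to \cite{MR3393453}). So there is no in-paper proof for you to match; your sketch is being compared against a blank.

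As a proof in its own right your outline is sound, but one phrase in the converse direction is off. You write that $\Sigma\cdot L\Phi$ is ``a comparison between two coalgebra structures on $L^2f$''. It is not: $\Sigma_f\cdot L\Phi_f$ is an endomorphism of $L^2f$ in $\mathcal C^{\mathbf 2}$, with codomain component $\sigma_f\cdot{R}{Lf}\colon KLf\to KLf$. The correct argument is that both $\sigma_f\cdot{R}{Lf}$ and $1_{KLf}$ are diagonal fillers of the square
\[
\xymatrixrowsep{.6cm}
\diagram
A\ar[r]^-{{L^2}f}\ar[d]_{{L^2}f}&KLf\ar[d]^{{R}{Lf}}\\
KLf\ar[r]_-{{R}{Lf}}&Kf
\enddiagram
\]
from $L^2f\in\mathcal L$ to $RLf\in\mathcal R$; the comonad counit axioms give both the required triangle identities, and orthogonality then forces $\sigma_f\cdot{R}{Lf}=1$. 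That is presumably what you intended, and with this correction (and the dual for $\mathsf R$) your plan goes through. Your remark that the distributive law is not needed is also correct.
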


Furthermore, if $\mathsf{R}$ is idempotent, then so is $\mathsf{L}$, a proof of
which can be found in~\cite{MR3393453}.

\subsection{Right morphisms form a fibration}
\label{sec:right-morphisms-form}

This section collects some of the material of~\cite[\S 3.4]{MR3393453} that will
be crucial later on.

A functor $P\colon \mathcal{A}\to\C^\two$ is a \emph{discrete
  pullback-fibration} if it is just like a discrete fibration except that only pullback
squares have cartesian liftings. More explicitly, for each $a\in \mathcal{A}$ and each pullback square
$(h,k)\colon f\to P(a)$ --~$f$ is the pullback of the morphism $P(a)$ along
$k$~-- there exists a unique morphism $\alpha\colon\bar a\to a$ in $\mathcal{A}$
such that $P\alpha=(h,k)$.
\begin{lemma}
  \label{cor:7}
  Suppose the category \C\ has pullbacks.
  For any codomain preserving monad $\mathsf{R}$ on $\C^\two$, the codomain functor exhibits
  $\mathsf{R}\text-\mathrm{Alg}$ as a discrete pullback-fibration over
  $\mathcal{C}$.
\end{lemma}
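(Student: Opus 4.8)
The plan is to unwind both notions---discrete pullback-fibration and codomain-preserving monad algebra---and exhibit the required cartesian liftings explicitly. First I would recall that an $\mathsf R$-algebra on a morphism $g$ is a map $p\colon Kg\to \dom(g)$ as in~\eqref{eq:7} satisfying the unit and associativity axioms, and that a morphism of $\mathsf R$-algebras $(g,p)\to(g',p')$ is a morphism $(h,k)\colon g\to g'$ in $\C^\two$ compatible with the algebra structures, i.e.\ $h\cdot p = p'\cdot K(h,k)$. Since $\Lambda$ is codomain preserving, $\cod R=\cod$, so applying $\cod$ to a morphism of $\mathsf R$-algebras just returns its codomain component $k$; thus $\cod\colon\mathsf R\text-\mathrm{Alg}\to\C$ is well-defined and I need to check it is a discrete pullback-fibration.

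The heart of the argument is the lifting. Suppose $(g',p')$ is an $\mathsf R$-algebra and $k\colon B\to B'=\cod(g')$ is a morphism of $\C$; let $g\colon A\to B$ be the pullback of $g'$ along $k$, with pullback square $(h,k)\colon g\to g'$. I must produce a \emph{unique} $\mathsf R$-algebra structure $p$ on $g$ making $(h,k)$ an algebra morphism. The key observation is that the functorial factorisation applied to the pullback square $(h,k)$ gives a map $K(h,k)\colon Kg\to Kg'$ over $k$, and---because $(R,\Lambda)$ is codomain preserving and $g$ is a pullback of $g'$---the square $(L(h,k),K(h,k))\colon Lg\to Lg'$ together with the outer data forces $Kg$ to be the pullback of $K g'$ along $k$ as well; more precisely, the square with top $K(h,k)$, sides $Rg$ and $Rg'$, bottom $k$ is a pullback. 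Granting this, an algebra morphism condition $h\cdot p = p'\cdot K(h,k)$ together with the requirement that the codomain component of the algebra structure be an identity pins down $p$ \emph{uniquely}: it is the unique factorisation of the cone $(p'\cdot K(h,k),\;Rg\colon Kg\to B)$ through the pullback $A = B\times_{B'}A'$. One then checks the unit and associativity axioms for this $p$; these follow formally from the corresponding axioms for $p'$ by uniqueness of factorisations through the relevant pullbacks, using that $\pi$ and the comparison maps are all compatible with the pullback structure.

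The step I expect to be the main obstacle is establishing that the chosen-pullback square is preserved by the functorial factorisation, i.e.\ that when $g$ is the pullback of $g'$ along $k$, the square $(K(h,k),k)\colon Rg\to Rg'$ is again a pullback. This is where the codomain-preserving hypothesis on $\mathsf R$ is essential, and it is presumably the content one borrows from~\cite{MR3393453}; I would prove it by noting that $Rg$ and $Rg'$ have the same codomain component behaviour, and using the description of $Kg$ via the (co)monad data, or alternatively by a direct diagram chase with the universal property of $Kg'=B'\times_{?}(\cdot)$. Once this pullback-preservation is in hand, everything else is a routine verification that the uniquely-determined $p$ satisfies the monad-algebra axioms and that the assignment is strictly functorial in the evident sense, so that $\cod$ is a discrete pullback-fibration as claimed.
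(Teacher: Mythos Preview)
Your core construction is correct and matches the paper's: define $p\colon Kg\to A$ via the universal property of the pullback $A=B\times_{B'}A'$, using the cone $(p'\cdot K(h,k),\,Rg)$. However, the step you flag as the main obstacle---that $(K(h,k),k)\colon Rg\to Rg'$ is itself a pullback square---is both unnecessary and false in general.

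It is unnecessary because every verification goes through the pullback $A$, not through any putative pullback property of $Kg$. For uniqueness: any algebra structure $p$ on $g$ making $(h,k)$ an algebra morphism must satisfy $h\cdot p=p'\cdot K(h,k)$ and $g\cdot p=Rg$, and these two equations already determine $p$ via the universal property of $A$. For the unit axiom $p\cdot Lg=1_A$, compose both sides with $h$ and with $g$ and use $K(h,k)\cdot Lg = Lg'\cdot h$ together with the unit axiom for $p'$. For associativity $p\cdot\pi_g = p\cdot K(p,1)$, again compose with $h$ and with $g$: naturality of $\Pi$ gives $K(h,k)\cdot\pi_g=\pi_{g'}\cdot K(K(h,k),k)$, and the associativity axiom for $p'$ finishes the calculation. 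At no point is a pullback property of $Kg$ invoked.

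It is false in general because $K=\dom\circ R$ for an arbitrary codomain-preserving monad $\mathsf R$, and there is no reason for this functor to take pullback squares to pullback squares. For instance, when $\mathsf R$ arises from a monad $\mathsf T$ on $\C$ via $Kf=\cod(f)\times_{T\cod(f)}T\dom(f)$, your claim would force $T$ to preserve the pullback defining $A$, which it need not. So drop this ``key observation'' entirely; the proof you sketched already works without it.
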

To give an idea of the proof, suppose that $g\colon C\to D$ has an $\mathsf{R}$-algebra
structure $p_g\colon Kg\to C$, and that
\begin{equation}
  \label{eq:67}
  \xymatrixrowsep{.3cm}
  \diagram
  A\ar[d]_f\ar[r]^h&
  C\ar[d]^g\\
  B\ar[r]^k&
  D
  \enddiagram
\end{equation}
is a pullback square. Then, the $\mathsf{R}$-algebra structure on $f$ is given
by the morphism $p_f\colon Kf\to A$ induced by the universal property of
pullbacks and the equality displayed below. This is the unique algebra structure
that makes $(h,k)$ a morphism of algebras.
\begin{equation}
  \label{eq:15}
  \xymatrixrowsep{.5cm}
  \xymatrixcolsep{1.3cm}
  \diagram
  Kf\ar[r]^-{p_f}\ar[d]_{Rf}&
  A\ar[r]^-{h}\ar[d]^f&
  C\ar[d]^g\\
  B\ar@{=}[r]&
  B\ar[r]^-k&
  D
  \enddiagram
  \quad=\quad
  \diagram
  Kf\ar[r]^-{K(h,k)}\ar[d]_{Rf}&
  Kg\ar[r]^-{p_g}\ar[d]_{Rg}&
  C\ar[d]^g\\
  B\ar[r]^-k&
  D\ar@{=}[r]&
  D
  \enddiagram
\end{equation}

\subsection{Miscellaneous remarks}
\label{sec:miscelaneous-remarks}

Before moving to the next section and the subject of double categories, we
collect three observations that will be of use later on. We use the
adjunctions $\cod\dashv\id\dashv\dom\colon\C^\two\to\C$, the first of which has
identity counit and the second has identity unit.
\begin{rmk}
  \label{rmk:3}
  Suppose given a functorial factorisation, with associated copointed
  endofunctor $(L,\Phi)$ and pointed endofunctor $(R,\Lambda)$.
  The identity natural transformation $1_{\C}=\dom\cdot{}L\cdot{}\id$ corresponds under
  $\id\dashv\dom$ to a natural transformation $(1,L)$ with $f$-component
  equal to the morphism depicted on the right hand side below.
  \begin{equation}
    \label{eq:62}
    {\xymatrixcolsep{1.7cm}
    \diagram
    \C^{\mathsf{2}}\rtwocell^{\id\cdot{}\dom}_L{\hole\hole(1,L)}&\C^{\mathsf{2}}
    \enddiagram
    }
    \qquad
    \xymatrixrowsep{.6cm}
    \diagram
    {\dom f}
    \ar@{=}[r]\ar[d]_{1}
    &
    {\dom f}
    \ar[d]^{Lf}
    \\
    {\dom f}
    \ar[r]_-{Lf}
    &
    {Kf}
    \enddiagram
  \end{equation}
\end{rmk}
\begin{rmk}
  \label{rmk:5}
  Given a functorial factorisation in \C\ with associated copointed endofunctor
  $(L,\Phi)$ and pointed endofunctor $(R,\Lambda)$, denote by
  $V\colon(R,\Lambda)\text-\mathrm{Alg}\to\C^{\mathbf{2}}$ the corresponding
  forgetful functor from the category of algebras for the pointed endofunctor $(R,\Lambda)$. Define a
  natural transformation as the composition of two transformations, as displayed.
  \begin{equation}
    \label{eq:65}
    \xymatrixrowsep{.6cm}
    \diagram
    {(R,\Lambda)\text-\mathrm{Alg}}
    \ar[r]^-{V}\ar[d]_{V}\drtwocell<\omit>{^{(1,p)}\hole\hole}
    &
    {\C^{\mathbf{2}}}
    \ar[d]^{\id\cdot{}\dom}
    \\
    {\C^{\mathbf{2}}}
    \ar[r]_-{L}
    &
    {\C^{\mathbf{2}}}
    \enddiagram
    \qquad
    (1,p)\colon L\cdot{}V\Longrightarrow{}\id\cdot{}\dom\cdot{}R\cdot{}V\Longrightarrow \id\cdot{}\dom\cdot{}V
  \end{equation}
  The first arrow is the mate of the identity natural transformation $\cod\cdot{} L=
  \dom \cdot{}R$ under the adjunction $\cod\dashv\id$. The second arrow is the
  application of the $(R,\Lambda)$-algebra structure of $R\cdot{}V\Rightarrow V$.
  Explicitly, the component of $(1,p)$ on an $(R,\Lambda)$-algebra $(f,p_f)$ is
  \begin{equation}
    \label{eq:25}
    \xymatrixrowsep{.6cm}
    \diagram
    \dom f
    \ar@{=}[r]\ar[d]_{Lf}
    &
    \dom f
    \ar[d]^{1}
    \\
    {Kf}
    \ar[r]_-{p_f}
    &
    \dom f
    \enddiagram
  \end{equation}
\end{rmk}
\begin{rmk}
  \label{rmk:6}
  The pasting along $L$ of the transformation $(1,L)$ of Remark~\ref{rmk:3} with the
  transformation $(1,p)$ of Remark~\ref{rmk:5} is the
  identity. This is a consequence of the unit axiom for $(R,\Lambda)$-algebras:
  if $(f,p_f)$ is an algebra, then $p_f\cdot{}Lf=1$.
\end{rmk}

\section{Double categories of algebras and coalgebras}
\label{sec:double-categ-aspects}
This section collects remarks on double categories and \textsc{awfs}s, due to
R~Garner. The definition of \textsc{awfs}s used in~\cite{MR2506256} differs of
the original one~\cite{MR2283020} in the requirement of an extra distributivity
condition: the transformation $\Delta\colon LR\Rightarrow RL$ displayed
in~\eqref{eq:98} should be a mixed distributive law. This condition is what
makes possible the definition of a composition of $\mathsf{R}$-algebras and of
$\mathsf{L}$-coalgebras, as we proceed to explain.

The standard cocategory object in $\Cat$ displayed on the
left below induces a category object in \Cat, that is, a double category,
displayed in the centre, that we may call the double category of squares and
denote by $\Sq(\mathcal{C})$.
Objects of $\Sq(\C)$ are those of \C, vertical and horizontal morphisms are
morphisms of \C, while 2-cells in $\Sq(\C)$ are commutative squares in
\C.
\begin{equation}
  \label{eq:22}
  \diagram
  \mathbf{3}&
  \mathbf{2}\ar@<8pt>[l]_{\phantom{c}}\ar[l]\ar@<-8pt>[l]\ar[r]&
  \mathbf{1}\ar@<8pt>[l]\ar@<-8pt>[l]
  \enddiagram
  \qquad
  \diagram
  \mathcal{C}^{\mathbf{3}}&
  \mathcal{C}^{\mathbf{2}}
  \ar@{<-}@<10pt>[l]\ar@{<-}[l]\ar@{<-}@<-10pt>[l]\ar@{<-}[r]^{\id}&
  \mathcal{C}\ar@{<-}@<10pt>[l]_{\dom}\ar@{<-}@<-10pt>[l]_{\cod}^{\phantom{c}}
  \enddiagram
  \qquad
  \diagram
  \mathsf{R}\text-\mathrm{Alg}\ar[d]_V &
  \mathcal{C}\ar@{<-}@<10pt>[l]\ar@{<-}@<-10pt>[l]\ar@{=}[d]\ar[l]
  \\
  \mathcal{C}^{\mathbf{2}}
  \ar@{<-}[r]^{\id}&
  \mathcal{C}\ar@{<-}@<10pt>[l]_{\dom}\ar@{<-}@<-10pt>[l]_{\cod}
  \enddiagram
\end{equation}

The central result of this section is the following.
\begin{prop}
  \label{prop:2}
  If $\mathsf{R}$ is a codomain-preserving monad on \C, there is a bijection
  between {\normalfont\textsl{\textsc{awfs}s}} with monad $\mathsf{R}$ and
  extensions of the diagram on the right hand side of~\eqref{eq:22} to a double
  functor, by which we mean extensions of the reflexive graph
  $\mathsf{R}\text-\mathrm{Alg}\rightrightarrows{}\C$ to a category object that
  makes~\eqref{eq:22} into a functor internal to \Cat\ --~a double functor into
  $\Sq(\C)$.
\end{prop}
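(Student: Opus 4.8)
\emph{Proof idea.}
The strategy is to reduce each side of the claimed bijection to a single piece of data and then match the two. On one side, a codomain-preserving monad $\mathsf{R}$ already determines a functorial factorisation --~with $Kf=\dom(Rf)$ and $Lf$ the domain component of $\Lambda_f$~-- hence a copointed endofunctor $(L,\Phi)$; consequently an \textsc{awfs} with monad $\mathsf{R}$ is precisely a comultiplication $\Sigma\colon L\Rightarrow L^2$ making $(L,\Phi,\Sigma)$ into a comonad $\mathsf{L}$ for which the transformation $\Delta$ of~\eqref{eq:98} is a distributive law, and by Definition~\ref{df:10} only the two squares~\eqref{eq:100} need to be checked, the triangles~\eqref{eq:99} being automatic. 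On the other side, $\mathsf{R}\text-\mathrm{Alg}\rightrightarrows\C$ is a reflexive graph over the reflexive graph of $\Sq(\C)$: its legs are $\dom\cdot V$ and $\cod\cdot V$, and its reflexivity sends $X$ to the identity arrow $1_X$ with the $\mathsf{R}$-algebra structure whose structure map is $R1_X$ --~forced by the unit axiom and available because codomain-preservation gives $R1_X\cdot L1_X=1_X$. Unwinding the definitions, an extension of the right-hand diagram of~\eqref{eq:22} to an internal functor into $\Sq(\C)$ is then exactly a composition functor $m\colon\mathsf{R}\text-\mathrm{Alg}\times_\C\mathsf{R}\text-\mathrm{Alg}\to\mathsf{R}\text-\mathrm{Alg}$ lying over $d_1\colon\C^{\mathbf{3}}\to\C^\two$ and obeying the usual unit and associativity laws.

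Having made this reduction, I would set up the two passages. From an \textsc{awfs} one obtains the composition of $\mathsf{R}$-algebras that --~as recalled at the start of Section~\ref{sec:double-categ-aspects}~-- is precisely what the distributivity condition was introduced to make possible: for composable algebras $(f,p_f)$ and $(g,p_g)$ the composite structure map on $g\cdot f$ is assembled from $p_f$, $p_g$, the transition maps $K(h,k)$ and the comultiplication $\sigma$ (equivalently $\Delta$), and one checks that it is a genuine $\mathsf{R}$-algebra, that the resulting $m$ is functorial in $(f,g)$, and that it covers $d_1$. Conversely, from a composition $m$ one reads off the mixed distributive law $\Delta$ --~and hence, via~\eqref{eq:98}, the component $\sigma_f\colon Kf\to KLf$: composing the free $\mathsf{R}$-algebras $(RLf,\Pi_{Lf})$ and $(Rf,\Pi_f)$ (cf.\ Remark~\ref{rmk:15}) under $m$ yields an $\mathsf{R}$-algebra on $Rf\cdot RLf$, and $\sigma_f$ is the domain component of the canonical $\mathsf{R}$-algebra map from $(Rf,\Pi_f)$ into it, the map and its naturality being supplied by the double-category structure. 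Since $\Phi$ and the factorisation are fixed by $\mathsf{R}$, this pins down a candidate comonad $\mathsf{L}$ and candidate $\Delta$.

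The main work, and the point requiring care, is the axiom bookkeeping: one must show that the comonad counit and coassociativity laws for $\mathsf{L}$ together with the two distributivity squares~\eqref{eq:100} correspond term for term to the two unit laws and the associativity law of the double category $\mathsf{R}\text-\mathrm{Alg}$ --~roughly, the left square of~\eqref{eq:100} carries one associativity constraint and the right square carries the compatibility of $m$ with the monad multiplication $\Pi$, i.e.\ with the free-algebra structures. A subsidiary but genuine check is that the canonical $\mathsf{R}$-algebra on each $1_X$ is a two-sided unit for $m$, which again comes down to the comonad counit axioms; the discrete pullback-fibration of Lemma~\ref{cor:7} (when $\C$ has pullbacks) is a convenient bookkeeping tool here. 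Finally, the two passages are mutually inverse: on the \textsc{awfs} side this is immediate from the defining relations~\eqref{eq:33} and~\eqref{eq:98}, while on the double-category side it follows because a composition of $\mathsf{R}$-algebras is determined by its restriction to the free ones, on which both round trips visibly agree.
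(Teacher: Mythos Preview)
Your approach matches the paper's: both directions are set up the same way, and in particular the paper too extracts $\sigma_f$ from the vertical composite $Rf\bullet RLf$ of free $\mathsf{R}$-algebras. There is one imprecision worth fixing. You write that the ``canonical $\mathsf{R}$-algebra map from $(Rf,\Pi_f)$'' into $Rf\bullet RLf$ is ``supplied by the double-category structure'', but the double-category structure only furnishes the composite algebra, not any distinguished morphism into it. The paper pins this map down: $(\sigma_f,1)\colon Rf\to Rf\bullet RLf$ is the transpose, under the free $\mathsf{R}$-algebra adjunction, of the morphism $(L^2f,1)\colon f\to Rf\cdot RLf$ in $\C^\two$ (equation~\eqref{eq:14}). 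That is the one concrete ingredient your sketch leaves unspecified; once supplied, the rest of your outline (and the axiom-matching you describe) is exactly what the paper indicates and defers to~\cite[\S 3]{MR3393453} for the details.
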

Below we give an indication of the proof of this proposition; a more detailed account can
be found in~\cite[\S 3]{MR3393453}.

\subsection{From AWFSs to double categories}
\label{sec:from-norm-double}
If $(\mathsf{L},\mathsf{R})$ is an \textsc{awfs} on \C, $\mathsf{R}$-algebras can be
composed, in the sense that if $f\colon A\to B$ and $g\colon B\to C$ are
$\mathsf{R}$-algebras, then an $\mathsf{R}$-algebra structure for $g\cdot{}f$ can be
constructed from the \textsc{awfs}. Explicitly, if $(p_f,1_B)\colon Rf\to f$ and
$(p_g,1_C)\colon Rg\to g$ are the $\mathsf{R}$-algebra structures of $f$ and $g$,
with respective domain components $p_f\colon Kf\to A$ and $p_g\colon Kg\to B$,
then the $\mathsf{R}$-algebra structure $(p_{g\cdot f},1_C)\colon R(g\cdot f)\to
g\cdot f$ is constructed in the following manner. The reader will recall from
Section~\ref{sec:backgr-natur-weak}, especially from diagram~\eqref{eq:8}, that
any morphism $f\to g$ from an $\mathsf{L}$-coalgebra to an $\mathsf{R}$-algebra
has a canonical diagonal filler. Consider the canonical diagonal filler $a$ of the
square $(f,R({g\cdot f}))\colon L({g\cdot f})\to g$, and then the
canonical diagonal filler $p_{g\cdot f}$ of the square $(1_A,a)\colon
L({g\cdot f})\to f$, as depicted in the diagram. It can be shown without
much problem that this is an $\mathsf{R}$-algebra structure on $g\cdot f$.
\begin{equation}
  \label{eq:29}
  \xymatrixcolsep{1.8cm}
  \xymatrixrowsep{.5cm}
  \diagram
  A\ar[dd]_{L({g\cdot f})}\ar@{=}[r]
  &
  A\ar[d]^f\\
  &
  B\ar[d]^g\\
  K(g\cdot f)\ar[r]_-{R({g\cdot f})}
  \ar[ur]_a\ar[uur]^{p_{g\cdot f}}&
  C
  \enddiagram
\end{equation}
We write $(g,p_g)\bullet(f,p_f)$ for the $\mathsf{R}$-algebra $(g\cdot
f,p_{g\cdot f})$ described. This operation on pairs of composable
$\mathsf{R}$-algebras can be shown to be associative and has identities
$(1_A,{R}{1_A})$, so there is a double category
$\mathsf{R}\text-\mathbb{A}\mathrm{lg}$.
The forgetful functor from $\mathsf{R}$-algebras
forms part of a double functor, depicted on the right hand of~\eqref{eq:22}.

\subsection{From double categories to AWFSs}
\label{sec:from-double-categ}

Suppose that $\mathsf{R}$ is a codomain-pre\-serv\-ing
monad on $\C^\two$, with associated codomain preserving copointed endofunctor
$(L,\Phi)$ on $\C^\two$. Each double category structure on
$\mathsf{R}\text-\mathrm{Alg}\rightrightarrows\C$ that is compatible with the
composition of morphisms in \C\ induces a comultiplication $\Sigma\colon
L\Rightarrow L^2$ that makes $\mathsf{L}=(L,\Phi,\Sigma)$ into a comonad
and $(\mathsf{L},\mathsf{R})$ an \textsc{awfs}. In the following we explain how
to construct the comultiplication from the double category structure.

If $(f,p_f)$ and $(g,p_g)$ are $\mathsf{R}$-algebras with $\cod(f)=\dom(g)$, the
double category structure provides for a vertical composition
$(g,p_g)\bullet(f,p_f)=(g\cdot{}f,p_g\bullet p_f)$ with underlying morphism $g\cdot{}f$. The
identities for the vertical composition are the $\mathsf{R}$-algebras
$(1,{R}1)$. Morphisms of $\mathsf{R}$-algebras can be vertically composed too:
given such morphisms $(h,k)\colon f\to g$ and $(k,\ell)\colon f'\to g'$,
then $(h,\ell)$ is a morphism
$f'\bullet f\to g'\bullet g$.

The comultiplication
$\Sigma_f=(1,\sigma_f)\colon Lf\to L^2f$ can be constructed from
the double category structure in the following manner. Consider the morphism of
$\mathsf{R}$-algebras $(\sigma_f,1)\colon Rf\to Rf\bullet RLf$ that
corresponds under free $\mathsf{R}$-algebra adjunction to the morphism
$(L^2f,1)\colon f\to Rf\cdot{}RLf$ in $\C^\two$.
\begin{equation}
  \label{eq:14}
  \xymatrixrowsep{.5cm}
  \diagram
  Kf\ar[r]^-{\sigma_f}\ar[dd]_{{R}f}&KLf\ar[d]^{{R}{Lf}}\\
  &Kf\ar[d]^{{R}f}\\
  B\ar@{=}[r]&B
  \enddiagram
  \qquad
  \sigma_f\cdot{}Lf=L^2{f}
\end{equation}
The detail of the proof that the components $\sigma_f$ yield a comultiplication
for $L$ can be found in~\cite[Prop.~4]{MR3393453}.

\section{Lax orthogonal AWFSs}
\label{sec:kz-a.f.s}
This section introduces the fundamental definition of this work, lax orthogonal
\textsc{awfs}s, and describes the most basic 2-categorical example. Before all that, we
shall recall some facts about lax idempotent 2-monads.

\subsection{2-monads}
\label{sec:2-monads}
We shall assume throughout the paper that the reader is familiar with the basic
notions of 2-category, 2-functor, 2-natural transformation and
modification. Familiarity with 2-(co)monads shall also be assumed, but we can
take this opportunity to remind the reader of the definitions; a complete account
can be found in~\cite{BKP}. A 2-monad
$\mathsf{T}=(T,i,m)$ on a 2-category \K\ is a 2-functor $T\colon \K\to \K$ with
2-natural transformations $i\colon 1_{\K}\Rightarrow T$, called the unit, and
$m\colon T^2\Rightarrow T$, called the multiplication, that satisfy the usual axioms of
a monad; in other words, the underlying functor of $T$ with the underlying
natural transformations of $i$ and $m$ form an ordinary monad on the underlying
category of \K. The definition of 2-comonad is dual.

An algebra for the 2-monad $\mathsf{T}$ is, by definition, an algebra for its
underlying monad. This amounts to an object $A$ with a morphism $a\colon TA\to
A$ that satisfies the usual algebra axioms --~the 2-cells play no role here. We
shall usually be concerned with the so-called \emph{strict morphisms} of
$\mathsf{T}$-algebras, which are the morphisms of algebras for the underlying
monad of $\mathsf{T}$; ie a strict morphism $(A,a)\to(B,b)$ is a morphism
$f\colon A\to B$ in \K\ such that $b\cdot Tf=f\cdot a$. However, the
2-dimensional aspect of \K\ enable us to speak of \emph{lax morphisms}, which
are morphisms $f\colon A\to B$ equipped with a 2-cell $\bar f\colon b\cdot
Tf\Rightarrow f\cdot a$ that must satisfy certain coherence axioms. For example,
(lax) monoidal functors are examples of lax morphisms for a certain
2-monad. There is a dual notion of \emph{oplax morphism}, which is a morphism
$f\colon A\to B$ with a 2-cell $\bar f\colon f\cdot a\Rightarrow b\cdot Tf$ that
must satisfy coherence axioms. A lax morphism $(f,\bar f)$ whose 2-cell $\bar f$ is
invertible is said to be a \emph{pseudomorphism}.

The four types of morphisms described in the previous paragraph are the
morphisms of four 2-categories, all with the $\mathsf{T}$-algebras as objects:
$\mathsf{T}\text-\mathrm{Alg}_s$ has the strict morphisms as morphisms;
$\mathsf{T}\text-\mathrm{Alg}_\ell$ has the lax morphisms as morphisms;
$\mathsf{T}\text-\mathrm{Alg}_c$ has the oplax morphisms as morphisms; and
$\mathsf{T}\text-\mathrm{Alg}$ has the pseudomorphisms as morphisms.

A useful fact about adjunctions and (op)lax morphisms is the so-called
\emph{doctrinal adjunction} theorem, of which we state the version that we will use
later.
\begin{prop}
  Let $\mathsf{T}$ be a 2-monad on \K. An oplax morphism $(f,\bar f)\colon
  (A,a)\to (B,b)$ between $\mathsf{T}$-algebras has a left adjoint in the
  2-category $\mathsf{T}\text-\mathrm{Alg}_c$ if and only if $f$ has a left
  adjoint in $\K$ and $\bar f$ is invertible.\label{prop:4}
\end{prop}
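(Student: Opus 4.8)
The plan is to prove both implications by explicitly manipulating the algebra data, using the standard argument from doctrinal adjunction. For the harder direction, suppose $f\colon A\to B$ has a left adjoint $u\colon B\to A$ in $\K$, with unit $\eta\colon 1_A\Rightarrow fu$ and counit $\varepsilon\colon uf\Rightarrow 1_B$, and suppose $\bar f\colon f\cdot a\Rightarrow b\cdot Tf$ is invertible. I would construct the oplax structure $\bar u\colon u\cdot b\Rightarrow a\cdot Tu$ on $u$ as the mate of $\bar f^{-1}$ under the adjunctions $Tu\dashv Tf$ (apply $T$ to $u\dashv f$) and $u\dashv f$: explicitly, $\bar u$ is the pasting of $\eta$, then $a\cdot Tu$ precomposed with $\bar f^{-1}\cdot Tu$ suitably arranged, then $\varepsilon$ — i.e. $\bar u = (a\cdot Tu\cdot T\varepsilon)\circ(a\cdot T\eta\cdot\text{stuff})$, the usual mate formula. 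The first routine check is that $(u,\bar u)$ satisfies the two coherence axioms for an oplax morphism; this follows formally from $\bar f$ satisfying the lax-morphism axioms together with the triangle identities for $u\dashv f$, by the standard mates calculus. The second check is that $\eta$ and $\varepsilon$ are 2-cells \emph{in} $\mathsf{T}\text-\mathrm{Alg}_c$, i.e. that they are compatible with the oplax structures; again this is a formal consequence of $\bar u$ being defined as the mate of $\bar f^{-1}$. One then has $u\dashv f$ in $\mathsf{T}\text-\mathrm{Alg}_c$.

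For the converse, suppose $(f,\bar f)$ has a left adjoint $(u,\bar u)$ in $\mathsf{T}\text-\mathrm{Alg}_c$. Applying the forgetful 2-functor $\mathsf{T}\text-\mathrm{Alg}_c\to\K$, which is a 2-functor and hence preserves adjunctions, gives immediately that $f$ has a left adjoint in $\K$. It remains to see that $\bar f$ is invertible; this is the part requiring a genuine (if short) argument. I would take the mate $\bar u^\sharp\colon b\cdot Tf\Rightarrow f\cdot a$ of $\bar u$ under the adjunctions $Tu\dashv Tf$ and $u\dashv f$ — running the mates correspondence of the first paragraph in reverse — and show that $\bar u^\sharp$ is a two-sided inverse to $\bar f$. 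The key input is that since $\eta,\varepsilon$ live in $\mathsf{T}\text-\mathrm{Alg}_c$, the composite 2-cell in the triangle identities is computed compatibly with the oplax structures; unwinding this compatibility (together with the oplax-morphism axiom for $(f,\bar f)$ applied along $\eta$ and $\varepsilon$) forces $\bar f\circ\bar u^\sharp = \mathrm{id}$ and $\bar u^\sharp\circ\bar f=\mathrm{id}$. Concretely, both composites reduce, via the triangle identities, to the identity 2-cell on the appropriate composite.

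The main obstacle is purely bookkeeping: keeping the mates-calculus diagram chase organised, since there are four adjunctions in play ($u\dashv f$ and $Tu\dashv Tf$ in $\K$, and their images in $\mathsf{T}\text-\mathrm{Alg}_c$), and one must be careful that the oplax coherence axioms are invoked at the right places. No deep idea is needed beyond the observation that the whole statement is an instance of the general doctrinal adjunction theorem of Kelly applied to the 2-monad $\mathsf{T}$, with the usual convention swap (lax vs.\ oplax) accounting for why a \emph{left} adjoint of an \emph{oplax} morphism is again oplax with invertible structure cell. Accordingly, I would keep the proof brief, referring to the mates calculus for the coherence verifications and spelling out only the construction of $\bar u$ and the inversion argument for $\bar f$.
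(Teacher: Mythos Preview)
Your approach is correct and is the standard doctrinal adjunction argument due to Kelly. The paper does not actually give a proof of this proposition: it is stated as background, introduced as ``the so-called \emph{doctrinal adjunction} theorem, of which we state the version that we will use later,'' and left without proof. So there is nothing to compare against; your sketch \emph{is} essentially the proof one finds in Kelly's original paper.

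Two small corrections to your write-up. First, with $u\colon B\to A$ left adjoint to $f\colon A\to B$, the unit is $\eta\colon 1_B\Rightarrow fu$ and the counit is $\varepsilon\colon uf\Rightarrow 1_A$; you have the subscripts swapped. Second, your closing remark that ``a left adjoint of an oplax morphism is again oplax with invertible structure cell'' is not quite right: the left adjoint $(u,\bar u)$ is oplax but its structure cell $\bar u$ need not be invertible --- it is the \emph{right} adjoint $(f,\bar f)$ whose structure is forced to be invertible. The substance of your argument is unaffected: the mate $\bar u^\sharp$ of $\bar u$ is a lax structure on $f$, and the algebra-2-cell conditions on $\eta$ and $\varepsilon$ say exactly that $\bar u^\sharp$ is a two-sided inverse to $\bar f$.
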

\subsection{Lax idempotent 2-monads}
\label{sec:kz-2-monads}
An essential part of our definition of lax orthogonal \textsc{awfs} is the
concept of a lax idempotent 2-monad, or \textsc{kz} 2-monad, that we recount in
this section.
We begin by introducing some space-saving terminology. Suppose given
an adjunction $f\dashv g $ in a 2-category, with unit $\eta\colon 1\Rightarrow
g\cdot{}f$ and counit $\varepsilon\colon f\cdot{}g\Rightarrow 1$. We say that $f\dashv g$ is
a \emph{retract (coretract)} adjunction when the counit (unit) is an identity 2-cell.

\begin{df}
  \label{df:12}
A 2-monad $\mathsf{T}=(T,i,m)$ on a 2-category $\mathscr K$ is \emph{lax
  idempotent}, or \emph{Kock-Z\"oberlein}, or simply \textsc{kz}, if any of the
following equivalent conditions hold.
\begin{enumerate}[label=(\roman*)]
\item \label{item:6} $Ti\dashv m$ with identity unit (coretract adjunction).
\item \label{item:8} $m\dashv iT$ with identity counit (retract adjunction).
\item \label{item:9} Each $\mathsf{T}$-algebra structure $a\colon TA\to A$ on an object $A$ is part of
  an adjunction $a\dashv i_A$ with identity counit (retract adjunction).
\item \label{item:10} There is a modification $\delta\colon Ti\Rightarrow iT$ satisfying
  $\delta \cdot i=1$ and $m\cdot \delta=1$.
\item The forgetful 2-functor $U_\ell\colon \mathsf{T}\text-\mathrm{Alg}_\ell\to \mathscr K$ is fully
  faithful.
\item For any pair of $\mathsf{T}$-algebras $A$, $B$, every morphism $f\colon UA\to UB$
  in $\mathscr K$ admits a unique structure of a lax morphism of $\mathsf{T}$-algebras.
\item For any morphism $f\colon X\to A$ into a $\mathsf{T}$-algebra $(A,a)$, the
  identity 2-cell exhibits $a\cdot Tf$ as a left extension of $f$ along $i_X$.
\end{enumerate}
The conditions \ref{item:6}, \ref{item:8} and \ref{item:10} appeared
in~\cite{Kock:KZmonads} and~\cite{MR0424896}, albeit in a slightly different
context; \cite{MR0424896}~shows the equivalence of these three conditions. The
proof of the equivalence of the whole list, in the case of a 2-monad, can be
found in~\cite{Kelly:Prop-like}.
\end{df}

Being lax idempotent can be regarded as a property of the 2-monad, since, for
example, there can exist at most one counit for the adjunction in~\ref{item:6}.

It may be useful to say a few words about how to obtain a left extension from
the modification $\delta$. If $f\colon X\to A$ and $g\colon TX\to A$ are
morphisms into a $\mathsf{T}$-algebra $(A,a)$, and $\alpha\colon f\Rightarrow
g\cdot i_X$ a 2-cell, then the corresponding 2-cell $a\cdot Tf\Rightarrow g$ is
constructed as $(a\cdot Tg\cdot\delta_X)\cdot(a\cdot T\alpha)$.

\begin{df}
\label{df:16}
A 2-comonad $\mathsf{G}=(G,e,d)$ on $\mathscr K$ is \emph{lax idempotent}, or \textsl{\textsc{kz}}, if
the 2-monad $(G^{\mathrm{op}},e^{\mathrm{op}},d^{\mathrm{op}})$ on $\mathscr
K^{\mathrm{op}}$ is lax idempotent. This means that we have conditions dual to
the ones spelled out above for 2-monads; eg adjunctions
$eG\dashv d\dashv Ge$, a modification $\delta\colon Ge\Rightarrow eG$, etc.
We state one of the conditions in full: given a morphism $f\colon A\to X$ from a
$\mathsf{G}$-coalgebra $(A,s)$, the identity 2-cell exhibits $Gf\cdot s\colon
A\to GX$ as a left lifting of $f$ through $e_X$.
\end{df}
% \begin{rmk}
%   \label{rmk:14}
%   Given a lax idempotent 2-monad $\mathsf{T}$ on \K, the right adjoint of its Kleisli construction
%   $U_{\mathsf{T}}\colon\mathrm{Kl}(\mathsf{T})\to\K$
%   is locally fully faithful. This is easily verified, since $U_{\mathsf{T}}$ is
%   the composition of the full and faithful comparison 2-functor
%   $\mathrm{Kl}(\mathsf{T})\to\mathsf{T}\text-\mathrm{Alg}_s$ with the forgetful
%   2-functor from the 2-category of $\mathsf{T}$-algebras.
% \end{rmk}

\subsection{Definition and basic properties of lax orthogonal AWFSs}
\label{sec:definition}
\label{sec:lax-orthogonal-weak}
A lax orthogonal \textsc{awfs} will be, first of all, an \textsc{awfs} on a
2-category. We shall start, thus, with the definition of 2-functorial
factorisations and \textsc{awfs}s on 2-categories.
\begin{df}
  \label{df:17}
  A \emph{2-functorial factorisation} on a 2-category \K\ is a 2-functor that is
  a section of the 2-functor $\K^{\mathbf{3}}\to\K^\two$ that sends a pair of
  composable morphisms to its composition.
\end{df}
A 2-functorial factorisation on \K\ induces a functorial factorisation $f\mapsto
Rf\cdot Lf$ on the underlying ordinary category of \K, and in addition it
factorises 2-cells, as depicted.
\begin{equation}
  \label{eq:119}
  \diagram
  X\ar[d]_f\rtwocell^h_k{\alpha}&Y\ar[d]^g\\
  X'\rtwocell^{h'}_{k'}{\alpha'}&Y'
  \enddiagram
  \longmapsto
  \xymatrixcolsep{2.6cm}
  \diagram
  X\ar[d]_{Lf}\rtwocell^h_k{\alpha}&Y\ar[d]^{Lg}\\
  Kf\ar[d]_{Rf}\rtwocell^{K(h,k)}_{K(h',k')}{\hole\hole\hole\hole K(\alpha,\alpha')}&
  Kg\ar[d]^{Rg}\\
  X'\rtwocell^{h'}_{k'}{\alpha'}&Y'
  \enddiagram
\end{equation}

There is a bijection between the family of 2-functorial factorisations and the
family of copointed endo-2-functors $\Phi\colon L\Rightarrow 1$ of $\K^\two$
with $\dom\Phi_f=1_{\dom(f)}$; and also the family of pointed endo-2-functors
$\Lambda\colon 1\Rightarrow R$ with $\cod\Lambda_f=1_{\cod(f)}$, for all $f\in
\K^\two$.
\begin{df}
  \label{df:2}
  An \textsc{awfs} on a 2-category $\mathscr K$ consists of a pair
  $(\mathsf{L},\mathsf{R})$ formed by a 2-comonad and a 2-monad on $\K^\two$
  satisfying the same properties as \textsc{awfs}s on categories. More
  explicitly,
  \begin{itemize}
  \item the domain of the counit $\Phi\colon L\Rightarrow 1$ is an identity
    morphism;
  \item the codomain of the unit $\Lambda\colon 1\Rightarrow R$ is an identity
    morphism;
  \item Both $(L,\Phi)$ and $(R,\Lambda)$ must give rise to the same
    2-functorial factorisation on $\K$;
  \item the 2-natural transformation $\Delta$ of~\eqref{eq:98} must be a
    distributive law between the underlying comonad of $\mathsf{L}$ and the
    underlying monad of $\mathsf{R}$ on the ordinary underlying category of \K.
  \end{itemize}
% the domain of the counit $\Phi\colon L\Rightarrow 1$
%   2-comonad must be domain-preserving and the 2-comonad codomain-preserving, the
%   copointed endofunctor of $\mathsf{L}$ and the pointed endofunctor of
%   $\mathsf{R}$ must give rise to the same functorial factorisation of morphisms
%   in $\K$, and the 2-natural transformation $\Delta$ of~\eqref{eq:98} must be a distributive law.
\end{df}

\begin{df}
  \label{df:1}
  An \textsc{awfs} $(\mathsf L,\mathsf R)$ in a 2-category $\mathscr K$ is
  \emph{lax orthogonal} if the 2-comonad $\mathsf L$ and the 2-monad $\mathsf R$ are lax idempotent.
\end{df}

We will later see in Section~\ref{sec:2-comonad-lax} that it is enough to
require that either $\mathsf{L}$ \emph{or}
$\mathsf{R}$ be lax idempotent.

\begin{rmk}
  \label{rmk:7}
  It was observed in Remark \ref{rmk:6} that the transformation
  $(1,p)\colon L\cdot{}V\to\id\cdot{}\dom\cdot{}V$ of Remark \ref{rmk:5} has as right inverse
  $(1,L)\cdot{}V$, where $(1,L)$ is the transformation of Remark
  \ref{rmk:3}. We claim that, when the 2-monad $\mathsf{R}$ is lax idempotent, we
  also have a retract adjunction in the 2-category
  $\mathbf{2}\text-\Cat(\mathsf{R}\text-\mathrm{Alg}_s,\mathscr K^{\mathbf{2}})$
  of 2-functors, 2-natural transformations and modifications
  \begin{equation}
    \label{eq:48}
    (1,p)\dashv (1,L)\cdot{}V\colon \id\cdot{}\dom\cdot{}V\Longrightarrow{} L\cdot{}V.
  \end{equation}
  The counit of this adjunction is the identity modification, and the unit has
  components
  \begin{equation}
    \label{eq:129}
    \xymatrixrowsep{.6cm}
    \diagram
    A\ar[d]_{Lf}\ar@{=}[r]&
    A\ar[d]_1\ar@{=}[r]&
    A\ar[d]^{Lf}\\
    Kf\ar[r]^{p_f}\rrlowertwocell_1{^\eta_f\hole}&
    A\ar[r]^{Lf}&
    Kf
    \enddiagram
  \end{equation}
  for $f\in \mathsf{R}\text-\mathrm{Alg}_s$, where $\eta_f$ is the domain
  component of the unit of the adjunction $(p_f,1)\dashv(Lf,1)$ provided by the fact
  that $\mathsf{R}$ is lax idempotent --~numeral~\ref{item:9} of Definition~\ref{df:12}. The fact that this defines a modification
  with components $(1,\eta_f)$
  follows, and clearly satisfies the triangle identities.
\end{rmk}

\subsection{A basic example}
\label{sec:basic-example}
There is a lax orthogonal \textsc{awfs} that will play the role analogous to
the role that the \textsc{ofs} $(\mathrm{Iso},\mathrm{Mor})$ plays in the
context of simple reflections --~as explained in
Section~\ref{sec:introduction}. The next few pages give a complete description
of this basic example of a lax orthogonal \textsc{awfs}.

Every functor $f\colon A\to B$ factors as
$Lf\colon A\to Kf=(f\downarrow B)$ followed by ${R}f\colon Kf\to B$,
where $Lf(a)=(a,1\colon f(a)\to f(a),f(a))$, and ${R}f(a,\beta\colon
f(a)\to b,b)=b$. The associated pointed endofunctor $R$ on $\Cat^{\mathbf2}$
given by $f\mapsto{R}f$
underlies the free split opfibration monad $\mathsf R$. Precisely the same
factorisation can be constructed in any 2-category \K\ with the necessary comma
objects. At this point one could
deduce that there is an \textsc{awfs} $(\mathsf{L},\mathsf{R})$ by observing that split
opfibrations compose and the results cited in
Section~\ref{sec:double-categ-aspects}, and furthermore, one could use
the results of Section~\ref{sec:2-comonad-lax} to prove that the \textsc{awfs} is lax
orthogonal. Instead, we shall give an explicit description of
the comonad and its coalgebras, as they will become important in later sections.

Given a 2-category \K\ we can perform two constructions to obtain new
2-categories. The first is the 2-category $\mathbf{Lari}(\K)$, whose objects are
morphisms $f$ in \K\ equipped with a right adjoint coretract $r_f$, ie a left adjoint
right inverse or \textsc{lari}, in the terminology used
in~\cite{MR3393453}; we may write an object of this 2-category as $(f,r)$, omitting the unit and
counit of the adjunction, since the unit is an identity 2-cell and the counit is,
therefore, the unique 2-cell that satisfies the adjunction triangle axioms for
$f\dashv r$. A morphism $(f,r)\to(f',r')$ in $\mathbf{Lari}(\K)$ is a morphism
$(h,k)\colon f\to f'$ in $\K^\two$ such that $r'\cdot k=h\cdot r$. It is not
difficult to show that $(h,k)$ is automatically compatible with the counits of
the adjunctions: if the counits are $\varepsilon$ and $\varepsilon'$, then
$\varepsilon'\cdot k=k\cdot \varepsilon$. The 2-cells between morphisms in
$\mathbf{Lari}(\K)$ are just the 2-cells in between them in $\K^\two$.
There is a forgetful 2-functor
$\mathbf{Lari}(\K)\to\K^\two$, and \textsc{lari}s can be composed via the usual
composition of adjunctions, so if $(f,r)$ and $(f',r')$ are \textsc{lari}s with
$f$ and $f'$ composable morphisms, then $(f'\cdot f,r\cdot r')$ is canonically a
\textsc{lari}. In this way, $\mathbf{Lari}(\K)$ has a double category structure,
and furthermore, the composition is obviously compatible with 2-cells, so the
double category structure extends to an internal category in the category of
2-categories.

The second construction is a 2-category
$\mathbf{OpFib}(\K)$ of split opfibrations in $\K$, by which we mean morphisms
$f$ in \K\ such that each functor $\K(-,f)$ is a split opfibration in
$[\K^{\mathrm{op}},\Cat]$. There is a forgetful 2-functor
$\mathbf{OpFib}(\K)\to\K^\two$ and the composition of two split opfibrations
is canonically a split opfibration, so $\mathbf{OpFib}(\K)$ is an internal
category in the category of 2-categories.

If the 2-category \K\ has enough comma objects, then there is an \textsc{awfs}
$(\mathsf{L},\mathsf{R})$ on \K\ that satisfies
$\mathbf{OpFib}(\K)\cong\mathsf{R}\text-\mathrm{Alg}_s$, and, as we will see in
Proposition~\ref{prop:19},
$\mathbf{Lari}(\K)\cong\mathsf{L}\text-\mathrm{Coalg}_s$.
Let us first say a few words about $\mathsf{R}$. The free split
  opfibration on $f$ is given by a comma object as depicted on the left hand
  side of~\eqref{eq:20}. The unit of $\mathsf{R}$ has components $\Lambda_f=(Lf,1)$, where
  $Lf\colon A\to Kf$  is the unique morphism such that
  ${R}f\cdot{L}f=f$, $q_f\cdot{L}f=1$ and
  $\nu_f\cdot{L}f=1$. The multiplication $\Pi_f=(\pi_f,1)$ is given by the
  unique morphism $\pi_f\colon K{R}f\to Kf$ satisfying the three equalities
  displayed on the right hand side.
\begin{equation}
  \label{eq:20}
  \xymatrixrowsep{.6cm}
  \diagram
  Kf\ar[d]_{{R}f}\ar[r]^{q_f}\drtwocell<\omit>{\hole\hole\nu_f}&
  A\ar[d]^f\\
  B\ar@{=}[r]&B
  \enddiagram
  \qquad
  \begin{array}{rcl}
  q_f\cdot\pi_f&=&q_f\cdot q_{{R}f}\\
  {R}f\cdot\pi_f&=&R^2f\\
  \nu_f\cdot\pi_f&=&\nu_{{R}f}(\nu_f\cdot q_{{R}f})
  \end{array}
\end{equation}

We remark that $Lf$
comes equipped with an adjunction $Lf\dashv q_f$ with identity
unit, where $q_f\colon Kf\to A$ is the projection. The counit $\omega_{Lf}\colon
  Lf\cdot q_f\Longrightarrow 1$ is the 2-cell induced by the universal property
  of comma objects and the conditions
  \begin{equation}
    q_f\cdot\omega_{Lf}=1\colon q_f\Longrightarrow q_f
    \quad \text{and} \quad
    {R}f\cdot\omega_{Lf}=\nu_f\colon {R}f\cdot{L}f\cdot q_f=
    f\cdot q_f\Longrightarrow {R}f.
    \label{eq:91}
  \end{equation}

The copointed endo-2-functor $L$ underlies a 2-comonad with comultiplication
$\Sigma\colon L\Rightarrow L^2$, defined by the following equality and the
universal property of comma objects.
\begin{equation}
  \label{eq:31}
  \xymatrixrowsep{.6cm}
  \diagram
  Kf\ar[r]^{\sigma_f}&
  KLf\ar[r]^-{q_{Lf}}\ar[d]_{{R}{Lf}}\drtwocell<\omit>{\hole\hole\nu_{Lf}}&
  A\ar[d]^{Lf}\\
  &Kf\ar@{=}[r]&Kf
  \enddiagram
  =
  \diagram
  Kf\ar@{=}[d]\ar[r]^{q_f}\drtwocell<\omit>{\hole\hole\omega_{Lf}}&
  A\ar[d]^{Lf}\\
  Kf\ar@{=}[r]&
  Kf
  \enddiagram
\end{equation}
The 2-monad $\mathsf R$ is well-known to be lax idempotent. To see that the
comonad $\mathsf L$ is lax idempotent, one can exhibit an adjunction
$\Phi_{Lf}\dashv \Sigma_f$ with identity counit.
The existence of an adjunction ${R}{Lf}\dashv\sigma_f$, with
identity counit follows from Remark~\ref{rmk:1} below. The fact that this
adjunction yields an adjunction $\Phi_{Lf}\dashv\Sigma_f$ in $\K^\two$ can be readily
checked.
We leave the verification of the
distributivity law between the 2-comonad and 2-monad to the reader.

\begin{rmk}
  \label{rmk:1}
  Given a comma object as exhibited on the left below, each adjunction $\ell\dashv r$
  induces a retract adjunction $p\dashv s$, where
  $s$ is defined by the equality on the right hand side.
  \begin{equation}
    \label{eq:35}
    \xymatrixrowsep{.6cm}
    \diagram
    \ell\downarrow t \ar[r]^-q\ar[d]_p\drtwocell<\omit>{\nu}&A\ar[d]^\ell\\
    X\ar[r]_t&B
    \enddiagram
    \qquad\quad
    \diagram
    X\ar[r]^-s& \ell\downarrow t \ar[r]^-q\ar[d]_p\drtwocell<\omit>{\nu}&A\ar[d]^\ell\\
    &X\ar[r]_t&B
    \enddiagram
    =
    \diagram
    X\ar[r]^t&B\ar[r]^r\drlowertwocell<0>_1{<-2>\varepsilon}&A\ar[d]^\ell\\
    &&B
    \enddiagram
  \end{equation}
  The unit $\eta\colon 1\Rightarrow s\cdot{}p$ is the unique 2-cell satisfying
  $p\cdot{}\eta=1$ and
  \begin{equation}
    \label{eq:90}
    \diagram
    \ell\downarrow t\rtwocell^1_{s\cdot{}p}{\eta}&\ell\downarrow t\ar[r]^-q&A
    \enddiagram
    \quad = \quad
    \xymatrixrowsep{.6cm}
    \diagram
    \ell\downarrow t \ar[r]^-q\ar[d]_p\drtwocell<\omit>{\nu}&
    A\ar[d]^\ell\ar@{=}[r]\rtwocell<\omit>{<2>}&A
    \\
    X\ar[r]_t&B\ar[ur]_r&
    \enddiagram
  \end{equation}

  We make a final observation that will be of use later on. Suppose that the
  unit of $\ell\dashv r$ is an identity and $h\colon Z\to
  \ell\downarrow t$ is any morphism such that $\nu\cdot{}h$ is an identity
  2-cell. Then $\eta\cdot{}h$ is an identity 2-cell.
\end{rmk}

\begin{prop}
  \label{prop:19}
  Let $(\mathsf{L},\mathsf{R})$ be the {\normalfont\textsl{\textsc{awfs}}}
  described above in this section.
  \begin{enumerate}
  \item \label{item:19} There is an isomorphism over $\K^\two$ between
    $(L,\Phi)\text-\mathrm{Coalg}$ (the 2-category of coalgebras for the
    copointed endo-2-functor $(L,\Phi)$\/) and the 2-category with
    \begin{itemize}
    \item Objects $(f,v,\xi)$ where $f\colon A\rightleftarrows B:v$ and
      $\xi\colon f\cdot{}v\Rightarrow 1$ that satisfy $v\cdot{}f=1_A$ and $\xi\cdot{}f=1$
      --~strong deformation retracts of $B$.
    \item Morphisms $(f,v,\xi)\to(f',v',\xi')$, morphisms $(h,k)\colon f\to f'$
      in $\K^\two$ such that $h\cdot{}v=v'\cdot{}k$ and $\xi'\cdot{}k=k\cdot{}\xi$.
    \item 2-cells $(h,k)\Rightarrow(\bar h,\bar k)
      \colon(f,v,\xi)\to(f',v',\xi')$, 2-cells $(\alpha,\beta)\colon
      (h,k)\Rightarrow (\bar h,\bar k)$ in $\K^\two$ such that
      $\alpha\cdot{}v=v'\cdot{}\beta$.
    \end{itemize}
  \item \label{item:20} There is an isomorphism over $\K^\two$ between
    $\mathsf{L}\text-\mathrm{Coalg}_s$ and the 2-category $\mathbf{Lari}(\K)$.
  \item \label{item:7} Cofree $\mathsf{L}$-coalgebras correspond to the
    coretract adjunctions $Lf\dashv q_f$.
  \item \label{item:25} The double category structure on
    $\mathsf{L}\text-\mathrm{Coalg}_s$ induced by this
    {\normalfont\textsl{\textsc{awfs}}} is that of $\mathbf{Lari}(\K)$, ie given by
    composition of coretract adjunctions.
\end{enumerate}
\end{prop}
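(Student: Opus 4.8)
The whole statement comes out of a single theme: decode the data of an $\mathsf{L}$-coalgebra through the universal property of the comma object $Kf=f\downarrow B$, exactly as one decodes $\mathsf{R}$-algebras to obtain $\mathbf{OpFib}(\K)\cong\mathsf{R}\text-\mathrm{Alg}_s$. For part~\ref{item:19}, a coalgebra structure for the copointed endo-$2$-functor $(L,\Phi)$ on $f\colon A\to B$ is a section $s\colon f\to Lf$ in $\K^\two$ of $\Phi_f=(1_A,Rf)$; since $\Phi_f$ has identity domain component, this is just a morphism $s\colon B\to Kf$ with $Rf\cdot s=1_B$ and $s\cdot f=Lf$. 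By the universal property of $Kf$ the morphism $s$ is the same as a pair $(v,\xi)$ with $v:=q_f\cdot s\colon B\to A$ and $\xi:=\nu_f\ast s\colon f\cdot v\Rightarrow Rf\cdot s=1_B$; decoding $s\cdot f=Lf$ the same way --~using $q_f\cdot Lf=1_A$ and that $\nu_f\ast Lf$ is an identity $2$-cell~-- turns it into $v\cdot f=1_A$ and $\xi\ast f=\id_f$, that is, a strong deformation retract, and the correspondence is visibly over $\K^\two$. Morphisms and $2$-cells are handled identically, decoding $K(h,k)$ and $K(\alpha,\beta)$ through their $q$- and $\nu$-components (which is how $K$ is defined on cells): the morphism condition $K(h,k)\cdot s=s'\cdot k$ becomes $h\cdot v=v'\cdot k$ together with $k\ast\xi=\xi'\ast k$, and the $2$-cell condition becomes $\alpha\ast v=v'\ast\beta$.

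For part~\ref{item:20}, morphisms and $2$-cells of comonad coalgebras impose no condition beyond those already present for the underlying copointed endofunctor, so only the objects need further work: one must see which of the above coalgebras satisfy coassociativity $\Sigma_f\cdot s=Ls\cdot s$. Decoding both sides into $Kf$ through the universal property, using the defining equation~\eqref{eq:31} for $\sigma_f$ (which encodes $q_{Lf}\cdot\sigma_f=q_f$, $RLf\cdot\sigma_f=1_{Kf}$ and $\nu_{Lf}\ast\sigma_f=\omega_{Lf}$) together with the identities~\eqref{eq:91} for $\omega_{Lf}$, the equation reduces --~on comparing components along $q_f$ and $Rf$~-- to the single identity $v\ast\xi=\id_v$. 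Together with $v\cdot f=1_A$ and $\xi\ast f=\id_f$ from part~\ref{item:19}, this is precisely the pair of triangle identities for an adjunction $f\dashv v$ with identity unit, so $\mathsf{L}$-coalgebras are exactly \textsc{lari}s $(f,v)$ with counit $\xi$. Finally, between \textsc{lari}s the condition $k\ast\xi=\xi'\ast k$ is automatic (compatibility of a \textsc{lari} morphism with the counits, noted in Section~\ref{sec:basic-example}), and a short mate argument with the identity units makes $\alpha\ast v=v'\ast\beta$ automatic as well; hence the isomorphism of part~\ref{item:19} restricts to an isomorphism $\mathsf{L}\text-\mathrm{Coalg}_s\cong\mathbf{Lari}(\K)$ over $\K^\two$.

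Part~\ref{item:7} is then immediate: the cofree $\mathsf{L}$-coalgebra on $f$ is $(Lf,\Sigma_f)$, whose \textsc{lari} data under part~\ref{item:20} are the right adjoint $q_{Lf}\cdot\sigma_f$ and the counit $\nu_{Lf}\ast\sigma_f$, which by~\eqref{eq:31} are $q_f$ and $\omega_{Lf}$; so the cofree coalgebras are exactly the coretract adjunctions $Lf\dashv q_f$.

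For part~\ref{item:25}, composition of coretract adjunctions makes $\mathbf{Lari}(\K)$ an internal category in the category of $2$-categories over $\K$, with identities the identity adjunctions, which under parts~\ref{item:7} and~\ref{item:20} are precisely the units of the $\mathsf{L}$-coalgebra composition; transporting this along the isomorphism of part~\ref{item:20} yields a double-category structure on $\mathsf{L}\text-\mathrm{Coalg}_s\rightrightarrows\K$ compatible with composition in $\K$. By the comonad analogue of Proposition~\ref{prop:2} such structures are in bijection with comultiplications making $(\mathsf{L},\mathsf{R})$ an \textsc{awfs}, the bijection reconstructing the comultiplication as in the dual of~\eqref{eq:14}; so it suffices to check that the transported structure reconstructs the $\sigma_f$ of~\eqref{eq:31}. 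Unwinding this, one is reduced to verifying~\eqref{eq:31} from the explicit formula for composing the coretract adjunction $Lf\dashv q_f$ with the cofree coalgebra over $Kf$ (identified by part~\ref{item:7}), through the universal property of $KLf$; alternatively one can compute the $\mathsf{L}$-coalgebra structure on a composite directly from the canonical diagonal fillers~\eqref{eq:8} as in the dual of~\eqref{eq:29} and recognise it as the composite counit. Either way, this last comparison of the two double-category structures is the main obstacle; the rest is the routine comma-object bookkeeping of parts~\ref{item:19}--\ref{item:7}.
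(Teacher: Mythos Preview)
Your treatment of parts~\ref{item:19}--\ref{item:7} is essentially the paper's: decode $s\colon B\to Kf$ through the comma projections $q_f$, $Rf$ and the universal 2-cell $\nu_f$ to obtain $(v,\xi)$, translate $s\cdot f=Lf$ into $v\cdot f=1$ and $\xi\ast f=1$, and show coassociativity reduces to the remaining triangle identity $v\ast\xi=1$; the mate argument for 2-cells and the automatic counit-compatibility of morphisms in $\mathbf{Lari}(\K)$ are handled identically. Your explicit argument for part~\ref{item:7} is not spelled out in the paper's proof and is a clean addition.

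For part~\ref{item:25} your invocation of Proposition~\ref{prop:2} has the direction tangled. With the \emph{comonad} $\mathsf{L}$ fixed, the dual of Proposition~\ref{prop:2} puts double-category structures on $\mathsf{L}\text-\mathrm{Coalg}_s$ in bijection with \emph{multiplications} on $R$, not comultiplications; the comultiplication $\sigma_f$ is already part of $\mathsf{L}$ and is not what is being reconstructed. What you actually need to do --- and this is the paper's route --- is: transport the $\mathbf{Lari}$ composition across the isomorphism of part~\ref{item:20}, obtain from it a multiplication $\bar\Pi$ on $R$ via the dual of~\eqref{eq:14}, and verify $\bar\Pi=\Pi$, the free-split-opfibration multiplication of~\eqref{eq:20}. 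By the dual of~\eqref{eq:14}, $(1,\bar\pi_f)$ is the unique $\mathsf{L}$-coalgebra morphism $L(Rf)\bullet Lf\to Lf$ satisfying $\Phi_f\cdot(1,\bar\pi_f)=(1,R^2f)$; so it suffices to check that $(1,\pi_f)$ has these two properties. The second is $Rf\cdot\pi_f=R^2f$, which is in~\eqref{eq:20}. For the first, the description of $\mathsf{L}$-coalgebra morphisms from part~\ref{item:20} says $(1,\pi_f)$ is such a morphism iff it commutes with the right adjoints, i.e.\ $q_f\cdot\pi_f=q_f\cdot q_{Rf}$ --- and that is again one of the defining equations in~\eqref{eq:20}. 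So the ``main obstacle'' you flag dissolves once you work on the multiplication side rather than trying to reconstruct $\sigma_f$.
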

\begin{proof}
The reader would recall from~\eqref{eq:20} the definition of $Kf$ as a comma object.
There is a bijection between morphisms $s\colon B\to Kf$ such that
${R}f\cdot{}s=1_B$ and morphisms $v\colon B\to A$ equipped with a 2-cell $\xi\colon
f\cdot{}v\Rightarrow 1_B$; the bijection is given by composing with the comma object
$\nu_f$, ie $v=q_f\cdot{}s$ and $\xi=\nu_f\cdot{}s$.
Under this
bijection, the condition $s\cdot{}f=Lf$, which means that $(1,s)$ is a morphism $f\to
Lf$, translates into $\xi\cdot{}f=1$.
This completes the description of $(L,\Phi)$-coalgebras.

Next we translate the condition $\sigma_f\cdot{}s=K(1,s)\cdot{}s$, that is the
coassociativity axiom for that makes an $(L,\Phi)$-coalgebra into an
$\mathsf{L}$-coalgebra.
Denote the counit of $Lf\dashv q_f$ by $\omega_f$, and recall that
$\sigma_f$ is defined by \eqref{eq:31}. The morphism
$\sigma_f\cdot{}s$ corresponds under the universal property of the comma object
$\nu_{Lf}$ to the 2-cell
% \begin{equation}
%   \label{eq:199}
%   \nu_{\lambda_f}\cdot{}\sigma_f\cdot{}s=\omega_f\cdot{}s\colon \lambda_f\cdot{}q_f\cdot{}s=\lambda_f\cdot{}v
%   \Longrightarrow s
% \end{equation}
\begin{equation}
  \label{eq:199}
  \xymatrixrowsep{.5cm}
  \diagram
  B\ar[r]^-s&
  Kf\ar[r]^-{\sigma_f}&
  KLf\ar[r]^-{q_{Lf}}\ar[d]_{{R}{Lf}}
  \drtwocell<\omit>{\hole\hole\nu_{Lf}}&
  A\ar[d]^{Lf}\\
  &&Kf\ar@{=}[r]&
  Kf
  \enddiagram
  =
  \diagram
  B\ar[r]^-{s}&Kf\ar[r]^-{q_f}\ar@{=}[d]\drtwocell<\omit>{\hole\hole\omega_{Lf}}
 &
 A\ar[d]^{Lf}\\
 &Kf\ar@{=}[r]&Kf
  \enddiagram
\end{equation}
while $K(1,s)\cdot{}s$ corresponds to the 2-cell displayed below.
% \begin{equation}
%   \label{eq:200}
%   \nu_{\lambda_f}\cdot{}K(1,s)\cdot{}s=s\cdot{}\nu_f\cdot{}s=s\cdot{}\xi\colon
%   s\cdot{}f\cdot{}v=\lambda_f\cdot{}v\Longrightarrow s.
% \end{equation}
\begin{equation}
%  \label{eq:200}
  \xymatrixrowsep{.5cm}
  \diagram
  B\ar[r]^-s&
  Kf\ar[r]^-{K(1,s)}&
  KLf\ar[r]^-{q_{Lf}}\ar[d]_{{R}{Lf}}
  \drtwocell<\omit>{\hole\hole\nu_{Lf}}&
  A\ar[d]^{Lf}\\
  &&Kf\ar@{=}[r]&
  Kf
  \enddiagram
  =
  \diagram
  B\ar[r]^-s&Kf\ar[r]^-{q_f}\ar[d]_{{R}f}\drtwocell<\omit>{\hole\nu_f}&A\ar[d]^{f}&\\
  &B\ar@{=}[r]&B\ar[r]_s&Kf
  \enddiagram
\end{equation}
\begin{equation}
  \label{eq:200}
  =
  \xymatrixrowsep{.5cm}
  \diagram
  B\ar[r]^v\ar@{=}[dr]\drtwocell<\omit>{<-1.7>\xi}&A\ar[d]^f&\\
  &B\ar[r]_s&Kf
  \enddiagram
\end{equation}
Therefore, $s$ is a coalgebra precisely when \eqref{eq:199} equals
\eqref{eq:200}. These are both 2-cells between morphisms with codomain $Kf$, and
as such they are equal if and only if their respective compositions with
the projections
${R}f$ and $q_f$ coincide. Their composition with ${R}f$ yield respectively
\begin{equation}
  \label{eq:201}
  {R}f\cdot{}\omega_{Lf}\cdot{}s=\nu_f\cdot{}s=\xi\qquad\text{and}\qquad {R}f\cdot{}s\cdot{}\xi=\xi
\end{equation}
while their composition with $q_f$ yield respectively
\begin{equation}
  \label{eq:202}
  q_f\cdot{}\omega_f\cdot{}s=1 \qquad\text{and}\qquad q_f\cdot{}s\cdot{}\xi=v\cdot{}\xi.
\end{equation}
It follows that $s$ is coassociative if and only if $v\cdot{}\xi=1$, completing
  the description of $\mathsf{L}$-coalgebras as coretract adjunctions $f\dashv v$.

We now describe the morphisms of $(L,\Phi)$-coalgebras from $(1,s)\colon f\to Lf$ to
$(1,s')\colon f'\to Lf'$. Such a morphism is a morphism $(h,k)\colon f\to f'$ in
$\K^\two$ satisfying $s'\cdot{}k=K(h,k)\cdot{}s$. Composing with the comma object $\nu_{f'}$,
this equality translates into $v'\cdot{}k=h\cdot{}v$ and $\xi'\cdot{}k=k\cdot{}\xi$. A morphism of
$\mathsf{L}$-coalgebras is just a morphism between the underlying
$(L,\Phi)$-coalgebras.

A 2-cell between morphisms $(h,k)$, $(\bar h,\bar k)\colon (f,s)\to (f',s')$ of
$(L,\Phi)$-algebras is a pair of 2-cells $\alpha\colon h\Rightarrow \bar h$ and
$\beta\colon k\Rightarrow\bar k$ satisfying
$K(\alpha,\beta)\cdot{}s=s'\cdot{}\beta$.
This is an equality of 2-cells between 1-cells with codomain $Kf'$, so it holds
  if and only if it does after composing with the projections ${R}{f'}$ and $q_{f'}$.
The
composition of this equality with ${R}{f'}$ yields $\beta=\beta$ --~no
information here~-- while its composition with $q_{f'}$ yields
$\alpha\cdot{}v=\beta\cdot{}v'$. This completes the description of
$(L,\Phi)\text-\mathrm{Coalg}$. When $(f,s)$ and $(f',s')$ are
$\mathsf{L}$-algebras, with associated coretract adjunctions $(f,v,\xi)$ and
$(f',v',\xi')$, this latter equality is void too, since its mate automatically
holds. Explicitly,
\begin{equation}
  \label{eq:1}
  \big(h\cdot{}v\xrightarrow{\alpha\cdot{}v}\bar h\cdot{}v\big)
  =
  \big(h\cdot{}v=v'\cdot{}k\xrightarrow{v'\cdot{}\beta}v'\cdot{}\bar k=\bar h\cdot{}v \big)
\end{equation}
holds if and only if it does after precomposing with $f$ and composing with the
unit $1=v\cdot{}f$ of $f\dashv v$:
\begin{equation}
  \label{eq:2}
  \alpha=\alpha\cdot{}v\cdot{}f=\big(
  h=h\cdot{}v\cdot{}f=v'\cdot{}k\cdot{}f\xrightarrow{v'\cdot{}\beta\cdot{}f}v'\cdot{}\bar k\cdot{}f = \bar h\cdot{}v\cdot{}f=\bar h \big).
\end{equation}
But this latter equality automatically holds, by $\beta\cdot{}f=f'\cdot{}\alpha$.
This shows that 2-cells in $\mathsf{L}\text-\mathrm{Coalg}_s$ are simply 2-cells in
$\K^\two$.

Finally, we prove the fourth statement of the proposition. The 2-category of
$\mathsf{L}$-coalgebras is equipped with an obvious composition: that of
coretract adjunctions. Any such composition corresponds to a
unique multiplication $\bar\Pi\colon R^2\to R$ that makes $(R,\Lambda,\bar\Pi)$ a
2-monad and satisfies the distributivity condition --~see
Section~\ref{sec:double-categ-aspects} for the details. We
  have to show that $\bar\Pi$ equals the multiplication $\Pi$ of the free
split opfibration 2-monad.

By the comments at the end of Section~\ref{sec:from-double-categ}, or rather the
dual version of those comments, $\bar\Pi_f=(\bar\pi_f,1)$ is defined by the
  property that $(1,\bar\pi_f)$ is the unique
morphism of $\mathsf{L}$-coalgebras from $L({R}f)\bullet Lf$ to $Lf$ that
composed with the counit $\Phi_f=(1,{R}f)\colon Lf\to f$ yields the morphism
$(1,R^2f)\colon LRf\cdot{}Lf\to f$ in
  $\K^\two$. %Here $\bar\pi_f$ has domain $K\rho_f$ and codomain $Kf$.
  \begin{equation}
    \label{eq:51}
    \xymatrixrowsep{.4cm}
    \diagram
    A\ar[d]_{Lf}\ar@{=}[r]&A\ar[dd]^{Lf}\\
    Kf\ar[d]_{L{{R}f}}&\\
    KLf\ar[r]^-{\bar \pi_f}&Kf
    \enddiagram
  \end{equation}
By
  the previous parts of the present proposition, to say
that $(1,\bar\pi_f)$ is a morphism of $\mathsf{L}$-coalgebras is equivalent to
saying that $1_A$ and $\bar\pi_f$ form a commutative square with the right
adjoints of $Lf$ and of $L{{R}f}\cdot Lf$.
It is worth keep in mind that composition of
$\mathsf{L}$-coalgebras that induces $\bar\pi_f$ is the usual composition of
coretract adjunctions, so the right
adjoint of ${L}f$ is $q_f$ and the right adjoint of
${L}{Rf}\cdot{L}f$ is $q_{f}\cdot q_{{R}f}$.
It follows that, to say that $(1,\bar\pi_f)$ is a morphism of
$\mathsf{L}$-coalgebras is equivalent to requiring the following equality.
\begin{equation}
q_f\cdot{}\bar\pi_f=q_f\cdot{}q_{{R}f}\label{eq:56}
\end{equation}
So far we have unravelled the definition of $\bar\pi_f$. In order to deduce that
$\bar\pi_f$ equals the multiplication $\pi_f$ of the free split
opfibration 2-monad, it suffices to verify that $(1,\pi_f)$ is too a morphism of
$\mathsf{L}$-coalgebras of the form~\eqref{eq:51} and $\Phi_f\cdot
(1,\pi_f)=(1,R^2f)$. The latter equation always holds, as it is
${R}f\cdot\pi_f=R^2f$, as remarked in the equation~\eqref{eq:20}. The
fact that $(1,\pi_f)$ is a morphism of $\mathsf{L}$-coalgebras is, by the same
argument applied to $\bar\pi_f$, the condition $q_f\cdot\pi_f=q_f\cdot
q_{{R}f}$, which holds again by~\eqref{eq:20}.  By uniqueness of $\bar\pi_f$,
we obtain $\pi_f=\bar\pi_f$ and thus the vertical composition of \textsc{lari}s
coincides with that of $\mathsf{L}$-coalgebras.
% Therefore, we can deduce that $\bar\pi_f=\pi_f$:
% \begin{multline}
%   \label{eq:208}
%   \nu_f\cdot{}\bar\pi_f=\rho_f\cdot{}\epsilon_f\cdot{}\pi_f=
%   (\rho_f\cdot{}\pi_f\cdot{}\epsilon_f)(\rho_f\cdot{}\pi_f\cdot{}\lambda_{\rho_f}\cdot{}\epsilon_{f}\cdot{}q_{\rho_f})
%   =
%   (\rho_{\rho_f}\cdot{}\epsilon_f)(\rho_{\rho_f}\cdot{}\lambda_{\rho_f}\cdot{}\epsilon_{f}\cdot{}q_{\rho_f})\\
%   =\nu_{\rho_f}(\rho_f\cdot{}\epsilon_f\cdot{}q_{\rho_f})=\nu_{\rho_f}(\nu_f\cdot{}q_{\rho_f}) =
%   \nu_f\cdot{}\pi_f
% \end{multline}
\end{proof}
\begin{rmk}
\label{rmk:18}
In general, for a copointed endofunctor $(G,\varepsilon)$ on a category
$\mathcal{C}$, and a retraction $r\colon Y\twoheadrightarrow{}X$ with section
$s$ in $\mathcal{C}$, each $(G,\varepsilon)$-coalgebra structure $\delta\colon
Y\to GY$ on $Y$ induces another on $X$. This induced coalgebra structure is
$(Gr)\cdot{}\delta\cdot{}s\colon X\to GX$.  Later, in the proof of Proposition~\ref{prop:12}, we shall need the description of this
construction in the case of the copointed endo-2-functor $(L,\Phi)$ of
Proposition~\ref{prop:19}. Let $(f,v,\xi)$ be a coalgebra and $(r_0,r_1)\colon
f\to \bar f$ a retraction on $\K^\two$ with section $(s_0,s_1)$. The induced
coalgebra structure $(\bar f,\bar v,\bar\xi)$ is given by $\bar v=r_0\cdot{}v\cdot{}s_1$ and
\begin{equation}
  \label{eq:147}
  \bar f\cdot{}\bar v=\bar f\cdot{}r_0\cdot{}v\cdot{}s_1=r_1\cdot{}f\cdot{}v\cdot{}s_1\xRightarrow{r_1\cdot{}\xi\cdot{}s_1}r_1\cdot{}s_1=1.
\end{equation}
\end{rmk}
\section{The 2-comonad is lax idempotent if the 2-monad is so}
\label{sec:2-comonad-lax}

In this section we show that, in order for an \textsc{awfs} on a 2-category to be lax
orthogonal, it suffices that \emph{either} its 2-monad \emph{or} its 2-comonad be lax
idempotent. This result can be seen as a two-dimensional generalisation of the
fact that an \textsc{awfs} on a category is orthogonal if either its monad or its comonad
is idempotent --~a fact that is explained
in~\cite{MR3393453}. However, the proof, as it is to be expected, is more
involved. Incidentally, our proof uses the double category structure on
$\mathsf{R}\text-\mathrm{Alg}_s$ mentioned in
Section~\ref{sec:double-categ-aspects}.

\begin{thm}
  \label{thm:11}
  The 2-comonad of an {\normalfont\textsl{\textsc{awfs}}} on a 2-category is lax idempotent provided the
  2-monad is lax idempotent.
\end{thm}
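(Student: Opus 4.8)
The plan is to use the characterisation of lax idempotent 2-monads from numeral~\ref{item:9} of Definition~\ref{df:12}, applied to $\mathsf R$, together with Proposition~\ref{prop:2} which identifies \textsc{awfs}s with monad $\mathsf R$ and double-category structures on $\mathsf R\text-\mathrm{Alg}\rightrightarrows\K$. Dualising numeral~\ref{item:9}, it suffices to show that for every $\mathsf L$-coalgebra $(f,s)$ (equivalently every object of $\mathsf L\text-\mathrm{Coalg}_s$) the counit component $\Phi_f\colon Lf\to f$ admits a \emph{left} adjoint with identity unit — and moreover that this is coherent in $f$, giving the modification $\delta\colon Le\Rightarrow eL$ of Definition~\ref{df:16}. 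The candidate left adjoint is already available: it is $(1,\sigma_f)$, the $\mathsf L$-coalgebra structure $\Sigma_f\colon Lf\to L^2 f$ on $Lf$ (the cofree coalgebra on $f$), but now viewed appropriately; what must be produced is the relevant adjunction \emph{in $\K^\two$} between $\Phi_f$ and $\Sigma_f$, with identity counit on the codomain side.

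The key steps, in order, are as follows. First I would exploit Remark~\ref{rmk:7}: when $\mathsf R$ is lax idempotent we have the retract adjunction $(1,p)\dashv(1,L)\cdot V$ in the 2-category of 2-functors $\mathsf R\text-\mathrm{Alg}_s\to\K^\two$, with unit $(1,\eta_f)$ built from the adjunction $(p_f,1)\dashv(Lf,1)$ of numeral~\ref{item:9}. Dually — using Proposition~\ref{prop:2} and the fact that the double-category structure on $\mathsf R\text-\mathrm{Alg}_s$ is exactly the one constructing the \textsc{awfs} — one transports this along the correspondence of Section~\ref{sec:from-double-categ}: the comultiplication $\Sigma$ is reconstructed from vertical composition of $\mathsf R$-algebras via~\eqref{eq:14}, so the unit $(1,\eta_f)$ of Remark~\ref{rmk:7} pulls back to a 2-cell exhibiting $L f$ as ``freely left-adjoint'' over $f$. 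Concretely, I would show that the 2-cell $\eta_f\colon Lf\cdot p_{Rf\bullet RLf}\Rightarrow\ldots$ (renamed suitably) provides a counit $\Sigma_f\cdot\Phi_{Lf}\Rightarrow 1_{L^2 f}$ whose domain component is an identity, while the unit $1_{Lf}=\Phi_{Lf}\cdot\Sigma_f$ is strictly an identity by the counit axiom of the comonad $\mathsf L$; verifying the two triangle identities then gives $\Sigma\dashv\Phi L$ with identity unit, which by numeral~\ref{item:6}-dual is precisely lax idempotence of $\mathsf L$.

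The main obstacle I anticipate is \emph{coherence}: turning the pointwise adjunctions $\Sigma_f\dashv\Phi_{Lf}$ into the single modification $\delta\colon L\varepsilon\Rightarrow\varepsilon L$ (or equivalently checking that the family of units/counits is a modification compatible with $\Sigma$), since this is where the distributivity law $\Delta\colon LR\Rightarrow RL$ of~\eqref{eq:98}–\eqref{eq:100} must enter. The cleanest route is probably to stay inside the double-category picture of Section~\ref{sec:double-categ-aspects}: the vertical composition of $\mathsf R$-algebras is functorial with respect to morphisms \emph{and} 2-cells (because the double category of squares $\Sq(\K)$ is a genuine internal category in $\Cat$, not just in $\mathrm{Set}$), so the 2-dimensional naturality needed for the modification is automatic from functoriality of $\bullet$, and the distributivity condition is exactly what makes $\bullet$ well-defined in the first place. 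I would therefore organise the write-up so that the coherence is reduced to ``$\bullet$ is a 2-functor'', which the hypotheses already grant, leaving only a routine but nontrivial diagram chase comparing $\delta$-induced left extensions against the comultiplication $\Sigma$; I expect that chase, rather than any conceptual gap, to be the bulk of the work.
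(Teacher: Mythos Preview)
Your proposal has a genuine error in the target adjunction. You aim to establish $\Sigma \dashv \Phi L$ (with counit $\Sigma_f\cdot\Phi_{Lf}\Rightarrow 1_{L^2 f}$ and identity unit $\Phi_{Lf}\cdot\Sigma_f=1$), but this is \emph{not} a lax idempotence condition for $\mathsf L$. Dualising Definition~\ref{df:12} as spelled out in Definition~\ref{df:16} gives the chain $\Phi L\dashv\Sigma\dashv L\Phi$; the paper accordingly proves the coretract adjunction $\Sigma_f\dashv L\Phi_f$, whose counit is a 2-cell $\sigma_f\cdot K(1_A,Rf)\Rightarrow 1_{KLf}$. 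Your version $\Sigma\dashv\Phi L$ is instead a \emph{colax} idempotence condition, and there is no reason for it to follow from lax idempotence of $\mathsf R$. This is not merely a slip of notation: you consistently write $\Phi_{Lf}=(\Phi L)_f=(1,RLf)$ rather than $L\Phi_f=(L\Phi)_f=(1,K(1,Rf))$, and these are different morphisms $L^2 f\to Lf$.

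Beyond this, the construction is too vague to assess even with the target corrected. The expression ``$\eta_f\colon Lf\cdot p_{Rf\bullet RLf}\Rightarrow\ldots$'' does not typecheck, and Remark~\ref{rmk:7} gives an adjunction $(1,p_g)\dashv(1,Lg)$ for each $\mathsf R$-algebra $g$, relating $Lg$ to $1_{\dom g}$ rather than $L^2 f$ to $Lf$; it is unclear how ``transporting along the correspondence of Section~\ref{sec:from-double-categ}'' bridges this. The paper's argument instead exploits the \emph{left extension} characterisation of lax idempotent 2-monads (the final clause of Definition~\ref{df:12}): the morphism of $\mathsf R$-algebras $(\sigma_f,1_B)\cdot R\Phi_f\colon RLf\to Rf\bullet RLf$ is automatically a left extension along $\Lambda_{Lf}$, and comparing it with $(1_{KLf},Rf)$ --- which has the same restriction along $\Lambda_{Lf}$ --- produces the counit $\varepsilon_f$ of $\Sigma_f\dashv L\Phi_f$ from the universal property. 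The triangle identities are then checked by two further left-extension arguments. Your instinct to use the $\mathsf R$-algebra composition $\bullet$ is sound, since the paper relies on it throughout, but the decisive tool you are missing is the left-extension universal property, not the pointwise adjunction of Remark~\ref{rmk:7}.
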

\begin{proof}
  Denote the \textsc{awfs} on the 2-category \K\ by $(\mathsf{L},\mathsf{R})$,
  where $\mathsf{L}$ is a 2-comonad with counit $\Phi\colon
  L\Rightarrow 1$ and comultiplication $\Sigma\colon L\Rightarrow L^2$, and
  $\mathsf{R}$ is a 2-monad with unit $\Lambda\colon 1\Rightarrow R$ and
  multiplication $\Pi\colon R^2\Rightarrow R$.
  We will verify one of the equivalent conditions that make $\mathsf{L}$ a lax
  idempotent 2-comonad --~the corresponding conditions for a 2-monad are
  mentioned in Section~\ref{sec:kz-2-monads}~-- namely, that there exist
  coretract adjunctions $\Sigma_f\dashv L\Phi_f$ whose counits form
  a modification in $f\colon A\to B$. In
  Section~\ref{sec:from-double-categ} we mentioned that $(\sigma_f,1_B)$ is a
  morphism of $\mathsf{R}$-algebras $Rf\to Rf\bullet RLf$, where the codomain is
  the vertical composition of the $\mathsf{R}$-algebras $RLf$ and $Rf$.
  Consider the morphism
  \begin{equation}
    \label{eq:186}
    RLf\xrightarrow{R\Phi_f}Rf\xrightarrow{(\sigma_f,1_B)}Rf\bullet RLf
  \end{equation}
  which is, by Section~\ref{sec:kz-2-monads}, a left extension along $\Lambda_{Lf}$
  of its composition with the unit $\Lambda_{Lf}$
  \begin{equation}
    \label{eq:187}
    (\sigma_f,1_B)\cdot{}R\Phi_f\cdot{}\Lambda_{Lf}=
    (\sigma_f,1_B)\cdot{}\Lambda_f\cdot{}\Phi_f=
    ({L}^2f,{R}f)\colon Lf\longrightarrow Rf\cdot{} RLf.
  \end{equation}

  The morphism $(1_{KLf},{R}f)\colon RLf\to Rf\cdot{}RLf$ in $\K^\two$ satisfies
  $(1_{KLf},{R}f)\cdot{}\Lambda_{{Lf}}=({L}^2f,{R}f)$ too, therefore
  the universal property of left extensions gives a unique 2-cell
  $(\sigma_f,1_B)\cdot{}R\Phi_f\Rightarrow (1_{KLf},{R}f)$ in $\K^\two$ whose
  composition with $\Lambda_{Lf}$ is the identity 2-cell. This forces the 2-cell
  to be of the form
  \begin{equation}
    \label{eq:402}
    (\varepsilon_f,1_{1_B})\colon
    (\sigma_f,1_B)\cdot{}R\Phi_f\Longrightarrow (1_{KLf},{R}f)
  \end{equation}
  for a 2-cell in \K
  \begin{equation}
    \label{eq:188}
    \varepsilon_f\colon \sigma_f\cdot{}K(1_A,{R}f)\Longrightarrow 1_{K{L}f}\colon
    K{L}f \to K{L}f
  \end{equation}
  since the codomain component of $\Lambda_f$ is an identity. This definition makes
  $(\varepsilon_f,1_{1_B})$, and hence $\varepsilon_f$, a modification in $f$, a
  fact that can be verified by using the universal property of left extensions.

  We now proceed to prove that $\varepsilon_f$ is the counit of a coretract
  adjunction $\sigma_f\dashv K(1_A,{R}f)$ in $\K$, for which we must show
  three conditions:
  \begin{equation}
    \label{eq:39}
    \varepsilon_f\cdot{}\sigma_f=1\qquad
    K(1_A,{R}f)\cdot{}\varepsilon_f=1\qquad
    \varepsilon_f\cdot{}{L}^2{f}=1.
  \end{equation}
  The first two conditions are the triangle identities of the adjunction,
  while the last one means that $\varepsilon_f$ is a 2-cell in $\K^\two$.

  Consider the morphism of
  $\mathsf{R}$-algebras
  \begin{equation}
    \label{eq:190}
    Rf\xrightarrow{(\sigma_f,1_B)}Rf\bullet
    RLf\xrightarrow{({R}f,1_B)\bullet R\Phi_f} 1_B\bullet Rf
    \xrightarrow{1_B\bullet(\sigma_f,1_B)} 1_B\bullet Rf\bullet RLf
  \end{equation}
  that can be depicted in the way of the following diagram --~of solid arrows~--
  where, as always, objects of $\K^\two$ are represented by vertical arrows and
  morphisms of $\K^\two$ by commutative squares.
  \begin{equation}
    \label{eq:57}
    \xymatrixrowsep{.5cm}
    \xymatrixcolsep{1.1cm}
    \diagram
    Kf\ar[r]^-{\sigma_f}\ar[ddd]_{{R}f}&
    K{L}f\ar[r]_-{K(1_A,{R}f)}\rruppertwocell<\omit>{^<-2.3>\varepsilon_f\hole}\ar@/^18pt/@{..>}[rr]^1
    \ar[dd]_{{R}{{L}f}}&
    Kf\ar[r]_-{\sigma_f}\ar[dd]_{{R}f}&
    K{L}f\ar[d]^{{R}{{L}f}}\\
    &&&Kf\ar[d]^{{R}f}\\
    &Kf\ar[r]^-{{R}f}\ar[d]_{{R}f}&
    B\ar@{=}[r]\ar[d]_1&
    B\ar[d]^1\\
    B\ar@{=}[r]&
    B\ar@{=}[r]&
    B\ar@{=}[r]&
    B
    \enddiagram
  \end{equation}
  This morphism is equal to $(\sigma_f,1_B)$, since
  $K(1_A,{R}f)\cdot\sigma_f=1$. Now consider the dotted identity arrow and the
  2-cell $\varepsilon_f$ in~\eqref{eq:57}, observing that it defines a 2-cell
  \begin{equation}
    (\varepsilon_f,1)\colon (\sigma_f,1)\cdot{}R\Phi_f\Longrightarrow 1\label{eq:64}
  \end{equation}
  in $\K^\two$, and, upon precomposing with $\sigma_f$, a 2-cell
  \begin{equation}
    \label{eq:47}
    % \big( (\rho_f,1_B)\bullet(\varepsilon_f,1_{1_B})\big)\cdot{}(\sigma_f,1)=
    (\varepsilon_f\cdot{}\sigma_f,1_{1_B})\colon
    (\sigma_f,1_B)\Longrightarrow(\sigma_f,1_B)
  \end{equation}
  with equal domain and codomain.
  This 2-cell~\eqref{eq:47} precomposed with
  $\Lambda_f\colon f\to Rf$ equals the identity, for
  $\sigma_f\cdot{}{L}f={L}^2f$ and
  $\varepsilon_f\cdot{}L^2f=1$ by definition of $\varepsilon_f$. Since
  $(\sigma_f,1)$ is a left extension of $(\sigma_f,1)\cdot{}\Lambda_f$ along
  $\Lambda_f$, we must have $(\varepsilon_f\cdot{}\sigma_f,1_{1_B})=1$, the first of
  the equalities~\eqref{eq:39}.

  In order to prove the second equality of~\eqref{eq:39}, consider the morphism of $\mathsf{R}$-algebras
  \begin{equation}
    \label{eq:193}
    RLf\xrightarrow{R\Phi_f}RL\xrightarrow{(\sigma_f,1_B)}Rf\bullet
    RLf\xrightarrow{({R}f,1_B)\bullet R\Phi_f}1_B\bullet Rf
  \end{equation}
  that can be depicted as in the following diagram --~of solid arrows.
  \begin{equation}
    \label{eq:192}
    \xymatrixrowsep{.5cm}
    \xymatrixcolsep{1.1cm}
    \diagram
    K{L}f\ar[dd]_{{R}{{L}f}}\ar[r]_-{K(1_A,{R}f)}
    \rruppertwocell<\omit>{^<-2.3>\varepsilon_f\hole}
    \ar@/^18pt/@{..>}[rr]^1&
    Kf\ar[dd]_{{R}f}\ar[r]_-{\sigma_f}&
    K{L}f\ar[d]_{{R}{{L}f}}\ar[r]^-{K(1_A,{R}f)}&
    Kf\ar[d]^{{R}f}\\
    &&Kf\ar[r]^-{{R}f}\ar[d]_{{R}f}&
    B\ar[d]^1\\
    Kf\ar[r]^-{{R}f}&
    B\ar@{=}[r]&
    B\ar@{=}[r]&
    B
    \enddiagram
  \end{equation}
  This morphism equals
  $(K(1_A,{R}f),1)$, since $K(1_A,{R}f)\cdot\sigma_f=1$ by the counit
  axiom of the comonad $\mathsf{L}$. If we now consider the dotted identity
  arrow, the 2-cell $\varepsilon_f$ induces an endo-2-cell
  \begin{equation}
    \label{eq:191}
    % ((\rho_f,1_B)\bullet R\Phi_f)\cdot{}(\varepsilon_f,1_{1_B})
    (K(1_A,{R}f)\cdot\varepsilon_f,1_{{R}f})\colon
    (K(1_A,{R}f),{R}f)\Longrightarrow(K(1_A,{R}f),{R}f),
  \end{equation}
  which, by definition of $\varepsilon_f$~\eqref{eq:402}, equals the identity when precomposed
  with $\Lambda_{Lf}$. The morphism $(K(1_A,{R}f),{R}f)$ is a morphism of
  $\mathsf{R}$-algebras, and hence a left extension along $\Lambda_{Lf}$, from
  where we deduce that~\eqref{eq:191} must be the identity 2-cell. That is,
  $K(1_A,{R}f)\cdot{}\varepsilon_f=1$, the second equality of~\eqref{eq:39}.

  All that remains to verify is $\varepsilon_f\cdot{}{L}^2{f}=1$, but this
  is part of the definition of $\varepsilon_f$, completing the proof.
\end{proof}

\section{Lifting operations}
\label{sec:lifting-operations}
We turn to the second part of the article where we put the emphasis on
\emph{lifting operations} and their relationship to \textsc{awfs}s.
In this section and the next we leave the case of 2-categories
and return to the framework of ordinary categories. After setting out our own
approach to lifting operations, we recall a number of notions known for ordinary
\textsc{awfs}. This is a necessary step previous to extending these notions to lax
idempotent \textsc{awfs}s from Section~\ref{sec:lax-natur-diag} onwards.

\subsection{Background on modules}
\label{sec:digr-into-modul}
As a preamble to the next section, let us briefly remind the reader about the
language of modules or profunctors, which will be heavily used henceforth.
\begin{df}
  \label{df:13}
A \emph{module} or \emph{profunctor} $\phi$ from a category $\mathcal{A}$ to a
category $\mathcal{B}$, denoted by
$\phi\colon\mathcal{A}\tor\mathcal{B}$, is a functor
$\mathcal{B}^{\mathrm{op}}\times\mathcal{A}\to\mathbf{Set}$, and a module
morphism is a natural transformation. Given another module
$\psi\colon\mathcal{B}\tor\mathcal{C}$, the composition $\psi\cdot{}\phi$ is
defined by the coend formula $(\psi\cdot{}\phi)(c,a)=\int^B\psi(c,b)\times\phi(b,a)$;
the identity $1_{\mathcal{A}}$ for this composition is given by
$1_{\mathcal{A}}(a,a')=\mathcal{A}(a,a')$. In this way we obtain a bicategory
$\mathbf{Mod}$.
\end{df}
Each functor $F\colon\mathcal{A}\to\mathcal{B}$ induces two modules
$F_*$ and $F^*$ given by $F_*(b,a)=\mathcal{B}(b,Fa)$ and
$F^*(a,b)=\mathcal{B}(Fa,b)$. Furthermore, there is an adjunction $F_*\dashv
F^*$ with unit and counit given by components
\begin{equation}
  \label{eq:10}
  \mathcal{A}(a,a')\xrightarrow{F}\mathcal{B}(Fa,Fa')\cong F^*\cdot{}F_*(a,a')
\end{equation}
\begin{equation}
  F_*\cdot{}F^*(b,b')=
  \int^A\mathcal{B}(b,Fa)\times\mathcal{B}(Fa,b')\xrightarrow{\mathrm{comp}}
  \mathcal{B}(b,b').
\end{equation}
The coend form of the Yoneda lemma implies that $(\psi\cdot F_*)(a,c)\cong
\psi(c,Fa)$ and $F^*\cdot\chi(a,d)\cong \chi(Fa,d)$, whenever these compositions
of modules are defined.

Similarly, if $\alpha\colon F\Rightarrow G$ is a natural transformation between
functors $\mathcal{A}\to\mathcal{B}$, then there are morphisms of modules
$\alpha_*\colon F_*\to G_*$ and $\alpha^*\colon G^*\to F^*$, with components
\begin{equation}
  \label{eq:101}
  \alpha_*(b,a)=\mathcal{B}(b,\alpha_a)
  \qquad
  \alpha^*(a,b)=\mathcal{B}(\alpha_a,b).
\end{equation}

\subsection{Lifting operations}
\label{sec:natur-diag-fill}

Fix a category $\mathcal C$. Recall that there are adjunctions
\begin{equation}
  \label{eq:59}
  \cod\dashv\id\dashv\dom\colon\C^{\mathbf2}\longrightarrow\C
\end{equation}
the first of which has identity counit and the second of which has identity unit.

Define a module (profunctor) ${\dd}\colon\mathcal C^{\mathbf2}\tor\mathcal C^{\mathbf2}$ in
the following way. Given two morphisms $f$, $g$ in $\mathcal C$, ${\dd}(f,g)$ is the set of commutative diagrams of the form
\begin{equation}
  \label{eq:34}
  \xymatrixrowsep{0.6cm}
  \diagram
  A\ar[d]_f\ar[r]^h&C\ar[d]^g\\
  B\ar[r]_k\ar[ur]^d&D
  \enddiagram
\end{equation}
The action of $\mathcal C^{\mathbf2}$ on either side is simply by pasting the
appropriate commutative square.

\begin{lemma}
  \label{l:2}
  There
  are isomorphisms of modules between ${\dd}$ and the following four modules
  $\mathcal{C}^\two \tor\mathcal{C}^\two$.
  \begin{equation}
    \cod^*\cdot{}\dom_*\qquad \id_*\cdot{}\id^*\qquad
    (\id\cdot{}\dom)_*\qquad (\id\cdot{}\cod)^*
    \label{eq:78}
  \end{equation}
\end{lemma}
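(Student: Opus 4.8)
The plan is to reduce everything to the observation that a commutative diagram of the shape \eqref{eq:34} carries no more information than its diagonal $d\colon\cod f\to\dom g$. Given an arbitrary morphism $d\colon\cod f\to\dom g$ in $\C$, setting $h:=d\cdot f$ and $k:=g\cdot d$ produces a diagram of the shape \eqref{eq:34}, and this assignment is inverse to the one extracting the diagonal from such a diagram. The resulting bijection $\dd(f,g)\cong\C(\cod f,\dom g)$ is compatible with the pasting actions of $\C^\two$ on both sides: precomposing a morphism $f'\to f$ of $\C^\two$ replaces $d$ by its precomposite with the codomain component $\cod f'\to\cod f$, and postcomposing a morphism $g\to g'$ replaces $d$ by its postcomposite with the domain component $\dom g\to\dom g'$. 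Hence $\dd$ is isomorphic, as a module $\C^\two\tor\C^\two$, to the module sending $(f,g)$ to $\C(\cod f,\dom g)$, and it remains only to identify this module with each of the four in \eqref{eq:78}.

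Two of these identifications fall straight out of the definitions: a morphism $f\to\id_{\dom g}$ in $\C^\two$ is determined by its codomain component, so $(\id\cdot\dom)_*(f,g)=\C^\two\bigl(f,\id_{\dom g}\bigr)\cong\C(\cod f,\dom g)$, and dually a morphism $\id_{\cod f}\to g$ is determined by its domain component, so $(\id\cdot\cod)^*(f,g)=\C^\two\bigl(\id_{\cod f},g\bigr)\cong\C(\cod f,\dom g)$. For the other two I would use the coend form of the Yoneda lemma recalled above: taking $\psi=\cod^*$ and $F=\dom$ yields $(\cod^*\cdot\dom_*)(f,g)\cong\cod^*(f,\dom g)=\C(\cod f,\dom g)$, while $(\id_*\cdot\id^*)(f,g)=\int^{c\in\C}\C^\two(f,\id_c)\times\C^\two(\id_c,g)$, which after the identifications $\C^\two(f,\id_c)\cong\C(\cod f,c)$ and $\C^\two(\id_c,g)\cong\C(c,\dom g)$ just used becomes $\int^{c\in\C}\C(\cod f,c)\times\C(c,\dom g)\cong\C(\cod f,\dom g)$. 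Alternatively, the adjunctions $\cod\dashv\id\dashv\dom$ of \eqref{eq:59} supply module isomorphisms $\cod^*\cong\id_*$ and $\dom_*\cong\id^*$, whence $\cod^*\cdot\dom_*\cong\id_*\cdot\id^*$, and the pseudofunctoriality of $(-)_*$ and $(-)^*$ identifies $(\id\cdot\dom)_*$ with $\id_*\cdot\dom_*$ and $(\id\cdot\cod)^*$ with $\cod^*\cdot\id^*$, which coincide with the former by the same isomorphisms.

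I do not expect a genuine obstacle here: the statement is a bookkeeping exercise. The only point requiring attention is to check that each displayed bijection is an isomorphism of \emph{modules}, that is, that it intertwines the pasting actions on $\dd$ with the hom-functorial (and coend) structure on the four modules of \eqref{eq:78}. Since all the constituent bijections are instances of Yoneda or of unfolding the definition of a morphism into or out of an identity arrow, this naturality is automatic, and it is enough to verify compatibility with the actions for the single explicit bijection $\dd(f,g)\cong\C(\cod f,\dom g)$ established in the first paragraph.
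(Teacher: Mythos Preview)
Your proof is correct and follows essentially the same approach as the paper: both identify $\dd(f,g)$ with $\C(\cod f,\dom g)$ via the diagonal and then invoke the adjunctions $\cod\dashv\id\dashv\dom$ together with the coend Yoneda lemma to obtain the four forms. Your version is slightly more explicit in checking naturality and in offering the direct computations for $(\id\cdot\dom)_*$ and $(\id\cdot\cod)^*$ before giving the adjunction argument as an alternative, but the substance is the same.
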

\begin{proof}
  The bijection ${\dd}(f,g)\cong (\cod^*\cdot\dom_*)(f,g)\cong
  \mathcal{C}(\cod(f),\dom(g))$ is the obvious one, that sends a commutative
  square with a diagonal filler $d$ as in~\eqref{eq:34} to the morphism $d$. Due
  to the adjunctions~\eqref{eq:59}, $\cod^*\cong\id_*$ and $\id^*\cong\dom_*$,
  and we obtain isomorphisms of $\cod^*\cdot\dom_*$ with
  $(\id\cdot\dom)_*$, and  with $\id_*\cdot\id^*$, and with
  $\cod^*\cdot{}{\id^*}\cong(\id\cdot\cod)^*$.
\end{proof}
The isomorphism of the previous lemma can be given in very explicit terms. For
example, the isomorphism ${\dd}(f,g)\cong(\id\cdot{}\dom)_*(f,g)=\C^{\mathbf{2}}(f,1_{\dom g})$ is given by
  \begin{equation}
    \label{eq:60}
    \xymatrixrowsep{0.6cm}
    \diagram
    {\cdot}
    \ar[r]^-{h}\ar[d]_{f}
    &
    {\cdot}
    \ar[d]^{g}
    \\
    {\cdot}
    \ar[r]_-{k}\ar[ur]^d
    &
    {\cdot}
    \enddiagram
    \longmapsto
    \diagram
    {\cdot}
    \ar[r]^-{h}\ar[d]_{f}
    &
    {\cdot}
    \ar[d]^{1_{\dom g}}
    \\
    {\cdot}
    \ar[r]_-{d}
    &
    {\cdot}
    \enddiagram
  \end{equation}

The counit of $\id_*\dashv\id^*$ is a module morphism
\begin{equation}
  {\dd}\longrightarrow 1_{\mathcal C^{\mathbf2}}\label{eq:54}
\end{equation}
whose component at $(f,g)$ sends the element \eqref{eq:34} to
the outer commutative square. It corresponds, under ${\dd}\cong(\id\cdot{}\dom)_*$, to the module morphism induced by the natural
transformation with $f$-component
\begin{equation}
  \label{eq:58}
  \id\cdot{}\dom\Longrightarrow{}1_{\C^{\mathbf{2}}}
  \qquad
  \xymatrixrowsep{0.6cm}
  \diagram
  {\cdot}
  \ar@{=}[r]
  \ar[d]_{1_{\dom f}}
  &
  {\cdot}
  \ar[d]^{f}
  \\
  {\cdot}\ar[r]^f
  &
  {\cdot}
  \enddiagram
\end{equation}

\begin{df}
  Let $(\mathcal A,U)$, $(\mathcal B,V)$ be two objects of $\Cat\slash\mathcal
  C^{\mathbf2}$, and define a module
  \begin{equation}
    \label{eq:36}
    {\dd}(U,V)\colon \mathcal B\xrightarrow{V_*}
    \mathcal C^{\mathbf2} \xrightarrow{{\dd}}
    \mathcal C^{\mathbf2} \xrightarrow{U^*}\mathcal A.
  \end{equation}
  The module morphism ${\dd}\to 1$ induces another ${\dd}(U,V)\to
  U^*\cdot{}V_*$. A \emph{lifting operation} for $U$,
  $V$ is a section for this module morphism, and amounts to a
  choice, for each square in $\mathcal C$ of the form
  \begin{equation}
    \label{eq:37}
    \xymatrixrowsep{0.6cm}
    \diagram
    {A}
    \ar[r]^-{h}\ar[d]_{Ua}
    &
    {C}
    \ar[d]^{Vb}
    \\
    {B}
    \ar[r]_-{k}
    &
    {D}
    \enddiagram
    \qquad
    a\in\mathcal A,\ b\in\mathcal B
  \end{equation}
  of a diagonal filler $d(h,k)$, in such a way that it is natural with respect to
  composition on either side.\label{df:5}
  To expand on this point, suppose given morphisms $\alpha\colon
  a'\to a$ in $\mathcal{A}$ sent by $U$ to the square $U\alpha=(x,y)\colon
  Ua'\to Ua$, and $\beta\colon b\to b'$ in $\mathcal{B}$ sent by $V$ to
  $V\beta=(u,v)\colon Vb\to Vb'$. The naturality of $d(h,k)$ means that the
  equality below holds.
  \begin{equation}
    \label{eq:95}
    \diagram
    \cdot\ar[r]^x\ar[d]_{U a'}&
    \cdot\ar[r]^h\ar[d]_{Ua}&
    \cdot\ar[r]^{u}\ar[d]^{Vb}&
    \cdot\ar[d]^{Vb'}\\
    \cdot\ar[r]_{y}&
    \cdot\ar[r]_-k\ar[ur]|{d(h,k)}&
    \cdot\ar[r]_v&
    \cdot
    \enddiagram
    =
    \xymatrixcolsep{2cm}
    \diagram
    \cdot\ar[r]^-{u\cdot h\cdot x}\ar[d]_{Ua'}&
    \cdot\ar[d]^{Vb'}\\
    \cdot\ar[r]_-{v\cdot k\cdot y}\ar[ur]|{d(u\cdot h\cdot x,v\cdot k\cdot y)}&
    \cdot
    \enddiagram
  \end{equation}
\end{df}

\begin{ex}
  \label{ex:3}
  A functorial factorisation system, with associated copointed endofunctor
  $(L,\Phi)$ and pointed endofunctor $(R,\Lambda)$, gives rise to a lifting
  operation for the forgetful functors $U\colon
  (L,\Phi)\text-\mathrm{Coalg}\to\mathcal C^{\mathbf 2}$ and $V\colon
  (R,\Lambda)\text-\mathrm{Alg}\to\mathcal C^{\mathbf 2}$. The section to the
  module morphism ${\dd}(U,V)\to U^*\cdot V_*$ has component at
  an object $(f,g)$ of
  $(L,\Phi)\text-\mathrm{Coalg}\times(R,\Lambda)\text-\mathrm{Alg}$ described in
  the following terms. If
  $(1,s)\colon f\to Lf$ is the coalgebra structure of $f$ and $(p,1)\colon Rg\to
  g$ the algebra structure of $g$, the component is given by
  \begin{equation}
    \label{eq:71}
    \C^{\mathbf{2}}(f,g)\xrightarrow{L}\C^{\mathbf{2}}(Lf,Lg)
    \xrightarrow{\C^{\mathbf{2}}(s,1)} \C^{\mathbf{2}}(f,Lg)
    \xrightarrow{\C^{\mathbf{2}}(1,(1,p))} \C^{\mathbf{2}}(f,1_{\dom(g)})
  \end{equation}
  \begin{equation}
    \label{eq:87}
    \xymatrixrowsep{0.6cm}
    \diagram
    {\cdot}
    \ar[r]^-{h}\ar[d]_{f}
    &
    {\cdot}
    \ar[d]^{g}
    \\
    {\cdot}
    \ar[r]_-{k}
    &
    {\cdot}
    \enddiagram
    \longmapsto
    \xymatrixcolsep{1.6cm}
    \diagram
    {\cdot}
    \ar[r]^-{h}\ar[d]_{f}
    &
    {\cdot}
    \ar[d]^1
    \\
    {\cdot}
    \ar[r]_-{p\cdot{}K(h,k)\cdot{}s}
    &
    {\cdot}
    \enddiagram
  \end{equation}
  The reader would have noticed that the diagonal filler $p\cdot K(h,k)\cdot s$
  so obtained is the same one mentioned in Section~\ref{sec:definition-awfs}
  and that we reproduce below.
  \begin{equation}
    \label{eq:113}
    \xymatrixcolsep{1.5cm}
    \xymatrixrowsep{.5cm}
    \diagram
    A\ar[rr]^h\ar[dd]_f\ar[dr]^{{L}f}
    &&
    C\ar[d]_{{L}g}\ar@{=}[r]
    &
    C\ar[dd]^g
    \\
    &
    Kf\ar[r]^-{K(h,k)}\ar[d]^{{R}f}
    &
    Kg\ar[dr]_{{R}g}\ar[ur]_{p}
    &
    \\
    B\ar[ur]^s\ar@{=}[r]
    &
    B\ar[rr]^k
    &&
    B
    \enddiagram
  \end{equation}

  There is an equivalent description of~\eqref{eq:71} that, instead of using
  $\mathcal{C}^\two(f,1_{\dom(g)})\cong{\dd}(U,V)(f,g)$, uses
  $\mathcal{C}^\two(1_{\cod(f)},g)\cong{\dd}(U,V)(f,g)$.
  From this point of view, the section takes the form
  \begin{equation}
    \label{eq:72}
    \C^{\mathbf{2}}(f,g)\xrightarrow{R}\C^{\mathbf{2}}(Rf,Rg)
    \xrightarrow{\C^{\mathbf{2}}(1,p)} \C^{\mathbf{2}}(Rf,g)
    \xrightarrow{\C^{\mathbf{2}}((s,1),1)} \C^{\mathbf{2}}(1_{\cod(f)},g)
  \end{equation}
  \begin{equation}
    \label{eq:73}
    \xymatrixrowsep{0.6cm}
    \diagram
    {\cdot}
    \ar[r]^-{h}\ar[d]_{f}
    &
    {\cdot}
    \ar[d]^{g}
    \\
    {\cdot}
    \ar[r]_-{k}
    &
    {\cdot}
    \enddiagram
    \longmapsto
    \xymatrixcolsep{1.6cm}
    \diagram
    {\cdot}
    \ar[r]^-{p\cdot{}K(h,k)\cdot{}s}\ar[d]_{1}
    &
    {\cdot}
    \ar[d]^{g}
    \\
    {\cdot}
    \ar[r]_-{k}
    &
    {\cdot}
    \enddiagram
  \end{equation}
\end{ex}

\begin{ex}
  A functorial factorisation corresponds to an orthogonal factorisation system
  when ${\dd}(U,V)\to U^*\cdot{}V_*$ is invertible.
  \label{ex:2}
\end{ex}

\begin{rmk}
  \label{rmk:2}
  Let us now assume that in Definition~\ref{df:5} $U$ has a right adjoint
  $G$. Then, the module~\eqref{eq:36} is isomorphic to $(G\cdot{}\id\cdot{}\dom\cdot{}V)_*$,
  $U^*\cdot{}V_*$ is isomorphic to $(G\cdot{}V)_*$, and the module morphism
  $U^*\cdot{}{\dd}\cdot{}V_*\to U^*\cdot{}V_*$ corresponds to the natural transformation
  \begin{equation}
    \label{eq:132}
    G\cdot{}\id\cdot{}\dom\cdot{}V\Longrightarrow G\cdot{}V
  \end{equation}
  induced by the counit of the adjunction $\id\dashv\dom$, ie the
  transformation with component at $b\in\mathcal{B}$
  \begin{equation}
    \label{eq:135}
    G(1,Vb)\colon G1_{\dom(Vb)}\longrightarrow GVb.
  \end{equation}
\end{rmk}
Now suppose that the functor $U$ is the forgetful functor
$U\colon\mathsf{L}\text-\mathrm{Coalg}\to\C^\two$, for a comonad $\mathsf{L}$,
and still denote by $G$ its right adjoint. Denote by
$F_{\mathsf{L}}\colon\C^\two \to \mathrm{Kl}(\mathsf{L})$ the Kleisli construction
of $\mathsf{L}$. The natural transformation~\eqref{eq:132}, belonging to the full image of $G$, can be described as a
morphism in $[\mathcal{B},\mathrm{Kl}(\mathsf{L})]$
\begin{equation}
  \label{eq:133}
  F_{\mathsf{L}}\cdot{}\id\cdot{}\dom\cdot{}V\Longrightarrow F_{\mathsf{L}}\cdot{}V.
\end{equation}

\begin{prop}
  \label{prop:8}
  Given a comonad $\mathsf{L}$ on $\C^\two$, lifting operations for the functors
  $U\colon\mathsf{L}\text-\mathrm{Coalg}\to\C^\two$ and
  $V\colon\mathcal{B}\to\C^\two$ are in bijective correspondence with sections of
  the natural transformation~\eqref{eq:133}.
\end{prop}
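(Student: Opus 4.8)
The plan is to realise the asserted bijection as a composite of three elementary bijections, so that the proof reduces to Remark~\ref{rmk:2} together with standard facts about modules and co-Kleisli categories. By Definition~\ref{df:5}, a lifting operation for $U$ and $V$ is \emph{by definition} a section of the module morphism ${\dd}(U,V)\to U^*\cdot V_*$, so the whole content of the proposition is the identification of sections of that module morphism with sections of~\eqref{eq:133}.

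First I would apply Remark~\ref{rmk:2}: since $U\colon\mathsf{L}\text-\mathrm{Coalg}\to\C^\two$ has the cofree coalgebra functor $G$ as right adjoint, that remark produces a commuting square of module morphisms whose two vertical arrows are the isomorphisms ${\dd}(U,V)\cong(G\cdot\id\cdot\dom\cdot V)_*$ and $U^*\cdot V_*\cong(G\cdot V)_*$, and whose bottom edge is $\theta_*$, for $\theta$ the natural transformation~\eqref{eq:132}. Conjugating by the vertical isomorphisms carries sections of ${\dd}(U,V)\to U^*\cdot V_*$ bijectively to sections of $\theta_*$, so it suffices to treat the latter. Second, I would invoke the local full faithfulness of the $(-)_*$ construction: for functors $W,W'\colon\mathcal{B}\to\mathsf{L}\text-\mathrm{Coalg}$, a module morphism $W_*\to W'_*$ is, by the Yoneda lemma applied in the contravariant variable, precisely a natural transformation $W\Rightarrow W'$, and this identification preserves composites and identities (as is visible from the formula for $\alpha_*$ recalled in Section~\ref{sec:digr-into-modul}). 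Consequently, sections of $\theta_*$ correspond to sections, in $[\mathcal{B},\mathsf{L}\text-\mathrm{Coalg}]$, of $\theta\colon G\cdot\id\cdot\dom\cdot V\Rightarrow G\cdot V$. Third and last, both functors $G\cdot\id\cdot\dom\cdot V$ and $G\cdot V$ factor through the fully faithful co-Kleisli inclusion $J\colon\mathrm{Kl}(\mathsf{L})\hookrightarrow\mathsf{L}\text-\mathrm{Coalg}$, via the factorisation $G=J\cdot F_{\mathsf{L}}$; since post-composition with a fully faithful functor is fully faithful on functor categories, and since by the construction of~\eqref{eq:133} the transformation $\theta$ is the image under $J$ of~\eqref{eq:133}, sections of $\theta$ are in bijection with sections of~\eqref{eq:133}. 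Composing the three bijections proves the proposition.

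I do not anticipate a genuine obstacle: each step is either a direct appeal to Remark~\ref{rmk:2} or a routine fact about proarrow equipments and co-Kleisli categories. The only point worth a careful sentence is that the identifications above transport the notion of \emph{section} faithfully, which needs the $(-)_*$ construction and the whiskering functor $J\cdot(-)$ on functor categories to be fully faithful \emph{and} strictly compatible with vertical composition of $2$-cells; both hold. If desired one may also make the correspondence concrete by unwinding Remark~\ref{rmk:2} and~\eqref{eq:135}, reading off the diagonal filler that a section of~\eqref{eq:133} assigns to a square of the form~\eqref{eq:37}, but this is not needed for the statement.
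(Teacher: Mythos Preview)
Your proof is correct and follows the same line as the paper's: the paper's one-sentence proof (``an application of Remark~\ref{rmk:2} to the case when the right adjoint is the universal functor into the Kleisli category of the comonad'') is precisely your three steps compressed, where the passage to $\mathrm{Kl}(\mathsf{L})$ is effected by the full faithfulness of the comparison $J$ and the factorisation $G=J\cdot F_{\mathsf{L}}$. Your explicit separation into the three bijections (module-level via Remark~\ref{rmk:2}, then Yoneda for $(-)_*$, then full faithfulness of the co-Kleisli inclusion) is a helpful unpacking of what the paper leaves implicit.
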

\begin{proof}
  The proof is an application of Remark~\ref{rmk:2} to the case when the right
  adjoint is the universal functor into the Kleisli category of the comonad.
\end{proof}

\begin{ex}
  \label{ex:1}
  As we saw in Example~\ref{ex:3}, each \textsc{awfs} $(\mathsf{L},\mathsf{R})$
  on \C\ induces a lifting operation which corresponds to a section
  of~\eqref{eq:133}, by Proposition~\ref{prop:8}. We can describe in explicit
  terms this section as follows.

  If $V\colon\mathsf{R}\text-\mathrm{Alg}\to\C^\two$ is the forgetful functor,
  consider the transformation $(1,p)\colon L\cdot{}V\Rightarrow \id\cdot{}\dom\cdot{}V$ as in
  Remark~\ref{rmk:5}, and denote by $\theta\colon F_{\mathsf{L}}V\tor F_{\mathsf{L}}\id\cdot{}\dom\cdot{}V$ the associated
  morphism in $[\mathsf{R}\text-\mathrm{Alg},\mathrm{Kl}(\mathsf{L})]$. It is
  easy to check that $\theta$ is the required section:
  $F_{\mathsf{L}}(1,g)\cdot{}\theta_g$ is, as a morphism in $\C^\two$,
  \begin{equation}
    \label{eq:145}
    (1,g)\cdot{}(1,p)=(1,{R}g)=\Phi_g.
  \end{equation}
\end{ex}
\section{The universal category with lifting operations}
\label{sec:univ-categ-with}
R.~Garner defined in~\cite{MR2506256} for each functor
$U\colon\mathcal{A}\to\C^\two$ a
category $\mathcal{A}^\pitchfork{}$ and a functor
$U^\pitchfork\colon\mathcal{A}^\pitchfork{}\to\C^\two$ as follows. The objects
of $\mathcal{A}^\pitchfork{}$ are pairs $(g,\phi^g)$, where $g\in\C^\two$ and
$\phi^g$ is an assignment of a diagonal filler for each square
\begin{equation}
  \label{eq:49}
  \xymatrixcolsep{1.4cm}
  \xymatrixrowsep{.7cm}
  \diagram
  \cdot
  \ar[r]^-{h}\ar[d]_{Ua}
  &
  \cdot
  \ar[d]^{g}
  \\
  \cdot
  \ar[r]_-{k}\ar@{..>}[ur]|-{\phi^g(a,h,k)}
  &
  \cdot
  \enddiagram
\end{equation}
which are compatible with morphisms $U\alpha\colon Ua'\to Ua$, in the sense that
\begin{equation}
  \phi^g(a,h,k)\cdot{}\cod(U\alpha)=\phi^g(a',h\cdot{}\dom(U\alpha),k\cdot{}\cod(U\alpha)).
  \label{eq:50}
\end{equation}
A morphism $(g,\phi^g)\to(e,\phi^e)$ is a morphism $(u,v)\colon g\to e$ in
$\C^\two$ such that $u\cdot{}\phi^g(a,h,k)=\phi^e(a,u\cdot{}h,v\cdot{}k)$, for all $(h,k)$.

The functor $U^\pitchfork{}$ just forgets the lifting operations, or in other
words, $U^\pitchfork(g,\phi^g)=g$. There is a canonical lifting operation from
$U$ to $U^\pitchfork$, namely the lifting operation that given a commutative
square $(h,k)\colon Ua\to U^\pitchfork(g,\phi^g)=g$ picks out the diagonal
$\phi^g(a,h,k)$, as in the diagram~\eqref{eq:49}. Furthermore, $U^\pitchfork$
equipped with this lifting operation is universal among functors into $\C^\two$
that are equipped with a lifting
operation against $U$.

The category $\mathcal{A}^\pitchfork$ and the functor
$U^\pitchfork$ can be constructed
as a certain limit in \Cat, of the form
\begin{equation}
  \label{eq:130}
  \xymatrixrowsep{-0.2cm}
  \diagram
  &\C^\two\ar[r]^{Y}&
  \mathcal{P}(\C^\two)\ar[dr]^{\mathcal{P}(U^*)}&\\
  \mathcal{A}^\pitchfork{}\ar[dr]_{U^\pitchfork{}}\ar[ur]^{U^\pitchfork{}}&
  \rtwocell<\omit>{}&&
  \mathcal{P}(\mathcal{A})\\
  &\C^\two\ar[r]_{\widehat{{\dd}}_{\C}}&
  \mathcal{P}(\C^\two)\ar[ur]_{\mathcal{P}(U^*)}&
  \enddiagram
\end{equation}
where $\mathcal{P}(\mathcal{X})$ denotes the presheaf category on $\mathcal{X}$,
and $\widehat{{\dd}}_{\C}$ is the functor associated to ${\dd}$. Equally well, $U^\pitchfork{}$ is a certain enhanced limit, in the
sense of \cite{MR2854177}.

We continue with some further observations from~\cite{MR2506256}.
  The universal property of $U^\pitchfork{}$ implies that lifting operations for
  the pair of functors
  $U\colon\mathcal{A}\to\C^\two\leftarrow\mathcal{B}:V$ are in bijection with
  functors $\mathcal{B}\to\mathcal{A}^\pitchfork{}$ over $\C^\two$.
  In particular, each \textsc{awfs} $(\mathsf{L},\mathsf{R})$ in
  \C\ gives rise to a canonical functor
  $\mathsf{R}\text-\mathrm{Alg}\to\mathsf{L}\text-\mathrm{Coalg}^\pitchfork{}$.
  Furthermore, this functor is fully faithful, as we proceed to show.
  Let $(p,1)\colon Rg\to g$ and $(p',1)\colon Rg'\to g'$ be two $\mathsf{R}$-algebra
  structures, and $(u,v)\colon g\to g'$ a morphism in
  $\mathsf{L}\text-\mathrm{Coalg}^\pitchfork{}$. We know that the chosen
  diagonal filler of the square $(1,{R}g)\colon {L}g\to g$ is $p$, and
  similarly
  for $g'$ and $p'$, so we have $u\cdot{}p=p'\cdot{}K(u,v)$. Hence,
  $(p',1)\cdot{}R(u,v)=(u,v)\cdot{}(p,1)$, so $(u,v)$ is a morphism of $\mathsf{R}$-algebras.

\begin{lemma}
  \label{l:11}
  Given a functor $U\colon\mathcal{A}\to\C^\two$, an adjunction $U\dashv G$, and $g\in\C^\two$,
  there is a bijection between the structure of an object
  $(g,\phi^g)\in\mathcal{A}^\pitchfork{}$ and sections $s$ of $G(1,g)\colon
  G(1_{\dom g})\to Gg$ in $\mathcal{A}$.
  If $(f,\phi^f)\in\mathcal{A}^\pitchfork{}$ is another object, with associated
  section $t$, morphisms
  $(g,\phi^g)\to(f,\phi^f)$ in $\mathcal{A}^\pitchfork{}$ are in bijection with
  morphisms $(h,k)\colon g\to f$ in $\C^\two$ such that $G(h,h)\cdot{}s=t\cdot{}G(h,k)$.
\end{lemma}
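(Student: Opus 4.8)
The plan is to reduce the whole statement to the Yoneda lemma, using the adjunction $U\dashv G$ together with the reinterpretation of diagonal fillers supplied by Lemma~\ref{l:2}.

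First I would rephrase ``diagonal filler'' in arrow-category terms. By Lemma~\ref{l:2} and the explicit formula~\eqref{eq:60}, a diagonal filler for a square $(h,k)\colon Ua\to g$ is the same as a morphism $\psi\colon Ua\to 1_{\dom g}$ in $\C^\two$ whose composite with $(1,g)\colon 1_{\dom g}\to g$ --~the counit of $\id\dashv\dom$ at $g$, as in~\eqref{eq:58}~-- equals $(h,k)$; concretely $\psi$ has domain component $h$ and codomain component the filler. Under this identification the datum $\phi^g$ becomes a choice, for every $a\in\mathcal A$ and every $(h,k)\colon Ua\to g$, of such a factorisation $\psi^g(a,h,k)$ of $(h,k)$ through $(1,g)$, and the compatibility clause~\eqref{eq:50} in the definition of $\mathcal A^\pitchfork{}$ translates, on codomain components, into $\psi^g(a,h,k)\cdot U\alpha=\psi^g(a',(h,k)\cdot U\alpha)$ for each $\alpha\colon a'\to a$ (the domain components agree automatically).

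Now transpose along $U\dashv G$. Squares $(h,k)\colon Ua\to g$ correspond to morphisms $a\to Gg$, factorisations $Ua\to 1_{\dom g}$ correspond to morphisms $a\to G1_{\dom g}$, and by naturality of the adjunction the condition ``composite with $(1,g)$ is $(h,k)$'' becomes ``composite with $G(1,g)$ is the transpose of $(h,k)$''. Thus $\phi^g$ becomes an assignment that to each $a$ and each $m\colon a\to Gg$ produces a morphism $a\to G1_{\dom g}$ lying over $G(1,g)$ and natural in $a$; that is, a natural transformation $\mathcal A(-,Gg)\Rightarrow\mathcal A(-,G1_{\dom g})$ that is a section of the one induced by $G(1,g)$. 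By the Yoneda lemma this is exactly a morphism $s\colon Gg\to G1_{\dom g}$ with $G(1,g)\cdot s=1_{Gg}$, i.e. a section of $G(1,g)$; this gives the first bijection. For the morphism statement, unravelling the condition $h\cdot\phi^g(a,h',k')=\phi^f(a,h\cdot h',k\cdot k')$ defining a morphism $(g,\phi^g)\to(f,\phi^f)$ shows it says precisely that postcomposition with $(h,h)\colon 1_{\dom g}\to 1_{\dom f}$ sends $\psi^g(a,-)$ to $\psi^f(a,-)\cdot(h,k)$; transposing and applying Yoneda once more turns this into $G(h,h)\cdot s=t\cdot G(h,k)$.

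The only real work here is bookkeeping --~tracking which component of a morphism of $\C^\two$ is domain and which is codomain, and checking that~\eqref{eq:50} really is the naturality condition Yoneda requires~-- rather than anything substantial; indeed the first half of the lemma may also be obtained directly from Remark~\ref{rmk:2} applied to $V\colon\mathbf 1\to\C^\two$ selecting $g$, followed by Yoneda to pass from presheaves on $\mathcal A$ back to $\mathcal A$ itself.
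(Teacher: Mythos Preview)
Your proof is correct and is essentially the same argument the paper has in mind: the paper's proof reads simply ``See discussion before Proposition~\ref{prop:8}'', meaning Remark~\ref{rmk:2} with $V=g\colon\mathbf 1\to\C^\two$ followed by Yoneda, which is exactly what you spell out (and indeed acknowledge in your final sentence). Your explicit unpacking via Lemma~\ref{l:2} and the transpose along $U\dashv G$ is just the elementary form of the module computation in Remark~\ref{rmk:2}.
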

\begin{proof}
  See discussion before Proposition \ref{prop:8}.
\end{proof}

\begin{lemma}
  \label{l:12}
  Assume the conditions of Lemma \ref{l:11}. Then, for any full subcategory
  $\mathcal{F}\subset\mathcal{A}$ containing the full image of $G$, the functor
  $\mathcal{A}^\pitchfork\to\mathcal{F}^\pitchfork$ induced by the inclusion is
  an isomorphism.
\end{lemma}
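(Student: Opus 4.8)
The plan is to deduce the isomorphism formally from the descriptions of objects and morphisms of $\mathcal{A}^\pitchfork$ given in Lemma~\ref{l:11}, the point being that those descriptions only involve objects of $\mathcal{A}$ lying in the full image of $G$. First I would note that the restriction $U|_{\mathcal{F}}\colon\mathcal{F}\to\C^\two$ again has a right adjoint, namely the corestriction $G'$ of $G$ to $\mathcal{F}$: this is well defined since $\mathcal{F}$ contains the full image of $G$, and it is right adjoint because $\mathcal{F}(a,Gx)=\mathcal{A}(a,Gx)\cong\C^\two(Ua,x)$ for $a\in\mathcal{F}$, the first equality by fullness. Crucially, $G'$ agrees with $G$ on objects and on arrows; in particular $G'(1,g)=G(1,g)\colon G(1_{\dom g})\to Gg$, and both source and target of this arrow lie in $\mathcal{F}$.

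Thus Lemma~\ref{l:11} applies both to $(U,G)$ and to $(U|_{\mathcal{F}},G')$. Reading it off in each case, for a fixed $g\in\C^\two$ the objects of $\mathcal{A}^\pitchfork$ over $g$ biject with sections of $G(1,g)$ in $\mathcal{A}$, while the objects of $\mathcal{F}^\pitchfork$ over $g$ biject with sections of $G'(1,g)=G(1,g)$ in $\mathcal{F}$; since source and target of $G(1,g)$ lie in the full subcategory $\mathcal{F}$, these two sets of sections coincide. Likewise, a morphism $(g,\phi^g)\to(f,\phi^f)$ in $\mathcal{A}^\pitchfork$, resp.\ in $\mathcal{F}^\pitchfork$, is by Lemma~\ref{l:11} an arrow $(h,k)\colon g\to f$ of $\C^\two$ satisfying the single condition $G(h,h)\cdot s=t\cdot G(h,k)$, the same condition in both cases because $G'=G$ on arrows. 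So $\mathcal{A}^\pitchfork$ and $\mathcal{F}^\pitchfork$ have the same objects and the same hom-sets over $\C^\two$.

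It remains to check that the functor $\mathcal{A}^\pitchfork\to\mathcal{F}^\pitchfork$ induced by $\mathcal{F}\hookrightarrow\mathcal{A}$ --~which keeps the underlying arrow and discards the fillers assigned to squares out of $Ua$ for $a\notin\mathcal{F}$~-- is the identity under the above identification. For this I would revisit the construction behind Lemma~\ref{l:11} (the discussion before Proposition~\ref{prop:8}, cf.\ Remark~\ref{rmk:2}): the section $s\colon Gg\to G(1_{\dom g})$ attached to $(g,\phi^g)$ is obtained by transposing, across $U\dashv G$, the diagonal filler that $\phi^g$ assigns to one distinguished square out of $U(Gg)$, namely the one given by the counit of $U\dashv G$ at $g$; naturality of the lifting operation then recovers $\phi^g$ from $s$. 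As $Gg\in\mathcal{F}$, this distinguished square is retained by the restriction and receives the same filler, so the restricted operation corresponds to the same $s$; the analogous statement holds on morphisms. Hence the induced functor is the identity under the identification, and so is bijective on objects and fully faithful, i.e.\ an isomorphism of categories. I expect this last bookkeeping --~confirming that the reconstructed section is unchanged by the restriction~-- to be the only step that is not purely formal; everything else follows from $\mathcal{F}$ being full and containing the image of $G$.
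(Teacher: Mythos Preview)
Your proposal is correct and follows essentially the same approach as the paper: both observe that the restriction $U|_{\mathcal{F}}$ admits the corestriction of $G$ as right adjoint, then apply Lemma~\ref{l:11} on each side and use fullness of $\mathcal{F}$ to identify sections of $G(1,g)$ in $\mathcal{A}$ with those in $\mathcal{F}$. Your final paragraph verifying that the induced functor matches the identification is a welcome bit of bookkeeping that the paper leaves implicit.
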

\begin{proof}
  Denote by $J\colon\mathcal{F}\hookrightarrow{}\mathcal{A}$ the fully faithful
  inclusion functor. The composition $UJ$ has a right adjoint $H\colon
  \C^\two\to\mathcal{F}$, given by $H(f)=G(f)$.
  An object of $\mathcal{F}^\pitchfork$ is a lifting operation for the
  functors $UJ$ and $g\colon\mathbf1\to\mathcal{C}^\two$,
  ie a section to the module morphism
  $(UJ)^*\cdot{}{\dd}\cdot{}g_*\to(UJ)^*\cdot{}g_*$. The same data can be
  equally given by a section to the morphism $H(1,g)\colon H(1_{\dom(g)})\to Hg$
  in $\mathcal{F}$; or a section to the image of this morphism under the fully
  faithful $J$. But $JH=G$, so we simply have a section of $G(1,g)$, which is precisely
  an object of $\mathcal{A}^\pitchfork$ by Lemma~\ref{l:11}. This shows that
  $\mathcal{A}^\pitchfork\to\mathcal{F}^\pitchfork$ is bijective on objects.
  The proof that it is fully faithful is along the same lines, and is left to
  the reader.
\end{proof}

\begin{cor}
  \label{cor:5}
  If $\mathsf{L}$ is a domain preserving comonad on $\C^\two$,
  the category $\mathsf{L}\text-\mathrm{Coalg}^\pitchfork{}$ can be described
  as the category with objects pairs $(g,d^g)$ satisfying the commutativity of
  \begin{equation}
    \label{eq:134}
    \xymatrixrowsep{.7cm}
    \diagram
    \cdot\ar@{=}[r]\ar[d]_{{L}g}&
    \cdot\ar[d]^g\\
    \cdot\ar[r]_{{R}g}\ar[ur]^{d^g}&
    \cdot
    \enddiagram
  \end{equation}
  and morphisms $(g,d^g)\to(f,d^f)$ morphisms $(h,k)\colon g\to f$ in $\C^\two$
  such that $h\cdot{}d^g=d^f\cdot{}K(h,k)$.
  If $\mathcal{F}\subset\mathsf{L}\text-\mathrm{Coalg}$ is a full
  subcategory containing the cofree $\mathsf{L}$-coalgebras, the induced functor
  % over $\C^\two$
  % \begin{equation}
  %  \label{eq:142}
  $  \mathsf{L}\text-\mathrm{Coalg}^\pitchfork{}\longrightarrow{}
    \mathcal{F}^\pitchfork{}$
  % \end{equation}
  over $\C^\two$
  is an isomorphism.
\end{cor}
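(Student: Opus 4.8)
The plan is to read off both assertions from Lemmas~\ref{l:11} and~\ref{l:12}, applied to the forgetful functor $U\colon\mathsf{L}\text-\mathrm{Coalg}\to\C^\two$ and its right adjoint $G$, the cofree-coalgebra functor, for which $UGh=Lh$. Since $\mathsf{L}$ is domain preserving we have $\dom(Lh)=\dom h$, and the counit $\Phi_h\colon Lh\to h$ of $U\dashv G$ has domain component $1_{\dom h}$; write $Kh:=\cod(Lh)$ and $Rh:=\cod\Phi_h\colon Kh\to\cod h$, so that $\Phi_h=(1_{\dom h},Rh)$ and $Rh\cdot Lh=h$. None of this uses that $\mathsf{L}$ is half of an \textsc{awfs}.

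For the first statement, Lemma~\ref{l:11} gives, for each fixed $g$, a bijection between objects of $\mathsf{L}\text-\mathrm{Coalg}^\pitchfork$ over $g$ and sections $s$ of $G(1,g)\colon G(1_{\dom g})\to Gg$ in $\mathsf{L}\text-\mathrm{Coalg}$, and it remains only to transport this across $U\dashv G$. A coalgebra morphism $Gg\to G(1_{\dom g})$ is the same as a morphism $Lg=UGg\to 1_{\dom g}$ in $\C^\two$, and such a morphism is a pair $(a,d^g)$ with $a\colon\dom g\to\dom g$, $d^g\colon Kg\to\dom g$ and $a=d^g\cdot Lg$ --~hence nothing more than a choice of $d^g\colon Kg\to\dom g$. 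Under the bijection $\mathsf{L}\text-\mathrm{Coalg}(Gg,Gg)\cong\C^\two(Lg,g)$ the identity corresponds to $\Phi_g=(1_{\dom g},Rg)$ and post-composition with $G(1,g)$ corresponds to post-composition with $(1,g)\colon 1_{\dom g}\to g$; so the section equation $G(1,g)\cdot s=1_{Gg}$ becomes $(d^g\cdot Lg,\,g\cdot d^g)=(1_{\dom g},Rg)$, i.e. $d^g\cdot Lg=1$ and $g\cdot d^g=Rg$, which is exactly the commutativity of~\eqref{eq:134}. An identical computation turns the morphism condition of Lemma~\ref{l:11}, $G(h,h)\cdot s=t\cdot G(h,k)$, into the equality of the $\C^\two$-morphisms $(h\cdot d^g\cdot Lg,\,h\cdot d^g)$ and $(d^f\cdot Lf\cdot h,\,d^f\cdot K(h,k))$ from $Lg$ to $1_{\dom f}$; the codomain components give $h\cdot d^g=d^f\cdot K(h,k)$, and the domain components follow from this by post-composing with $Lg$ and using the naturality square $K(h,k)\cdot Lg=Lf\cdot h$ of $L$. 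This produces precisely the description in the statement.

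For the second statement, the objects of the full image of $G$ are exactly the cofree $\mathsf{L}$-coalgebras, so any full subcategory $\mathcal F\subset\mathsf{L}\text-\mathrm{Coalg}$ containing the cofree coalgebras contains the full image of $G$; Lemma~\ref{l:12} then applies verbatim and yields that the inclusion-induced functor $\mathsf{L}\text-\mathrm{Coalg}^\pitchfork\to\mathcal F^\pitchfork$ is an isomorphism, and it is over $\C^\two$ since both sides carry the evident projections forgetting the lifting operation. The only actual work is the bookkeeping in the two adjunction translations of the first paragraph; there is no conceptual obstacle, the content being entirely in Lemmas~\ref{l:11} and~\ref{l:12}. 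The single point to watch is that it is domain-preservation of $\mathsf{L}$ that makes $\dom(Lh)=\dom h$, and hence collapses a section of $G(1,g)$ to the single morphism $d^g\colon Kg\to\dom g$ rather than a more elaborate datum, and that forces the top component of $\Phi_g$ (and of $L(h,k)$) to be an identity (resp. $h$), which is what makes the two component equations reduce to the stated single one.
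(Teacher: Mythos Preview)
Your proof is correct and follows essentially the same approach as the paper: both invoke Lemmas~\ref{l:11} and~\ref{l:12} for the cofree-coalgebra adjunction $U\dashv G$, then transport the section condition across $U\dashv G$ to obtain the diagonal-filler description. Your presentation is arguably a little more streamlined in that you translate the section equation directly into the pair of triangle equalities in one step, whereas the paper first identifies $s_0=1$, then the bottom triangle, then the top triangle separately; but the substance is identical.
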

\begin{proof}
  % The proof begins with an application of Lemma~\ref{l:12} to the cofree
  % $\mathsf{L}$-coalgebra adjunction, in virtue of which
  % $\mathsf{L}\text-\mathrm{Coalg}^\pitchfork$ is isomorphic to
  % $\mathcal{F}^\pitchfork$, where $\mathcal{F}$ is the full subcategory defined
  % by the coalgebras of the form $Lf$.
  An object of $\mathsf{L}\text-\mathrm{Coalg}^\pitchfork$ can be described, by
  Lemma~\ref{l:11}, as a morphism $g$ of $\C$ equipped with a section
  $s=(s_0,s_1)$ to $L(1,g)\colon  L1_{\dom g}\to Lg$ that is a morphism of
  $\mathsf{L}$-coalgebras. In fact, $s_0=1$ since $L(1,g)$ has identity domain
  component. The morphism of coalgebras $(1,s)\colon Lg\to L1_{\dom g}$
  corresponds to a unique morphism $(1,d^g)\colon Lg\to 1_{\dom g}$ in
  $\C^\two$, where $d^g={R}{1_{\dom g}}\cdot s_1$, by the cofree coalgebra
  adjunction. Similarly, the equality of coalgebra morphisms $1_{Lg}=L(1,g)\cdot s$
  can be translated into
  \begin{equation}
    \Phi_g=\Phi_g\cdot L(1,g)\cdot s=(1,g)\cdot
    \Phi_{1_{\dom g}}\cdot s;\label{eq:79}
  \end{equation}
  the domain component of each morphism in this string of equalities is an
  identity morphism, while the codomain component yields
  ${R}g= g\cdot {R}{1_{\dom g}}\cdot s_1=g\cdot d^g$. So far we have proven
  that $1=L(1,g)\cdot s$ is equivalent to the commutativity of the bottom
  triangle in~\eqref{eq:134}. Again by the free coalgebra adjunction, the top
  triangle in~\eqref{eq:134}, namely $d^g\cdot {L}g=1$,
  is equivalent to $s_1\cdot{L}g=L({1_{\dom g}})$, which says that
  $(1,s_1)$ is a morphism $Lg\to L1_{\dom g}$. This completes the description of
  the objects of $\mathsf{L}\text-\mathrm{Coalg}^\pitchfork$.

  We now prove the part of the statement relating to morphisms. Suppose that
  $(h,k)\colon (g,\phi^g)\to(f,\phi^f)$ is a morphism in
  $\mathsf{L}\text-\mathrm{Coalg}^\pitchfork$ and $s\colon Lg\to L1_{\dom_g}$
  and $t\colon Lf\to L1_{\dom f}$ the sections of $L(1,g)$ and $L(1,f)$ provided
  by Lemma~\ref{l:11}. By the same lemma, the condition of $(h,k)$ being a
  morphism in $\mathsf{L}\text-\mathrm{Coalg}^\pitchfork$ is equivalent to the
  equality $L(h,h)\cdot s=t\cdot L(h,k)$, which is equivalent to
  \begin{equation}
    (h,h)\cdot\Phi_g\cdot s=
    \Phi_f\cdot L(h,h)\cdot s=\Phi_f\cdot t\cdot L(h,k).\label{eq:85}
  \end{equation}
  The domain component of this equality is trivial, and so it is equivalent to
  the equality of its codomain component, which is
  \begin{equation}
    \label{eq:89}
    h\cdot d^g= h\cdot{R}g\cdot s = {R}f\cdot t\cdot K(h,k)= d^f\cdot K(h,k).
  \end{equation}
  The last sentence of the statement follows from Lemma~\ref{l:12}, completing the proof.
\end{proof}

\section{KZ lifting operations}
\label{sec:lax-natur-diag}
Section~\ref{sec:natur-diag-fill} described the algebraic structure that
provides a lifting operation, and the category
$\mathcal{A}^\pitchfork{}$, in terms of modules. This section introduces
variations of these notions that are suitable to lax orthogonal factorisations.

\subsection{Lifting operations in 2-categories}
\label{sec:lax-natural-diagonal-1}

Before introducing the main definitions of this section, let us remind the
reader about some facts around \Cat-modules. A \Cat-module $\phi$ from a 2-category
$\mathscr{A}$ to another $\mathscr{B}$, denoted by
$\phi\colon\mathscr{A}\tor \mathscr{B}$, is a 2-functor
$\mathscr{B}^{\mathrm{op}}\times\mathscr{A}\to\Cat$. A difference with the case
of modules between ordinary categories is that for \Cat-modules there is a
2-category $\Cat\text-\mathbf{Mod}(\mathscr{A},\mathscr{B})$ of \Cat-modules
$\mathscr{A}\tor\mathscr{B}$: the morphisms are 2-natural transformations and
the 2-cells are the modifications.

Given a \Cat-module $\phi\colon\mathscr{B}\tor\mathscr{C}$, and 2-functors
$F\colon\mathscr{A}\to\mathscr B$ and $G\colon\mathscr{D}\to\mathscr{C}$, we
write $\phi\cdot F_*\colon\mathscr{A}\tor\mathscr{C}$ and
$G^*\cdot\phi\colon\mathscr{B}\tor\mathscr D$ for the modules defined by the formulas
\begin{equation}
  \label{eq:63}
  (\phi\cdot F_*)(c,a)=\phi(c,Fa)\quad\text{and}\quad (G^*\cdot\phi)(d,b)=\phi(Gd,b).
\end{equation}

In a completely analogous way to the case of
$\mathbf{Set}$-modules or profunctors addressed in
Section~\ref{sec:digr-into-modul}, we have the following facts:
\begin{itemize}
\item Each 2-functor $F\colon\mathscr{A}\to\mathscr{B}$ induces a pair of
  $\Cat$-modules $F_*\colon \mathscr{A}\tor\mathscr{B}$ and
  $F^*\colon\mathscr{B}\tor\mathscr{A}$, by the formulas
  $F_*(b,a)=\mathscr{B}(b,Fa)$ and $F^*(a,b)=\mathscr{B}(Fa,b)$.
\item Each 2-natural transformation $\alpha\colon F\Rightarrow G$ induces a
  morphism of \Cat-modules $\alpha_*\colon F_*\to G_*$ by
  $\alpha_*(b,a)=\mathscr{B}(b,\alpha_a)$.
\item If $F$, $G\colon\mathscr{A}\to\mathscr{B}$ are 2-functors, each morphism
  of \Cat-modules $F_*\to G_*$ is of the form $\alpha_*$ for a unique 2-natural
  transformation $\alpha\colon F\Rightarrow G$.
\end{itemize}

Let us write $|\mathscr{A}|$ for the objects of the 2-category $\mathscr{A}$.
The forgetful 2-functor from $\Cat\text-\mathbf{Mod}(\mathscr{A},\mathscr{B})$
to $\Cat^{|\mathscr{B}|\times|\mathscr{A}|}$ that sends a module $\phi$ to the
family of categories $\{\phi(a,b)|a\in|\mathscr{A}|,b\in|\mathscr{B}|\}$ is
2-monadic, by the usual argument: its left adjoint is given by left Kan
extension along the inclusion of objects
$|\mathscr{B}|\times|\mathscr{A}|\hookrightarrow\mathscr{B}^{\mathrm{op}}\times\mathscr{A}$,
and it is conservative. We denote the associated 2-monad by $\mathsf{T}$.

We now substitute the category $\mathcal C$ in Section~\ref{sec:natur-diag-fill} by a
2-category $\mathscr K$, and make the modules into \Cat-enriched modules. So
${\dd}(f,g)$ is now the category with objects commutative
squares with a diagonal filler as depicted in~\eqref{eq:34},
with morphisms, from an object with diagonal $d$ to another with diagonal $d'$,
given by 2-cells $d\Rightarrow d'$ in $\K$; in other words, ${\dd}(f,g)$ is
isomorphic to the category $\K(\cod f,\dom g)$.
Given 2-functors $U\colon\mathscr A\to\mathscr K^{\mathbf2}$ and
$V\colon\mathscr B\to\mathscr K^{\mathbf2}$, we define ${\dd}(U,V)$
% and ${\sss}(U,V)$
in the same way as we did in Section~\ref{sec:natur-diag-fill} in
the case of ordinary categories, ie ${\dd}(U,V)(a,b)=\K^\two(Ua,Vb)$, with the difference that now the modules are
$\Cat$-enriched.

\begin{df}
  \label{df:7}
  \begin{enumerate}
  \item \label{item:11} A \emph{lifting operation} for a pair of 2-functors $U$,
    $V$ into $\K^\two$ is a \Cat-module that is a section of ${\dd}(U,V)\to
    {U^*\cdot{}V_*}$; in other words, it is a section in
    $\Cat\text-\mathbf{Mod}(\mathscr{A},\mathscr{B})$
    --~the reader may want to compare with Definition~\ref{df:5}.
  \item \label{item:12} A \emph{lax natural lifting operation} for $U$, $V$ is a
    section of ${\dd}(U,V)\to{U^*\cdot{}V_*}$ in
    $\mathsf{T}\text-\mathrm{Alg}_c$, the 2-category of $\mathsf{T}$-algebras
    and oplax morphisms, also known as colax morphisms, for the 2-monad $\mathsf{T}$ on $\Cat^{|\mathscr
      B|\times|\mathscr A|}$ whose algebras are $\Cat$-modules --~an explicit
    description can be found below.
  \end{enumerate}
\end{df}

An object of the 2-category $\mathsf{T}\text-\mathrm{Alg}_c$ is a \Cat-module $\varphi\colon \mathscr
A\tor\mathscr B$, while a morphism $t\colon\varphi\to\psi$ is a family of
morphisms $t_{b,a}\colon \varphi(b,a)\to\psi(b,a)$ that is oplax with respect to
the action of $\mathscr A$ and $\mathscr B$. This means that, given a morphism $f\colon a\to a'$ in
$\mathscr A$, and $g\colon b\to b'$ in $\mathscr B$, there is extra data
\begin{equation}
  \label{eq:40}
  \xymatrixrowsep{.6cm}
  \diagram
  \varphi(b,a')\ar[r]^{t(b,a')}\ar[d]_{\varphi(g,f)}
  \drtwocell<\omit>{^\bar t_{f,g}\hole\hole}&
  \psi(b,a')\ar[d]^{\psi(g,f)}\\
  \varphi(b',a)\ar[r]_{t(b',a)}&
  \psi(b'a)
  \enddiagram
\end{equation}
satisfying coherence axioms.

Each component ${U^*\cdot{}V_*}(a,b)\to{\dd}(U,V)(a,b)$ of the section of
Definition~\ref{df:7} gives a
diagonal filler for each square $Ua\to Vb$ in $\K^\two$. The
oplax morphism structure on the section
can be described as follows. Suppose the morphisms $\alpha\colon a'\to a$ in
$\mathscr A$ and $\beta\colon b\to b'$ in $\mathscr B$ are mapped by $U$ and $V$
to commutative squares in \K
\begin{equation}
  \label{eq:41}
  \xymatrixrowsep{.6cm}
  \diagram
  A'\ar[d]_{Ua'}\ar[r]^x&A\ar[d]^{Ua}\\
  B'\ar[r]^{y}&B
  \enddiagram
  \quad
  \text{and}
  \quad
  \diagram
  C\ar[r]^u\ar[d]_{Vb}&C'\ar[d]^{Vb'}\\
  D\ar[r]^{v}&D'
  \enddiagram
\end{equation}
Consider the diagonal fillers given by the respective components of the section:
\begin{equation}
  \label{eq:42}
  \xymatrixrowsep{.6cm}
  \diagram
  {A}
  \ar[r]^-{h}\ar[d]_{Ua}
  &
  {C}
  \ar[d]^{Vb}
  \\
  {B}
  \ar[r]_-{k}\ar[ur]^d
  &
  {C}
  \enddiagram
  \quad\text{and}\quad
  \diagram
  {A'}
  \ar[r]^-{u\cdot{}h\cdot{}x}\ar[d]_{Ua'}
  &
  {C'}
  \ar[d]^{Vb'}
  \\
  {B'}
  \ar[r]_-{v\cdot{}k\cdot{}y}\ar[ur]^j
  &
  {D'}
  \enddiagram
\end{equation}
Then, the oplax morphism structure on ${U^*\cdot{}V_*}\to{\dd}(U,V)$ provides a 2-cell $\omega=\omega(\alpha,\beta)\colon
j\Rightarrow u\cdot{}d\cdot{}y$, satisfying $(Vb')\cdot{}\omega=1$, $\omega\cdot{} (Ua')=1$, and coherence
conditions that we proceed to describe.
Suppose given an object $d$ of ${\dd}(U,V)(a,b)$ as above, and morphisms in
$\mathscr A$ and $\mathscr B$
\begin{equation}
  \label{eq:45}
  a''\xrightarrow{\alpha'}a'\xrightarrow{\alpha}a
  \quad\text{and}\quad
  b\xrightarrow{\beta}b'\xrightarrow{\beta'}b''
\end{equation}
we have the following diagram, where the dashed arrows are chosen diagonal
fillers.
\begin{equation}
  \label{eq:44}
  \xymatrixcolsep{1.4cm}
  \xymatrixrowsep{1.5cm}\diagram
  A''\ar[d]_{Ua''}\ar[r]^{x'}&
  A'\ar[d]_(.4){Ua'}\ar[r]^x&
  A\ar[d]_(.35){Ua}\ar[r]^h&
  C\ar[d]^(.6){Vb}\ar[r]^{u}&
  C'\ar[d]^{Vb'}\ar[r]^{u'}&
  C''\ar[d]^{Vb''}\\
  B''\ar[r]_{y'}\ar@{..>}[urrrrr]^(.13)e&
  B'\ar[r]_y\ar@{..>}[urrr]|(.18)j&
  B\ar[r]_k\ar@{..>}[ur]^(.7)d&
  D\ar[r]_v&
  D'\ar[r]_{v'}&
  D''
  \enddiagram
\end{equation}
The condition corresponding to the associativity axiom of the oplax morphism
${U^*\cdot{}V_*}\to{\dd}(U,V)$ says that
\begin{equation}
  \label{eq:46}
  \big(e\xrightarrow{\omega(\alpha\cdot{}\alpha',\beta'\cdot{}\beta)}u'\cdot{}u\cdot{}d\cdot{}y\cdot{}y'\big)
  =\big( e\xrightarrow{\omega(\alpha',\beta')}u'\cdot{}j\cdot{}y'
  \xrightarrow{u'\cdot{}\omega(\alpha,\beta)\cdot{}y'}u'\cdot{}u\cdot{}d\cdot{}y\cdot{}y'\big)
\end{equation}
The axiom corresponding to the unit axiom of the oplax morphism $U^*\cdot{}V_*\to{\dd}(U,V)$ says that
$\omega(1,1)=1$.

\subsection{KZ lifting operations}
\label{sec:lax-natural-diagonal}
Having introduced in the previous section lifting operations and lax natural
lifting operations, we now introduce a version of lifting operation that
corresponds to lax orthogonal \textsc{awfs}s, namely, \textsc{kz} lifting
operations.

\begin{df}
  \label{df:8}
  A \textsl{\textsc{kz}} \emph{lifting operation} in $\mathscr K$ for the 2-functors
  $U$, $V$ is a left adjoint section to the morphism ${\dd}(U,V)\to{U^*\cdot{}V_*}$ in the
  2-category $\Cat\text-\mathbf{Mod}(\mathscr A,\mathscr B)$.
\end{df}

In more explicit terms, a \textsc{kz} lifting operation is given by,
for each square~\eqref{eq:37} in $\mathscr K$, a diagonal filler $d(h,k)$, with the
following universal property. For any $d'\colon B\to C$ and any pair of 2-cells
$\alpha$, $\beta$ satisfying
\begin{equation}
  \label{eq:38}
  \diagram
  A\rtwocell^h_{d'\cdot{}Ua}{\alpha}&C\ar[r]^{Vb}&D
  \enddiagram
  =
  \diagram
  A\ar[r]^{Ua}&B\rtwocell^{k}_{Vb\cdot{}d'}{\beta}&D
  \enddiagram
\end{equation}
there exists a unique 2-cell $\gamma\colon d(h,k)\Rightarrow d'$ such that $\gamma\cdot{}Ua=\alpha$
and $Vb\cdot{}\gamma=\beta$.
\begin{equation}
  \label{eq:124}
  \xymatrixcolsep{2cm}
  \diagram
  A\ar[r]^h\ar[d]_{Ua}&B\ar[d]^{Vb}\\
  C\ar[r]_k\urtwocell^{d(h,k)\hole\hole}_{d'}{\hole\exists !\gamma}&D
  \enddiagram
\end{equation}
This universal property makes, by the usual
argument, $d(h,k)$ functorial in $(h,k)$: for any pair of 2-cells $\mu\colon
h\Rightarrow h'$ and $\kappa\colon k\Rightarrow k'$ such that $Vb\cdot
\mu=\kappa\cdot Ua$, there is a 2-cell $d(\mu,\kappa)\colon d(h,k)\Rightarrow d(h',k')$.
Furthermore, the diagonal fillers $d(h,k)$ must be 2-natural in
$a$ and $b$, in the following sense. % If $\alpha\colon a'\to a$ is a 2-cell in
% $\mathscr{A}$, sent by $U$ to the square $(x,y)\colon Ua'\to Ua$, and
% $\beta\colon b\to b'$ is sent by $V$ to the square $(u,v)\colon Vb\to Vb'$, then
Suppose that $\Gamma$ is a 2-cell in $\mathscr{A}$ and $\Theta$ a 2-cell in
$\mathscr{B}$, sent by $U$ and $V$ to 2-cells in $\K^\two$ as depicted.
\begin{equation}
  \label{eq:96}
  \diagram
  \cdot\rtwocell^\alpha_{\alpha'}{\Gamma}&\cdot
  \enddiagram
  \overset{U}{\longmapsto}
  \diagram
  \cdot\rtwocell^x_{x'}{\gamma}\ar[d]_{Ua'}&\cdot\ar[d]^{Ua}\\
  \cdot\rtwocell^y_{y'}{\delta}&\cdot
  \enddiagram
  \qquad
  \diagram
  \cdot\rtwocell^\beta_{\beta'}{\Theta}&\cdot
  \enddiagram
  \overset{V}{\longmapsto}
  \diagram
  \cdot\rtwocell^u_{u'}{\theta}\ar[d]_{Vb}&\cdot\ar[d]^{Vb'}\\
  \cdot\rtwocell^v_{v'}{\tau}&\cdot
  \enddiagram
\end{equation}
The 2-naturality of the lifting operation means that there is an equality of
2-cells $\theta\cdot d(h,k)\cdot\delta=d(\theta\cdot h\cdot\gamma,\tau\cdot
k\cdot\delta)$.
\begin{equation}
  \label{eq:97}
  \diagram
  \cdot\rtwocell^x_{x'}{\gamma}\ar[d]_{Ua'}
  &
  \cdot\ar[d]\ar[r]^h
  &
  \cdot\rtwocell^u_{u'}{\theta}\ar[d]
  &
  \cdot\ar[d]^{Vb'}
  \\
  \cdot\rtwocell^y_{y'}{\delta}
  &
  \cdot\ar[r]_k\ar[ur]|{d(h,k)}
  &
  \cdot\rtwocell^v_{v'}{\tau}
  &
  \cdot
  \enddiagram
  =
  \xymatrixcolsep{3cm}
  % \diagram
  % \cdot\rtwocell^{u\cdot h\cdot x}_{u'\cdot h\cdot x'}\ar[d]_{Ua'}
  % &
  % \cdot\ar[d]^{Vb'}
  % \\
  % \cdot\rtwocell^{v\cdot k\cdot y}_{v'\cdot k\cdot y'}
  % \urtwocell^{d()}
  % &
  % \cdot
  % \enddiagram
  \diagram
  \cdot\ar[d]_{Ua'}
  &{}&
  \cdot\ar[d]^{Vb'}
  \\
  \cdot\ar@/^14pt/[urr]^{d(u\cdot h\cdot x,v\cdot k\cdot y)}
  \ar@/_14pt/[urr]_{d(u'\cdot h\cdot x',v'\cdot k\cdot y')}
  \urrtwocell<\omit>{\omit d(\theta\cdot h\cdot\gamma,\tau\cdot k\cdot\delta)}
  \urtwocell<\omit>{<2>}
  &&
  \cdot
  \enddiagram
\end{equation}

\begin{df}
  \label{df:4}
  A \emph{lax natural} \textsl{\textsc{kz}} \emph{lifting operation} in $\mathscr K$ for the 2-functors
  $U$, $V$ is a left adjoint section to the morphism ${\dd}(U,V)\to{U^*\cdot{}V_*}$ in the
  2-category $\Cat^{\lvert\mathscr{B}\rvert\times\lvert\mathscr{A}\rvert}$.
\end{df}

This means that a lax natural \textsc{kz} lifting operation is given by a
left adjoint section for each component
\begin{equation}
  \label{eq:24}
  {\dd}(U,V)(a,b)\longrightarrow\K^\two(Ua,Vb)\qquad a\in \mathscr A,\
  b\in\mathscr B.
\end{equation}
More explicitly, it is given by a choice, for each square $(h,k)\colon Ua\to
Vb$, of a diagonal filler $d(h,k)\colon \cod(Ua)\to\dom(Vb)$ with the property
that 2-cells $d(h,k)\Rightarrow d'\colon\cod(Ua)\to\dom(Vb)$ are in bijection with
2-cells $(h,k)\Rightarrow (d'\cdot{}Ua,Vb\cdot{}d')$. In other words, the same universal
property of \textsc{kz} lifting operation, except that the chosen diagonals
$d(h,k)$ need not be natural in $a$, $b$.

\begin{rmk}
  \label{rmk:4}
  A lax natural \textsc{kz} lifting operation equates
  to providing, for each $a\in\mathscr A$ and $b\in\mathscr B$, with $Ua\colon A\to
  B$ and $Vb\colon C\to D$, a left adjoint section of the usual comparison functor
  \begin{equation}
    \label{eq:43}
    \mathscr K(B,C)\longrightarrow \mathscr K(A,C)\times_{\mathscr K(A,D)}
    \mathscr K(B,D).
  \end{equation}
  However, the presentation using modules effortlessly yields more,
  as discussed below.
\end{rmk}

\begin{prop}
  \label{prop:5}
  \begin{enumerate}
  \item \label{item:15} Each {\normalfont \textsl{\textsc{kz}}} lifting
    operation is also a lax natural {\normalfont \textsl{\textsc{kz}}} lifting
    operation.
  \item \label{item:16} Each {lax natural} {\normalfont \textsl{\textsc{kz}}}
    lifting operation is also a {lax natural} lifting operation.
  \end{enumerate}
\end{prop}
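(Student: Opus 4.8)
The plan is to keep track of the single morphism
\[ p\colon {\dd}(U,V)\longrightarrow U^*\cdot{}V_* \]
in each of three ambient settings. Recall that $\Cat\text-\mathbf{Mod}(\mathscr A,\mathscr B)$ is the 2-category $\mathsf{T}\text-\mathrm{Alg}_s$ of strict algebras for the 2-monad $\mathsf{T}$, that the forgetful 2-functor $W\colon\mathsf{T}\text-\mathrm{Alg}_s\to\Cat^{\lvert\mathscr B\rvert\times\lvert\mathscr A\rvert}$ is 2-monadic, in particular a 2-functor, and that a \emph{left adjoint section} of $p$ means a section $s$ (so $p\cdot{}s=1$) together with an adjunction $s\dashv p$ with identity unit, i.e.\ a coretract adjunction. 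Since $p$ is a morphism of \Cat-modules it is a strict morphism of $\mathsf{T}$-algebras, hence in particular an oplax morphism with identity, and so invertible, structure 2-cell; and $Wp$ is its underlying morphism.

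For part~\ref{item:15}: a \textsc{kz} lifting operation is by definition a left adjoint section of $p$ taken in $\mathsf{T}\text-\mathrm{Alg}_s$. As $W$ is a 2-functor it carries the adjunction $s\dashv p$, with its unit and counit, to an adjunction $Ws\dashv Wp$ with identity unit, and it carries $p\cdot{}s=1$ to $Wp\cdot{}Ws=1$; thus $Ws$ is a left adjoint section of $Wp$ in $\Cat^{\lvert\mathscr B\rvert\times\lvert\mathscr A\rvert}$, which is exactly a lax natural \textsc{kz} lifting operation.

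For part~\ref{item:16}: start from a lax natural \textsc{kz} lifting operation, i.e.\ a left adjoint section $s$ of $Wp$ in $\Cat^{\lvert\mathscr B\rvert\times\lvert\mathscr A\rvert}$. Since $p$ is an oplax morphism of $\mathsf{T}$-algebras with invertible structure 2-cell and its underlying morphism $Wp$ admits the left adjoint $s$, doctrinal adjunction (Proposition~\ref{prop:4}) shows that $p$ has a left adjoint in $\mathsf{T}\text-\mathrm{Alg}_c$; by the mate construction underlying that proposition this left adjoint may be taken to be $s$ itself, carrying the oplax $\mathsf{T}$-algebra morphism structure $\bar s$ obtained as the mate of the (identity) structure 2-cell of $p$ under the adjunctions $s\dashv Wp$ and $Ts\dashv T(Wp)$, and then $(s,\bar s)\dashv p$ in $\mathsf{T}\text-\mathrm{Alg}_c$ with the same unit and counit as $s\dashv Wp$. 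Because that unit is an identity, $(s,\bar s)$ is a section of $p$ in $\mathsf{T}\text-\mathrm{Alg}_c$, hence a lax natural lifting operation for $U$, $V$.

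The step I expect to require genuine, if modest, care is the passage, in the last sentence, from ``$(s,\bar s)$ is a left adjoint of $p$ with $Wp\cdot{}Ws=1$'' to ``$(s,\bar s)$ is a section of $p$ in $\mathsf{T}\text-\mathrm{Alg}_c$'': this amounts to checking that the composite oplax morphism $p\circ(s,\bar s)$ has identity structure 2-cell, equivalently $p\cdot{}\bar s=1$, which follows by whiskering the defining mate square for $\bar s$ by $p$ and then applying a triangle identity for $s\dashv Wp$ together with the hypothesis that its unit is an identity. The oplax coherence axioms for $\bar s$ and the triangle identities of $(s,\bar s)\dashv p$ are already delivered by Proposition~\ref{prop:4}, so nothing more is needed there.
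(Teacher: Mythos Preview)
Your proof is correct and follows essentially the same approach as the paper: part~\ref{item:15} uses the forgetful 2-functor $W$ to transport the coretract adjunction down, and part~\ref{item:16} invokes doctrinal adjunction (Proposition~\ref{prop:4}) to lift the left adjoint to $\mathsf{T}\text-\mathrm{Alg}_c$. You are in fact more careful than the paper on one point: the paper simply asserts that the lifted left adjoint ``is the definition of lax natural lifting operation'', whereas Definition~\ref{df:7}\ref{item:12} requires a \emph{section} in $\mathsf{T}\text-\mathrm{Alg}_c$, and you explicitly verify that the composite oplax morphism $p\circ(s,\bar s)$ has identity structure 2-cell via $p\cdot\bar s=1$, using the triangle identity $p\cdot\varepsilon=1$ that follows from the unit being an identity.
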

\begin{proof}
  The first part of the statement follows from the existence of a forgetful
  2-functor
  \begin{equation}
    \Cat\text-\mathbf{Mod}(\mathscr{A},\mathscr{B}) \longrightarrow
    \Cat^{|\mathscr{A}|\times|\mathscr{B}|}.\label{eq:126}
  \end{equation}
  Then, given 2-functors $U\colon\mathscr{A}\to\K^\two$ and
  $V\colon\mathscr{B}\to\K^\two$, a left adjoint section to the canonical morphism
  $\dd(U,V)\to U^*\cdot V_*$ in $\Cat\text-\mathbf{Mod}(\mathscr{A},\mathscr{B})
  $ is also a left adjoint section in $\Cat^{|\mathscr{A}|\times|\mathscr{B}|}$.

  For the second part of the statement, one needs to use the fact that the
  2-functor~\eqref{eq:126} is monadic; this means that there is a 2-monad
  $\mathsf{T}$ on $\Cat^{|\mathscr{A}|\times|\mathscr{B}|}$ and that
  $\Cat\text-\mathbf{Mod}(\mathscr{A},\mathscr{B})$ is the 2-category
  $\mathsf{T}\text-\mathrm{Alg}_s$ of $\mathsf{T}$-algebras and strict
  morphisms. A lax natural \textsc{kz} lifting operation is a left adjoint section
  of~\eqref{eq:126} in the 2-category
  $\Cat^{|\mathscr{A}|\times|\mathscr{B}|}$. Since \eqref{eq:126}~is a strict
  morphism of $\mathsf{T}$-algebras, the \emph{doctrinal adjunction}
  Proposition~\ref{prop:4} ensures that \eqref{eq:126}~has a left adjoint in
  $\mathsf{T}\text-\mathrm{Alg}_c$, which is the definition of lax natural
  lifting operation --~Definition~\ref{df:7}.
\end{proof}
\begin{lemma}
  \label{l:3}
  {\normalfont\textsl{\textsc{kz}}} lifting operations are unique up to
  canonical isomorphism. More precisely, if the diagonal filler $d(h,k)$ and
  $d'(h,k)$ define two {\normalfont\textsl{\textsc{kz}}} lifting operations
  between 2-functors $U\colon\mathscr{A}\to\K^\two\leftarrow\mathscr{B}\colon
  V$, then there exists a unique 2-cell $\gamma(h,k)\colon d(h,k)\Rightarrow
  d'(h,k)$ such that $Vb\cdot\gamma(h,k)=1$ and $\gamma(h,k)\cdot Ua=1$, as
  depicted in~\eqref{eq:124}. Furthermore, $\gamma$ is invertible.
\end{lemma}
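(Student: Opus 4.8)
The plan is to read the lemma directly off the universal property that characterises a \textsc{kz} lifting operation --~the one displayed right after Definition~\ref{df:8} and pictured in~\eqref{eq:124}~-- applied with the \emph{second} chosen filler playing the role of the target $d'$. Write $p\colon {\dd}(U,V)\to U^*\cdot{}V_*$ for the canonical module morphism, and fix a square $(h,k)\colon Ua\to Vb$ in $\K^\two$, so that $Ua\colon A\to B$, $Vb\colon C\to D$, $h\colon A\to C$, $k\colon B\to D$. Since $d'(-,-)$ is in particular a section of $p$, the filler $d'(h,k)\colon B\to C$ satisfies $d'(h,k)\cdot{}Ua=h$ and $Vb\cdot{}d'(h,k)=k$; consequently the pair $\alpha=1_h$, $\beta=1_k$ meets the compatibility condition~\eqref{eq:38} with $d':=d'(h,k)$, since both whiskerings appearing there collapse to $1_{Vb\cdot{}h}=1_{k\cdot{}Ua}$. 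The universal property of the \textsc{kz} lifting operation $d$ now yields a \emph{unique} 2-cell $\gamma(h,k)\colon d(h,k)\Rightarrow d'(h,k)$ with $\gamma(h,k)\cdot{}Ua=1$ and $Vb\cdot{}\gamma(h,k)=1$. This is exactly the existence-and-uniqueness assertion of the lemma.

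For invertibility I would rerun the argument with the roles of $d$ and $d'$ exchanged, obtaining $\gamma'(h,k)\colon d'(h,k)\Rightarrow d(h,k)$ with the same two normalisation identities. Then $\gamma'(h,k)\circ\gamma(h,k)$ is an endo-2-cell of $d(h,k)$ whose whiskerings by $Ua$ and by $Vb$ are both identities; applying the universal property of $d$ once more, this time with target $d':=d(h,k)$ and $\alpha,\beta$ the identities, and observing that $1_{d(h,k)}$ is itself such an endo-2-cell, the uniqueness clause forces $\gamma'(h,k)\circ\gamma(h,k)=1$. Symmetrically $\gamma(h,k)\circ\gamma'(h,k)=1$, so $\gamma(h,k)$ is invertible with inverse $\gamma'(h,k)$.

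Although the statement only asks for the pointwise 2-cells, I would add a remark that the family $\gamma=\{\gamma(h,k)\}$ is automatically a modification between the two \textsc{kz} lifting operations, i.e.\ compatible with the $\mathscr A$- and $\mathscr B$-actions of~\eqref{eq:97}: transporting $\gamma$ around such an action square again produces, at the relevant component, a 2-cell satisfying the two normalisation identities, hence it must equal $\gamma$ there by pointwise uniqueness. Equivalently, the whole lemma is an instance of the abstract fact that a left adjoint of a fixed 1-cell in a 2-category is unique up to a unique invertible 2-cell: a \textsc{kz} lifting operation is precisely a left adjoint to $p$ in $\Cat\text-\mathbf{Mod}(\mathscr A,\mathscr B)$ whose unit is the identity (a coretract adjunction --~the unit at a square works out to $(1_h,1_k)$), and this identity-unit normalisation is exactly what pins the canonical isomorphism $\gamma$ down to satisfy $p\gamma=1$, that is, the displayed identities. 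I do not expect a genuine obstacle; the only points needing care are checking that $\alpha=1_h$, $\beta=1_k$ genuinely satisfy~\eqref{eq:38}, and that the invertibility step correctly invokes pointwise uniqueness in the diagonal case $d=d'$.
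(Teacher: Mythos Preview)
Your argument is correct and is exactly the approach the paper takes: its proof is the single sentence ``This is a direct consequence of the universal property of \textsc{kz} lifting operations,'' and you have spelled out precisely that consequence. Your additional remarks on the modification property and on the abstract uniqueness-of-adjoints viewpoint are accurate and go slightly beyond what the paper records.
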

\begin{proof}
  This is a direct consequence of the universal property of \textsc{kz} lifting
  operations.
\end{proof}

\begin{prop}
  \label{prop:11}
  {\normalfont\textsl{\textsc{kz}}} lifting operations for the 2-functors $U\colon\mathscr
  A\to\K^\two$ and $V\colon\mathscr{B}\to\K^\two$ are, if $U$ has a right
  adjoint $G$, in bijective correspondence with left adjoint sections of the
  morphism $G\cdot{}\id\cdot{}\dom\cdot{}V\to G\cdot{}V$ induced by the counit of $\id\dashv \dom$
  -- with components $G(1,Vb)$ -- in the 2-category $[\mathscr B,\mathscr A]$ of
  2-functors $\mathscr B\to\mathscr A$.
\end{prop}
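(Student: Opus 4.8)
The plan is to reduce the statement to a computation with representable $\Cat$-modules, in the spirit of Remark~\ref{rmk:2} and Proposition~\ref{prop:8}, the novelty here being the refinement from plain sections to \emph{left adjoint} sections.

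First I would establish the \Cat-enriched analogue of Remark~\ref{rmk:2}. Exactly as for $\mathbf{Set}$-modules, the adjunction $U\dashv G$ yields a module isomorphism $U^*\cong G_*$; the adjunctions $\cod\dashv\id\dashv\dom$ on $\K^\two$ yield $\cod^*\cong\id_*$ and $\id^*\cong\dom_*$; and the \Cat-version of Lemma~\ref{l:2} identifies $\dd$ with $\id_*\cdot{}\id^*$. Composing these and using $(F\cdot{}H)_*\cong F_*\cdot{}H_*$ gives isomorphisms of \Cat-modules
\[ \dd(U,V)=U^*\cdot{}\dd\cdot{}V_*\cong(G\cdot{}\id\cdot{}\dom\cdot{}V)_*, \qquad U^*\cdot{}V_*\cong(G\cdot{}V)_*, \]
and, just as in Remark~\ref{rmk:2}, one checks that under these identifications the canonical morphism $\dd(U,V)\to U^*\cdot{}V_*$ becomes $\gamma_*$, where $\gamma\colon G\cdot{}\id\cdot{}\dom\cdot{}V\Rightarrow G\cdot{}V$ is the $2$-natural transformation with components $G(1,Vb)$. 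Since a left adjoint section of a $1$-cell in a $2$-category is carried bijectively to a left adjoint section of any isomorphic $1$-cell, \kz\ lifting operations for $U$, $V$ are precisely the left adjoint sections of $\gamma_*$ in $\Cat\text-\mathbf{Mod}(\mathscr A,\mathscr B)$.

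It then remains to transport left adjoint sections of $\gamma_*$ back to left adjoint sections of $\gamma$. Here I would use the fact, recalled just before Definition~\ref{df:7}, that $F\mapsto F_*$ is a $2$-functor from $[\mathscr B,\mathscr A]$ to $\Cat\text-\mathbf{Mod}(\mathscr A,\mathscr B)$ which is an isomorphism onto the full sub-$2$-category of representable modules: it is injective on objects by the $2$-categorical Yoneda lemma, bijective on $1$-cells (every module morphism $F_*\to G_*$ is $\alpha_*$ for a unique $2$-natural $\alpha$), and, by the same Yoneda argument, bijective on modifications. A $2$-functor which is an isomorphism onto a full sub-$2$-category reflects and preserves every piece of structure defined by $2$-cells and equations between them; in particular it sets up a bijection between adjunctions $s\dashv\gamma$ in $[\mathscr B,\mathscr A]$ and adjunctions $s_*\dashv\gamma_*$ in $\Cat\text-\mathbf{Mod}(\mathscr A,\mathscr B)$. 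Combined with $\gamma_*\cdot{}s_*=(\gamma\cdot{}s)_*$ and $1_*=1$, which reduce the section condition to $\gamma\cdot{}s=1$, this gives the desired bijection, completing the proof.

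The step I expect to require the most care is the first one: carrying out the chain of module isomorphisms \Cat-enriched and verifying that it is compatible with the canonical ``forget the filler'' morphism $\dd\to 1_{\K^\two}$, i.e.\ reproving in the enriched setting the explicit identifications of Lemma~\ref{l:2} and Remark~\ref{rmk:2}, in particular that the counit of $\id_*\dashv\id^*$ corresponds to the natural transformation sending a chosen inner triangle to its outer commutative square. The only other point needing a word of justification is that $F\mapsto F_*$ is bijective on modifications; this is stated for $1$-cells in the excerpt and extends by the standard $2$-Yoneda computation, which I would spell out if required.
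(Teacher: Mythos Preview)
Your proposal is correct and follows essentially the same route as the paper: first identify the module morphism $\dd(U,V)\to U^*\cdot V_*$ with $\gamma_*$ via the $\Cat$-enriched version of Remark~\ref{rmk:2}, then use that $F\mapsto F_*$ is a locally fully faithful $2$-functor from $[\mathscr B,\mathscr A]$ into $\Cat\text-\mathbf{Mod}$ to transport left adjoint coretracts back and forth. The paper compresses this into two sentences, invoking the fact that $(-)_*$ is an isomorphism on hom-categories; your more explicit treatment of bijectivity on modifications and of the section condition is exactly what that phrase unpacks to.
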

\begin{proof}
  By the comments about \Cat-modules at the beginning of
  Section~\ref{sec:lax-natural-diagonal-1} and same argument deployed in
  Remark~\ref{rmk:2}, the \Cat-module
  transformation
  \begin{equation}
    U^*\cdot{}{\dd}\cdot{}V_*\longrightarrow U^*\cdot{}V_*\label{eq:11}
  \end{equation}
  corresponds to the 2-natural transformation of the statement
  \begin{equation}
    G\cdot{}\id\cdot{}\dom\cdot{}V\Longrightarrow G\cdot{}V.
    \label{eq:12}
  \end{equation}
  Since the 2-functor from $[\mathscr B,\mathscr A]$
  to $\Cat\text-\mathbf{Mod}(\mathscr B,\mathscr A)$ that sends $F$ to $F_*$ is
  full and faithful --~an isomorphism on hom-categories~--
  \eqref{eq:11}~has a left adjoint coretract if and only if
  \eqref{eq:12} does.
\end{proof}

\begin{prop}
  \label{prop:10}
  Given a lax orthogonal {\normalfont\textsl{\textsc{awfs}}} $(\mathsf{L},\mathsf{R})$ on \K, the 2-natural
  transformation $F^{\mathsf{L}}\cdot{}\id\cdot{}\dom\cdot{}V\Rightarrow F^{\mathsf{L}}\cdot{}V$ induced
  by the counit of\/ $\id\dashv \dom$ has a left adjoint section in
  $[\mathsf{R}\text-\mathrm{Alg}_s,\mathsf{L}\text-\mathrm{Coalg}_s]$, where
  $F^{\mathsf{L}}\colon\K^\two\to\mathsf{L}\text-\mathrm{Coalg}_s$ is the cofree
  coalgebra 2-functor and $V$ the forgetful 2-functor from
  $\mathsf{L}\text-\mathrm{Coalg}_s$.
\end{prop}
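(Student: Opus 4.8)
The plan is to write down the section $\Theta$ by hand as a mate, check the section equation by a triangle identity, and deduce left--adjointness by composing two adjunctions --- one carrying the lax idempotence of $\mathsf{R}$ and one that of $\mathsf{L}$. Throughout write $U\colon\mathsf{L}\text-\mathrm{Coalg}_s\to\K^\two$ for the forgetful 2--functor, so $U\dashv F^{\mathsf{L}}$ with unit $\eta$ and counit $\Phi\colon UF^{\mathsf{L}}=L\Rightarrow 1$; keep $V$ for the forgetful 2--functor $\mathsf{R}\text-\mathrm{Alg}_s\to\K^\two$, $(1,p)\colon L\cdot V\Rightarrow\id\cdot\dom\cdot V$ for the transformation of Remark~\ref{rmk:5}, and $\Xi=F^{\mathsf{L}}\cdot\epsilon\cdot V$ for the transformation of the statement, $\epsilon\colon\id\cdot\dom\Rightarrow 1$ being the counit of $\id\dashv\dom$. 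I would set $\Theta=(F^{\mathsf{L}}\cdot(1,p))\circ(\eta\cdot F^{\mathsf{L}}\cdot V)\colon F^{\mathsf{L}}\cdot V\Rightarrow F^{\mathsf{L}}\cdot L\cdot V\Rightarrow F^{\mathsf{L}}\cdot\id\cdot\dom\cdot V$, the mate of $(1,p)$ under $U\dashv F^{\mathsf{L}}$; this is a morphism of $[\mathsf{R}\text-\mathrm{Alg}_s,\mathsf{L}\text-\mathrm{Coalg}_s]$, as wanted. (Alternatively, by Proposition~\ref{prop:11} one may present the outcome as the canonical \textsc{kz} lifting operation for $U$ and $V$, whose underlying ordinary lifting operation is the one of Example~\ref{ex:3}, $d(h,k)=p\cdot K(h,k)\cdot s$; the mate is more economical.)

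That $\Theta$ is a section is immediate. By the identity recorded in Example~\ref{ex:1}, $\epsilon_g\circ(1,p_g)=(1,g)\circ(1,p_g)=(1,{R}g)=\Phi_g$ for each $\mathsf{R}$--algebra $g$, so $(\epsilon\cdot V)\circ(1,p)=\Phi\cdot V$; hence
\[
  \Xi\circ\Theta=(F^{\mathsf{L}}\cdot\epsilon\cdot V)\circ(F^{\mathsf{L}}\cdot(1,p))\circ(\eta\cdot F^{\mathsf{L}}\cdot V)=(F^{\mathsf{L}}\cdot\Phi\cdot V)\circ(\eta\cdot F^{\mathsf{L}}\cdot V)=\bigl((F^{\mathsf{L}}\cdot\Phi)\circ(\eta\cdot F^{\mathsf{L}})\bigr)\cdot V=1 ,
\]
the last step being a triangle identity for $U\dashv F^{\mathsf{L}}$. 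So $\Xi\circ\Theta=1$ and we are free to take the unit of the envisaged adjunction $\Theta\dashv\Xi$ to be the identity.

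The left--adjointness is the crux. First note the factorisation $\Xi=(F^{\mathsf{L}}\cdot\Phi\cdot V)\circ(F^{\mathsf{L}}\cdot(1,L)\cdot V)$, valid because $\Phi_g\circ(1,Lg)=(1,{R}g)\circ(1,Lg)=(1,g)=\epsilon_g$, with $(1,L)$ the transformation of Remark~\ref{rmk:3}. Thus $\Theta$ and $\Xi$ are, respectively, the composites of the left and of the right legs of two adjunctions: (i) $F^{\mathsf{L}}(1,p)\dashv F^{\mathsf{L}}((1,L)\cdot V)$ with identity counit, obtained by applying the 2--functor $F^{\mathsf{L}}\cdot(-)$ to the retract adjunction of Remark~\ref{rmk:7}, and so encoding the lax idempotence of $\mathsf{R}$; and (ii) $\eta\cdot F^{\mathsf{L}}\cdot V\dashv F^{\mathsf{L}}\cdot\Phi\cdot V$ with identity unit, furnished by the lax idempotence of the 2--comonad $\mathsf{L}$ (Definition~\ref{df:16}), its underlying composite $(F^{\mathsf{L}}\cdot\Phi)\circ(\eta\cdot F^{\mathsf{L}})$ being forced to be the identity by the triangle identity used above. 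Composing (ii) with (i) yields $\Theta\dashv\Xi$, with counit assembled from the identity counit of (i) and the counit of (ii). The point needing care --- and which I expect to be the main obstacle --- is to verify that the resulting composite \emph{unit} is the identity modification of $1_{F^{\mathsf{L}}\cdot V}$, so that $\Theta$ is a left adjoint \emph{section} and not merely a left adjoint of $\Xi$. For this one uses that the unit of the adjunction of Remark~\ref{rmk:7} has, at each $f$, codomain component $\eta_f$ satisfying $({R}f)\circ\eta_f=1$ --- this is exactly the compatibility making $(1,\eta_f)$ a 2--cell of $\K^\two$ --- whence $\Phi_f\circ(1,\eta_f)=1_{\Phi_f}$, and after applying $F^{\mathsf{L}}$ and whiskering by $\eta\cdot F^{\mathsf{L}}\cdot V$ the composite unit collapses to the identity. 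Granting this, $\Theta$ is the sought left adjoint section, and by Proposition~\ref{prop:11} it is the same datum as the expected \textsc{kz} lifting operation from $\mathsf{L}$--coalgebras to $\mathsf{R}$--algebras.
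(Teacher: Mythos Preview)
Your proof is correct and follows essentially the same route as the paper's. Both arguments construct the left adjoint section as the composite of two adjunctions --- one coming from the lax idempotence of $\mathsf{L}$ (your $\eta\cdot F^{\mathsf{L}}\cdot V\dashv F^{\mathsf{L}}\cdot\Phi\cdot V$, the paper's $\Sigma_g\dashv L\Phi_g$) and one from Remark~\ref{rmk:7} (your $F^{\mathsf{L}}(1,p)\dashv F^{\mathsf{L}}((1,L)\cdot V)$, the paper's $L(1,p_g)\dashv L(1,Lg)$) --- and both verify that the composite unit is the identity by the same computation $\Phi_g\cdot(1,\eta_g)=1$, i.e.\ $Rf\cdot\eta_f=1$. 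Your presentation via the mate under $U\dashv F^{\mathsf{L}}$ and your explicit check of the section equation are more detailed than the paper's, but the mathematical content is identical.
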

\begin{proof}
  Given an $\mathsf{R}$-algebra structure $(p_g,1)\colon Rg\to g$ we need to
  exhibit a coretract adjunction in $\mathsf{L}\text-\mathrm{Coalg}_s$ with
  right adjoint $L(1,g)\colon L1_{\dom(g)}\to Lg$. We know from
  Remark~\ref{rmk:7} that there is a coretract adjunction $(1,p_g)\dashv
  (1,{L}g)$, whose unit we denote by $\eta_g$; the same remark points out
  that these adjunctions are 2-natural in $(g,p_g)$. Together with the adjunction
  $\Sigma_g\dashv L\Phi_g$ that exhibits $\mathsf{L}$ as lax idempotent, we
  obtain
  \begin{equation}
    \label{eq:23}
    L(1,p_g)\cdot{}\Sigma_g\dashv L\Phi_g\cdot{}L(1,{L}g)=L(1,g).
  \end{equation}
  The unit of this composition of adjunctions is
  \begin{equation}
    \label{eq:26}
    1=L\Phi_g\cdot{}\Sigma_g\xrightarrow{L\Phi_g\cdot{}L(\eta_g)\cdot{}\Sigma_g}
    L\Phi_g\cdot{}L(1,{L}g)\cdot{}L(1,p)\cdot{}\Sigma_g=1,
  \end{equation}
  which is the identity since $\Phi_g\cdot{}\eta_g=1$ -- again by Remark~\ref{rmk:7}.
\end{proof}

\begin{thm}
  \label{thm:2}
  Each lax orthogonal {\normalfont\textsl{\textsc{awfs}}} $(\mathsf L,\mathsf R)$ on the 2-category $\mathscr K$
  induces
  \begin{enumerate}
  \item \label{item:39} A {\normalfont\textsl{\textsc{kz}}} lifting operation for
    $\mathsf{L}\text-\mathrm{Coalg}_s\to\mathscr K^{\mathbf{2}}$ and
    $\mathsf{R}\text-\mathrm{Alg}_s\to\mathscr K^{\mathbf{2}}$.
  \item \label{item:38} A lax natural {\normalfont\textsl{\textsc{kz}}} lifting operation for
    $U_\ell\colon\mathsf{L}\text-\mathrm{Coalg}_\ell\to\mathscr K^{\mathbf{2}}$ and
    $V_\ell\colon\mathsf{R}\text-\mathrm{Alg}_\ell\to\mathscr K^{\mathbf{2}}$.
  \end{enumerate}
  Moreover, the diagonal fillers are those given by the {\normalfont\textsl{\textsc{awfs}}} in the usual
  way --~\eqref{eq:8}.
\end{thm}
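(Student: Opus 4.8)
The plan is to obtain part~\ref{item:39} by chaining Propositions~\ref{prop:11} and~\ref{prop:10}, to deduce part~\ref{item:38} from it by an observation on objects, and to pin down the diagonal fillers by descending to the underlying $\mathbf{Set}$-enriched lifting operation. For part~\ref{item:39}, the forgetful 2-functor $U\colon\mathsf{L}\text-\mathrm{Coalg}_s\to\K^\two$ has as right adjoint the cofree-coalgebra 2-functor $F^{\mathsf{L}}\colon\K^\two\to\mathsf{L}\text-\mathrm{Coalg}_s$, $g\mapsto(Lg,\Sigma_g)$, so with $V\colon\mathsf{R}\text-\mathrm{Alg}_s\to\K^\two$ the forgetful 2-functor, Proposition~\ref{prop:11} identifies \textsc{kz} lifting operations for $(U,V)$ with left adjoint sections, in $[\mathsf{R}\text-\mathrm{Alg}_s,\mathsf{L}\text-\mathrm{Coalg}_s]$, of the 2-natural transformation $F^{\mathsf{L}}\cdot\id\cdot\dom\cdot V\Rightarrow F^{\mathsf{L}}\cdot V$ induced by the counit of $\id\dashv\dom$. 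Because $(\mathsf{L},\mathsf{R})$ is lax orthogonal, Proposition~\ref{prop:10} supplies exactly such a left adjoint section, and part~\ref{item:39} follows.

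For part~\ref{item:38} I would invoke part~\ref{item:15} of Proposition~\ref{prop:5}: the \textsc{kz} lifting operation just produced is in particular a lax natural \textsc{kz} lifting operation for $(U,V)$, i.e.\ a left adjoint section of $\dd(U,V)\to U^*\cdot V_*$ in $\Cat^{\lvert\mathsf{R}\text-\mathrm{Alg}_s\rvert\times\lvert\mathsf{L}\text-\mathrm{Coalg}_s\rvert}$, which amounts to a left adjoint section of $\dd(Ua,Vb)\to\K^\two(Ua,Vb)$ for each $\mathsf{L}$-coalgebra $a$ and each $\mathsf{R}$-algebra $b$. Since $\mathsf{L}\text-\mathrm{Coalg}_\ell$ and $\mathsf{R}\text-\mathrm{Alg}_\ell$ have the same objects as $\mathsf{L}\text-\mathrm{Coalg}_s$ and $\mathsf{R}\text-\mathrm{Alg}_s$, and since $\dd(U_\ell a,V_\ell b)$ and $\K^\two(U_\ell a,V_\ell b)$ depend only on the underlying $\K^\two$-objects of $a$ and $b$, this identical family of left adjoint sections is a lax natural \textsc{kz} lifting operation for $(U_\ell,V_\ell)$.

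It remains to check that the diagonal fillers are those of~\eqref{eq:8}, and since a \textsc{kz} lifting operation is determined on $1$-cells by its underlying ordinary lifting operation it suffices to compute the latter. Forgetting 2-cells, the section of Proposition~\ref{prop:10} has underlying $1$-cell $L(1,p_g)\cdot\Sigma_g$ for an $\mathsf{R}$-algebra $(g,p_g)$; using naturality of $\Phi$ and the counit axiom $\Phi_{Lg}\cdot\Sigma_g=1$ one finds that under the cofree-coalgebra adjunction this corresponds in $\K^\two$ to the morphism $(1,p_g)$, i.e.\ to the section of~\eqref{eq:133} described in Example~\ref{ex:1}. By Examples~\ref{ex:1} and~\ref{ex:3} the lifting operation attached to that section sends a square $(h,k)$ from an $\mathsf{L}$-coalgebra with structure $(1,s)$ to an $\mathsf{R}$-algebra with structure $(p,1)$ to the filler $p\cdot K(h,k)\cdot s$, which is precisely the one in~\eqref{eq:8}.

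The proof is essentially an assembly of Propositions~\ref{prop:11},~\ref{prop:10} and~\ref{prop:5}, so no deep obstacle is expected; the step that demands the most care is the last, where one must confirm that the successive correspondences --~Proposition~\ref{prop:11} (resting, via the discussion preceding Proposition~\ref{prop:8}, on the cofree-coalgebra adjunction) together with the translations in Examples~\ref{ex:1} and~\ref{ex:3}~-- are compatible with passage to underlying $\mathbf{Set}$-enriched structures, so that the filler read off from the section $(1,p_g)$ really is the one furnished by the \textsc{kz} lifting operation of part~\ref{item:39}.
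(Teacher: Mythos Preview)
Your proof is correct and follows essentially the same route as the paper: part~\ref{item:39} via Propositions~\ref{prop:11} and~\ref{prop:10}, and part~\ref{item:38} by observing that the components of a lax natural \textsc{kz} lifting operation depend only on objects, which are the same for the strict and lax 2-categories of (co)algebras. You add an explicit verification of the ``moreover'' clause about the diagonal fillers --~which the paper's proof leaves unstated~-- and your computation that $L(1,p_g)\cdot\Sigma_g$ corresponds under the cofree-coalgebra adjunction to $(1,p_g)$, via $\Phi_{1_{\dom g}}\cdot L(1,p_g)\cdot\Sigma_g=(1,p_g)\cdot\Phi_{Lg}\cdot\Sigma_g=(1,p_g)$, is correct and recovers the filler of~\eqref{eq:8} through Examples~\ref{ex:1} and~\ref{ex:3}.
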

\begin{proof}
  The first part is a direct consequence of Propositions~\ref{prop:11} and
  \ref{prop:10}. The second part means that there must exist a left adjoint
  coretract to each functor
  \begin{equation}
    {\dd}(U_\ell,V_\ell)((f,s),(g,p))= \K(\cod(f),\dom(g))\longrightarrow
    \K^\two(f,g)\label{eq:27}
  \end{equation}
  where $(f,s)$ is an $\mathsf{L}$-coalgebra and $(g,p)$ an
  $\mathsf{R}$-algebra. We know that such a left adjoint coretract does exist, by
  the first part of the statement, and the proof is complete.
\end{proof}

\begin{rmk}
  \label{rmk:32}
  It may be useful to exhibit the counit of the coretract adjunction in the
  proof of Theorem~\ref{thm:2}, in which \eqref{eq:27}~is the right adjoint,
  even though it is not necessary to prove that result. Let $d$ be a diagonal
  filler for a square
  \begin{equation}
    \label{eq:70}
    \xymatrixrowsep{.4cm}
    \diagram
    A\ar[d]_f\ar[r]^h&
    C\ar[d]^g\\
    B\ar[r]^k&
    D
    \enddiagram
  \end{equation}
  from an $\mathsf{L}$-coalgebra $(f,s)$ to an
  $\mathsf{R}$-algebra $(g,p)$. The diagram on the left below shows the equality
  $K(h,k)=K(1_C,{R}g)\cdot K(1_C,{L}g)\cdot K(h,d)$, while the diagram on
  the right shows that $p={R}{1_C}\cdot K(1_C,p)\cdot\sigma_g$.
  \begin{equation}
    \label{eq:66}
    \xymatrixcolsep{1.3cm}
    \diagram
    \cdot\ar[d]_{{L}f}\ar[r]^h&
    \cdot\ar@{=}[r]\ar[d]^{{L}{1_C}}&
    \cdot\ar@{=}[r]\ar[d]^{L^2g}&
    \cdot\ar[d]^{{L}g}\\
    \cdot\ar[r]^-{K(h,d)}\ar[d]_{{R}f}&
    \cdot\ar[d]^{{R}{1_C}}\ar[r]^{K(1_C,{L}g)}&
    \cdot\ar[r]^{K(1_C,{R}g)}\ar[d]^{{R}{{L}g}}&
    \cdot\ar[d]^{{R}g}\\
    \cdot\ar[r]^d&
    \cdot\ar[r]^{{L}g}&
    \cdot\ar[r]^{{R}g}&
    \cdot
    \enddiagram
    \quad
    \diagram
    \cdot\ar@{=}[r]\ar[d]_{{L}g}&
    \cdot\ar@{=}[r]\ar[d]_{L^2g}&
    \cdot\ar[d]^{{L}{1_C}}\\
    \cdot\ar[r]^{\sigma_g}\ar@{=}[dr]&
    \cdot\ar[d]^{{R}{{L}g}}\ar[r]^{K(1_C,p)}&
    \cdot\ar[d]^{{R}{1_C}}\\
    &\cdot\ar[r]^p&
    \cdot
    \enddiagram
  \end{equation}
  Using these equalities, one can show that the counit
  $p\cdot{}K(h,k)\cdot{}s\Rightarrow d$ given by the \textsc{kz} lifting
  operation can be described by
  \begin{multline}
    \label{eq:403}
    p\cdot{}K(h,k)\cdot{}s=
    {R}{1_C}\cdot{}K(1_C,p)\cdot{}\sigma_g\cdot{}K(1_C,{R}g)\cdot{}K(1_C,{L}g)\cdot{}K(h,d)\cdot{}s
    \Longrightarrow
    \\
    \Longrightarrow
    {R}{1_C}\cdot{}K(1_C,p)\cdot{}K(1_C,{L}g)\cdot{}K(h,d)\cdot{}s
    = {R}{1_C}\cdot{}K(h,d)\cdot{}s
    = d\cdot{}{R}f\cdot{}s=d
  \end{multline}
  where the unlabelled arrow is the one induced by the counit
  $\sigma_g\cdot{}K(1,{R}g)\Rightarrow 1_{Kg}$ that endows the comonad $\mathsf{L}$
  with its lax idempotent structure.
\end{rmk}
Theorem~\ref{thm:2}~(\ref{item:38}) can be rephrased by saying that the usual lifting operation
for $(\mathsf{L},\mathsf{R})$ is, when both $\mathsf{L}$ and
$\mathsf{R}$ are lax idempotent, lax natural with respect to all morphisms
in \K.

\subsection{Lax orthogonal functorial factorisations}
\label{sec:lax-orth-funct}
We have seen in the previous sections that the lifting operation of a
lax orthogonal \textsc{awfs} has the extra structure of a \textsc{kz} lifting
operation. One could ask what extra structure is inherited from a lax orthogonal
\textsc{awfs} to its underlying \textsc{wfs}. Since we work with algebraic factorisations, we have
at our disposal not only mere \textsc{wfs}s but functorial factorisations, and it is for
these that we answer the question.

Let $\mathscr A$, $\mathscr B$ be 2-categories and
$\Cat\text-\mathbf{Mod}(\mathscr B,\mathscr A)$ the 2-category of \Cat-modules
from $\mathscr{B}$ to $\mathscr{A}$. Denote by
$\mathsf{M}$ the 2-monad $(M,\Lambda^{M},\Pi^M)$ on $\Cat\text-\mathbf{Mod}(\mathscr{B},\mathscr{A})^\two$ whose
algebras are morphisms in $\Cat\text-\mathbf{Mod}(\mathscr B,\mathscr A)$ equipped with a left adjoint coretract. A
dual of $\mathsf{M}$ has been described in Section~\ref{sec:basic-example}; more
precisely, if $\mathsf{L}$ is the 2-comonad of Proposition~\ref{prop:19}, whose
algebras are morphisms equipped with a right adjoint retract defined on the 2-category
$(\Cat\text-\mathbf{Mod}(\mathscr B,\mathscr A)^{\mathrm{op}})^\two\cong(\Cat\text-\mathbf{Mod}(\mathscr B,\mathscr A)^\two)^{\mathrm{op}}$, then $\mathsf{M}$ is $\mathsf{L}^\mathrm{op}$. An
algebra for the pointed endo-2-functor $(M,\Lambda^M)$ is a morphism $\alpha\colon\phi\to\psi$ equipped with a
coretract $\sigma\colon\psi\to\phi$ and a 2-cell $m\colon \sigma\cdot{}\alpha\Rightarrow
1$ such that $\sigma\cdot{}m=1$.
This is a dual form of Proposition~\ref{prop:19}~(\ref{item:19}).

\begin{df}
  \label{df:15}
  Consider 2-functors $U$ and $V$ from $\mathscr A$ and $\mathscr B$ into
  $\K^\two$.
  A \emph{lax orthogonality structure} on $U$, $V$ is an $(M,\Lambda^M)$-coalgebra
  structure on the morphism of \Cat-modules $U^*\cdot{}{\dd}\cdot{}V_*\to
  U^*\cdot{}V_*$. Consider a functorial factorisation on \K\ with associated copointed
  endo-2-functor $(L,\Phi)$ and associated pointed endo-2-functor
  $(R,\Lambda)$. A lax orthogonality structure on the functorial factorisation
  is one on $U$, $V$, for $U$ the
  forgetful 2-functor from $(L,\Phi)$-coalgebras and $V$ the forgetful 2-functor
  from
  $(R,\Lambda)$-algebras.
\end{df}
Explicitly, a lax orthogonality structure as in the definition is a choice of
2-natural diagonal fillers $d(a,b)(h,k)\colon\cod(Ua)\to\dom(Vb)$ that is functorial on
squares $(h,k)\colon Ua\to Vb$, and 2-natural in $a\in\mathscr A$, $b\in\mathscr
B$. Furthermore, for any diagonal filler $e$ of $(h,k)$ we are given a 2-cell
$\theta(a,b)(e)\colon d(a,b)(h,k)\Rightarrow e$ that is 2-natural in $e$ and a
modification on $a$, $b$.
\begin{equation}
  \label{eq:9}
  \xymatrixcolsep{1.5cm}
  \diagram
  \cdot\ar[r]^h\ar[d]_{Ua}&
  \cdot\ar[d]^{Vb}\\
  \cdot\ar[r]_k\urtwocell^{d}_e{\hole\theta(e)}&
  \cdot
  \enddiagram
\end{equation}
The 2-cells $\theta(a,b)(e)$ must satisfy
$(Vb)\cdot{}\theta(a,b)(e)=1_k$ and $\theta(a,b)(e)\cdot{}(Ua)=1_h$.
Naturality in $e$ means that for each 2-cell
$\epsilon\colon e\Rightarrow\bar e$ the equality
\begin{equation}
  \label{eq:69}
  \big(\theta(a,b)(\bar e)\big)\big(d(a,b)(\epsilon\cdot{}Ua,Vb\cdot{}\epsilon)\big)
  =
  \epsilon \theta(a,b)(e)
\end{equation}
holds. The modification property for $\theta$ means that,
if $\alpha\colon a'\to a$ and
$\beta\colon b\to b'$ are morphisms in $\mathscr A$ and $\mathscr B$, then
\begin{equation}
  \label{eq:88}
  \dom(V\beta)\cdot{}\theta(a,b)(e)\cdot{}\cod(U\alpha) =
  \theta(a',b')(\dom(V\beta)\cdot{}e\cdot{}\cod(U\alpha)).
\end{equation}
\begin{equation}
  \label{eq:155}
  \xymatrixcolsep{1.5cm}
  \xymatrixrowsep{1.5cm}
  \diagram
  \cdot\ar[d]_{Ua'}\ar[r]^{\dom U\alpha}&
  \cdot\ar[d]_{Ua}\ar[r]^{h}&
  \cdot\ar[d]^{Vb}\ar[r]^{\dom V\beta}&
  \cdot\ar[d]^{Vb'}\\
  \cdot\ar[r]_{\cod U\alpha}&
  \cdot\ar[r]_k\urtwocell^{d}_e{\hole\theta(e)}&
  \cdot\ar[r]_{\cod V\beta}&
  \cdot
  \enddiagram
  =
  \xymatrixcolsep{4cm}
  \xymatrixrowsep{1.5cm}
  \diagram
  \cdot\ar[d]_{Ua'}\ar[r]^{\dom V\beta\cdot h\cdot \dom U\alpha} &
  \cdot\ar[d]^{Vb'}\\
  \cdot\ar[r]_{\cod V\beta\cdot k\cdot\cod U\alpha}
  % \urtwocell^d_{\hole\hole\hole\hole\hole\dom V\beta\cdot e\cdot\cod U\alpha}{}
  \uruppertwocell^d{\theta}\ar[ur]_{\hole\hole\dom V\beta\cdot e\cdot\cod U\alpha}
  &
  \cdot
  \enddiagram
\end{equation}

Observe that there is no reason why $\theta$ should satisfy the extra property
that the endo-2-cell $\theta(a,b)(d(a,b)(h,k))$ of $d(a,b)(h,k)$ be an identity
2-cell.
\begin{rmk}
  In the particular instance when $\mathscr A=\mathscr B=\mathbf{1}$, the 2-functors
  $U$ and $V$ pick out morphisms $f\colon A\to B$ and $g\colon C\to D$ in \K,
  and a lax orthogonality structure for $f$, $g$ can be described
  simply as a functor $D$ that is a section of the canonical comparison functor
  $H$ into the pullback, together with a natural transformation $\theta\colon
  DH\Rightarrow 1$ that satisfies $H\theta=1$. This structure can be described
  as a choice of a diagonal filler $D(h,k)$ for each square $(h,k)$ and a 2-cell
  $\theta(e)\colon D(h,k)\Rightarrow e$ for any other diagonal filler $e$, that
  satisfies $g\cdot{}\theta(e)=1$ and $\theta(e)\cdot{}f=1$.
  \begin{equation}
    \label{eq:92}
    \diagram
    \K(B,C)\ar@<4pt>[r]^-H\ar@<-4pt>@{<-}[r]_-D&
    \K(A,C)\times_{\K(A,D)}\K(B,D)
    \enddiagram
  \end{equation}\label{rmk:17}
\end{rmk}

\begin{prop}
  \label{prop:12}
  The underlying 2-functorial factorisation of a lax orthogonal {\normalfont\textsl{\textsc{awfs}}} carries a
  canonical lax orthogonal structure, whose diagonal fillers are those induced
  by the 2-functorial factorisation in the usual way
  --~as in Example~\ref{ex:3}.
\end{prop}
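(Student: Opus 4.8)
The plan is to obtain the lax orthogonality structure by transporting the \slkz\ lifting operation of Theorem~\ref{thm:2} along two canonical retracts. For a morphism $f$ of $\K$, the cofree $\mathsf{L}$-coalgebra on $f$ is $(Lf,\Sigma_f)$, and the data $(1,s)\colon f\to Lf$ of an $(L,\Phi)$-coalgebra structure on $f$ exhibits $f$ as a retract of $Lf$ in $\K^\two$, with retraction the counit $\Phi_f=(1,{R}f)$ and section $(1,s)$; the relevant identities ${R}f\cdot s=1$ and $s\cdot f={L}f$ are parts of the coalgebra axioms. Dually, the data $(p,1)\colon Rg\to g$ of an $(R,\Lambda)$-algebra structure on $g$ exhibits $g$ as a retract of the free $\mathsf{R}$-algebra $(Rg,\Pi_g)$ in $\K^\two$, with section the unit $\Lambda_g=({L}g,1)$ and retraction $(p,1)$, so that $p\cdot{L}g=1$. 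By Remark~\ref{rmk:18}, $(1,s)$ is exactly the $(L,\Phi)$-coalgebra structure induced on the retract $f$ from $(Lf,\Sigma_f)$, and dually for $g$.

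First I would pin down the diagonal fillers. Given a square $(h,k)\colon f\to g$ from an $(L,\Phi)$-coalgebra to an $(R,\Lambda)$-algebra, apply the \slkz\ lifting operation of Theorem~\ref{thm:2} to the square ${L}f\xrightarrow{\,\Lambda_g\cdot(h,k)\cdot\Phi_f\,}Rg$ between the $\mathsf{L}$-coalgebra $(Lf,\Sigma_f)$ and the $\mathsf{R}$-algebra $(Rg,\Pi_g)$. By Example~\ref{ex:3} its diagonal filler is $\pi_g\cdot K\big(\Lambda_g\cdot(h,k)\cdot\Phi_f\big)\cdot\sigma_f$, which equals $K(h,k)$ because $\pi_g\cdot K(\Lambda_g)=1$ by the unit axiom for $\mathsf{R}$ and $K(\Phi_f)\cdot\sigma_f=1$ by the counit axiom for $\mathsf{L}$. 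Post- and pre-composing $K(h,k)$ with the retract data gives $d(h,k):=p\cdot K(h,k)\cdot s$, the filler of Example~\ref{ex:3}. For an arbitrary diagonal filler $e$ of $(h,k)$, the $1$-cell ${L}g\cdot e\cdot{R}f$ is a diagonal filler of $\Lambda_g\cdot(h,k)\cdot\Phi_f$, so the universal property of the \slkz\ lifting operation produces a unique $2$-cell $\bar\theta(e)\colon K(h,k)\Rightarrow{L}g\cdot e\cdot{R}f$ with $\bar\theta(e)\cdot{L}f=1$ and ${R}g\cdot\bar\theta(e)=1$. I then set $\theta(e):=p\cdot\bar\theta(e)\cdot s$, a $2$-cell $d(h,k)\Rightarrow e$ once one identifies its codomain using $p\cdot{L}g=1$ and ${R}f\cdot s=1$; composing the two identities satisfied by $\bar\theta(e)$ with $s$, with $f$, and with $s\cdot f={L}f$, $p\cdot{L}g=1$ yields $g\cdot\theta(e)=1_k$ and $\theta(e)\cdot f=1_h$, as Definition~\ref{df:15} requires.

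It then remains to verify the coherence axioms of Definition~\ref{df:15}: that $d(h,k)$ is functorial in $(h,k)$ and $2$-natural in $a\in(L,\Phi)\text-\mathrm{Coalg}$ and $b\in(R,\Lambda)\text-\mathrm{Alg}$, that $\theta$ is natural in $e$~\eqref{eq:69}, and that $\theta$ is a modification in $a$, $b$~\eqref{eq:88}. Each of these follows from the corresponding property of the \slkz\ lifting operation of Theorem~\ref{thm:2}, whiskered with the fixed $1$-cells $s$, $p$, ${R}f$, ${L}g$: functoriality and naturality in $e$ use that the \slkz\ lifting operation renders $K(h,k)$ functorial with natural counit, while $2$-naturality and the modification property use, in addition, that a morphism $(x,y)\colon(f',s')\to(f,s)$ of $(L,\Phi)$-coalgebras satisfies $s\cdot y=K(x,y)\cdot s'$, which makes $(1,s)$ compatible with the morphism of cofree $\mathsf{L}$-coalgebras $L(x,y)\colon(Lf',\Sigma_{f'})\to(Lf,\Sigma_f)$, and dually for $(R,\Lambda)$-algebras; this compatibility is precisely Remark~\ref{rmk:18} read for the retracts above. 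The hard part will be this last bookkeeping: checking that the transported $\theta$ is a modification with respect to genuine morphisms of $(L,\Phi)$-coalgebras and $(R,\Lambda)$-algebras, rather than only with respect to morphisms of $\mathsf{L}$-coalgebras and $\mathsf{R}$-algebras. Finally, I would observe that the triangle identity $\theta(d(h,k))=1$ fails in general, since the diagonal filler of $\Lambda_g\cdot(h,k)\cdot\Phi_f$ that corresponds to $d(h,k)$ under this transport is ${L}g\cdot d(h,k)\cdot{R}f$, not $K(h,k)$; so the transported data is genuinely only a lax orthogonality structure, in agreement with the remark preceding the statement.
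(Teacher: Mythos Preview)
Your proposal is correct and follows essentially the same strategy as the paper: transport the \slkz\ lifting operation of Theorem~\ref{thm:2} along the retracts $(1,s)\colon f\to Lf$ and $(p,1)\colon Rg\to g$, using Remark~\ref{rmk:18}. The difference is purely one of packaging. The paper performs the transport once and for all at the level of $\Cat$-modules: it exhibits the morphism $U^*\cdot\dd\cdot V_*\to U^*\cdot V_*$ as a retract, in $\Cat\text-\mathbf{Mod}^\two$, of $(LU)^*\cdot\dd\cdot(RV)_*\to (LU)^*\cdot(RV)_*$ via the retractions $\bar\Sigma^*$ and $\bar\Pi_*$, and then invokes Remark~\ref{rmk:18} for the pointed endo-2-functor $(M,\Lambda^M)$ to get the $(M,\Lambda^M)$-algebra structure on the retract. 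This automatically encodes the 2-naturality in $a$, $b$ and the modification property of $\theta$, so no separate bookkeeping is needed. Your approach unpacks this componentwise, which is why you have to verify the naturality and modification axioms by hand; your identification of the relevant compatibility, namely $s\cdot y=K(x,y)\cdot s'$ for morphisms of $(L,\Phi)$-coalgebras together with the fact that $L(x,y)$ is a morphism of cofree $\mathsf L$-coalgebras, is exactly what makes this work, and is precisely what the module-level retraction $\bar\Sigma^*$ expresses. Your computation that the transported filler is $p\cdot K(h,k)\cdot s$ via $\pi_g\cdot K(\Lambda_g)=1$ and $K(\Phi_f)\cdot\sigma_f=1$ also matches the paper's verification in~\eqref{eq:148}.
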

\begin{proof}
  For an \textsc{awfs} $(\mathsf{L},\mathsf{R})$, consider the forgetful functors $U$ and
  $V$ from, respectively, the 2-categories of $(L,\Phi)$-coalgebras and
  $(R,\Lambda)$-algebras. Denote by $\bar\Sigma=(1,s)\colon U\Rightarrow LU$ the
  $\mathsf{L}$-coalgebra structure of $U$, and $\bar\Pi=(p,1)\colon RV\Rightarrow V$
  the $\mathsf{R}$-algebra structure of $V$.
  In this proof we use the notation introduced in the second paragraph of this
  section: $\mathsf{M}=(M,\Lambda^M,\Pi^M)$ the 2-monad on $\Cat\text-\mathbf{Mod}(\mathsf{R}\text-\mathrm{Alg}_s,\mathsf{L}\text-\mathrm{Coalg}_s)^\two$ whose
  algebras are right adjoint retracts.

  We can form two objects of $\Cat\text-\mathbf{Mod}(\mathsf{R}\text-\mathrm{Alg}_s,\mathsf{L}\text-\mathrm{Coalg}_s)^\two$ depicted as the vertical arrows
  in the square below, induced by the \Cat-module morphism ${\dd}\to 1$
  introduced in~\eqref{eq:54}.
  The morphisms of \Cat-modules $\bar\Sigma^*\colon (LU)^*\to U^*$ and $\bar\Lambda_*\colon
  (RV)_*\to V_*$ induce a morphism in $\Cat\text-\mathbf{Mod}(\mathsf{R}\text-\mathrm{Alg}_s,\mathsf{L}\text-\mathrm{Coalg}_s)^\two$, depicted as the
  commutative diagram in $\Cat\text-\mathbf{Mod}(\mathsf{R}\text-\mathrm{Alg}_s,\mathsf{L}\text-\mathrm{Coalg}_s)$ below.
  \begin{equation}
    \label{eq:61}
    \xymatrixrowsep{.5cm}
    \xymatrixcolsep{1.5cm}
    \diagram
    U^*L^* {}{\dd} {}R_*V_*\ar[d]\ar[r]^-{\bar\Sigma^*{\dd}\bar\Pi_*}&
    U^* {}{\dd} {}V_*\ar[d]\\
    U^*L^* {}R_*V_*\ar[r]^-{\bar\Sigma^*\bar\Pi_*}&
    U^* {}V_*
    \enddiagram
  \end{equation}
  This morphism is a retraction in $\Cat\text-\mathbf{Mod}(\mathsf{R}\text-\mathrm{Alg}_s,\mathsf{L}\text-\mathrm{Coalg}_s)^\two$, since $\bar\Sigma^*$ and
  $\bar\Pi_*$ are retractions with respective sections $\Phi^*$ and $\Lambda_*$.
  Theorem~\ref{thm:2} implies that the object of $\Cat\text-\mathbf{Mod}(\mathsf{R}\text-\mathrm{Alg}_s,\mathsf{L}\text-\mathrm{Coalg}_s)^\two$ depicted
  by the leftmost vertical arrow in the diagram carries a structure of an
  $\mathsf{M}$-algebra.
  Hence the object on the right hand side, as a retract of an
  $\mathsf{M}$-algebra, carries an $(M,\Lambda^M)$-algebra
  structure that makes the retraction~\eqref{eq:61} a morphism of $(M,\Lambda^M)$-algebras. It
  remains to show that the section of
  $U^*\cdot{}{\dd}\cdot{}V_*\to U^*\cdot{}V_*$ so obtained is equal to that induced by the
  functorial factorisation, as described in Example~\ref{ex:3}, for which we
  appeal to Remark~\ref{rmk:18}. The
  induced section is
  \begin{equation}
    \label{eq:148}
    U^* V_*\xrightarrow{U^* \Phi^* \Lambda_* V_*}U^* L^* R_* V_*
    \to U^* L^* {\dd}_\K R_* V_*
    \xrightarrow{(1,s)^* {\dd} (p,1)_*} U^* {\dd} V_*
  \end{equation}
  where the middle morphism is the \textsc{kz} lifting operation for
  $LU$, $RV$. One can verify that the diagonal filler of a square $(h,k)\colon
  f\to g$, where $(f,s)$ is an $(L,\Phi)$-coalgebra and $(g,p)$ and
  $(R,\Lambda)$-algebra, is $p\cdot{}d\cdot{}s$ where $d$ is the diagonal filler of
  $({L}g\cdot{}h,k\cdot{}{R}f)\colon Lf\to Rg$. But
  $d=K(h,k)$, so $p\cdot{}d\cdot{}s$ is precisely the diagonal filler induced by the
  functorial factorisation.
\end{proof}

\section{Algebraic KZ Injectivity}
\label{sec:freeness}
In previous sections we have visited the construction of the universal category
$\mathcal{A}^\pitchfork$ with lifting operations against a functor
$\mathcal{A}\to\C^\two$, and the fact that, for any \textsc{awfs}
$(\mathsf{L},\mathsf{R})$ on $\C$, each $\mathsf{R}$-algebra comes equipped with
a lifting operation against $\mathsf{L}$-coalgebras; in other words, the
existence of a functor
$\mathsf{R}\text-\mathrm{Alg}\to\mathsf{L}\text-\mathrm{Coalg}^\pitchfork$. In
this section we concentrate in the analogous constructions adapted to the case
of lax orthogonal \textsc{awfs}s, where \textsc{kz} lifting operations will play
an important role.

The reader would
recall from Section~\ref{sec:univ-categ-with}, and originally
from~\cite{MR2506256}, the definition of the free category with a lifting operation
$U^\pitchfork{}\colon\mathcal{A}^\pitchfork{}\to\mathcal{C}^\two$ for
$U\colon\mathcal{A}\to\mathcal{C}^\two$. If $U\colon\mathscr A\to\K^\two$ is a
2-functor instead, $\mathscr{A}^\pitchfork{}$ has objects $(g,\phi)$ where
$\phi$ is a section of the morphism $U^*\cdot{}{\dd}\cdot{}g_*\to U^*\cdot{}g_*$ in the
2-category $\Cat\text-\mathbf{Mod}(\mathbf{1},\mathscr{A})$, which is isomorphic
to $[\mathscr{A}^{\mathrm{op}},\Cat]$. Morphisms $(g,\phi)\to(g',\phi')$ are
those morphisms $(u,v)\colon g\to g'$ in $\K^\two$ that are compatible with
the sections, while 2-cells $(u,v)\Rightarrow(\bar u,\bar v)$ are pairs of
2-cells $\alpha\colon u\to \bar u$ and $\beta\colon v\to \bar v$ in $\K$
such that the equality below holds --~we omit the dots that denote composition to
save space.
\begin{equation}
  \label{eq:137}
  \xymatrixcolsep{.6cm}
  \diagram
  U^*{}g_*\ar[r]^-{\phi}&
  U^*{}{\dd}{}g_*
  \rrtwocell^{U^*{}{\dd}(u,v)_*}_{U^*{}{\dd}(\bar u,\bar v)_*} &&
  U^*{}{\dd}{}g'_*
  \enddiagram
  =
  \diagram
  U^*{}g_*\rtwocell^{U^*{}(u,v)_*}_{U^*{}(\bar u,\bar v)_*}&
  U^*{}g_*'\ar[r]^-{\phi'}&
  U^*{}{\dd}{}g'_*
  \enddiagram
\end{equation}
In more elementary terms, $\alpha\cdot{}\phi(a,h,k)=\phi'(a,\alpha\cdot{}h,\beta\cdot{}k)$, for
each $a\in \mathscr A$ and each square $(h,k)\colon Ua\to g$. The 2-functor
$U^\pitchfork{}\colon \mathscr A^\pitchfork{}\to\K^\two$ is the obvious one,
analogous to the case of ordinary categories.

Next we introduce a different construction, the universal 2-category with a
\textsc{kz} lifting operation.
\begin{df}
  Given a 2-functor $U\colon \mathscr A\to\K^\two$ define another
  $U^\lperp\colon\mathscr A^\lperp\to\K^\two$ in the following manner.
  \begin{itemize}[labelindent=*,leftmargin=*]
  \item Its objects are morphisms $g\in\K^\two$ that are \emph{algebraically}
    \textsl{\textsc{kz}} \emph{injective to $U$}, by which we mean that they are equipped with
    a \textsc{kz}~lifting operation for the 2-functors $U$,
    $g\colon\mathbf{1}\to\K^\two$; ie a left adjoint coretract to the morphism
    of \Cat-modules
    $U^*\cdot{}{\dd}\cdot{}g_*\to U^*\cdot{}g_*$. Hence, an object of
    $\mathscr{A}^\lperp$ is an object of $\mathscr A^\pitchfork{}$ equipped with
    the extra structure of a coretract adjunction.
  \item A morphism $g\to g'$ in $\mathscr{A}^\lperp$ is a
    morphism $(h,k)$ in $\K^\two$ such that in the diagram below not only the
    square formed with the right adjoints commutes --~this always holds~-- but
    moreover the diagram represents a morphism of adjunctions; ie the square
    formed by the vertical arrows and the horizontal left adjoints commutes, and
    the vertical morphisms are compatible with the counits.
    \begin{equation}
      \label{eq:18}
      \xymatrixcolsep{1.7cm}
      \diagram
      U^*\cdot{}{\dd}\cdot{}g_*\ar@<-5pt>[r]\ar@{}[r]|-\bot\ar@<5pt>@{<-}[r]
      \ar[d]_{U^*\cdot{}{\dd}\cdot{}(h,k)_*}&
      U^*\cdot{}g_*\ar[d]^{U^*\cdot{}(h,k)_*}\\
      U^*\cdot{}{\dd}\cdot{}g'_*\ar@<-5pt>[r]\ar@{}[r]|-\bot\ar@<5pt>@{<-}[r]&
      U^*\cdot{}g'_*\\
      \enddiagram
    \end{equation}
  \item The 2-cells in $\mathscr A^\lperp$ are those of $\K^\two$. Observe that any
    such 2-cell is automatically compatible with the left adjoints
    in~\eqref{eq:18} --~by Proposition~\ref{prop:19}~\eqref{item:20}.  There are
    obvious forgetful 2-functors $\mathscr A^\lperp\to\mathscr A^\pitchfork$ and
    $\mathscr A^\lperp\to\K^\two$, the first of which is locally fully faithful.
  \end{itemize}

  Dually, given a 2-functor $V\colon\mathscr B\to\K^\two$ define
  $\prescript{\lperpleft}{}{V}\colon \prescript{\lperpleft}{}{\mathscr
    B}\to\K^\two$ by $\prescript{\lperpleft}{}{\mathscr B}=(\mathscr
  B^{\mathrm{op}})^\lperp$ and
  $\prescript{\lperpleft}{}{V}=(V^\mathrm{op})^\lperp$. Here we use the obvious
  isomorphism $(\K^\two)^{\mathrm{op}}\cong (\K^\mathrm{op})^\two$. More
  explicitly, objects of $\prescript{\lperpleft}{}{\mathscr B}$ are
  $f\in\K^\two$ equipped with a \textsc{kz} lifting operation for the 2-functors
  $f\colon \mathbf{1}\to\K^\two$, $V$.
  \label{df:6}
\end{df}
\begin{rmk}
\label{rmk:22}
There is a concise way of describing $\mathscr A^\lperp$, for a small 2-category
$\mathscr{A}$ over $\K^\two$. Let $\mathsf{M}$ be
the 2-monad on the 2-category $\mathscr P(\mathscr A)^\two$ whose algebras
are right adjoint retract morphisms in $\mathscr P(\mathscr A)=[\mathscr
A^{\mathrm{op}},\mathbf{CAT}]$. This 2-monad can be described by performing the
construction of the 2-monad of Section~\ref{sec:basic-example} starting from the
2-category $\mathscr P(\mathscr A)^{\mathrm{op}}$. More explicitly, if $\phi$ is
a morphism in $\mathscr P(\mathscr A)$, then $M\phi$ is the morphism with domain
the co-comma object depicted and whose composition with this co-comma object is
an identity 2-cell.
\begin{equation}
  \label{eq:156}
  \xymatrixrowsep{.4cm}
  \diagram
  \cdot\ar[d]_{\phi}\ar@{=}[r]\drtwocell<\omit>{^}&\cdot\ar[d]\ar@/^/[ddr]^\phi&\\
  \cdot\ar[r]\ar@/_/[drr]_1&\cdot\ar@{..>}[dr]|{M\phi}&\\
  &&\cdot
  \enddiagram
\end{equation}
% Yet another description of $\mathsf{M}$ can be given by saying that the
% component $(M\phi)_a$, which is a functor, has domain the Grothendieck
% construction of the indexed category $\two\to\mathbf{CAT}$ that picks out the
% functor $\phi_a$. Then $(M\phi)_a$ is the projection of this
The \Cat-module morphism
${\dd}\longrightarrow 1_{\K^\two}$ can be equivalently described as a 2-functor
\begin{equation}
  \label{eq:195}
  E\colon \K^\two\longrightarrow \mathscr P(\K^\two)^\two
\end{equation}
that sends $g\in\K^\two$ to ${\dd}(-,g)\to\K^\two(-,g)$. Then
$\mathscr A^\lperp$ is the pullback of the 2-category of $\mathsf{M}$-algebras
along $\mathscr P(U^*)^\two E$.
\begin{equation}
  \label{eq:203}
  \diagram
  \mathscr A^\lperp\ar[d]\ar[rr]&&
  \mathsf{M}\text-\mathrm{Alg}_s\ar[d]\\
  \K^\two\ar[r]^-E&
  \mathscr P(\K^\two)^\two\ar[r]^-{\mathscr P(U^*)^\two}&
  \mathscr P(\mathscr A)^\two
  \enddiagram
\end{equation}
\end{rmk}

\begin{rmk}
\label{rmk:23}
One can express the compatibility of the morphism $(h,k)$ in $\mathscr A^\lperp$
with the counits required in Definition~\ref{df:6}
in terms of diagonal fillers. Given a diagonal filler $j$ as on
the left hand side below, the counit provides for a 2-cell
$\varepsilon_j\colon\phi(a,u,v)\Rightarrow j$. The compatibility means that
$h\cdot{}\varepsilon_j=\varepsilon_{h\cdot{}j}$.
\begin{equation}
  \label{eq:19}
  \diagram
  \cdot\ar[d]_{Ua}\ar[r]^u&
  \cdot\ar[d]^g\\
  \cdot\ar[ur]^{j}\ar[r]_-v&
  \cdot
  \enddiagram
  \qquad
  \diagram
  \cdot\ar[r]^u\ar[d]_{Ua}&
  \cdot\ar[d]^g\ar[r]^h&
  \cdot\ar[d]^{g'}\\
  \cdot\urtwocell^\phi_j{}\ar[r]_v&
  \cdot\ar[r]_k&
  \cdot
  \enddiagram
  =
  \xymatrixcolsep{1.5cm}
  \diagram
  \cdot\ar[r]^{h\cdot{}u}\ar[d]_{Ua}&
  \cdot\ar[d]^{g'}\\
  \cdot\urtwocell_{\hole h\cdot{}j}\ar[r]_{k\cdot{}v}&
  \cdot
  \enddiagram
\end{equation}
These constructions are functorial, in the sense that if $F\colon(\mathscr
A,U)\to(\mathscr B,V)$ is a 2-functor over $\K^\two$, there is another 2-functor
$F^\lperp\colon(\mathscr B^\lperp,V^\lperp)\to(\mathscr A^\lperp,U^\lperp)$,
which sends $g\in\K^\two$ equipped with a \textsc{kz} lifting operation for $V,g$
to the induced choice for $U=VF,g$. A 2-functor $\prescript{\lperpleft}{}{F}$
can be similarly defined.
\end{rmk}
\begin{rmk}
  Given $V\colon\mathscr B\to\K^\two$, there is an isomorphism of categories
  between 2-functors $\mathscr B\to\mathscr A^\lperp$ over $\K^\two$ and
  \textsc{kz} lifting operations for the pair of 2-functors $U,V$. Similarly, there is
  an isomorphism of categories between 2-functors $\mathscr
  A\to\prescript{\lperpleft{}}{}{\mathscr B}$ over $\K^\two$ and \textsc{kz} lifting
  operations for the pair of 2-functors $U,V$. We hence have a natural
  isomorphism of \emph{sets}
  \begin{equation}
    \label{eq:165}
    \mathbf 2\text-\Cat/\K^\two((\mathscr A,U),(\mathscr B^\lperp,V^\lperp)) \cong
    \mathbf 2\text-\Cat/\K^\two((\mathscr B,V),(\prescript{\lperpleft}{}{\mathscr A},\prescript{\lperpleft}{}{U}))
  \end{equation}
  so there is an adjunction % $(-)^\lperp\colon \bigl(\mathbf 2\text-\Cat/\K^\two\bigr)^{\mathrm{op}}
  % \longrightarrow \mathbf 2\text-\Cat/\K^\two$ is a right adjoint of
  % $\prescript{\lperpleft}{}{(-)}\colon \mathbf 2\text-\Cat/\K^\two
  % \longrightarrow \bigl(\mathbf 2\text-\Cat/\K^\two\bigr)^{\mathrm{op}}$.
  \begin{equation}
    \xymatrixcolsep{2.5cm}
    \diagram
    \bigl(\mathbf 2\text-\Cat/\K^\two\bigr)^{\mathrm{op}}
    \ar@<7pt>[r]^-{(-)^\lperp}
    \ar@{}[r]|{\top}
    \ar@{<-}@<-7pt>[r]_-{\prescript{\lperpleft}{}{(-)}}
    &
    \mathbf 2\text-\Cat/\K^\two
    \enddiagram
  \end{equation}

  The unit and counit of this adjunction -- or rather, both units -- are
  2-functors $N_U\colon \mathscr A\to\prescript{\lperpleft}{}{(\mathscr
    A^\lperp)}$ and $M_U\colon\mathscr A\to(\prescript{\lperpleft}{}{\mathscr
    A})^\lperp$ commuting with the functors into $\K^\two$. The first one
  corresponds to the tautological \textsc{kz} lifting operation for the pair of
  2-functors $U,U^\lperp$, and the second one to the tautological \textsc{kz} lifting
  operation for $\prescript{\lperpleft}{}{U},U$.\label{rmk:24}
\end{rmk}

\begin{ex}
  \label{ex:4}
  In the case when $U$ is the 2-functor $f\colon \mathbf{1}\to\K^\two$ that
  picks out a morphism $f$, the objects of the 2-category $f^\lperp$ are
  morphisms \emph{algebraically} \textsl{\textsc{kz}} \emph{injective with respect to $f$}. This is a
  slight abuse of language, as a morphism can be algebraically \textsc{kz} injective to
  $f$ in more than one way --~but two such are, of course, isomorphic.
\end{ex}
\begin{lemma}
  \label{l:13}
Given a 2-functor $U\colon\mathscr A\to\K^\two$, a 2-adjunction $U\dashv G$ and
$g\in\K^\two$, there is an isomorphism of 2-categories over $\K^\two$ between
$\mathscr A^\lperp$ and the 2-category described by:
\begin{itemize}
\item Objects are coretract adjunctions $\ell_g\dashv G(1,g) \colon
  G(1_{\dom(g)})\to Gg$ in $\mathscr A$.
\item Morphisms from $\ell_g\dashv G(1,g)$ to $\ell_{\bar g}\dashv G(1,\bar
  g)$ are morphisms $(h,k)\colon g\to\bar g$ in $\K^\two$ such that $G(h,k)$
  defines a morphism of adjunctions: $G(h,k)\cdot{}\ell_g=\ell_{\bar g}\cdot{}G(h,k)$ and
  $G(h,k)$ commutes with the counits.
\item 2-cells $(h,k)\Rightarrow(\bar h,\bar k)$ are 2-cells in $\K^\two$,
  with no additional conditions.
\end{itemize}
\end{lemma}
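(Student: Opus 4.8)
The plan is to combine Lemma~\ref{l:11} with Proposition~\ref{prop:19}~\eqref{item:20}. The key observation is that the $\dd$-module construction is exactly the basic example of Section~\ref{sec:basic-example}, only applied to presheaf categories rather than to $\K$ itself; hence $\mathscr{A}^\lperp$ is, by definition, the pullback~\eqref{eq:203} of $\mathsf{M}\text-\mathrm{Alg}_s$, where $\mathsf{M}$ is the 2-monad whose algebras are right adjoint retract morphisms in $\mathscr{P}(\mathscr{A})$. So the task reduces to unravelling what an $\mathsf{M}$-algebra structure on the canonical morphism $U^*\cdot\dd\cdot g_*\to U^*\cdot g_*$ amounts to, and then translating along the adjunction $U\dashv G$.

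First I would recall from Remark~\ref{rmk:2} that the morphism of \Cat-modules $U^*\cdot\dd\cdot g_*\to U^*\cdot g_*$, when $U$ has a right 2-adjoint $G$, is carried by the 2-natural transformation $G(1,g)\colon G(1_{\dom(g)})\to Gg$ in the hom-2-category $[\mathscr{B},\mathscr{A}]$ (here with $\mathscr{B}=\mathbf{1}$, so simply a morphism $G(1_{\dom(g)})\to Gg$ in $\mathscr{A}$). This is the content of Lemma~\ref{l:11}: an object of $\mathscr{A}^\pitchfork$ over $g$ is precisely a section $s$ of $G(1,g)$. Now, by Proposition~\ref{prop:19}~\eqref{item:20}, applied to the 2-category $\mathscr{A}$ (or more precisely to the relevant hom-2-category, using the dual form of that proposition that describes coretract adjunctions), giving an $(M,\Lambda^M)$-algebra --- equivalently, by the lax idempotence of $\mathsf{M}$, an $\mathsf{M}$-algebra --- structure refining such a section is exactly the same as upgrading $s$ to a coretract adjunction $\ell_g\dashv G(1,g)$. (The counit of that adjunction is forced; the unit is an identity.) This yields the description of objects. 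The point to be careful about is which variance of Proposition~\ref{prop:19} is needed: $\mathscr{A}^\lperp$ asks for a \emph{left} adjoint coretract to $U^*\cdot\dd\cdot g_*\to U^*\cdot g_*$, which under the full-and-faithful embedding $F\mapsto F_*$ corresponds to a left adjoint coretract to $G(1,g)$, i.e.\ a coretract adjunction $\ell_g\dashv G(1,g)$ --- matching the statement.

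For morphisms: by Definition~\ref{df:6} a morphism $g\to\bar g$ in $\mathscr{A}^\lperp$ is a morphism $(h,k)$ in $\K^\two$ whose induced square~\eqref{eq:18} between the two adjunctions is a morphism of adjunctions (compatible with left adjoints and with counits; compatibility with right adjoints is automatic). Translating the vertical arrows of~\eqref{eq:18} along $U\dashv G$ as in Lemma~\ref{l:11} --- where the right adjoint legs become $G(1,g)$ and $G(1,\bar g)$, and the vertical legs become $G(h,k)$ (with $G(h,h)$ on the $\dd$-side, but note $\id\cdot\dom$ applied to $(h,k)$ gives $(h,h)$, so this is again the component of $G$ on a morphism of $\K^\two$) --- the condition becomes exactly: $G(h,k)$ commutes with the left adjoints, $G(h,k)\cdot\ell_g=\ell_{\bar g}\cdot G(h,k)$, and $G(h,k)$ is compatible with the counits. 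That is the stated morphism condition. For 2-cells, Definition~\ref{df:6} says they are just 2-cells of $\K^\two$ with no further condition, and the lemma's third clause records precisely this; the automatic compatibility noted in Definition~\ref{df:6} (citing Proposition~\ref{prop:19}~\eqref{item:20}) ensures nothing is lost.

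The main obstacle I anticipate is bookkeeping of the variance/duality: $\mathsf{M}$ is defined as $\mathsf{L}^{\mathrm{op}}$ (right adjoint retracts rather than left adjoint coretracts, on $(\mathscr{P}(\mathscr{A})^\two)^{\mathrm{op}}$), and one must be scrupulous about whether ``left adjoint coretract to $U^*\cdot\dd\cdot g_*\to U^*\cdot g_*$'' corresponds under $F\mapsto F_*$ (which is \emph{covariant} on hom-categories but sits over a module category) to a left adjoint coretract or to its opposite when expressed in $[\mathscr{B},\mathscr{A}]$ --- exactly the kind of verification performed in the proof of Proposition~\ref{prop:11}. Once that dictionary is pinned down, the rest is a mechanical transport of the explicit descriptions in Proposition~\ref{prop:19}~\eqref{item:19}--\eqref{item:20} across the 2-adjunction $U\dashv G$, using Lemma~\ref{l:11} for the objects-and-morphisms part and the definition of $\mathscr{A}^\lperp$ for the adjunction-refinement part.
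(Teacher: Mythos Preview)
Your proposal is correct and follows essentially the same approach as the paper, which cites Proposition~\ref{prop:11} for the object-level bijection and then observes that the description of morphisms and 2-cells is a direct translation of Definition~\ref{df:6}. Your detour through Remark~\ref{rmk:22} and Proposition~\ref{prop:19} adds conceptual framing, but the core step---transporting the coretract-adjunction structure along the fully faithful $F\mapsto F_*$---is exactly what Proposition~\ref{prop:11} already packages, so the two arguments coincide.
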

\begin{proof}
  By Proposition \ref{prop:11} there is a bijection between objects of $\mathscr
  A^\lperp$ and coretract adjunctions as in the statement. The description of
  the morphisms and 2-cells is a direct translation from the ones of $\mathscr
  A^\lperp$ --~Definition~\ref{df:6}.
\end{proof}
\begin{lemma}
  \label{l:14}
  Assume the conditions of Lemma \ref{l:13}. Then, for any full sub-2-category
  $\mathscr F\subset\mathscr A$ containing the full image of $G$, the functor
  $\mathscr A^\lperp\to\mathscr F^\lperp$ induced by the inclusion is an
  isomorphism.
\end{lemma}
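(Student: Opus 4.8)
The plan is to reduce Lemma~\ref{l:14} to Lemma~\ref{l:12}, exactly as the proof of Lemma~\ref{l:13} reduced $\mathscr A^\lperp$ to the category described in that lemma. The crucial point already recorded is that, by Lemma~\ref{l:13}, an object of $\mathscr A^\lperp$ is a coretract adjunction $\ell_g\dashv G(1,g)$ in $\mathscr A$, its underlying object in $\mathscr A^\pitchfork$ being the section associated by Lemma~\ref{l:11}; morphisms and 2-cells of $\mathscr A^\lperp$ are likewise expressed purely in terms of $G$ and of $\K^\two$. Since $\mathscr F$ is full in $\mathscr A$ and contains the full image of $G$, the composite $UJ\colon\mathscr F\hookrightarrow\mathscr A\to\K^\two$ has right adjoint $H$ with $H=G$ on objects and $JH=G$, as in the proof of Lemma~\ref{l:12}. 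So everything that defines $\mathscr F^\lperp$ via Lemma~\ref{l:13} applied to $UJ\dashv H$ — coretract adjunctions $\ell_g\dashv H(1,g)$ in $\mathscr F$, morphisms given by $(h,k)$ in $\K^\two$ with $H(h,k)$ a morphism of adjunctions compatible with counits, and 2-cells just those of $\K^\two$ — is carried isomorphically, by the fully faithful $J$, onto the corresponding data for $\mathscr A^\lperp$.

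Concretely, first I would spell out $\mathscr F^\lperp$ using Lemma~\ref{l:13}: its objects are coretract adjunctions $\ell_g\dashv H(1,g)\colon H(1_{\dom g})\to Hg$ in $\mathscr F$. Because $J$ is fully faithful and $JH=G$, giving such an adjunction in $\mathscr F$ is the same as giving a coretract adjunction $\ell_g\dashv G(1,g)$ in $\mathscr A$ with all the 1- and 2-cells lying in the full image of $G$ — and in a full sub-2-category an adjunction between objects of $\mathscr F$ automatically has all its structure 2-cells in $\mathscr F$, so this is no restriction beyond the objects $Hg=Gg$, $H1_{\dom g}=G1_{\dom g}$ already being common to both. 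This gives the bijection on objects. Next, for morphisms: a morphism $g\to\bar g$ in $\mathscr F^\lperp$ is $(h,k)$ in $\K^\two$ such that $H(h,k)$ is a morphism of adjunctions and commutes with counits; applying $J$ and using $JH=G$ and the faithfulness of $J$ on 2-cells, this is equivalent to $G(h,k)$ being a morphism of adjunctions compatible with counits — which is exactly a morphism of $\mathscr A^\lperp$ by Lemma~\ref{l:13}. Finally the 2-cells of both 2-categories are simply the 2-cells of $\K^\two$, with no extra conditions, so they agree on the nose. One checks readily that these bijections respect the 2-functors to $\K^\two$ (both send $g$ to $g$) and are functorial in the obvious composites, giving the claimed isomorphism over $\K^\two$.

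The only mild obstacle is bookkeeping: verifying that the phrase ``morphism of adjunctions compatible with the counits'' transports correctly under the fully faithful $J$, i.e.\ that a square of 1-cells commutes in $\mathscr F$ iff it does after applying $J$ (immediate from fidelity) and that counit-compatibility — an equation of 2-cells — likewise transports (again fidelity on 2-cells). There is also the trivial observation that $H$ and $G$ genuinely agree as functors $\K^\two\to\mathscr F\hookrightarrow\mathscr A$, which is the content of $JH=G$ already used in Lemma~\ref{l:12}. None of this requires new ideas beyond those proof; the lemma is the $\lperp$-analogue of Lemma~\ref{l:12} and is proved by the identical strategy, replacing ``section of $G(1,g)$'' throughout by ``coretract adjunction $\ell_g\dashv G(1,g)$''. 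I would therefore present it briefly, citing Lemmas~\ref{l:11}, \ref{l:12} and~\ref{l:13}, and leave the routine 2-cell verifications to the reader, as the paper does in the parallel situation.
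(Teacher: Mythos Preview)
Your proposal is correct and follows essentially the same approach as the paper: apply Lemma~\ref{l:13} to both $U\dashv G$ and $UJ\dashv H$ (with $JH=G$), then use full faithfulness of $J$ to transport coretract adjunctions, morphisms of adjunctions, and counit-compatibility between $\mathscr F$ and $\mathscr A$. The paper's proof is a two-sentence sketch of exactly this argument, so your more detailed bookkeeping on morphisms and 2-cells simply spells out what the paper leaves as ``similarly easy.''
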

\begin{proof}
  If we denote by $J\colon \mathscr F\hookrightarrow\mathscr A$ the inclusion
  and $H=JG\colon \mathscr A\to\mathscr F$ the right adjoint of $UJ$,
  Lemma~\ref{l:13} allows us to describe $\mathscr F^\lperp$ as the 2-category
  with objects coretract adjunctions $\ell_g\dashv H(1,g)\colon
  H(1_{\dom(g)})\to Hg$ in $\mathscr F$. But to give this retract adjunction in
  $\mathscr F$ is equivalent to giving a retract adjunction $\ell_g\dashv G(1,g)$
  in $\mathscr A$. The rest of the proof is similarly easy.
\end{proof}
\begin{cor}
  \label{cor:1}
  If $(\mathsf{L},\mathsf{R})$ is a lax orthogonal
  {\normalfont\textsl{\textsc{awfs}}} on \K, there exists a 2-functor
  \begin{equation}
    \label{eq:28}
    \mathsf{R}\text-\mathrm{Alg}_s\longrightarrow \mathsf{L}\text-\mathrm{Coalg}_s^\lperp
    \text{ over }
    \mathsf{L}\text-\mathrm{Coalg}_s.
  \end{equation}
\end{cor}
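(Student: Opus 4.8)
The plan is to read the corollary off from the \textsc{kz} lifting operation that a lax orthogonal \textsc{awfs} already carries, using the universal property of the construction $(-)^\lperp$. Put $\mathscr A=\mathsf L\text-\mathrm{Coalg}_s$ with its forgetful $2$-functor $U\colon\mathscr A\to\K^\two$, and $\mathscr B=\mathsf R\text-\mathrm{Alg}_s$ with its forgetful $2$-functor $V\colon\mathscr B\to\K^\two$. By Theorem~\ref{thm:2}~(\ref{item:39}), the lax orthogonal \textsc{awfs} $(\mathsf L,\mathsf R)$ provides a \textsc{kz} lifting operation for the pair $(U,V)$. By Remark~\ref{rmk:24} --~which records an isomorphism of categories between $2$-functors $\mathscr B\to\mathscr A^\lperp$ over $\K^\two$ and \textsc{kz} lifting operations for $(U,V)$~-- this is the same datum as a $2$-functor $\mathsf R\text-\mathrm{Alg}_s\to\mathsf L\text-\mathrm{Coalg}_s^\lperp$ commuting with the forgetful $2$-functors into $\K^\two$, and that is the asserted $2$-functor.

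For concreteness I would also unwind what this $2$-functor does on objects. Let $F^{\mathsf L}\colon\K^\two\to\mathsf L\text-\mathrm{Coalg}_s$ be the cofree coalgebra $2$-functor, right adjoint to $U$. By Proposition~\ref{prop:11} a \textsc{kz} lifting operation against $U$ for $g\in\K^\two$ is the same as a left adjoint coretract of $F^{\mathsf L}(1,g)\colon F^{\mathsf L}1_{\dom g}\to F^{\mathsf L}g$, and Proposition~\ref{prop:10} constructs exactly such an adjunction, namely $L(1,p_g)\cdot{}\Sigma_g\dashv F^{\mathsf L}(1,g)$, $2$-naturally in the $\mathsf R$-algebra $(g,p_g)$ --~indeed the adjunction lives in $[\mathsf R\text-\mathrm{Alg}_s,\mathsf L\text-\mathrm{Coalg}_s]$. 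By Lemma~\ref{l:13}, applied with $\mathscr A=\mathsf L\text-\mathrm{Coalg}_s$ and $G=F^{\mathsf L}$, such a coretract adjunction is precisely an object of $\mathsf L\text-\mathrm{Coalg}_s^\lperp$ lying over $g$; the $2$-naturality then says that a strict morphism of $\mathsf R$-algebras goes to a morphism of the associated coretract adjunctions, hence to a morphism of $\mathsf L\text-\mathrm{Coalg}_s^\lperp$, while a $2$-cell of $\mathsf R\text-\mathrm{Alg}_s$ goes to a $2$-cell of $\K^\two$, which is all a $2$-cell of $\mathsf L\text-\mathrm{Coalg}_s^\lperp$ requires (last clause of Lemma~\ref{l:13}).

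The only point that asks for a moment's care is the compatibility with counits built into the definition of a morphism of $\mathscr A^\lperp$ (Definition~\ref{df:6}, displayed in terms of diagonal fillers in Remark~\ref{rmk:23}), equivalently the ``morphism of adjunctions'' clause of Lemma~\ref{l:13}. This is automatic, however: in a $2$-natural family of adjunctions the units and counits assemble into modifications, so every morphism of the indexing $2$-category is compatible with them, and here the indexing $2$-category is $\mathsf R\text-\mathrm{Alg}_s$ with the relevant $2$-naturality being that of the adjunctions of Remark~\ref{rmk:7} and Proposition~\ref{prop:10}. So there is no genuine obstacle and the content is just the assembly described above, which is why the statement is a corollary; dually, the construction $\prescript{\lperpleft}{}{(-)}$ yields in parallel a $2$-functor $\mathsf L\text-\mathrm{Coalg}_s\to\prescript{\lperpleft}{}{(\mathsf R\text-\mathrm{Alg}_s)}$, should it be wanted.
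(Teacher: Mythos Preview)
Your proof is correct. The explicit unwinding in your second and third paragraphs --- using Proposition~\ref{prop:10} to build the coretract adjunction $L(1,p_g)\cdot\Sigma_g\dashv L(1,g)$ and Lemma~\ref{l:13} to identify this with an object of $\mathsf L\text-\mathrm{Coalg}_s^\lperp$ --- is exactly the paper's proof, which says in one sentence that Proposition~\ref{prop:10} together with Lemma~\ref{l:13} give the 2-functor on objects, and that on morphisms and 2-cells it is the identity.

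Your first paragraph adds a slightly different packaging: rather than building the 2-functor by hand, you observe that Theorem~\ref{thm:2}~(\ref{item:39}) already furnishes a \textsc{kz} lifting operation and then invoke the universal property recorded in Remark~\ref{rmk:24} to translate it directly into a 2-functor $\mathsf R\text-\mathrm{Alg}_s\to\mathsf L\text-\mathrm{Coalg}_s^\lperp$ over $\K^\two$. This is a clean way to see why the result is a corollary, and it makes the later equivalence in Theorem~\ref{thm:5}~(\ref{item:3})$\Leftrightarrow$(\ref{item:58}) transparent. The paper's route is more hands-on and has the minor advantage that it names the left adjoint explicitly, which is convenient for the subsequent Theorem~\ref{thm:5}; your abstract route buys conceptual clarity at no real cost, since your second paragraph supplies the same explicit formula anyway.
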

\begin{proof}
Proposition~\ref{prop:10} together with Lemma~\ref{l:13} imply that, in order to
define the 2-functor on objects, we may send an
$\mathsf{R}$-algebra $(p,1)\colon Rg\to g$ to a corectract adjunction
$\ell\dashv L(1,g)$ in
$\mathsf{L}\text-\mathrm{Coalg}_s$. The adjunction is
$L(1,p)\cdot{}\Sigma_g\dashv L(1,g)$, which is the
composition of the adjunctions $\Sigma_g\dashv L\Phi_g$ and $L(1,p)\dashv
L(1,{L}g)$. On morphisms and 2-cells, the 2-functor is defined by the identity.
\end{proof}
\begin{thm}
  \label{thm:5}
  The following are equivalent for an {\normalfont\textsl{\textsc{awfs}}} $(\mathsf{L},\mathsf{R})$ on a 2-category.
  \begin{enumerate}
  \item \label{item:57} $(\mathsf{L},\mathsf{R})$ is a lax orthogonal {\normalfont\textsl{\textsc{awfs}}}.
  \item \label{item:3} There is a \slkz{} lifting operation
    for the forgetful
    2-functors from $\mathsf{L}$-coalgebras and from $\mathsf{R}$-algebras.
  \item \label{item:58} There is a 2-functor
    $\mathsf{R}\text-\mathrm{Alg}_s\to{}
    \mathsf{L}\text-\mathrm{Coalg}_s^\lperp$ making~\eqref{eq:138} commutative.
    Furthermore, this 2-functor is essentially unique.
  \item \label{item:53} There is a 2-functor
    $\mathsf{R}\text-\mathrm{Alg}_s\to{}\mathscr{F}^\lperp $ making the outer
    diagram in \eqref{eq:138} commutative, for any full sub-2-category $\mathscr
    F\subset \mathsf{L}\text-\mathrm{Coalg}_s$ containing the cofree
    $\mathsf{L}$-coalgebras. Furthermore, this 2-functor is essentially unique.
  \end{enumerate}
  \begin{equation}
    \label{eq:138}
    \xymatrixrowsep{.4cm}
    \diagram
    \mathsf{R}\text-\mathrm{Alg}_s\ar@{-->}[r]\ar[dr]&
    \mathsf{L}\text-\mathrm{Coalg}_s^\lperp\ar[d]\ar[r]&
    \mathscr F^\lperp\ar[d]\\
    &
    \mathsf{L}\text-\mathrm{Coalg}_s^\pitchfork\ar[r]&
    \mathscr F^\pitchfork
    \enddiagram
  \end{equation}
\end{thm}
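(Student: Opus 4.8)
The plan is to prove the cycle \eqref{item:57} $\Rightarrow$ \eqref{item:3} $\Rightarrow$ \eqref{item:58} $\Rightarrow$ \eqref{item:57}, together with the equivalence \eqref{item:58} $\Leftrightarrow$ \eqref{item:53}. The first implication is exactly Theorem~\ref{thm:2}~\eqref{item:39}. The equivalence \eqref{item:58} $\Leftrightarrow$ \eqref{item:53} is formal: the cofree $\mathsf{L}$-coalgebras are precisely the objects in the full image of the right adjoint $G=F^{\mathsf{L}}$ of the forgetful 2-functor $U\colon\mathsf{L}\text-\mathrm{Coalg}_s\to\K^\two$, so Lemma~\ref{l:14} provides an isomorphism $\mathsf{L}\text-\mathrm{Coalg}_s^\lperp\cong\mathscr F^\lperp$ over $\K^\two$ and Corollary~\ref{cor:5} an isomorphism $\mathsf{L}\text-\mathrm{Coalg}_s^\pitchfork\cong\mathscr F^\pitchfork$ over $\K^\two$, both compatible with the relevant forgetful 2-functors; under these the two incarnations of diagram~\eqref{eq:138} correspond, as do the two ``essential uniqueness'' clauses.

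For \eqref{item:3} $\Rightarrow$ \eqref{item:58} I would invoke Remark~\ref{rmk:24}: a \slkz{} lifting operation for $U$ and the forgetful $V\colon\mathsf{R}\text-\mathrm{Alg}_s\to\K^\two$ is the same datum as a 2-functor $\mathsf{R}\text-\mathrm{Alg}_s\to\mathsf{L}\text-\mathrm{Coalg}_s^\lperp$ over $\K^\two$. Composing with the forgetful $\mathsf{L}\text-\mathrm{Coalg}_s^\lperp\to\mathsf{L}\text-\mathrm{Coalg}_s^\pitchfork$ of Definition~\ref{df:6} yields a 2-functor over $\K^\two$ into $\mathsf{L}\text-\mathrm{Coalg}_s^\pitchfork$; one checks this is the canonical one attached to the \textsc{awfs} in Section~\ref{sec:univ-categ-with}, which is the commutativity required in~\eqref{eq:138}. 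I expect this check to be routine: by Proposition~\ref{prop:11} the \slkz{} lifting operation is a left adjoint coretract of $F^{\mathsf{L}}\cdot\id\cdot\dom\cdot V\Rightarrow F^{\mathsf{L}}\cdot V$, and, left adjoints being unique, its underlying section of $F_{\mathsf{L}}\cdot\id\cdot\dom\cdot V\Rightarrow F_{\mathsf{L}}\cdot V$ must be the one of Example~\ref{ex:1}. The essential uniqueness in~\eqref{item:58} is Lemma~\ref{l:3}.

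The substance lies in \eqref{item:58} $\Rightarrow$ \eqref{item:57} (equivalently \eqref{item:53} $\Rightarrow$ \eqref{item:57}). The plan is to show that the 2-monad $\mathsf{R}$ is lax idempotent by verifying numeral~\ref{item:9} of Definition~\ref{df:12}; Theorem~\ref{thm:11} then gives that $\mathsf{L}$ is lax idempotent too, hence lax orthogonality. Fix an $\mathsf{R}$-algebra $(f,p_f)$, $f\colon A\to B$. In a putative adjunction $(p_f,1)\dashv\Lambda_f$ in $\K^\two$ the counit is forced to be the identity, since $p_f\cdot Lf=1_A$, so only a unit must be produced. I would feed the \slkz{} lifting operation the square $\Lambda_f\cdot\Phi_f\colon Lf\to Rf$, with top $Lf$ and bottom $Rf$, from the cofree $\mathsf{L}$-coalgebra on $f$ to the free $\mathsf{R}$-algebra on $f$. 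Since~\eqref{eq:138} commutes the chosen diagonal filler is the canonical one of Example~\ref{ex:3}, which equals $\pi_f\cdot K(\Lambda_f\cdot\Phi_f)\cdot\sigma_f=\pi_f\cdot K\Lambda_f\cdot K\Phi_f\cdot\sigma_f$; the comonad counit identity $K\Phi_f\cdot\sigma_f=1_{Kf}$ and the monad unit identity $\pi_f\cdot K\Lambda_f=1_{Kf}$ collapse this to $1_{Kf}$. As $Lf\cdot p_f$ is another diagonal filler of the same square --~$Lf\cdot p_f\cdot Lf=Lf$ and $Rf\cdot Lf\cdot p_f=Rf$~-- the universal property of the \slkz{} lifting operation furnishes a unique 2-cell $\theta\colon 1_{Kf}\Rightarrow Lf\cdot p_f$ with $\theta\cdot Lf=1$ and $Rf\cdot\theta=1$. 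Then $(\theta,1_{1_B})$ is a 2-cell $1_{Rf}\Rightarrow\Lambda_f\cdot(p_f,1)$ in $\K^\two$, and it is the required unit: its two triangle identities reduce to $\theta\cdot Lf=1$, which holds by construction, and to $p_f\cdot\theta=1$, which follows by applying the uniqueness clause to the chosen filler $p_f$ of the square $\Phi_f\colon Lf\to f$ --~also canonical, being $p_f\cdot K\Phi_f\cdot\sigma_f=p_f$~-- since there both $p_f\cdot\theta$ and $1_{p_f}$ are classified by the pair $(1_{1_A},1_{Rf})$. This verifies numeral~\ref{item:9} and closes the cycle.

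The main obstacle is this last implication, and within it two genuine computations. First, one must identify the \textsc{awfs}'s canonical diagonal fillers of the two test squares $\Phi_f\colon Lf\to f$ and $\Lambda_f\cdot\Phi_f\colon Lf\to Rf$ with $p_f$ and $1_{Kf}$; this depends on recognising that $\Lambda_f\cdot\Phi_f$ is the square with components $(Lf,Rf)$, hence $K(\Lambda_f\cdot\Phi_f)=K\Lambda_f\cdot K\Phi_f$, and on the (co)unit axioms of $\mathsf{L}$ and $\mathsf{R}$. Second, one must verify the triangle identities for the adjunction extracted from the universal filler, which truly uses the \emph{uniqueness} half of the universal property of \slkz{} lifting operations --~this is the precise point where a left adjoint, rather than a mere section, is needed. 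A smaller matter, flagged above, is the verification that any \slkz{} lifting operation between the two forgetful 2-functors has the canonical \textsc{awfs} lifting operation underlying it; everything else is bookkeeping with the universal properties of $(-)^\lperp$ and $(-)^\pitchfork$ and with Lemmas~\ref{l:13}, \ref{l:14} and~\ref{l:3}.
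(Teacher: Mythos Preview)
Your proposal follows essentially the paper's route. The substantive step, \eqref{item:58}$\Rightarrow$\eqref{item:57}, is argued exactly as in the paper: the same two test squares $(Lf,Rf)\colon Lf\to Rf$ and $(1,Rf)\colon Lf\to f$, the same unit $\theta$ extracted from the universal property, and the same use of uniqueness to obtain $p_f\cdot\theta=1$; you additionally spell out the computations identifying the canonical fillers as $1_{Kf}$ and $p_f$, which the paper merely asserts. The reductions \eqref{item:58}$\Leftrightarrow$\eqref{item:53} via Lemma~\ref{l:14} and \eqref{item:57}$\Rightarrow$\eqref{item:3} via Theorem~\ref{thm:2} are also the paper's.

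One caveat: your justification for \eqref{item:3}$\Rightarrow$\eqref{item:58} is not right as written. ``Left adjoints being unique'' tells you there is at most one \slkz\ lifting operation, but it does \emph{not} tell you that its underlying section of $F_{\mathsf L}\cdot\id\cdot\dom\cdot V\Rightarrow F_{\mathsf L}\cdot V$ coincides with the canonical one of Example~\ref{ex:1}; a morphism can have many sections, only one of which is a left adjoint, and there is no a priori reason that one is the canonical section produced by the \textsc{awfs}. The paper also passes over this point quickly (``by definition of $\mathscr A^\lperp$''), so the looseness is shared; it does not affect your argument for the key implication \eqref{item:58}$\Rightarrow$\eqref{item:57}, which uses the commutativity of~\eqref{eq:138} precisely to identify those two fillers.
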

\begin{proof}
  There is a bijection between structures in (\ref{item:3}) and those
  in~\eqref{item:58}, by definition of $\mathscr A^\lperp$, in which case both
  are essentially unique since \textsc{kz} lifting operations are unique up to
  isomorphism --~Lemma~\ref{l:3}. The equivalence of~\eqref{item:58}
  and~\eqref{item:53} follows from Lemma~\ref{l:14}, while that
  of~\eqref{item:57} and~\eqref{item:58} was already explained above.

  We now proceed to prove \eqref{item:58}$\Rightarrow$\eqref{item:57}. As it has
  been our convention, we will denote by \K\ the base 2-category, and by $U$ and $V$ the forgetful 2-functors
  from the 2-categories of $\mathsf{L}$-coalgebras and $\mathsf{R}$-algebras,
  respectively.

  Let
  $(g,p)$ be an $\mathsf{R}$-algebra. Its image in
  $\mathsf{L}\text-\mathrm{Coalg}_s^\pitchfork$ can be given as in
  Corollary~\ref{cor:5}, again by $(g,p)$. By hypothesis, $(g,p)$ carries a
  structure of
  an object of $\mathsf{L}\text-\mathrm{Coalg}_s^\lperp$.
  By definition $p\cdot{}{L}g=1$ and
  $g\cdot{}p={R}g$. Consider the diagonal
  \begin{equation}
    \label{eq:140}
    \xymatrixrowsep{.6cm}
    \xymatrixcolsep{1.4cm}
    \diagram
    \cdot
    \ar[r]^-{{L}g}\ar[d]_{{L}g}
    &
    \cdot
    \ar[d]^{{R}g}
    \\
    \cdot
    \ar[r]_-{{R}g}\ar[ur]|-{{L}g\cdot{}p}
    &
    \cdot
    \enddiagram
  \end{equation}
  and note that $Rg$ is an object of
  $\mathsf{L}\text-\mathrm{Coalg}_s^\lperp$, and that the chosen diagonal filler
  of the outer square is the identity morphism. It follows the existence of a
  unique 2-cell $\eta\colon 1\Rightarrow {L}g\cdot{}p$ such that
  $\eta\cdot{}{L}g=1$ and ${R}g\cdot{}\eta=1$. The first of these two equalities is one
  of the triangle identities required to obtain a retract adjunction
  $p\dashv {L}g$. The second of these equalities tells us that, if we
  can prove the other triangle identity, we obtain not only an adjunction in
  \K\ but also a retract adjunction $(p,1)\dashv\Lambda_g$ in $\K^\two$.

  We now show that $p\cdot{}\eta=1$. Consider the pasting below.
  \begin{equation}
    \label{eq:141}
    \xymatrixcolsep{1.4cm}
    \xymatrixrowsep{0.2cm}
    \diagram
    \cdot\ar@{=}[rrr]\ar[ddd]_{{L}g}&&&
    \cdot\ar[ddd]^g\\
    &&\cdot\ar[ur]_{p}&\\
    &\cdot\ar[ur]_{{L}g}&&\\
    \cdot\ar[ur]^{p}\uurruppertwocell^1{\eta}\ar[rrr]_{{R}g}&&&
    \cdot
    \enddiagram
  \end{equation}
  The chosen diagonal filler of the outer diagram is $p$, and $p\cdot{}\eta$ is an
  endo-2-cell of $p$. In addition, $g\cdot{}p\cdot{}\eta={R}g\cdot{}\eta=1$ and
  $p\cdot{}\eta\cdot{}{L}g=1$. By the universal property of \textsc{kz} lifting operations
  spelled out immediately after Definition~\ref{df:8}, it must be $p\cdot{}\eta=1$.
  This finishes the proof that $\mathsf{R}$-algebra structures are left adjoint
  retracts to the components of the unit of $\mathsf{R}$, ie that
  $\mathsf{R}$ is lax idempotent.

  One can show that $\mathsf{L}$ is lax idempotent either by appealing to
  Theorem~\ref{thm:11} or by a duality argument. By taking opposite
  2-categories, and taking into account the isomorphism
  $(\K^{\mathrm{op}})^\two\cong(\K^\two)^{\mathrm{op}}$, the 2-functor
  $\mathsf{L}\text-\mathrm{Coalg}_s\to\prescript{\lperpleft}{}{\mathsf{R}\text-\mathrm{Alg}_s}$,
  which exists by Remark~\ref{rmk:24}, transforms into a 2-functor
  $\mathsf{L}^{\mathrm{op}}\text-\mathrm{Alg}_s\to\mathsf{R}^{\mathrm{op}}\text-\mathrm{Coalg}_s^\lperp$
  that commutes with the 2-functors into
  $\mathsf{R}^{\mathrm{op}}\text-\mathrm{Coalg}_s^\pitchfork$. By the proof
  above we know that $\mathsf{L}^{\mathrm{op}}$ is a lax idempotent 2-monad on
  $(\K^\two)^{\mathrm{op}}$, which is to say that $\mathsf{L}$ is a lax
  idempotent 2-comonad.
\end{proof}

  Theorem \ref{thm:5} has a dual statement of the following form: an \textsc{awfs}
  $(\mathsf{L},\mathsf{R})$ is lax orthogonal if and only if there exists an --
  essentially unique -- 2-functor
  $\mathsf{L}\text-\mathrm{Coalg}_s\to\prescript{\lperpleft}{}{\mathsf{R}\text-\mathrm{Alg}_s}$
  commuting with the respective forgetful functors into
  $\prescript{\pitchfork}{}{\mathsf{R}\text-\mathrm{Alg}_s}$.

\begin{rmk}
  \label{rmk:16}
  For a lax orthogonal \textsc{awfs} $(\mathsf{L},\mathsf{R})$, objects of
  $\mathsf{L}\text-\mathrm{Coalg}_s^\lperp$ are in bijection with normal
  pseudo-$\mathsf{R}$-algebras. Indeed, the proof of Theorem~\ref{thm:5} shows
  that they are in bijection with retract adjunctions $(p,1)\dashv\Lambda_g$ in
  $\K^\two$, which are precisely normal pseudo-$\mathsf{R}$-algebras
\end{rmk}

\section{Simple 2-adjunctions and lax idempotent 2-monads}
\label{sec:extens-transf-a.f}
This section introduces the notion of simple 2-adjunction, which can be thought as
a lax version of that of simple reflection studied in~\cite{MR779198}.

In the same way that one can define a strict monoidal category as a category
with a bifunctor $(-\otimes-)$ that is associative and has a unit object, we may
define a \emph{strict monoidal 2-category} as a 2-category $\mathscr A$ with a
2-functor $\otimes\colon \mathscr A\times\mathscr A\to\mathscr A$ that is
associative and has a unit object $I$. A monoid in $\mathscr A$ is a monoid in
its underlying strict monoidal category, ie an object $T$ with a multiplication
and unit morphisms that satisfy the usual monoid axioms. The main example for us
will be $\mathscr A=\mathrm{End}(\mathscr B)$, the endo-2-morphisms of a
2-category $\mathscr B$, where a monoid is a 2-monad.

\begin{df}
  \label{df:888}
A \emph{lax idempotent monoid} in a strict monoidal 2-category $\mathscr A$ is a monoid
$j\colon I\to T\leftarrow T\otimes T\colon m$ that satisfies conditions
analogous to those of Definition~\ref{df:12} numerals \ref{item:6}, \ref{item:8}
and \ref{item:10}. These are, in turn,
\begin{itemize}[labelindent=\parindent,leftmargin=*]
\item $T\otimes j\dashv m$ with identity unit;
\item $m\dashv j\otimes T$ with identity counit;
\item there is a 2-cell $\delta\colon T\otimes j\Rightarrow j\otimes T\colon
  T\to T\otimes T$ that satisfies $\delta \cdot j=1$ and $m\cdot\delta =1$.
\end{itemize}
\end{df}

We can now make our first statement of the section. The reader would have noticed
that the monoidal 2-categories need not be strict in order for the results to
hold, but we keep the strictness hypothesis for simplicity.
\begin{lemma}
  \label{l:9}
  Let $\mathscr A$ be a monoidal 2-category and $\mathscr C\subseteq\mathscr A$
  a coreflective 2-category, closed under the monoidal structure, and $(T,i,m)$
  a monoid in $\mathscr A$. If $\alpha\colon S\to T$ is the coreflection of $T$
  into $\mathscr C$, then $S$ carries a structure of a monoid $(S,j,n)$ making
  $\alpha$ a monoid morphism. Assume further that $\alpha\otimes S\colon
  S\otimes S\to T\otimes S$ is the coreflection of $T\otimes S$. Then $S$ is lax
  idempotent if there exists a coretract adjunction
  \begin{equation}
    \label{eq:105}
    \big( T\xrightarrow{T\otimes j}T\otimes S\big )\dashv
    \big( T\otimes S\xrightarrow{T\otimes\alpha}T\otimes T\xrightarrow{m} T\big).
  \end{equation}
\end{lemma}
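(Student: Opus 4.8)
The plan is to first equip $S$ with a monoid structure by the standard coreflection argument, and then to verify the third of the equivalent conditions in Definition~\ref{df:888} for $(S,j,n)$, namely to produce a $2$-cell $\delta\colon S\otimes j\Rightarrow j\otimes S$ with $\delta\cdot j=1$ and $n\cdot\delta=1$. The basic tool throughout is that, $\mathscr C$ being a \emph{full} coreflective sub-$2$-category, composing with the coreflection $\alpha\colon S\to T$ gives for every $C\in\mathscr C$ an isomorphism of categories $\mathscr A(C,S)\cong\mathscr A(C,T)$; hence equalities of $1$- and $2$-cells with domain in $\mathscr C$ and codomain $S$ may be tested after composing with $\alpha$, and similarly, once the extra hypothesis is invoked, equalities with codomain $S\otimes S$ may be tested after composing with $\alpha\otimes S$ (which is assumed to be the coreflection of $T\otimes S$).

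Since $I$ and $S\otimes S$ lie in $\mathscr C$, define $j\colon I\to S$ as the unique morphism with $\alpha\cdot j=i$, and $n\colon S\otimes S\to S$ as the unique morphism with $\alpha\cdot n=m\cdot(\alpha\otimes\alpha)$. The monoid axioms for $(S,j,n)$, and the assertion that $\alpha$ is a monoid morphism, then follow by testing after $\alpha$ from the monoid axioms for $(T,i,m)$ (using also $S\otimes S\otimes S\in\mathscr C$); this part is routine. Now write $f=T\otimes j\colon T\to T\otimes S$ and $g=m\cdot(T\otimes\alpha)\colon T\otimes S\to T$ for the two legs of the hypothesised adjunction~\eqref{eq:105}. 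By hypothesis $f\dashv g$ is a coretract adjunction, so its unit is the identity; in particular $g\cdot f=1_T$ (which indeed holds, as $\alpha\cdot j=i$ and $m\cdot(T\otimes i)=1_T$), and its counit $\varepsilon\colon f\cdot g\Rightarrow 1_{T\otimes S}$ satisfies $\varepsilon\cdot f=1_f$ and $g\cdot\varepsilon=1_g$. From strictness and the unit laws of $(T,i,m)$ one reads off the identities
\begin{equation*}
  (\alpha\otimes S)\cdot(S\otimes j)=f\cdot\alpha,\qquad
  (\alpha\otimes S)\cdot(j\otimes S)=i\otimes S,\qquad
  g\cdot(i\otimes S)=\alpha,\qquad
  \alpha\cdot n=g\cdot(\alpha\otimes S),
\end{equation*}
together with $f\cdot i=i\otimes j$.

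Set $\delta':=\varepsilon\cdot(i\otimes S)$; by the third identity above this is a $2$-cell $f\cdot\alpha\Rightarrow i\otimes S$ between morphisms $S\to T\otimes S$. Since $S\in\mathscr C$ and $\alpha\otimes S$ is the coreflection of $T\otimes S$, and using the first two identities, $\delta'$ is the image under $(\alpha\otimes S)\cdot(-)$ of a unique $2$-cell $\delta\colon S\otimes j\Rightarrow j\otimes S$. It remains to check the two equations. For $\delta\cdot j=1$: this is a $2$-cell between morphisms $I\to S\otimes S$ with $I\in\mathscr C$, hence determined by its composite with $\alpha\otimes S$, which is $\delta'\cdot j=\varepsilon\cdot(i\otimes j)=\varepsilon\cdot f\cdot i=1$, using $i\otimes j=f\cdot i$ and $\varepsilon\cdot f=1_f$. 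For $n\cdot\delta=1$: this is an endo-$2$-cell of $1_S$, hence determined by its composite with $\alpha$, which is $\alpha\cdot n\cdot\delta=g\cdot(\alpha\otimes S)\cdot\delta=g\cdot\delta'=g\cdot\varepsilon\cdot(i\otimes S)=1$, using $g\cdot\varepsilon=1_g$. By Definition~\ref{df:888} this shows $(S,j,n)$ is lax idempotent.

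The main obstacle is recognising that one cannot directly "coreflect" the adjunction~\eqref{eq:105}, since neither $T$ nor $T\otimes S$ need lie in $\mathscr C$; the trick is to whisker its counit by $i\otimes S$, which lands one among $1$-cells whose domain and codomain are reachable from $\mathscr C$, so that the coreflection isomorphisms of hom-categories can be applied. Everything else is bookkeeping with the strict monoidal structure and the monoid unit laws, which is precisely why the $\delta$-formulation of lax idempotence, rather than either adjunction formulation, is the convenient target.
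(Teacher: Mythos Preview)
Your proof is correct and follows essentially the same approach as the paper: define $j$ and $n$ via the coreflection, lift $\delta$ through the coreflection $\alpha\otimes S$ to a $2$-cell $\delta'$ built from the counit $\varepsilon$ of the given adjunction, and verify the two axioms using the triangle identities. The only cosmetic difference is that the paper phrases the construction of $\delta'$ as choosing its transpose $\delta''=1$ under the adjunction~\eqref{eq:105}, whereas you write out $\delta'=\varepsilon\cdot(i\otimes S)$ directly; these are the same $2$-cell, and your verifications of $\delta\cdot j=1$ and $n\cdot\delta=1$ are if anything slightly cleaner.
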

\begin{proof}
  The unit $j\colon I\to S$ and multiplication $n\colon S\otimes S\to S$ are
  defined by $\alpha\cdot{}j=i$ and $\alpha\cdot{}n=m\cdot{}(\alpha\otimes \alpha)$.
  We shall define a 2-cell $\delta\colon S\otimes j\Rightarrow j\otimes S\colon
  S\to S\otimes S$. From the fact that $\alpha\otimes S$ is a coreflection, it
  follows that to give $\delta$ is equally well to give a 2-cell
  $\delta'\colon(T\otimes j)\cdot{}\alpha\Rightarrow i\otimes S$, and by the
  adjunction \eqref{eq:105}, to give a 2-cell $\delta''\colon\alpha\Rightarrow
  m\cdot{}(T\otimes \alpha)\cdot{}(i\otimes S)$, which we choose to be the identity.

  The axiom $\delta\cdot{}j=1$ of a lax idempotent monoid follows from the triangle
  identity $\varepsilon\cdot{}(T\otimes j)=1$, where $\varepsilon$ is the counit of
  \eqref{eq:105}: we show that $\delta'\cdot{}j=1$ below.
  \begin{equation}
    \label{eq:53}
    \delta'\cdot{}j=
    ((T\otimes j)\cdot{}m\cdot{}(T\otimes\alpha)\cdot{}\delta'\cdot{}j)(\varepsilon\cdot{}(j\otimes S)\cdot{}j)=
    \varepsilon\cdot{}(T\otimes j)\cdot{}i=1.
  \end{equation}

  It only rests to verify the axiom $n\cdot{}\delta=1$. By the coreflection $\alpha$,
  we have to show
  $1=\alpha\cdot{}n\cdot{}\delta=m\cdot{}(\alpha\otimes\alpha)\cdot{}\delta=m\cdot{}(T\otimes\alpha)\cdot{}\delta'=\delta''$,
  which holds by our choice of $\delta''$.
\end{proof}
Before continuing, it is convenient to introduce some notation. Each endo-2-functor
$S$ of $\K^\two$ corresponds under the isomorphism
$\mathrm{End}(\K^\two)=[\K^\two,\K^\two]\cong[\K^\two,\K]^\two$ to a pair of 2-functors $S_0$,
$S_1\colon\C^\two\to\C$ with a 2-natural transformation $S_0\Rightarrow S_1$. We
denote the component of this natural transformation at $f$ by $Sf\colon S_0f\to
S_1f$. A morphism $S\to T$ in $\mathrm{End}(\K^\two)$ corresponds to a pair of
2-natural transformations $S_0\Rightarrow T_0$ and $S_1\Rightarrow T_1$,
compatible with $S_0\Rightarrow S_1$ and $T_0\Rightarrow T_1$.

A version for categories and functors, as opposite to 2-categories and
2-functors, of the following lemma is contained in~\cite[Prop~4.7]{MR2506256}.
\begin{lemma}
  \label{l:5}
    If \K\ has pushouts, then the category of
    codomain-preserving pointed endo-2-functors $1\backslash
    \mathrm{End}_{\cod}(\K^{\mathbf{2}})$ is a coreflective sub-2-category of the
    2-category of pointed endofunctors $1\backslash
    \mathrm{End}(\K^{\mathbf{2}})$. The coreflection of a 2-monad has a canonical
    structure of a codomain-preserving 2-monad that makes the coreflection counit a monad morphism.
\end{lemma}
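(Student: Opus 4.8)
The plan is to build the coreflection explicitly. Recall that a pointed endo-2-functor $(S,\lambda)$ corresponds, under $\mathrm{End}(\K^\two)\cong[\K^\two,\K]^\two$, to a pair of 2-functors $S_0,S_1\colon\K^\two\to\K$ together with a 2-natural $Sf\colon S_0f\to S_1f$ and point $\lambda_f=(\lambda_f^0,\lambda_f^1)\colon f\to Sf$; codomain-preserving means $S_1=\cod$ and $\lambda_f^1=1_{\cod f}$. Given an arbitrary pointed endo-2-functor $(S,\lambda)$, I would define its coreflection $(S',\lambda')$ by taking $S_1'=\cod$ and letting $S_0'f$ be the pushout
\begin{equation}
  \label{eq:coreflpb}
  \xymatrixrowsep{.6cm}
  \diagram
  S_0 f\ar[r]^{Sf}\ar[d]_{\lambda_f^0}& S_1 f\ar[d]\\
  \dom f\ar[r] & S_0'f
  \enddiagram
\end{equation}
with $S'f\colon S_0'f\to\cod f$ the map induced by $Sf$ paired with $\lambda_f^1\colon\dom f\to S_1f\to\cod f$ (using $S_1=\cod$ would make $S_1f\to\cod f$ the identity; in general the map $S_1f\to\cod f$ is the codomain component of $\lambda$ composed appropriately — I would phrase this as: $S'f$ is the unique map out of the pushout agreeing with $f\colon\dom f\to\cod f$ on one leg and with the codomain-component of $\lambda$ on the other). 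The point $\lambda'_f$ has domain component $1_{\dom f}$ (the second pushout leg) and codomain component $1_{\cod f}$, and the coreflection counit $\alpha\colon S'\Rightarrow S$ has components $(S_0'f\to S_0f,\ S_1f\to\cod f)$ induced by the pushout. Since \K\ has pushouts and these are computed pointwise, $S_0'$ is 2-functorial and everything is 2-natural, so this lives in $1\backslash\mathrm{End}(\K^\two)$.

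Next I would verify the universal property: for any codomain-preserving pointed endo-2-functor $(P,\mu)$, a morphism $(P,\mu)\to(S,\lambda)$ of pointed endo-2-functors factors uniquely through $\alpha$. The key point is that such a morphism is a 2-natural $\beta\colon P_0\Rightarrow S_0$ commuting with the points; since $P$ is codomain-preserving, the point $\mu_f$ has identity codomain component, and the compatibility of $\beta$ with $\lambda^0$ forces exactly the data of a cocone on \eqref{eq:coreflpb}, hence a unique factorisation $P_0\Rightarrow S_0'$. This is routine diagram-chasing once the pushout description is set up, and I expect no 2-dimensional subtleties because the 2-cells are handled by pointwise 2-functoriality of the pushout. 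This establishes the first sentence of the lemma.

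For the second sentence, suppose $\mathsf S=(S,\lambda,\mu)$ is a 2-monad. I would transport the multiplication along the coreflection exactly as in Lemma~\ref{l:9}: the composite $S'S'\to SS\xrightarrow{\mu}S$ factors uniquely through $\alpha\colon S'\Rightarrow S$ because $S'S'$ is codomain-preserving (a composite of codomain-preserving endofunctors is codomain-preserving, and $S'$ is codomain-preserving by construction), giving $\mu'\colon S'S'\Rightarrow S'$; similarly the unit $1\Rightarrow S$ factors through $\alpha$ giving $\eta'$. The monad axioms for $(S',\eta',\mu')$ follow from those of $\mathsf S$ by the uniqueness part of the universal property — each side of an axiom, composed with $\alpha$ (which is a pointwise split mono, or at least jointly monic enough via the codomain component), agrees, and both sides are morphisms out of a codomain-preserving functor, hence equal. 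That $\alpha$ is then a monad morphism is immediate from how $\mu',\eta'$ were defined.

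The main obstacle I anticipate is \textbf{not} the monad-structure transport — that is formal, following the template of Lemma~\ref{l:9} — but rather pinning down the correct pushout diagram \eqref{eq:coreflpb} and the induced map $S'f\to\cod f$ so that $S'$ is genuinely codomain-preserving while still receiving a canonical map to $S$; in particular one must check that the codomain component of the counit $\alpha_f$ is forced to be the correct map $S_1f\to\cod f$ coming from $\lambda$, and that this is compatible with 2-cells. Once the pushout square is correctly identified (dualising the pullback square used for $\mathrm{End}_{\dom}$ in the companion result \cite[Prop~4.7]{MR2506256}), the rest is a sequence of uniqueness arguments. I would also remark that the whole statement is the formal dual, via $(\K^\two)^{\mathrm{op}}\cong(\K^{\mathrm{op}})^\two$ and replacing "pushout" by "pullback", of the analogous domain-preserving statement, so one could alternatively deduce it by duality from \cite[Prop~4.7]{MR2506256} enriched to the 2-categorical setting — but giving the direct pushout construction is cleaner and is what later sections will need.
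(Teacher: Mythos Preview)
Your construction uses the wrong limit. The coreflection is built via a \emph{pullback}, not a pushout; the hypothesis ``\K\ has pushouts'' in the lemma statement is a typo for ``pullbacks''---the paper's own description immediately following the lemma displays a pullback square~\eqref{eq:80}, and every subsequent use (Proposition~\ref{prop:16}, Remark~\ref{rmk:8}, all of Section~\ref{sec:transf-along-left}) assumes pullbacks in \K. Concretely, your pushout diagram does not even type-check: you draw $\lambda_f^0$ as an arrow $S_0 f\to\dom f$, but the point $\lambda_f\colon f\to Sf$ has domain component $\lambda_f^0\colon \dom f\to S_0 f$, going the other way. Even ignoring this, a pushout with apex $S_0'f$ comes with maps \emph{into} it, so it cannot supply the counit component $S_0'f\to S_0f$ you claim. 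A coreflection is a right adjoint to the inclusion, hence is computed by a limit: one takes $R_0f$ to be the pullback of $Tf\colon T_0f\to T_1f$ along the codomain component $\Theta_{1f}\colon\cod f\to T_1f$ of the point; the pullback projections then give $Rf\colon R_0f\to\cod f$ and the counit component $R_0f\to T_0f$, while the point $\Lambda_{0f}\colon\dom f\to R_0f$ is induced by the cone $(f,\Theta_{0f})$.

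Your plan for the second sentence---transporting the monad structure along the coreflection by uniqueness, in the spirit of Lemma~\ref{l:9}---is correct and matches the paper. Note also that the cited \cite[Prop.~4.7]{MR2506256} treats the \emph{same} codomain-preserving case via pullbacks, not a dual domain-preserving case, so there is no dualisation to perform; you were misled by the typo.
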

  Given a pointed endo-2-functor $(T,\Theta)$, its codomain-preserving coreflection
  $(R,\Lambda)$ is given by the
  following pullback square, while the point $\Lambda\colon 1\Rightarrow R$ is
  induced by the universal property.
  The natural transformation $R\Rightarrow T$ with components given by the
  pullback square is
  the counit of the coreflection.
  \begin{equation}
    \label{eq:80}
    \xymatrixrowsep{.4cm}
    \xymatrixcolsep{1.7cm}
    \diagram
    A\ar@/_/[ddr]_f\ar@/^/[drr]^{\Theta_{0f}}\ar[dr]|{\Lambda_{0f}}&&
    \\
    &
    R_0f\ar@{}[dr]|{\mathrm{pb}}\ar[d]^{Rf}\ar[r]_{}&
    T_0f\ar[d]^{Tf}\\
    &
    B\ar[r]_{\Theta_{1f}}&T_1f
    \enddiagram
  \end{equation}

\begin{rmk}
  \label{rmk:25}
For future reference, we state that the coreflection $R\Rightarrow T$ of a monad $T$ on $\mathcal{C}^\two$ into a
  codomain-preserving monad $R$ is a monad morphism.
\end{rmk}
\begin{df}
  \label{df:9}
  Suppose given the following data.
  \begin{itemize}
  \item A 2-adjunction $F\dashv U\colon \mathscr A\to\K$, whose counit we
    denote by $e\colon FU\Rightarrow 1$.
  \item A 2-monad $\mathsf{P}$ on $\mathscr
    A^\two$ with multiplication $m\colon P^2\Rightarrow P$.
  \item The codomain-preserving coreflection of the 2-monad $U^\two
    \mathsf{P}F^\two$, that we denote by $\alpha\colon\mathsf{S}\to U^\two
    \mathsf{P}F^\two$, and whose unit we denote $j\colon 1\Rightarrow S$.
  \end{itemize}
The 2-adjunction is said to be \emph{simple} with respect
  to $\mathsf{P}$ if there is a coretract adjunction in the 2-category
  $[\K^\two,\mathscr A^\two]$, with components at $f\in \K^\two$
  \begin{equation}
    \label{eq:106}
    \big(PFf\xrightarrow{PF^\two j_f}PFSf\big)
    \dashv
    \big(PFSf\xrightarrow{PF^\two\alpha_f}PFUPFf\xrightarrow{Pe PF^\two f}PPFf
    \xrightarrow{mFf} PFf\big).
  \end{equation}
\end{df}
\begin{lemma}
  \label{l:10}
  Given a simple 2-adjunction as in Definition \ref{df:9}, the
  codomain-preserving reflection $\mathsf{S}$ is a lax idempotent 2-monad.
\end{lemma}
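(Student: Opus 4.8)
The statement follows from Lemma~\ref{l:9}, applied in a suitable monoidal $2$-category of endofunctors. Take the monoidal $2$-category to be $1\backslash\mathrm{End}(\K^\two)$, the $2$-category of pointed endo-$2$-functors of $\K^\two$, with composition as tensor product and $(1_{\K^\two},\id)$ as unit object; this is a strict monoidal $2$-category whose monoids are precisely the $2$-monads on $\K^\two$, and a lax idempotent monoid in it is precisely a lax idempotent $2$-monad --~Definition~\ref{df:888}~--. Take the coreflective sub-$2$-category to be $1\backslash\mathrm{End}_{\cod}(\K^\two)$, which is coreflective by Lemma~\ref{l:5}, is closed under composition, and contains the unit object. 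Take the monoid to be the $2$-monad $\mathsf T:=U^\two\mathsf P F^\two$; by Lemma~\ref{l:5} its coreflection into $1\backslash\mathrm{End}_{\cod}(\K^\two)$ is exactly the codomain-preserving $2$-monad $\mathsf S$ of Definition~\ref{df:9}, with coreflection counit $\alpha$ and monoid unit $j$. Lemma~\ref{l:9} then yields that $\mathsf S$ is lax idempotent, once its two extra hypotheses are verified.

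The first hypothesis is that $\alpha\otimes\mathsf S=\alpha\mathsf S\colon\mathsf S\mathsf S\to\mathsf T\mathsf S$ is the coreflection of $\mathsf T\otimes\mathsf S=\mathsf T\mathsf S$. I would read this off the pointwise description of the coreflection in Lemma~\ref{l:5}: for a pointed endo-$2$-functor $(T,\Theta)$ the coreflection at $f$ is the pullback of $Tf\colon T_0f\to T_1f$ along $\Theta_{1f}\colon\cod f\to T_1f$, so it depends on $T$ only through this span at $f$. For $\mathsf T\mathsf S$ at $f$ one has $(\mathsf T\mathsf S)f=\mathsf T(\mathsf Sf)$, and naturality of the unit of $\mathsf T$ at the structure morphism $j_f\colon f\to\mathsf Sf$, combined with $\mathsf S$ being codomain-preserving, identifies the codomain component of the point of $\mathsf T\mathsf S$ at $f$ with that of the unit of $\mathsf T$ at $\mathsf Sf$. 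Hence the pullback computing the coreflection of $\mathsf T\mathsf S$ at $f$ is the one computing $\mathsf S$ at $\mathsf Sf$, with counit $\alpha_{\mathsf Sf}=(\alpha\mathsf S)_f$, which is the required assertion.

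The second hypothesis is the existence of a coretract adjunction $\bigl(\mathsf T\xrightarrow{\mathsf T\otimes j}\mathsf T\mathsf S\bigr)\dashv\bigl(\mathsf T\mathsf S\xrightarrow{\mathsf T\otimes\alpha}\mathsf T\mathsf T\xrightarrow{m}\mathsf T\bigr)$, where $m$ is the multiplication of $\mathsf T$. This I would obtain from the simplicity data of Definition~\ref{df:9} by whiskering with $U^\two$. Post-composition with $U^\two$ is a $2$-functor $[\K^\two,\mathscr A^\two]\to[\K^\two,\K^\two]$, hence preserves adjunctions and the property of having identity unit; it carries the simplicity coretract adjunction --~between the $2$-functors $f\mapsto PF^\two f$ and $f\mapsto PF^\two\mathsf Sf$~-- to a coretract adjunction between $f\mapsto U^\two PF^\two f=\mathsf Tf$ and $f\mapsto U^\two PF^\two\mathsf Sf=(\mathsf T\mathsf S)f$. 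It then remains to recognise the two adjoints: the left adjoint becomes $\mathsf T\otimes j$ because $U^\two PF^\two(j_f)=\mathsf T(j_f)$, and, after expanding the standard multiplication $m$ of the composite $2$-monad $\mathsf T=U^\two\mathsf P F^\two$ out of the counit $e\colon F^\two U^\two\Rightarrow 1$ and the multiplication of $\mathsf P$, the right adjoint becomes exactly $m\cdot(\mathsf T\otimes\alpha)$.

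With both hypotheses in hand, Lemma~\ref{l:9} gives that $\mathsf S$ is a lax idempotent $2$-monad. The main obstacle is the identification at the end of the second step: although conceptually routine, it requires carefully unwinding the composite-monad structure on $U^\two\mathsf P F^\two$ and the several whiskerings appearing in Definition~\ref{df:9} to confirm that the $U^\two$-image of the simplicity adjunction is, on the nose, the adjunction demanded by Lemma~\ref{l:9}; by contrast, the comparison of points in the first step is a short diagram chase.
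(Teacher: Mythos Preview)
Your proof is correct and follows essentially the same approach as the paper: apply Lemma~\ref{l:9} with $\mathsf T=U^\two\mathsf P F^\two$, verify that $\alpha\mathsf S$ is the coreflection of $\mathsf T\mathsf S$ from the pullback description, and obtain the required coretract adjunction~\eqref{eq:105} by postcomposing the simplicity adjunction of Definition~\ref{df:9} with $U^\two$. Your version is in fact more carefully stated than the paper's (you correctly write $\alpha\mathsf S$ where the paper has the typo $\alpha T$, and you spell out the identification of the right adjoint with the multiplication of $U^\two\mathsf P F^\two$), but the argument is the same.
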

\begin{proof}
  Let us denote by $\mathsf{T}$ the 2-monad $U^\two PF^\two$. By the
  construction of the coreflection $\mathsf{S}$ as a pullback, it is clear that $\alpha T
  \colon SS\to TS$ is the coreflection of $TS$. Lemma \ref{l:9} tells us
  that $\mathsf{S}$ will be lax idempotent if we have a coretract adjunction
  in $[\K^\two,\K^\two]$
  \begin{equation}
    \label{eq:107}
    \big(T \xrightarrow{Tj}TS\big)\dashv
    \big(TS\xrightarrow{T\alpha}TT\longrightarrow{}T\big).
  \end{equation}
  Such an adjunction is obtained from the one of Definition~\ref{df:9} by
  applying $U^\two$.
\end{proof}
Let us now make an observation that shall be needed later on.
\begin{rmk}
  \label{rmk:12}
  Given $F\dashv U$, $\mathsf{P}$ and the codomain-preserving coreflection
  $\alpha\colon S\to U^\two PF^\two$ as in Lemma~\ref{l:10}, we claim that the
  composition of the multiplication with the counit $\alpha$
  \begin{equation}
    \label{eq:151}
    SS\longrightarrow S\xrightarrow{\alpha}U^\two PF^\two
  \end{equation}
  factors through $U^\two mF^\two$, where $m$ is the multiplication of
  $\mathsf{P}$. Indeed, since $\alpha$ is a 2-monad
  (strict) morphism, we know that
  \begin{equation}
    \label{eq:111}
    (SS\to S\xrightarrow{\alpha}U^\two PF^\two) =
    (SS\xrightarrow{\alpha\alpha}U^\two PF^\two U^\two PF^\two
    \to U^\two PP F^\two \to
    U^\two PF^\two).
  \end{equation}
\end{rmk}

Below we describe Definition \ref{df:9} in a particular case of interest, but
before let us recall a few facts about the Kleisli construction for the free
split opfibration 2-monad $\mathsf{R}'$ on $\mathscr A^\two$, for a 2-category
$\mathscr A$ with lax limits of morphisms. This Kleisli construction can be
described as the inclusion 2-functor of $\mathscr A^\two$ into the 2-category
$\mathrm{Lax}[\two,\mathscr A]$ of 2-functors from $\two$ to $\mathscr A$ and
lax transformations between them. Morphisms between
free $\mathsf{R}'$-algebras are in bijection with morphisms in
$\mathrm{Lax}[\two,\mathscr A]$, and the bijection is given as displayed below,
a fact we shall soon employ.
\begin{equation}
  \label{eq:3}
  \xymatrixrowsep{.6cm}
  \diagram
  \cdot\ar[d]_{R'f}\ar[r]^h&
  \cdot\ar[d]^{R'g}\\
  \cdot\ar[r]^k&
  \cdot
  \enddiagram
  \qquad\longmapsto\qquad
  \diagram
  \cdot\ar[r]^{{L}f}\ar[d]_f&
  \cdot\ar[d]_{R'f}\ar[r]^h&
  \cdot\ar[d]_{R'g}\ar[r]^{q_g}\drtwocell<\omit>{\nu}&
  \cdot\ar[d]^g\\
  \cdot\ar@{=}[r]&
  \cdot\ar[r]^k&
  \cdot\ar@{=}[r]&
  \cdot
  \enddiagram
\end{equation}
  %% Description of simple for coreflection--opfib.
\begin{prop}
  \label{prop:16}
  Let $F\dashv U\colon\mathscr A\to\K$ be a 2-adjunction, where $\mathscr A$ has
  comma objects and \K\ has pullbacks, and $\mathsf{R}'$ be the free split
  opfibration 2-monad on $\mathscr A^\two$. Denote by $\mathsf{R}$ the
  codomain-preserving coreflection of $U^\two\mathsf{R}'F^\two$. The
  2-adjunction is simple with respect to the coreflection--opfibration {\normalfont\textsl{\textsc{awfs}}}
  precisely when there are coretract adjunctions
  $F{L}f\dashv q_{Ff}\cdot{}e_{K'Ff}\cdot{}F\tau_f$ 2-natural in $f$, where these morphisms
  are those defined in~\eqref{eq:110}.
\end{prop}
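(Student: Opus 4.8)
The plan is to unwind Definition~\ref{df:9} for $\mathsf{P}=\mathsf{R}'$ and to translate the coretract adjunction~\eqref{eq:106} into the stated one by means of the description~\eqref{eq:3} of $\mathsf{R}'$-algebra morphisms between free split opfibrations as lax squares in $\mathrm{Lax}[\two,\mathscr A]$. Write $\lambda_f:=PF^\two j_f$ and $\rho_f:=(mFf)\cdot(PePF^\two f)\cdot(PF^\two\alpha_f)$ for the two $1$-cells of~\eqref{eq:106}; here $m=\Pi$ is the multiplication of $\mathsf{R}'$ and $\alpha\colon\mathsf{S}\to U^\two\mathsf{R}'F^\two$ is the codomain-preserving coreflection, with domain component $\tau_f:=\alpha_{0f}\colon S_0f\to UK'Ff$ -- the map of~\eqref{eq:110} -- and unit $j$. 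For each $f$ and $2$-naturally in $f$, $\lambda_f$ and $\rho_f$ are morphisms of free $\mathsf{R}'$-algebras: $\lambda_f$ is $\mathsf{R}'$ applied to a morphism of $\mathscr A^\two$, and $\rho_f$ is built from $\mathsf{R}'$ applied to morphisms together with $\Pi$, itself a free-algebra morphism. Since the correspondence~\eqref{eq:3} is $2$-functorial, the coretract adjunction~\eqref{eq:106} is equivalent to a coretract adjunction, $2$-natural in $f$, between the lax squares that $\lambda_f$ and $\rho_f$ determine.

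The next step is to identify these two lax squares. Since $\mathsf{S}$ is codomain-preserving (Lemma~\ref{l:10}), its unit is $j_f=(Lf,1_B)\colon f\to Sf$, with $Lf$ the left factor of the factorisation of $\mathsf{S}$; hence $F^\two j_f=(FLf,1_{FB})$ is a \emph{commutative} square, and~\eqref{eq:3} sends a commutative square $(h,k)\colon\phi\to\psi$ to the lax square with domain leg $q_\psi\cdot K'(h,k)\cdot\ell_\phi=h$ -- using $q_\psi\cdot K'(h,k)=h\cdot q_\phi$ and $q_\phi\cdot\ell_\phi=1$, where $\ell_\phi$ is the left adjoint coretract of $q_\phi$ from~\eqref{eq:91} -- codomain leg $k$, and identity structure $2$-cell. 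Thus $\lambda_f$ yields the lax square with domain leg $FLf$, codomain leg $1_{FB}$ and identity $2$-cell. For $\rho_f$ one uses the explicit formulas~\eqref{eq:20} for $q_\bullet$, $\nu_\bullet$, $\Pi_\bullet$, the pullback description~\eqref{eq:80} of $\alpha$ (so that $\alpha_f=(\tau_f,\eta_B)$, with $\eta$ the unit of $F\dashv U$), and the triangle identity $e_{FB}\cdot F\eta_B=1$ to cancel the codomain contributions of the composite; one finds that the lax square of $\rho_f$ has codomain leg $1_{FB}$ and domain leg $q_{Ff}\cdot e_{K'Ff}\cdot F\tau_f$. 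This is the main obstacle of the proof: it requires carefully tracking the comma-object data of $\mathsf{R}'$ through the three stages $PF^\two\alpha_f$, $PePF^\two f$, $mFf$ and recognising the $F\dashv U$ triangle cancellations.

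With both lax squares identified, I would conclude by observing that a coretract adjunction between lax squares both of whose codomain legs are $1_{FB}$, the first of which moreover has identity structure $2$-cell, reduces to a plain coretract adjunction between the domain legs. Indeed, a morphism, resp.\ a $2$-cell, of lax squares projects to a domain and a codomain component; here the codomain components lie among endo-$2$-cells of $1_{FB}$ and the triangle identities force them to be trivial, while compatibility with the structure $2$-cell of $\rho_f$ is automatic by the last observation in Remark~\ref{rmk:1} -- the pertinent $2$-cell is a component of $\nu_{Ff}$ at a point where it is an identity. Hence~\eqref{eq:106} holds precisely when there are coretract adjunctions $FLf\dashv q_{Ff}\cdot e_{K'Ff}\cdot F\tau_f$ whose counits form a modification in $f$, that is, are $2$-natural in $f$; this is the assertion. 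As a consistency check, the coretract identity $(q_{Ff}\cdot e_{K'Ff}\cdot F\tau_f)\cdot FLf=1_{FA}$ holds unconditionally, because $\tau_f\cdot Lf=\alpha_{0f}\cdot j_{0f}$ is the domain leg $U(\ell_{Ff})\cdot\eta_A$ of the unit of $U^\two\mathsf{R}'F^\two$ and $q_{Ff}\cdot e_{K'Ff}\cdot FU(\ell_{Ff})\cdot F\eta_A=q_{Ff}\cdot\ell_{Ff}\cdot e_{FA}\cdot F\eta_A=1$; so the real content of the statement is the existence of the counits, $2$-naturally in $f$.
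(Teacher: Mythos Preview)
Your approach is essentially the paper's: translate the coretract adjunction~\eqref{eq:106} through the Kleisli description $\mathrm{Kl}(\mathsf{R}')\simeq\mathrm{Lax}[\two,\mathscr A]$ via~\eqref{eq:3}, identify the resulting lax squares as in~\eqref{eq:110}, and argue that the only nontrivial content is a coretract adjunction between the domain legs. Your identification of the two lax squares and your consistency check $r\cdot FLf=1$ match the paper's computation~\eqref{eq:179}.

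The one point that is not adequately justified is your claim that ``compatibility with the structure $2$-cell of $\rho_f$ is automatic by the last observation in Remark~\ref{rmk:1}''. That remark concerns the unit of an induced retract adjunction $p\dashv s$ built from a comma object and is not applicable here. What must actually be verified is the modification axiom for the counit $(\alpha_f,1)\colon\lambda\rho\Rightarrow 1$, which unwinds to the equation $F{R}f\cdot\alpha_f=\nu_{Ff}\cdot e_{K'Ff}\cdot F\tau_f$ (the paper's~\eqref{eq:5}). The paper dispatches this using the universal property of the coretract adjunction $FLf\dashv r$: the 2-cell $F{R}f\cdot\alpha_f$ is the \emph{unique} 2-cell $\beta\colon Ff\cdot r\Rightarrow F{R}f$ with $\beta\cdot FLf=1$, so it suffices to check that $\nu_{Ff}\cdot e_{K'Ff}\cdot F\tau_f\cdot FLf=1$. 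You already have this ingredient---it is exactly your parenthetical observation that ``the pertinent 2-cell is a component of $\nu_{Ff}$ at a point where it is an identity'', since $e_{K'Ff}\cdot F\tau_f\cdot FLf=L'(Ff)$ and $\nu_{Ff}\cdot L'(Ff)=1$---but the reason this suffices is the adjunction's universal property, not Remark~\ref{rmk:1}.
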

\begin{proof}
  In this proof we shall denote the unit and counit of $F\dashv U$ by $i$ and
  $e$, respectively, and the comparison adjoint of the Kleisli construction of a
  2-monad  $\mathsf{P}$ on $\mathscr A^\two$ by $C\colon
  \mathrm{Kl}(\mathsf{P})\to\mathsf{P}\text-\mathrm{Alg}_s$. In a moment we will
  use the well-known fact
  that $C$ is full and
  faithful and its replete image is the full subcategory of free algebras.
  The definition of simple adjunction consists of a coretract adjunction in
  $[\K^\two,\mathscr A^\two]$ between 2-natural transformations whose components
  are strict morphisms of $\mathsf{P}$-algebras between free
  $\mathsf{P}$-algebras in $\mathscr A^\two$. Therefore, the said coretract
  adjunction is the image of a coretract adjunction in
  $[\K^\two,\mathrm{Kl}(\mathsf{P})]$ under the 2-functor
  \begin{equation}
    \label{eq:55}
    [1,C]\colon[\K^\two,\mathrm{Kl}(\mathsf{P})]
    \longrightarrow [\K^\two,\mathsf{P}\text-\mathrm{Alg}_s].
  \end{equation}
  % When $U_{\mathsf{P}}$ is locally fully faithful, such a coretract adjunction
  % can be lifted to $[\K^\two,\mathrm{Kl}(\mathsf{P})]$;
  % this is possible if $\mathsf{P}$ is lax idempotent by Remark~\ref{rmk:14}.

  When $\mathsf{P}$ is the free split opfibration 2-monad $\mathsf{R}'$, its
  Kleisli construction is
  isomorphic to the inclusion of $\mathscr A^\two$ into
  $\mathrm{Lax}[\two,\mathscr A]$, by the comments before the present
  proposition. We can use the correspondence between morphisms of free
  $\mathsf{R}'$-algebras and morphisms in $\mathrm{Lax}[\two,\mathscr A]$
  described in~\eqref{eq:3} to deduce the form of the lifting
  to $[\K^\two,\mathrm{Lax}[\two,\mathscr A]]$ of the coretract adjunction in
  $[\K^\two,\mathscr A^\two]$ that
  exhibits $F\dashv U$ as a simple 2-adjunction. The lifting has
  component at $f\in\K^\two$ displayed below, where $\nu$ is the comma object
  that defines $R'$.
  \begin{equation}
    \label{eq:110}
    \diagram
    FA\ar[d]_{Ff}\ar[r]^-{F{L}f}&FKf\ar[d]_{F{R}f}\\
    FB\ar@{=}[r]&FB
    \enddiagram
    \dashv
    \diagram
    FKf\ar[d]^{F{R}f}\ar[r]^-{F\tau_f} &
    FUK'Ff\ar[d]_{FUR'{Ff}}\ar[r]^-{e} &
    K'Ff\ar[r]^{q_{Ff}}\ar[d]_{R'{Ff}}\drtwocell<\omit>{\nu}&
    FA\ar[d]^{Ff}
    \\
    FB\ar[r]^-{Fi_B} &
    FUFB\ar[r]^-{e} &
    FB\ar@{=}[r]&
    FB
    \enddiagram
  \end{equation}
  This adjunction consists of a coretract adjunction as in the statement of this
  proposition, plus the requirement that its counit, say $\alpha_f$, is a 2-cell
  in $\mathrm{Lax}[\mathbf{2},\mathscr{A}]$; ie
  \begin{equation}
    F{R}f\cdot{}\alpha_f=\nu\cdot{}e_{K'Ff}\cdot{}F\tau_f.\label{eq:5}
  \end{equation}
  Thus, the direct part of the statement is trivial.  To prove the converse, we
  will show that
  if $F{L}f$ has a right adjoint retract in \K\ as in the statement,
  then \eqref{eq:5} automatically holds.
  As a consequence of the adjunction, the 2-cell
  on the left hand side of \eqref{eq:5} is the unique 2-cell
  $\beta$, with the appropriate domain and codomain, such that $\beta\cdot{}F{L}f=1$. We must verify that the 2-cell on
  the right hand side of \eqref{eq:5} satisfies the same property.
  By definition of ${L}f$,
  \begin{equation}
    {e}_{K'Ff}\cdot{}F\tau_f\cdot{}F{L}f=
    {e}_{K'Ff}\cdot{}FUL'({Ff})\cdot{}Fi_A=
    L'({Ff})\cdot{}{e}_{FA}\cdot{}Fi_A=L'({Ff}),
    \label{eq:179}
  \end{equation}
  from where it is clear that
  $\nu\cdot{} {e}_{K'Ff}\cdot{}F\tau_f\cdot{}F{L}f=\nu\cdot{}L'({Ff})=1$, concluding the proof.
\end{proof}
Lemma~\ref{l:10} yields:
\begin{cor}
  \label{cor:6}
  If $F\dashv U\colon\mathscr A\to \K$ is a 2-adjunction simple with respect to
  the free split opfibration 2-monad $\mathsf{R}'$ on $\mathscr A^\two$, then
  the codomain-preserving coreflection $\mathsf{R}$ of
  $U^\two\mathsf{R}'\mathsf{F}^\two$ is lax idempotent.
\end{cor}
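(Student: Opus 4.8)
The plan is simply to specialise Lemma~\ref{l:10} to the case in which the 2-monad $\mathsf{P}$ of Definition~\ref{df:9} is the free split opfibration 2-monad $\mathsf{R}'$ on $\mathscr A^\two$. First I would line up the data. In Definition~\ref{df:9} one starts from the 2-adjunction $F\dashv U$ together with a 2-monad $\mathsf{P}$ on $\mathscr A^\two$, forms the 2-monad $U^\two\mathsf{P}F^\two$ on $\K^\two$, and takes its codomain-preserving coreflection; by Lemma~\ref{l:5} the latter exists and carries a canonical codomain-preserving 2-monad structure together with a 2-monad morphism down to $U^\two\mathsf{P}F^\two$ --~this existence is implicit in the statement of the corollary, where it is used to speak of $\mathsf{R}$ at all. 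Taking $\mathsf{P}=\mathsf{R}'$, this coreflection is precisely the 2-monad $\mathsf{R}$ of the present statement. Read with $\mathsf{P}=\mathsf{R}'$, Definition~\ref{df:9} unwinds the hypothesis that $F\dashv U$ is simple with respect to $\mathsf{R}'$, word for word, into the existence of the coretract adjunction~\eqref{eq:106}.

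Having matched the data and the hypotheses, I would then invoke Lemma~\ref{l:10} directly: it asserts that for a simple 2-adjunction the associated codomain-preserving coreflection is a lax idempotent 2-monad, and in our situation that coreflection is $\mathsf{R}$, which gives the conclusion. For orientation, the mechanism inside Lemma~\ref{l:10} is to apply $U^\two$ to the adjunction~\eqref{eq:106}, producing the coretract adjunction~\eqref{eq:107} in $[\K^\two,\K^\two]$, and then to feed this into Lemma~\ref{l:9} with the monoidal 2-category there taken to be $\mathrm{End}(\K^\two)$, its coreflective sub-2-category the codomain-preserving endo-2-functors, and $T=U^\two\mathsf{R}'F^\two$, $S=\mathsf{R}$; but none of this has to be redone here.

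I do not anticipate any genuine obstacle: the corollary is exactly Lemma~\ref{l:10} read in the special case $\mathsf{P}=\mathsf{R}'$. Its usefulness lies in being combined with Proposition~\ref{prop:16}, which re-expresses the abstract simplicity condition concretely as the existence of 2-natural coretract adjunctions $F{L}f\dashv q_{Ff}\cdot{}e_{K'Ff}\cdot{}F\tau_f$; together these two results will let one recognise concrete 2-adjunctions as producing lax idempotent codomain-preserving 2-monads $\mathsf{R}$, and hence --~via the machinery of the later sections~-- lax orthogonal {\normalfont\textsl{\textsc{awfs}}}s.
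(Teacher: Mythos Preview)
Your proposal is correct and matches the paper's own reasoning: the paper simply states that Lemma~\ref{l:10} yields the corollary, which is exactly your specialisation of that lemma to $\mathsf{P}=\mathsf{R}'$.
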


\begin{rmk}
  \label{rmk:26}
  There is a 2-monad morphism with components $(\tau_f,i_{\cod(f)})\colon Rf\to
  UR'Ff$, by Remark~\ref{rmk:25}. Taking the mate along $F\dashv U$, we obtain
  an opmorphism of 2-monads $(\hat\tau_f,1_{F\cod(f)})\colon FRf\to R'Ff$.
\end{rmk}
Recall from Lemma~\ref{cor:7} that the codomain functor is a fibration from
(the underlying category of) $\mathsf{R}\text-\mathrm{Alg}_s$ to
$\mathcal{C}$. In particular the category of split opfibrations in a 2-category
\K\ with lax limits of morphisms is a fibration over $\K$.
\begin{thm}
  \label{thm:15}
  Assume given a 2-adjunction $F\dashv U\colon \mathscr{A}\to\K$ between
  2-categories equipped with chosen lax limits of morphisms and
  pullbacks, strictly preserved by $U$. If the 2-monad  $\mathsf{R}$ is as in
  Proposition~\ref{prop:16}, then there is a canonical 2-functor
  into the category of split opfibrations in \K\ that commutes with the
  forgetful functors into $\K^\two$.
  \begin{equation}
    \label{eq:275}
    \mathsf{R}\text-\mathrm{Alg}_s\longrightarrow \mathbf{Op}\mathbf{Fib}_s(\K)
  \end{equation}
\end{thm}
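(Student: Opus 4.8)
The plan is to obtain~\eqref{eq:275} by restriction of scalars along a canonical 2-monad morphism $\phi\colon\mathsf{Q}\to\mathsf{R}$, where $\mathsf{Q}$ denotes the free split opfibration 2-monad on $\K^\two$. This 2-monad exists because $\K$ comes with chosen lax limits of morphisms, and its 2-category of strict algebras is $\mathbf{Op}\mathbf{Fib}_s(\K)$ by the discussion of Section~\ref{sec:basic-example}. Once $\phi$ is in hand, the assignment $(g,\rho)\mapsto(g,\rho\cdot\phi_g)$ is a 2-functor $\mathsf{R}\text-\mathrm{Alg}_s\to\mathsf{Q}\text-\mathrm{Alg}_s\cong\mathbf{Op}\mathbf{Fib}_s(\K)$, and it commutes with the forgetful 2-functors into $\K^\two$ by construction.

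Write $\mathsf{T}=U^\two\mathsf{R}'F^\two$. To define $\phi$ I would first exhibit a canonical 2-monad morphism $c\colon\mathsf{Q}\to\mathsf{T}$: its component at $f\in\K^\two$ has codomain part the unit $i_{\cod f}$ of $F\dashv U$ and domain part the transpose under $F\dashv U$ of the canonical comparison $F(f\downarrow\cod f)\to Ff\downarrow F(\cod f)$ supplied by the universal property of the comma object in $\mathscr A$. Strict preservation of comma objects by $U$ gives the identification $U^\two\mathsf{R}'\cong\mathsf{Q}U^\two$ under which $c$ lands in $\mathsf{T}$, and is also what makes $c$ a morphism of 2-monads: compatibility with the unit is immediate from 2-naturality of $i$, and compatibility with the multiplication is a routine diagram chase with the comma-object universal properties and the triangle identities of $F\dashv U$. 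Now $\mathsf{Q}$ is codomain preserving, and $\mathsf{R}$ is by construction --~Proposition~\ref{prop:16}, through the pullback~\eqref{eq:80}~-- the codomain-preserving coreflection of $\mathsf{T}$, whose counit $\alpha\colon\mathsf{R}\to\mathsf{T}$ is a 2-monad morphism by Remark~\ref{rmk:25}. Hence the universal property of the coreflection (Lemma~\ref{l:5}) factors the underlying morphism of codomain-preserving pointed endo-2-functors of $c$ uniquely as $\mathsf{Q}\xrightarrow{\phi}\mathsf{R}\xrightarrow{\alpha}\mathsf{T}$; this defines $\phi$, and shows it to be canonical.

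It then remains to see that $\phi$ is a morphism of 2-monads and not merely of pointed endo-2-functors. Compatibility with units is automatic, since $\alpha\phi=c$ and both $\alpha$ and $c$ respect units. For the multiplications, note that $\mathsf{Q}\mathsf{Q}$ is again codomain preserving, so the two 2-natural transformations $\phi\cdot\Pi^{\mathsf{Q}}$ and $\Pi^{\mathsf{R}}\cdot(\mathsf{R}\phi)\cdot(\phi\mathsf{Q})$ from $\mathsf{Q}\mathsf{Q}$ to $\mathsf{R}$ both lie in $1\backslash\mathrm{End}_{\cod}(\K^\two)$; by naturality of the coreflection bijection they coincide if and only if they coincide after composition with $\alpha$, and there a short calculation using that $\alpha$ and $c=\alpha\phi$ are 2-monad morphisms (together with interchange) shows that both become $\Pi^{\mathsf{T}}\cdot(\mathsf{T}c)\cdot(c\mathsf{Q})$. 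Thus $\phi$ is a 2-monad morphism and the construction is complete. Concretely, the split opfibration attached to an $\mathsf{R}$-algebra $(g\colon C\to D,\ \rho_0\colon R_0 g\to C)$ has structure map $\rho_0\cdot\mu_g\colon g\downarrow D\to C$, where $\mu_g\colon g\downarrow D\to R_0 g$ is the comparison into the pullback~\eqref{eq:80}.

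I expect the only substantial step to be the verification that $c$ respects the multiplications: this is where the strict preservation of comma objects by $U$, the universal properties of the comma objects in $\mathscr A$ and in $\K$, and the triangle identities of $F\dashv U$ must all be brought to bear at once. Everything afterwards --~propagating the ``2-monad morphism'' property through the coreflection, and obtaining the 2-functor on strict algebras with its commutation with the forgetful 2-functors~-- is formal.
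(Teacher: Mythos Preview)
Your approach is the paper's. Both arguments construct a 2-monad morphism from the free split opfibration 2-monad on $\K^\two$ (your $\mathsf{Q}$; the paper writes $\mathsf{R}'_\K$) into $\mathsf{T}=U^\two\mathsf{R}'_{\mathscr A}F^\two$---using that $U^\two\mathsf{R}'_{\mathscr A}=\mathsf{R}'_\K U^\two$ from strict preservation of commas, together with the unit of $F\dashv U$, which is precisely your $c$---and then factor through the codomain-preserving coreflection to obtain $\mathsf{R}'_\K\to\mathsf{R}$. The paper dispatches this in two sentences; you have additionally written out why the factorisation $\phi$ is a 2-monad morphism and not merely one of pointed endofunctors, a point the paper leaves implicit. (The paper's printed direction ``$\mathsf{R}\to\mathsf{R}'_\K$'' is evidently a slip: restriction along a monad morphism goes the other way on algebras, and the coreflection universal property yields a map into $\mathsf{R}$, as you have it.)
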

\begin{proof}
  Denote by $\mathsf{R}'_{\mathscr A}$ and $\mathsf{R}'_{\K}$ the free split opfibration 2-monad on
  $\mathscr{A}^\two$ and $\K^\two$ respectively. Clearly
  $U^\two\mathsf{R}'_{\mathscr A}=\mathsf{R}'_{\K}U^\two$, and there is a monad
  morphism $\mathsf{R}'_{\K}\to U^\two\mathsf{R}'_{\mathscr A}F^\two$. Since
  $\mathsf{R}$ is by definition the codomain-preserving coreflection of
  $U^\two\mathsf{R}'_{\mathscr A}F^\two$, there exists a 2-monad morphism
  $\mathsf{R}\to\mathsf{R}'_{\K}$, which induces the 2-functor of the
  statement.
\end{proof}
\begin{rmk}
  The 2-monad $\mathsf{R}$ on $\K^\two$ of Proposition~\ref{prop:16} has a
  slightly more elementary description that will be useful later in our
  work. The associated 2-functorial factorisation $f={R}f\cdot{L}f\colon
  A\to B$ can be described as follows. The morphism ${R}f$ is given by the
  comma object displayed below, and ${L}f$ is the unique morphism such that
  $\mu_f\cdot{}{L}f=1$.
  \begin{equation}
    \label{eq:159}
    \xymatrixrowsep{.6cm}
    \diagram
    A\ar[dr]|{{L}f}\ar@/^/[drr]^{i_A}\ar@/_/[ddr]_f&&\\
    &Kf\ar[r]^{q_f}\ar[d]^{{R}f}\drtwocell<\omit>{\hole \mu_f}&UFA\ar[d]^{UFf}\\
    &B\ar[r]_{i_B}&UFB
    \enddiagram
  \end{equation}
  This is so because $\mu_f$ is related to the comma object $\nu$ that appears
  in~\eqref{eq:110} via the equality below.
  \begin{equation}
    \label{eq:184}
    \xymatrixrowsep{.6cm}
    \diagram
    Kf\ar[r]^-{q_f}\drtwocell<\omit>{\hole\mu_f}\ar[d]_{{R}f}&
    UFA\ar[d]^{UFf}\\
    B\ar[r]_-{i_B}&
    UFB
    \enddiagram
    =
    \diagram
    Kf\ar[d]_{{R}f}\ar[r]^{\tau_f}\ar@{}[dr]|{\mathrm{pb}}&
    UK'Ff\ar[r]^-{Uq_{Ff}}\ar[d]|{UR'{Ff}}\drtwocell<\omit>{\hole\hole\hole U\nu_{Ff}}&
    UFA\ar[d]^{UFf}\\
    B\ar[r]_-{i_B}&
    UFB\ar@{=}[r]&
    UFB
    \enddiagram
  \end{equation}
  The multiplication is given by a morphism $\pi_f\colon R^2f\to Rf$ that
  satisfies the equality below.
  \begin{equation}
    \label{eq:68}
    \diagram
    K{R}f\ar[d]_{R^2f}\ar[r]^-{\pi_f}&
    Kf\ar[d]|{{R}f}\ar[r]^-{q_f}\drtwocell<\omit>{\hole\mu_f}&
    UFA\ar[d]|{UFf}\\
    B\ar@{=}[r]&
    B\ar[r]_-{i_B}&
    UFB
    \enddiagram
    =
    \diagram
    K{R}f\ar[d]|{R^2f}\ar[r]^-{q_{{R}f}}
    \drtwocell<\omit>{\hole\hole\mu_{{R}f}} &
    UFKf\ar[d]|{UF{R}f}\ar[r]^-{UFq_f}\drtwocell<\omit>{\hole\hole\hole UF\mu_f}&
    UFUFA\ar[d]^{UFUFf}\ar[r]^-{Ue_{FA}}&
    UFA\ar[d]^{UFf}\\
    B\ar[r]_{i_{B}}&
    UFB\ar[r]_-{UFi_{B}}&
    UFUFB\ar[r]_-{Ue_{FB}}&
    UFB
    \enddiagram
  \end{equation}
  \label{rmk:8}
\end{rmk}
\section{{AWFS}s through simple adjunctions}
\label{sec:transf-along-left}

If $(\mathsf{L}',\mathsf{R}')$ is an \textsc{awfs} on $\mathcal{A}$, and $F\dashv
U\colon \mathcal{A}\to\C$ an adjunction, we obtain a \emph{transferred right
  algebraic weak factorisation system} $((L,\Phi),\mathsf{R})$ in \C. The monad
$\mathsf{R}$ is the codomain-preserving coreflection of the monad
$U^\two\mathsf{R}'F^\two$ on $\C^\two$ --~Lemma~\ref{l:5}. This means that
$Rf$ is given by the pullback in \C\ depicted on the left hand side.
\begin{equation}
  \label{eq:86}
  \xymatrixrowsep{.5cm}
  \diagram
  Kf\ar[d]_{Rf}\ar[r]&UK'Ff\ar[d]^{UR'Ff}\\
  B\ar[r]^{i_B}&UFB
  \enddiagram
  \qquad
  \xymatrixrowsep{.5cm}
  \diagram
  A\ar[dr]|{{L}f}\ar@/_/[ddr]_f\ar[r]^{i_A}&UFA\ar[dr]^{UL'{Ff}}&\\
  &Kf\ar[d]^{{R}f}\ar[r]^{\tau_f}&UK'Ff\ar[d]^{UR'{Ff}}\\
  &B\ar[r]^{i_B}&UFB
  \enddiagram
\end{equation}
The associated 2-functorial factorisation is given by
$f={R}f\cdot{}{L}f\colon A\to B$ where ${L}f$ is as in the diagram
on the right hand side above.  The corresponding domain-preserving copointed
endofunctor $(L,\Phi)$ on $\C^\two$ sends $f$ to ${L}f$ and
$\Phi_f=(1,{R}f)\colon {L}f\to f$, and can be constructed as the
pullback on the left below. As a consequence, the diagram on the right is a
pullback, as can be easily shown.
\begin{equation}
  \label{eq:52}
  \diagram
  L\ar[d]_\Phi\ar[r]&U^\two L'F^\two\ar[d]^{U^\two\Phi'F^\two}\\
  1\ar[r]^-{i^\two}&U^\two F^\two
  \enddiagram
  \qquad
  \diagram
  (L,\Phi)\text-\mathrm{Coalg}\ar[d]\ar[r]&(L',\Phi')\text-\mathrm{Coalg}\ar[d]\\
  \C^{\two}\ar[r]^{F^\two}&\mathcal{A}^\two
  \enddiagram
\end{equation}
(Recall that an $(L,\Phi)$-coalgebra structure on $f\in\mathscr{C}^\two$ is a
section of $\Phi_f\colon Lf\to f$).

The above considerations hold not only in the case of categories and functors but
also in the case of 2-categories, 2-adjunctions, etc, which we assume for the
rest of the section. We also assume that the comonad $\mathsf{L}'$ on the
2-category $\mathscr{A}^\two $ has as coalgebras the left adjoint coretracts in
the 2-category $\mathcal{A}$, which we assume to have comma objects; the
2-category $\mathsf{L}'\text-\mathrm{Coalg}_s$ can be written in the notation
used in~\cite{MR3393453} as $\mathbf{Lari}(\mathcal{A})$, where \textsc{lari}
stands for left adjoint right inverse.
The resulting factorisation $f={R}f\cdot{L}f$ can be described by the
following diagram, where the 2-cell $\mu$ is a comma object.
\begin{equation}
  \label{eq:94}
  \xymatrixrowsep{.5cm}\diagram
  A\ar[dr]|{{L}f}&&\\
  &Kf\ar[d]_{{R}f}\ar[r]^-{q_f}\drtwocell<\omit>{\mu_f}&UF\ar[d]^{UFf}\\
  &B\ar[r]_-{i_B}&UFB
  \enddiagram
  \quad=\quad
  \diagram
  A\ar@/_/[ddr]_f\ar@/^/[drr]^{i_A}\ar@{}[ddrr]|=&&\\
  &&UFA\ar[d]^{UFf}\\
  &B\ar[r]_-{i_B}&UF
  \enddiagram
\end{equation}

\begin{df}
  \label{df:11}
  If $F\colon\K\to\mathscr{A}$ is a 2-functor, define a 2-category
  \begin{equation}
    \label{eq:81}
    \xymatrixrowsep{.6cm}
    \diagram
    F\text-\mathbf{Emb}\ar@{}[rd]|{\mathrm{pb}}\ar[r]\ar[d]_G&\mathbf{Lari}(\mathscr{A})\ar[d]\\
    \K^\two\ar[r]_-{F^\two}&\mathscr{A}^\two
    \enddiagram
  \end{equation}
  whose objects may be called \emph{$F$-embeddings} --~the terminology is widely
  used in the context of categories enriched in posets, as topological
  embeddings are $F$-embeddings for a certain choice of $F$. Explicitly, an
  object of $F\text-\mathbf{Emb}$ is a morphism $f$ in \K\ equipped with a right
  adjoint retract $r_f$ for $Ff$ in $\mathscr A$, with counit $\alpha_f\colon
  Ff\cdot r_f\Rightarrow 1$. A morphism $(f,r_f,\alpha_f)\to(g,r_g,\alpha_g)$ is
  a morphism $(h,k)\colon f\to g$ in $\K^\two$ such that $(Fh,Fk)$ is a morphism
  in $\mathbf{Lari}(\mathscr{A})$; this means that $Fh\cdot r_f=r_g\cdot Fk$. (It
  is not hard to show that the compatibility with the counits, expressed in
  $Fk\cdot \alpha_f=\alpha_g\cdot Fk$, is automatically satisfied.)
\end{df}
\begin{rmk}
  \label{rmk:10}
  The composition of \textsc{lari}s described in Section~\ref{sec:basic-example}
  induces a composition on $F\text-\mathbf{Emb}$ and the projections
  $\K^\two\leftarrow F\text-\mathbf{Emb}\to\mathbf{Lari}(\K)$ preserve
  it. Explicitly, if $(f,r)$ and $(f',r')$ are objects of $F\text-\mathbf{Emb}$
  with $f$ and $f'$ composable morphisms of $\K$, then $(f'\cdot f,r\cdot r')$
  has a canonical structure of an $F$-embedding arising from $F(f'\cdot f)\cong
  Ff'\cdot Ff\dashv r\cdot r'$.
\end{rmk}

The pullback square that defines $F\text-\mathbf{Emb}$ can be factorised as the
pasting of two pullback squares, one of which we have already met
in~\eqref{eq:52}. In particular, each $f\in F\text-\mathbf{Emb}$ has an
$(L,\Phi)$-coalgebra structure.
\begin{equation}
  \label{eq:109}
  \xymatrixrowsep{.4cm}
  \diagram
  F\text-\mathbf{Emb}\ar[r]\ar[d]&\mathbf{Lari}(\mathscr{A})\ar[d]\\
  (L,\Phi)\text-\mathrm{Coalg}\ar[r]\ar[d]&(L',\Phi')\text-\mathrm{Coalg}\ar[d]\\
  \K^\two\ar[r]^-{F^\two}&\mathscr{A}^\two
  \enddiagram
\end{equation}
\begin{thm}
  Suppose given a 2-adjunction $F\dashv U\colon \mathscr{A}\to\K$ where
  $\mathcal{A}$ has comma objects and $\K$ has pullbacks. Then the following are equivalent.
  \begin{enumerate}
  \item \label{item:1} The 2-adjunction is simple.
  \item \label{item:2} The copointed endo-2-functor $(L,\Phi)$ can be extended
    to a comonad $\mathsf{L}$ and $F\text-\mathbf{Emb}$ is isomorphic to
    $\mathsf{L}\text-\mathrm{Coalg}_s$ over $(L,\Phi)\text-\mathrm{Coalg}$.
  \item \label{item:4} The forgetful 2-functor $G\colon F\text-\mathbf{Emb}\to\K^\two$ has a right
    adjoint and the induced comonad on $\K^\two$ has underlying copointed
    endo-2-functor $(L,\Phi)$.
  \end{enumerate}
  \label{thm:4}
\end{thm}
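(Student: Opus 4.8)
The plan is to run the cycle $(\ref{item:1})\Rightarrow(\ref{item:2})\Rightarrow(\ref{item:4})\Rightarrow(\ref{item:1})$, using the reformulation of simplicity provided by Proposition~\ref{prop:16} as a pivot: the 2-adjunction is simple precisely when, 2-naturally in $f\in\K^\two$, the morphism $F{L}f$ admits a right adjoint retract in $\mathscr A$ with underlying morphism $q_{Ff}\cdot{}e_{K'Ff}\cdot{}F\tau_f$. The common first observation is that such a 2-natural family of right adjoint retracts is exactly the datum of a 2-functor $\widetilde{L}\colon\K^\two\to F\text-\mathbf{Emb}$ lifting the endo-2-functor $L$ along the projection $G\colon F\text-\mathbf{Emb}\to\K^\two$, namely the one sending $f$ to $Lf$ equipped with that retract and a morphism $(h,k)\colon f\to g$ to $(h,K(h,k))\colon Lf\to Lg$; the compatibility $Fh\cdot{}r_{Lf}=r_{Lg}\cdot{}FK(h,k)$ needed to land in $F\text-\mathbf{Emb}$ follows from the naturality of $\tau$, of the counit $e$ of $F\dashv U$, and of the comma projections $q$.

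For $(\ref{item:1})\Rightarrow(\ref{item:2})$ I would define the comultiplication $\Sigma\colon L\Rightarrow L^2$ as the 2-natural $(L,\Phi)$-coalgebra structure on $L$ obtained by composing $\widetilde{L}$ with the forgetful 2-functor $F\text-\mathbf{Emb}\to(L,\Phi)\text-\mathrm{Coalg}$ of~\eqref{eq:109}; concretely $\Sigma_f=(1,\sigma_f)$, with $\sigma_f$ read off from the cofree $F$-embedding structure on $Lf$. That $\mathsf{L}=(L,\Phi,\Sigma)$ is a 2-comonad and that $(\mathsf{L},\mathsf{R})$ satisfies the distributivity squares~\eqref{eq:99}--\eqref{eq:100} is then checked by transporting each assertion, through the defining pullbacks~\eqref{eq:52} and~\eqref{eq:80} and the strict preservation of the relevant limits by $U$, to the corresponding already-established fact for the free split opfibration AWFS $(\mathsf{L}',\mathsf{R}')$ on $\mathscr A^\two$ of Section~\ref{sec:basic-example}: the composition of $F$-embeddings of Remark~\ref{rmk:10} was defined exactly so as to mirror composition of $\mathsf{L}'$-coalgebras, so coassociativity, the counit laws, and the two distributive-law squares all reduce to their primed counterparts. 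With $\mathsf{L}$ in hand, a section of $\Phi_f$ is $\Sigma$-coassociative if and only if the $(L',\Phi')$-coalgebra structure it induces on $F^\two f$ via~\eqref{eq:52} is in fact an $\mathsf{L}'$-coalgebra, ie --~since $\mathsf{L}'\text-\mathrm{Coalg}_s=\mathbf{Lari}(\mathscr A)$~-- a right adjoint retract for $Ff$ in $\mathscr A$; this is precisely an object of $F\text-\mathbf{Emb}$, and the same translation on morphisms and 2-cells yields the isomorphism $\mathsf{L}\text-\mathrm{Coalg}_s\cong F\text-\mathbf{Emb}$ over $(L,\Phi)\text-\mathrm{Coalg}$.

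The other two implications are essentially formal. For $(\ref{item:2})\Rightarrow(\ref{item:4})$: if $(L,\Phi)$ underlies a 2-comonad $\mathsf{L}$ with $F\text-\mathbf{Emb}\cong\mathsf{L}\text-\mathrm{Coalg}_s$ over $(L,\Phi)\text-\mathrm{Coalg}$, hence over $\K^\two$, then $G$ is, up to this isomorphism, the forgetful 2-functor $\mathsf{L}\text-\mathrm{Coalg}_s\to\K^\two$; the latter is 2-comonadic, so it has a right adjoint (cofree coalgebras) and the 2-comonad it induces is $\mathsf{L}$, whose underlying copointed endo-2-functor is $(L,\Phi)$. For $(\ref{item:4})\Rightarrow(\ref{item:1})$: if $G$ has a right adjoint $D$ and the induced 2-comonad has underlying copointed endo-2-functor $(L,\Phi)$, then the counit of $G\dashv D$ at $f$ is $\Phi_f\colon Lf\to f$ and the cofree $F$-embedding on $f$ has underlying morphism $GD(f)=Lf$, so $F{L}f$ carries a right adjoint retract in $\mathscr A$, 2-naturally in $f$ since $D$ is a 2-functor. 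By uniqueness of right adjoints together with~\eqref{eq:179}, which exhibits $q_{Ff}\cdot{}e_{K'Ff}\cdot{}F\tau_f$ as already a retraction of $F{L}f$, this retract must coincide with the one singled out in Proposition~\ref{prop:16}, so the 2-adjunction is simple. (Alternatively, one may deduce $(\ref{item:2})$ from $(\ref{item:4})$ by 2-comonadicity of $G$ and then $(\ref{item:1})$ from the cofree coalgebras; Lemma~\ref{l:10}, or Theorem~\ref{thm:11}, then further shows the AWFS $(\mathsf{L},\mathsf{R})$ so obtained is lax orthogonal.)

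The step I expect to be the main obstacle is $(\ref{item:1})\Rightarrow(\ref{item:2})$: once $\widetilde L$ is available the comultiplication comes for free, but verifying the 2-comonad and distributivity axioms, and nailing down the identification $\mathsf{L}\text-\mathrm{Coalg}_s\cong F\text-\mathbf{Emb}$ over $(L,\Phi)\text-\mathrm{Coalg}$, demands careful bookkeeping that matches the transferred data with the free split opfibration AWFS on $\mathscr A^\two$ through the chain of pullback squares; the other implications rely only on 2-comonadicity of $G$ and uniqueness of adjoints.
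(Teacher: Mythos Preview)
Your cycle $(\ref{item:1})\Rightarrow(\ref{item:2})\Rightarrow(\ref{item:4})\Rightarrow(\ref{item:1})$ reverses the paper's $(\ref{item:1})\Rightarrow(\ref{item:4})\Rightarrow(\ref{item:2})\Rightarrow(\ref{item:1})$, and this reordering creates a real problem in your final step.

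Your argument for $(\ref{item:4})\Rightarrow(\ref{item:1})$ is: the cofree $F$-embedding on $f$ endows $F{L}f$ with \emph{some} right adjoint retract, and since \eqref{eq:179} shows that the candidate $c_f:=q_{Ff}\cdot e_{K'Ff}\cdot F\tau_f$ is already a retraction of $F{L}f$, ``uniqueness of right adjoints'' forces them to agree. But uniqueness of right adjoints says only that two right adjoints of the same morphism are canonically isomorphic; it says nothing about an arbitrary retraction. Knowing $F{L}f\dashv r$ for some $r$ and knowing $c_f\cdot F{L}f=1$ does not make $c_f$ a right adjoint, nor does it identify $c_f$ with $r$. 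Proposition~\ref{prop:16} demands the \emph{specific} coretract adjunction $F{L}f\dashv c_f$, so this step does not close. The paper's $(\ref{item:2})\Rightarrow(\ref{item:1})$ shows what is actually needed: because the isomorphism $F\text-\mathbf{Emb}\cong\mathsf L\text-\mathrm{Coalg}_s$ lives over $(L,\Phi)\text-\mathrm{Coalg}$, the $F$-embedding structure on $Lf$ and the comultiplication $\Sigma_f=(1,\sigma_f)$ are tied together by $r_{Lf}=e_{FA}\cdot Fq_{Lf}\cdot F\sigma_f$ (this is how the forgetful $F\text-\mathbf{Emb}\to(L,\Phi)\text-\mathrm{Coalg}$ works, cf.~\eqref{eq:112}), and then the counit law $L\Phi_f\cdot\Sigma_f=1$ yields $q_{Lf}\cdot\sigma_f=q_f$, whence $r_{Lf}=e_{FA}\cdot Fq_f=c_f$. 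That computation, not an appeal to uniqueness, is the missing ingredient.

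Your $(\ref{item:1})\Rightarrow(\ref{item:2})$ is also more fragile than you suggest. The ``transport'' of the comonad and distributivity axioms from $(\mathsf L',\mathsf R')$ through the pullback squares \eqref{eq:52} and \eqref{eq:80} is not a one-line affair: $L^2$ is not a pullback of $U^\two L'^2F^\two$ (there is an intervening $F^\two U^\two$), so coassociativity of $\Sigma$ does not simply ``reduce to its primed counterpart''. The paper avoids this entirely by proving $(\ref{item:1})\Rightarrow(\ref{item:4})$ directly---constructing the right adjoint $J$ to $G$ by hand, with unit $\Psi_{(f,r_f,\alpha_f)}=(1,s_f)$ as in \eqref{eq:112} and verifying the triangle identities via explicit comma-object calculations. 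The comonad $\mathsf L=GJ$ then comes with all axioms for free, and $(\ref{item:4})\Rightarrow(\ref{item:2})$ is immediate. This is both shorter and more robust than attempting to transport axioms across the pullback.
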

\begin{proof}
(\ref{item:1})$\Rightarrow$(\ref{item:4})
The hypothesis tells us that there are adjunctions $F{L}g\dashv
r_{Lg}$ in $\mathscr A$, where $r_{Lg}=e_{FA}\cdot Fq_g\colon FKf\to FA$,
with counits $\varepsilon_{{L}g}\colon F{L}f\cdot r_{Lg}\Rightarrow 1$
that are modifications in $g\in\K^\two$. This defines a 2-functor $J\colon
\K^\two\to F\text-\mathbf{Emb}$ that sends $g$ to
$({L}g,r_{Lg},\varepsilon_{Lg})$. We shall show that $J$ is a right adjoint
to $G$.

Suppose that $(f,r_f,\alpha_f)$ is an object of $F\text-\mathbf{Emb}$, and
construct a morphism $s_f$ as the unique morphism satisfying the displayed
equality.
\begin{equation}
  \label{eq:112}
  \xymatrixrowsep{.5cm}
  \diagram
  B\ar[r]^-{s_f}&Kf\ar[d]_{{R}f}\drtwocell<\omit>{\hole\mu_f}\ar[r]^-{q_f}&
  UFA\ar[d]^{UFf}\\
  &B\ar[r]_-{i_B}&UFB
  \enddiagram
  \quad=\quad
  \diagram
  B\ar[r]^-{i_B}&UFB\drtwocell<\omit>{<-1>\hole\hole U\alpha_f}\ar[r]^-{Ur_f}\ar@/_/[dr]_1&UFA\ar[d]^{UFf}\\
  &&UFB
  \enddiagram
\end{equation}
To be more precise, $q_f\cdot s_f= Ur_f\cdot i_Y$, ${R}f\cdot s_f=1$ and
$\mu_f\cdot s_f= U\alpha_f\cdot i_Y$.
We claim that
\begin{equation}
  \label{eq:114}
  \xymatrixrowsep{.5cm}
  \diagram
  A\ar@{=}[r]\ar[d]_f&A\ar[d]^{{L}f}\\
  B\ar[r]^-{s_f}&Kf
  \enddiagram
  \qquad\text{is a morphism in $F\text-\mathbf{Emb}$.}
\end{equation}
We have to show the compatibility of $s_f$ with the right adjoints, which
means that $r_{Lf}\cdot Fs_f=r_f$ should hold, as it indeed
does, as witnessed by
\begin{equation}
  \label{eq:115}
  r_{Lf}\cdot Fs_f=e_{FA}\cdot Fq_{f}\cdot Fs_f=e_{FA}\cdot F(q_f\cdot s_f)=
  e_{FA}\cdot F(Ur_f\cdot i_B)=r_f.
\end{equation}

The morphisms $\Psi_{(f,r_f,\alpha_f)}=(1,s_f)\colon f\to {L}f$ in
$F\text-\mathbf{Emb}$ form a 2-natural transformation $\Psi\colon
1_{F\text-\mathbf{Emb}}\Rightarrow JU$. On the other hand, we already have a
2-natural transformation $\Phi\colon UJ=L\Rightarrow 1_{\K^\two}$ with
components $\Phi_g=(1,{R}g)$. We now proceed to show that these are the unit
and the counit of a 2-adjunction $G\dashv J$.
We start by considering
\begin{equation}
  \label{eq:125}
  G\xRightarrow{G\Psi}GJG=LG\xRightarrow{\Phi G}G
\end{equation}
and evaluating on $(f,r_f,\alpha_f)\in F\text-\mathbf{Emb}$ to obtain
$(1,{R}f)\cdot(1,s_f)=(1,{R}f,\cdot s_f)=(1,1)$, so one of the triangle
identities holds. The other triangle identity involves the composition
\begin{equation}
  \label{eq:127}
  J\xRightarrow{\Psi J}JGJ\xRightarrow{J\Psi}J
\end{equation}
which evaluated on $g\colon X\to Y$ in $\K^\two$ gives
\begin{equation}
  \label{eq:131}
  \xymatrixcolsep{1.7cm}
  \diagram
  \cdot\ar@{=}[r]\ar[d]_{{L}g}&
  \cdot\ar[d]_{L^2_g}\ar@{=}[r]&\cdot\ar[d]^{{L}g}\\
  \cdot\ar[r]^-{s_{{L}g}}&\cdot\ar[r]^-{K(1,{R}g)}&\cdot
  \enddiagram
\end{equation}
where $s_{{L}g}$ is defined according to~\eqref{eq:112}. We have to show
that $K(1,{R}g)\cdot s_{{L}g}=1$; since the codomain is the comma object
$Kg$, this equality is equivalent to the conjunction of the three conditions
\begin{equation}
  \label{eq:136}
  {R}g\cdot K(1,{R}g)\cdot s_{{L}g}={R}g\qquad
  q_g\cdot  K(1,{R}g)\cdot s_{{L}g}=q_g\qquad
   \mu_g\cdot K(1,{R}g)\cdot s_{{L}g}=\mu_g
\end{equation}
where $\mu_g\colon UFg\cdot q_g\Rightarrow i_Y\cdot {R}g$ is the universal
2-cell of the comma object. The first of these equalities always holds, since
${R}g\cdot K(1,{R}g)\cdot s_{{L}g}= {R}g\cdot {R}{{L}g}\cdot
s_{{L}g}={R}g$. The second equality also holds, by definition of $r_{Lg}$:
\begin{equation}
  \label{eq:144}
  q_g\cdot  K(1,{R}g)\cdot s_{{L}g}=q_{Lg}\cdot
  s_{{L}g}=Ur_{Lg}\cdot i_{Kg}
  =Ue_{FX}\cdot UFq_{g}\cdot i_{Kg}
  = q_{g}.
\end{equation}
The third and last equality we need to verify can be rewritten by using what we know of the
definition of $s_f$, as
\begin{equation}
  \label{eq:146}
  \mu_g=
  \mu_g\cdot K(1,{R}g)\cdot s_{{L}g}=UF{R}g\cdot\mu_{{L}g}\cdot
  s_{{L}g}=
  UF{R}g\cdot U\alpha_{{L}g}\cdot i_Y.
\end{equation}
In order to prove~(\ref{eq:146}) we may consider
the 2-cell $\gamma_g$ that is the transpose of $\mu_g$ under the
2-adjunction $F\dashv U$.
\begin{equation}
  \label{eq:149}
  \xymatrixcolsep{.5cm}
  \diagram
  Kg\ar[r]^-{q_g}\ar[d]_{{R}g}\drtwocell<\omit>{\hole\mu_g}&UFX\ar[d]^{UFg}\\
  Y\ar[r]_-{i_Y}&UFY
  \enddiagram
  \quad\longleftrightarrow{}\quad
  \diagram
  FUFX\ar[r]^-{e_{FX}}&FX\ar[d]^{Fg}\\
  FKg\ar[u]^{Fq_g}\ar[r]_-{F{R}g}\rtwocell<\omit>{<-3>\hole\gamma_g}&FY
  \enddiagram
\end{equation}
We will soon use the fact that the equality $\mu_g\cdot{L}g=1$
translates to $\gamma_g\cdot
F{L}g=1$.
Transposing~(\ref{eq:146}) under $F\dashv U$ yields the equivalent
equation
\begin{equation}
  \label{eq:139}
  \gamma_g=F{R}g\cdot\alpha_{{L}g}
\end{equation}
that we prove by considering the pasting diagram
depicted below
\begin{equation}
  \label{eq:162}
  \xymatrixrowsep{.5cm}
  \xymatrixcolsep{2cm}
  \diagram
  &FX\ar[dr]^{F{L}g}&&FX\ar[dr]^{Fg}&\\
  FKg\ar[ur]^{r_{Lg}}\ar@{=}[rr]
  \rrtwocell<\omit>{<-2>\hole\hole\alpha_{{L}g} } &&
  FKg\ar[rr]_-{F{R}g} \rrtwocell<\omit>{<-2>\hole\gamma_g}
  \ar[ur]^{r_{Lg}}&&
  FY
  \enddiagram
\end{equation}
and calculating
\begin{equation}
  \label{eq:166}
  F{R}g\cdot\alpha_{Lg}=
  \bigl(
  \gamma_g\cdot F{L}g\cdot r_{Lg}
  \bigr)
  \bigl(
  F{R}g\cdot\alpha_{Lg}
  \bigr)
  =
  \bigl(
  Fg\cdot r_{Lg}\cdot\alpha_{{L}g}
  \bigr)
  \gamma_g
  =\gamma_g
\end{equation}
where we have used $\gamma_g\cdot F{L}g=1$ and
$r_{Lg}\cdot\alpha_{{L}g}=1$. This completes this part of the
proof.

(\ref{item:4})$\Rightarrow$(\ref{item:2}) is clear.

(\ref{item:2})$\Rightarrow$(\ref{item:1}) Suppose that
$\mathsf{L}=(L,\Phi,\Sigma)$ is a 2-comonad with category of coalgebras and
strict maps isomorphic to $F\text-\mathbf{Emb}$ over
$(L,\Phi)\text-\mathrm{Coalg}$.  The comultiplication has components
$\Sigma_f=(1,\sigma_f)\colon Lf\to L^2f$. Since $\Sigma_f$ is a
$(L,\Phi)$-coalgebra structure on $Lf$, and the commutativity of the top square
in~\eqref{eq:109}, the morphism $\sigma_f\colon Kf\to K{L}f$ and the right
adjoint $r_{Lf}$ of $F{L}f$ are related by
\begin{equation}
  \label{eq:152}
  r_{Lf}=e_{FA}\cdot Fq_{{L}f}\cdot F\sigma_f\colon FK{L}f\longrightarrow FA.
\end{equation}
We have to show that $r_{Lf}=e_{FA}\cdot Fq_f$, for which it will suffice to
show that
\begin{equation}
  \label{eq:153}
  q_f=q_{{L}f}\cdot\sigma_f.
\end{equation}
We make use of the counit axiom $1=L\Phi_f\cdot\Sigma_f$, whose codomain
component is $1_{Kf}=K(1,{R}f)\cdot \sigma_f$. Composing the latter with $q_f$
we obtain the required equality: $q_f=q_f\cdot K(1,{R}f)\cdot\sigma_f=
q_{{L}f}\cdot\sigma_f$.
\end{proof}

\begin{thm}
  \label{thm:1}
  If the 2-adjunction $F\dashv U\colon \mathscr A\to \K$ is simple, where
  $\mathscr A$ has comma objects and $\K$ has pullbacks, then there exists an
  {\normalfont\textsl{\textsc{awfs}}} $(\mathsf{L},\mathsf{R})$ on $\K$ that
  extends the 2-functorial factorisation $f={R}f\cdot{L}f$. Furthermore,
  \begin{itemize}
  \item this {\normalfont\textsl{\textsc{awfs}}} is lax idempotent; and,
  \item the 2-category $\mathsf{L}\text-\mathrm{Coalg}_s$ is isomorphic over
    $\K^\two$ to the 2-category $F\text-\mathbf{Emb}$ of Definition~\ref{df:11}.
  \item the 2-monad $\mathsf{R}$ is the codomain-preserving reflection of
    $U^\two\mathsf{R}'\mathsf{F}^\two$, where $\mathsf{R}'$ is the free split
    opfibration 2-monad on $\mathscr A^\two$.
\end{itemize}

\end{thm}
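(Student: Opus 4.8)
The plan is to assemble the theorem from results already in hand, reading \emph{simple} in the sense of Definition~\ref{df:9} with $\mathsf{P}$ the free split opfibration 2-monad $\mathsf{R}'$ on $\mathscr A^\two$, i.e.\ as in Proposition~\ref{prop:16}. First I would fix the 2-monad and the factorisation: let $\mathsf{R}$ be the codomain-preserving coreflection of $U^\two\mathsf{R}'F^\two$, which exists and is a 2-monad by Lemma~\ref{l:5}, with associated 2-functorial factorisation $f=Rf\cdot{}Lf$ described by the comma object in~\eqref{eq:94} (equivalently Remark~\ref{rmk:8}) and associated domain-preserving copointed endo-2-functor $(L,\Phi)$ given by the pullback~\eqref{eq:52}. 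This is exactly the $\mathsf{R}$ and the factorisation asserted in the statement, so the third bullet holds by construction, while Corollary~\ref{cor:6} already records that $\mathsf{R}$ is a lax idempotent 2-monad.

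Next I would produce the 2-comonad. By Theorem~\ref{thm:4}, implication \eqref{item:1}$\Rightarrow$\eqref{item:2}, simplicity of $F\dashv U$ furnishes a comultiplication $\Sigma$ extending $(L,\Phi)$ to a 2-comonad $\mathsf{L}=(L,\Phi,\Sigma)$ together with an isomorphism $\mathsf{L}\text-\mathrm{Coalg}_s\cong F\text-\mathbf{Emb}$ over $(L,\Phi)\text-\mathrm{Coalg}$, hence over $\K^\two$; this is the second bullet. By construction $\mathsf{L}$ and $\mathsf{R}$ induce the same 2-functorial factorisation on $\K$, so the only thing left in order to have an \textsc{awfs} is the distributivity law. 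To obtain it I would transport the composition of \textsc{lari}s: by Remark~\ref{rmk:10}, $F\text-\mathbf{Emb}$ is an internal category in the category of 2-categories with the forgetful 2-functor $F\text-\mathbf{Emb}\to\K^\two$ underlying a double functor into $\Sq(\K)$; carrying this structure across the isomorphism of Theorem~\ref{thm:4} endows the reflexive graph $\mathsf{L}\text-\mathrm{Coalg}_s\rightrightarrows\K$ with a double-category structure over $\Sq(\K)$. Here one must check that the degeneracies agree on the two sides, but this is clear, since on either side the degeneracy at an object $X$ picks out the identity $1_X$ with its canonical coalgebra, respectively \textsc{lari}, structure. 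By the evident dual of Proposition~\ref{prop:2} --~the correspondence between double-category structures and \textsc{awfs}s recalled in Section~\ref{sec:double-categ-aspects}, already used in the proof of Proposition~\ref{prop:19}~\eqref{item:25}~-- there is then a unique multiplication $\Pi\colon R^2\Rightarrow R$ on the pointed endo-2-functor $(R,\Lambda)$ making $(\mathsf{L},(R,\Lambda,\Pi))$ an \textsc{awfs} extending $f=Rf\cdot{}Lf$.

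It remains to identify $(R,\Lambda,\Pi)$ with $\mathsf{R}$, i.e.\ to show $\Pi=\Pi_{\mathsf{R}}$, and this is the only step requiring genuine computation. Dually to the description at the end of Section~\ref{sec:from-double-categ}, $\Pi_f=(\pi_f,1)$ is characterised by the property that $(1,\pi_f)\colon L(Rf)\bullet Lf\to Lf$ is the unique morphism of $\mathsf{L}$-coalgebras --~where $\bullet$ is the transported composition, that is the composition of \textsc{lari}s~-- whose composite with the counit $\Phi_f=(1,Rf)$ is $(1,R^2f)$. I would verify that the morphism $\pi_f\colon KRf\to Kf$ of~\eqref{eq:68} has exactly this property: under $\mathsf{L}\text-\mathrm{Coalg}_s\cong F\text-\mathbf{Emb}$, writing $f\colon A\to B$, the right adjoint retract of $F(Lf)$ is $r_{Lf}=e_{FA}\cdot{}Fq_f$ and that of $F(L(Rf)\cdot{}Lf)$ is $r_{Lf}\cdot{}r_{L(Rf)}$ with $r_{L(Rf)}=e_{FKf}\cdot{}Fq_{Rf}$, so being a morphism of $\mathsf{L}$-coalgebras reduces to the single equality $r_{Lf}\cdot{}F\pi_f=r_{Lf}\cdot{}r_{L(Rf)}$, which follows from the defining equality~\eqref{eq:68} of $\pi_f$ together with the triangle and naturality identities for the counit $e$ of $F\dashv U$; this is the same argument that extracts $q_f\cdot{}\bar\pi_f=q_f\cdot{}q_{Rf}$ in the proof of Proposition~\ref{prop:19}~\eqref{item:25}, carried out for the transferred factorisation, and the remaining condition $\Phi_f\cdot{}(1,\pi_f)=(1,R^2f)$ is just $Rf\cdot{}\pi_f=R^2f$, part of~\eqref{eq:68}. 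By uniqueness $\Pi=\Pi_{\mathsf{R}}$, so $(\mathsf{L},\mathsf{R})$ is an \textsc{awfs} on $\K$ extending the stated factorisation. Finally, $\mathsf{R}$ is lax idempotent by Corollary~\ref{cor:6}, whence by Theorem~\ref{thm:11} the 2-comonad $\mathsf{L}$ of this \textsc{awfs} is lax idempotent as well; thus $(\mathsf{L},\mathsf{R})$ is lax orthogonal, which is the first bullet (Definition~\ref{df:1}).

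The main obstacle is the monad identification in the third paragraph: it is the one place where the specific shape of the transferred factorisation~\eqref{eq:94}--\eqref{eq:68} and of the \textsc{lari}s $r_{Lf}$ must be manipulated rather than merely cited, and it runs along the lines of the corresponding calculation for the free split opfibration 2-monad in Proposition~\ref{prop:19}~\eqref{item:25}. A subsidiary point requiring care is the matching of degeneracies in the second paragraph, needed so that the double-category structure transported from $F\text-\mathbf{Emb}$ is one to which the dual of Proposition~\ref{prop:2} applies.
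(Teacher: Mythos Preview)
Your proposal is correct and follows essentially the same route as the paper. The only cosmetic difference is that the paper names the monad arising from the double-category structure $\mathsf{R}$ and the codomain-preserving coreflection $\bar{\mathsf R}$, whereas you do the reverse; the key computation identifying the two multiplications via the equality $r_{Lf}\cdot F\pi_f=r_{Lf}\cdot r_{L(Rf)}$, reduced through the transpose to $q_f\cdot\pi_f=Ue_{FA}\cdot UFq_f\cdot q_{Rf}$ from~\eqref{eq:68}, is identical, and your explicit invocation of Corollary~\ref{cor:6} and Theorem~\ref{thm:11} for lax orthogonality makes explicit what the paper leaves tacit.
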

\begin{proof}
  The category $F\text-\mathbf{Emb}$ has an obvious
  composition, mentioned in Remark~\ref{rmk:10}, so $\mathsf{L}\text-\mathrm{Coalg}_s$ does too. This already
  generates a 2-monad $\mathsf{R}=(R,\Lambda,\Phi)$ with
  $\Lambda_f=({L}f,1)$. We shall show that $\mathsf{R}$ is the codomain
  preserving coreflection of the monad $U^\two\mathsf{R}'\mathsf{F}^\two$
  considered in Section~\ref{sec:extens-transf-a.f}, where $\mathsf{R}'$ is the
  free split opfibration 2-monad on $\mathscr A$. Both this coreflection, that
  we denote by $\bar {\mathsf{R}}$, and $\mathsf{R}$ have the same underlying
  2-functor and unit; this is because the description of the respective
  2-functorial factorisations in Remark~\ref{rmk:8} and just before
  Definition~\ref{df:11} coincide.
  It remains to prove that they share the same multiplication. Denote by
  $\bar\Pi$ the multiplication of $\bar{\mathsf{R}}$ and by $\Pi$ that of
  $\mathsf{R}$. We know from Section~\ref{sec:from-double-categ}, or, more
  precisely, by a dual case to that explained in that section, that $\Pi$ can be
  described in terms of the composition of $\mathsf{L}$-coalgebras as the unique
  morphism of $\mathsf{L}$-coalgebras
  \begin{equation}
    \label{eq:154}
    \xymatrixrowsep{.3cm}
    \diagram
    A\ar[d]_{{L}f}\ar@{=}[r]&A\ar[dd]^{Lf}\\
    Kf\ar[d]_{L{{R}f}}&\\
    K{R}f\ar[r]^-{\pi_f}&Kf
    \enddiagram
  \end{equation}
  that composed with the counit $(1,{R}f)\colon {L}f\to f$ equals the
  morphism $(1,R^2f)$ in $\K^\two$; the
  $\mathsf{L}$-coalgebra structure of
  ${L}({{R}f})\cdot Lf$ is that given by the composition of coalgebras,
  ie the composition of $F\text-\mathbf{Emb}$. To show $\Pi=\bar \Pi$ we only
  need to show that $(1,\bar\pi_f)$ satisfies the same properties.
  We certainly know that
  ${R}f\cdot\bar\pi_f=R^2f$, so it remains to show that $(1,\bar\pi_f)$ is a strict
  morphism of $\mathsf{L}$-coalgebras, or, what is the same, that it is a
  morphism in $F\text-\mathbf{Emb}$. We know that there are coretract
  adjunctions $F{L}f\dashv r_{Lf}$ and $F{L}({{R}f})\dashv
  r_{L{R}f}$ in $\mathscr A$. To say that $(1,\bar\pi_f)$ is a
  morphism in $F\text-\mathbf{Emb}$ is to say that
  \begin{equation}
    \label{eq:74}
    r_{Lf}\cdot F\bar\pi_f= r_{Lf}\cdot r_{L{R}f},
  \end{equation}
  or, substituting the right adjoints $r$ by their expressions given by the
  simplicity of $F\dashv U$,
  \begin{multline}
    \label{eq:75}
    e_{FA}\cdot Fq_f\cdot F\bar\pi_f= e_{FA}\cdot Fq_f\cdot e_{Kf}\cdot
    Fq_{{L}f}=\\
    = e_{FA}\cdot e_{FUFA}\cdot FU Fq_f\cdot Fq_{{L}f}
    = e_{FA}\cdot FUe_{FA}\cdot FU Fq_f\cdot Fq_{{L}f}.
  \end{multline}
  Taking the transpose of each side under $F\dashv U$, the equality is
  equivalent to
  \begin{equation}
    \label{eq:83}
    q_f\cdot\bar\pi_f= Ue_{FA}\cdot UF q_f\cdot q_{{L}f}
  \end{equation}
  which is precisely the equality satisfied by $\bar\pi_f$ as mentioned in
  Remark~\ref{rmk:8}. This completes the proof.
\end{proof}
% The \textsc{awfs} constructed in the theorem above factors a morphism $f\colon A\to
% B$ as $f=\rho_f\cdot{}\lambda_f$, where $\rho_f$ is given by the comma displayed
% below --~the same as in~\eqref{eq:159}~-- and $\lambda_f$ is the unique morphism such that
% $\mu_f\cdot{}\lambda_f=1$.
%   \begin{equation}
%     \label{eq:159}
%     \xymatrixrowsep{.6cm}
%     \diagram
%     A\ar[dr]|{\lambda_f}\ar@/^/[drr]^{i_A}\ar@/_/[ddr]_f&&\\
%     &Kf\ar[r]^{t_f}\ar[d]^{\rho_f}\drtwocell<\omit>{\hole \mu_f}&UFA\ar[d]^{UFf}\\
%     &B\ar[r]_{i_B}&UFB
%     \enddiagram
%   \end{equation}

We now look at the fibrant replacement 2-monad associated to the \textsc{awfs} constructed.
\begin{cor}
  \label{cor:9}
  Suppose that in Theorem~\ref{thm:4} the 2-category $\K$ has a terminal object $1$,
  and that $i_1\colon 1\to UF1$ is a right adjoint of $UF1\to 1$. Then the
  restriction of $\mathsf{R}$ to $\K/1\cong \K$ --~the fibrant replacement
  2-monad of $(\mathsf{L},\mathsf{R})$~-- is isomorphic to $UF$.
\end{cor}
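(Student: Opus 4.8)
The plan is to run everything through the explicit description of $\mathsf{R}$ given in Remark~\ref{rmk:8}. Recall from there that for $f\colon A\to B$ the object $Kf$ is the comma object of $UFf\colon UFA\to UFB$ and $i_B\colon B\to UFB$, with projections $q_f\colon Kf\to UFA$ and $Rf\colon Kf\to B$ and comma $2$-cell $\mu_f\colon UFf\cdot q_f\Rightarrow i_B\cdot Rf$; that the unit is $\Lambda_f=(Lf,1)$ with $q_f\cdot Lf=i_A$; and that the multiplication $\Pi_f=(\pi_f,1)$ satisfies, among the three equalities of~\eqref{eq:68}, the identity $q_f\cdot\pi_f=Ue_{FA}\cdot UFq_f\cdot q_{Rf}$, where $e\colon FU\Rightarrow 1$ is the counit of $F\dashv U$. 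Since $\mathsf{R}$ is codomain-preserving it restricts to a $2$-monad on the full sub-$2$-category of $\K^\two$ on objects with codomain $1$, that is, on $\K/1\cong\K$; this restriction sends an object $A$ (i.e.\ $!_A\colon A\to 1$) to $K(!_A)$.

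The first and main step is to prove that the projection $\rho_A:=q_{!_A}\colon K(!_A)\to UFA$ is an isomorphism, $2$-natural in $A$. By construction $K(!_A)$ is the comma object of $UF!_A\colon UFA\to UF1$ and $i_1\colon 1\to UF1$. By hypothesis $i_1$ is a right adjoint of $!_{UF1}\colon UF1\to 1$, and the adjunction $!_{UF1}\dashv i_1$ is automatically a retract adjunction since $1$ is terminal. Hence, for $x\colon Z\to UFA$, $2$-cells $UF!_A\cdot x\Rightarrow i_1\cdot y$ correspond under this adjunction to $2$-cells $!_{UF1}\cdot UF!_A\cdot x\Rightarrow y$ between morphisms $Z\to 1$, of which there is exactly one (forcing $y=!_Z$). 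Thus giving a morphism $Z\to K(!_A)$ is the same as giving a morphism $Z\to UFA$, and the analysis of $2$-cells is identical, so $q_{!_A}$ is an isomorphism. The $2$-naturality of $q$ in $f$ — a standard part of the structure of the $2$-functorial factorisation, coming from the functoriality of comma objects, namely $q_g\cdot K(h,k)=UFh\cdot q_f$ — shows that $\rho\colon R|_{\K/1}\xrightarrow{\ \sim\ }UF$ is a $2$-natural isomorphism of $2$-functors $\K\to\K$.

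It remains to check that $\rho$ intertwines the monad data, after which, being a $2$-natural isomorphism compatible with units and multiplications, it is an isomorphism of $2$-monads. For the unit, $\rho_A\cdot\Lambda_{!_A}=q_{!_A}\cdot L(!_A)=i_A$, the unit of the $2$-monad $UF$. For the multiplication, note that under $\rho$ the object $R(!_A)$ of $\K/1$ is identified with $K(!_A)$, so $\rho_{R(!_A)}=q_{R(!_A)}$ and the $A$-component of the horizontal composite $(UF\rho)\circ\rho\colon(R|_{\K/1})^2\Rightarrow(UF)^2$ is $UF\rho_A\cdot q_{R(!_A)}\colon K(R(!_A))\to UFUFA$; then, by the formula of Remark~\ref{rmk:8},
\[
\rho_A\cdot\pi_{!_A}=q_{!_A}\cdot\pi_{!_A}=Ue_{FA}\cdot UFq_{!_A}\cdot q_{R(!_A)}=Ue_{FA}\cdot\bigl(UF\rho_A\cdot q_{R(!_A)}\bigr),
\]
which is precisely the multiplication $Ue_{F-}$ of $UF$ precomposed with $(UF\rho)\circ\rho$. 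Here one observes that for $f=!_A$ the morphism $\pi_f$ is already determined by $q_f\cdot\pi_f$, because the remaining comma-object data of a map into $K(!_A)$ — its leg to $1$ and its structure $2$-cell — are forced by terminality of $1$, exactly as in the previous paragraph; so the single displayed identity from Remark~\ref{rmk:8} suffices.

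The whole difficulty is concentrated in the second step, the collapse $K(!_A)\cong UFA$: this is where the hypothesis on $i_1$ is used, through the uniqueness of $2$-cells into $i_1$ out of morphisms that factor through the terminal object. Once that $2$-natural isomorphism is in hand, matching the unit is immediate and matching the multiplication is the one-line conjugation above; the only care needed is to carry the comma-object arguments at the level of $2$-cells and naturality, not merely of objects, for which the $2$-naturality of $q$ and $\mu$ (implicit in Remark~\ref{rmk:8}) is exactly what is required.
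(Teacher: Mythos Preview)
Your proof is correct and follows essentially the same approach as the paper's: both establish that $q_{!_A}$ is an isomorphism via the adjunction $!_{UF1}\dashv i_1$ and terminality of $1$, then check compatibility with the unit via $q_{!_A}\cdot L(!_A)=i_A$ and with the multiplication via the identity $q_f\cdot\pi_f=Ue_{FA}\cdot UFq_f\cdot q_{Rf}$ from Remark~\ref{rmk:8}. Your version is simply more explicit about the 2-naturality and about why the single displayed identity for $\pi_f$ suffices when $f=!_A$.
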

\begin{proof}
  Let us denote by $f\colon A\to 1$ the unique morphism into the terminal
  object, and by $\mathsf{R}_1$ the restriction of $\mathsf{R}$ to $\K/1$. We
  shall show that in
  the comma object
  \begin{equation}
    \label{eq:77}
    \xymatrixrowsep{.5cm}
    \diagram
    Kf\ar[r]^-{q_A}\ar[d]_{{R}f}\drtwocell<\omit>{}&
    UFA\ar[d]^{UFf}\\
    1\ar[r]^-{i_1}&
    UF1
    \enddiagram
  \end{equation}
  the projection $q_A$ is an isomorphism. For any morphism $x\colon X\to UFA$,
  there exists a unique 2-cell $UFf\cdot{}x\Rightarrow i_1\cdot{}!$, as these are in
  bijection, by mateship along $i_1\dashv f$, with endo-2-cells of $X\to 1$, of
  which there is only one. Hence $\K(X,q_A)$ is an
  isomorphism, for each $X$, and thus $q_A$ is an isomorphism.
  Since $q_A\cdot{}{L}f=i_A$, and the compatibility of
  $q$ with the multiplication of $\mathsf{R}$ and $UF$ exhibited
  by~\eqref{eq:68}, namely
  \begin{equation}
    \label{eq:84}
    q_A\cdot\pi_f=Ue_{FA}\cdot UFq_f\cdot q_{Kf},
  \end{equation}
  we have that $q$ is a 2-monad isomorphism $q\colon R_1\to
  UF$.
\end{proof}

We conclude the section with the following lemma, which will be of use in later
sections.
Corollary~\ref{cor:9} says that for any morphism $b\colon 1\to B$ from the
terminal object of \K\ the fibre $A_b$ of any
$\mathsf{R}$-algebra $g\colon A\to B$ --~ie the pullback of $g$ along
$b$~-- has a structure of a $\mathsf T$-algebra, for the monad $\mathsf{T}$
induced by $F\dashv U$.
\begin{equation}
  \label{eq:172}
  \xymatrixrowsep{.5cm}
  \diagram
  A_b\ar[r]^{z_b}\ar[d]_{!}&A\ar[d]^g\\
  1\ar[r]^-b&B
  \enddiagram
\end{equation}
Furthermore, $(z_b,b)$ is a morphism of $\mathsf{R}$-algebras.
\begin{lemma}
  \label{l:27}
  Assume the conditions of Corollary~\ref{cor:9}, and denote by $\mathsf{T}$ the
  monad generated by $F\dashv U$. Given $g\colon A\to B$ and $b\colon 1\to B$,
  the morphism
  \begin{equation}
    \label{eq:173}
    (Kg)_b\xrightarrow{z_b}Kg\xrightarrow{q_g}TA
  \end{equation}
  is a morphism of $\mathsf{T}$-algebras.
\end{lemma}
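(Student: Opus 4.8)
The plan is to trace through the explicit descriptions already assembled for the $\mathsf{R}$-algebra structures in play and then finish with a one-line $1$-cell computation. Write $T=UF$ and $m_A=Ue_{FA}\colon T^2A\to TA$ for the free $\mathsf{T}$-algebra structure on $TA=UFA$. By Corollary~\ref{cor:9} the restriction of $\mathsf{R}$ to $\K/1\cong\K$ is, via the projections $q$, isomorphic to $\mathsf{T}$ as a $2$-monad; in particular $q_{(Kg)_b}\colon K\!\bigl((Kg)_b\to 1\bigr)\xrightarrow{\cong}UF(Kg)_b=T(Kg)_b$ is invertible, and the $\mathsf{T}$-algebra structure carried by $(Kg)_b$ --~the one referred to in the statement~-- is $\kappa=p_{(Kg)_b}\cdot{}q_{(Kg)_b}^{-1}\colon T(Kg)_b\to(Kg)_b$, where $p_{(Kg)_b}\colon K\!\bigl((Kg)_b\to 1\bigr)\to(Kg)_b$ is the $\mathsf{R}$-algebra structure obtained by pulling the free $\mathsf{R}$-algebra $Rg\colon Kg\to B$ (with structure $\Pi_g=(\pi_g,1)$) back along $b$, as in Lemma~\ref{cor:7}. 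The target $TA$ carries $m_A$.

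Next I would record three identities. First, applying the description of the pullback $\mathsf{R}$-algebra in~\eqref{eq:15} to the pullback square $(z_b,b)\colon\bigl((Kg)_b\to 1\bigr)\to Rg$ in $\K^\two$, the domain component satisfies $z_b\cdot{}p_{(Kg)_b}=\pi_g\cdot{}K(z_b,b)$. Second, the multiplication of the transferred $2$-monad is characterised through the universal property of the comma object $Kg$ by the diagram of Remark~\ref{rmk:8}, equation~\eqref{eq:68}; its domain-component content is $q_g\cdot{}\pi_g=Ue_{FA}\cdot{}UFq_g\cdot{}q_{Rg}$. Third, the projections $q_f\colon Kf\to UF(\dom f)$ are natural in $f$ with respect to $1$-cells of $\K^\two$: for any $(h,k)\colon f\to f'$ one has $q_{f'}\cdot{}K(h,k)=UFh\cdot{}q_f$ --~this is immediate from how $K(h,k)$ is built out of the universal property of the comma objects in Remark~\ref{rmk:8}~-- so that $q_{Rg}\cdot{}K(z_b,b)=UF z_b\cdot{}q_{(Kg)_b}$.

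Then the computation I would write out is
\begin{align*}
  q_g\cdot{}z_b\cdot{}\kappa
  &=q_g\cdot{}z_b\cdot{}p_{(Kg)_b}\cdot{}q_{(Kg)_b}^{-1}\\
  &=q_g\cdot{}\pi_g\cdot{}K(z_b,b)\cdot{}q_{(Kg)_b}^{-1}\\
  &=Ue_{FA}\cdot{}UFq_g\cdot{}q_{Rg}\cdot{}K(z_b,b)\cdot{}q_{(Kg)_b}^{-1}\\
  &=Ue_{FA}\cdot{}UF(q_g\cdot{}z_b)=m_A\cdot{}T(q_g\cdot{}z_b),
\end{align*}
which is exactly the assertion that $q_g\cdot{}z_b\colon\bigl((Kg)_b,\kappa\bigr)\to(TA,m_A)$ is a (strict) morphism of $\mathsf{T}$-algebras.

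The genuinely routine items I am skipping are the naturality of $q$ and the precise reading of the diagram in Remark~\ref{rmk:8} as the displayed identity for $q_g\cdot{}\pi_g$, both of which are bookkeeping with comma-object universal properties; I do not anticipate any real obstruction. The only point that demands care --~and the one I would be sure to spell out~-- is the identification made in the first paragraph: that the $\mathsf{T}$-algebra structure on $(Kg)_b$ appearing in the statement really is $\kappa$, i.e.\ the structure transported from the inherited $\mathsf{R}$-algebra via Corollary~\ref{cor:9}, since this is what makes the chain of equalities above address the correct algebra structures.
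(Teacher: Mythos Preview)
Your proof is correct and follows essentially the same path as the paper's: both use that $(z_b,b)$ is a morphism of $\mathsf{R}$-algebras, the identity $q_g\cdot\pi_g=Ue_{FA}\cdot UFq_g\cdot q_{Rg}$ extracted from Remark~\ref{rmk:8}, and naturality of $q$ with respect to $K(z_b,b)$. The paper carries out the computation as a sequence of equalities of pasted $2$-cells in the comma-object diagrams, whereas you extract the domain components up front and work purely at the $1$-cell level; your version is slightly cleaner and makes the r\^ole of the isomorphism $q_{(Kg)_b}$ from Corollary~\ref{cor:9} fully explicit.
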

\begin{proof}
  Denote by $a\colon T(Kg)_b\to (Kg)_b$ the $\mathsf{T}$-algebra structure given
  by Corollary~\ref{cor:9}. We are to show that the following rectangle commutes.
  \begin{equation}
    \label{eq:174}
    \xymatrixrowsep{.5cm}
    \diagram
    T(Kg)_b\ar[r]^-{Tz_b}\ar[d]_a&TKg\ar[r]^-{Tq_b}&T^2A\ar[d]^{m_A}\\
    (Kg)_b\ar[r]^-{z_b}&Kg\ar[r]^-{q_b}&TA
    \enddiagram
  \end{equation}
  In order to do so, consider the string of equalities displayed below,
  the first of which holds since $(z_b,b)$ is a morphism of
  $\mathsf{R}$-algebras; the second holds by definition of $\pi_g$, depicted in~\eqref{eq:68}; the next equality reflects the definition of $K(z_b,b)$ and
  the fact that the restriction of $\mathsf{R}$ to $\K/1$ is $\mathsf{T}$
  --~Corollary~\ref{cor:9}.
    \begin{equation}
      \label{eq:175}
      \xymatrixrowsep{.5cm}
      \xymatrixcolsep{.4cm}
      \diagram
      T(Kg)_b\ar[r]^-a\ar[d]&
      (Kg)_b\ar[d]\ar[r]^-{z_b}&
      Kg\ar[d]_{{R}g}\ar[r]^-{q_g}\drtwocell<\omit>{}&
      TA\ar[d]^{Tg}\\
      1\ar@{=}[r]&
      1\ar[r]_-b&
      B\ar[r]_{i_B}
      &TB
      \enddiagram
      =
      \xymatrixcolsep{.8cm}\diagram
      T(Kg)_b\ar[r]^-{K(z_b,b)}\ar[d]&
      K{R}g\ar[d]_{{R}^2{g}}\ar[r]^-{\pi_g}&
      Kg\ar[d]_{{R}g}\ar[r]^-{q_g}\drtwocell<\omit>{}&
      TA\ar[d]^{Tg}\\
      1\ar[r]_-b&
      B\ar@{=}[r]&
      B\ar[r]_-{i_B}&
      TB
      \enddiagram
      =
    \end{equation}
    \begin{equation}
      \label{eq:177}
      =
      \xymatrixrowsep{.5cm}
      \diagram
      T(Kg)_b\ar[r]^-{K(z_b,b)}\ar[d]&
      K{R}{g}\ar[d]_{R^2g}\drtwocell<\omit>{}\ar[r]&
      TKg\ar[r]^-{Tq_g}\drtwocell<\omit>{}\ar[d]|{T{R}g}&
      T^2A\ar[r]^-{m_A}\ar[d]^{T^2g}&
      TA\ar[d]^{Tg}\\
      1\ar[r]_-b&
      B\ar[r]_-{i_B}&
      TB\ar[r]_-{Ti_B}&
      T^2B\ar[r]_-{m_B}&
      TB
      \enddiagram
      =
    \end{equation}
    \begin{equation}
      \label{eq:178}
      =
      \xymatrixrowsep{.5cm}
      \diagram
      T(Kg)_b\ar[d]\ar[r]^1\drtwocell<\omit>{{!}}&
      T(Kg)_b\ar[d]\ar[r]^-{Tz_b}&
      TKg\ar[d]_{T{R}g}\ar[r]^-{Tq_g}\drtwocell<\omit>{}&
      T^2A\ar[r]^-{m_A}\ar[d]^{T^2g}&
      TA\ar[d]^{Tg}\\
      1\ar[r]_-{i_1}&
      T1\ar[r]_-{Tb}&
      TB\ar[r]_-{Ti_B}&
      T^2B\ar[r]_-{m_B}&
      TB
      \enddiagram
    \end{equation}
  It is now clear that \eqref{eq:174}~commutes, completing the proof.
\end{proof}

\section{Simple 2-monads}
\label{sec:simple-2-monads}

A 2-monad $\mathsf{T}$ on a 2-category \K\ with lax limits of morphisms is said
to be \emph{simple} if the usual Eilenberg-Moore adjunction $F\dashv
U\colon\mathsf{T}\text-\mathrm{Alg}_s\to\K$ is simple with respect to the
coreflection--opfibration \textsc{awfs} on $\mathsf{T}\text-\mathrm{Alg}_s$ -- in the
sense of Definition~\ref{df:9}. To make this definition more explicit, consider
the factorisation of a morphism $f\colon A\to B$ in \K\ depicted in~\eqref{eq:159}, and
recall from Proposition~\ref{prop:16} that the simplicity of $F\dashv
U$ amounts to the existence of a certain
coretract adjunction in $\mathsf{T}\text-\mathrm{Alg}_s$; namely
\begin{equation}
  T{L}f \dashv m_{K'Ff}\cdot{}Tq_f\label{eq:180}
\end{equation}
where $m$ is the multiplication
of $\mathsf{T}$ and the rest of the notation is as in the diagram~\eqref{eq:184}. This
adjunction must be an adjunction in $\mathsf{T}\text-\mathrm{Alg}_s$ -- a condition
that is redundant when, for example, $\mathsf{T}$ is lax idempotent, as it will
often be in our examples.
\begin{rmk}
  \label{rmk:20}
  At this point it is useful to consider the meaning of simple 2-monads and the
  previous proposition when the 2-category is locally discrete, ie
  just a category $\mathcal{C}$. In this case comma objects are just pullbacks,
  and the coreflection--opfibration factorisation becomes the orthogonal
  factorisation $(\mathrm{Iso},\mathrm{Mor})$ that factors a morphism $f$ as the
  identity followed by $f$. To say that a monad $\mathsf{T}$ on $\mathcal{C}$ is
  simple is to say that the image of the comparison morphism $\ell$, which goes
  from the naturality square of $i$ to the pullback in \eqref{eq:160}, is sent
  to an isomorphism by the free $\mathsf{T}$-algebra functor. Equivalently, one can
  say that $T\ell$ is an isomorphism. Observe that, when $\mathsf{T}$ is a reflection,
  this gives the definition of \emph{simple reflection} in the sense of
  \cite{MR779198}.
  \begin{equation}
    \label{eq:160}
    \xymatrixrowsep{.5cm}
    \diagram
    A\ar[dr]^\ell\ar@/_/[ddr]_f\ar@/^/[rrd]^{i_A}&&\\
    &B\times_{TB}TA\ar[d]\ar[r]&TA\ar[d]^{Tf}\\
    &B\ar[r]_-{i_B}&TB
    \enddiagram
  \end{equation}
\end{rmk}
We consider in this section some properties that guarantee that a 2-monad is simple,
thus inducing a transferred \textsc{awfs}.
We make the blanket assumption that the 2-category \K\ has pullbacks and cotensor products
with $\mathbf{2}$, and therefore comma objects.

\begin{notation}
\label{not:1}
If $T\colon \mathscr A\to\mathscr B$ is a 2-functor and $\lim D$ the limit of a
2-functor $D$ into $\mathscr A$, there is
a ``comparison'' morphism $T(\lim D)\to \lim TD$. We are interested in the limits
that are comma objects.
Given a cospan $f\colon{}A\to C\leftarrow B:g$, if $\alpha$ and $\beta$ are the
universal 2-cells of the comma objects $f\downarrow g$ and $Tf\downarrow
Tg$, then the comparison
morphism
\begin{equation}
  \label{eq:118}
  k\colon{}T(f\downarrow g)\longrightarrow Tf\downarrow Tg,
\end{equation}
is defined by the equality
\begin{equation}
  \label{eq:157}
  \xymatrixrowsep{.4cm}
  \diagram
  T(f\downarrow g)\ar[r]^-k&
  Tf\downarrow Tg\ar[r]\ar[d]\drtwocell<\omit>{\beta}&
  TA\ar[d]^{Tf}\\
  &TB\ar[r]_-{Tg}&
  TC
  \enddiagram
  =
  \diagram
  T(f\downarrow g)\drtwocell<\omit>{\hole\hole T(\alpha)}\ar[r]\ar[d]&
  TA\ar[d]^{Tf}&\\
  TB\ar[r]_-{Tg}&
  TC
  \enddiagram
\end{equation}
\end{notation}

\begin{prop}[Simplicity criterion]
  \label{prop:9}
  A 2-monad $\mathsf{T}=(T,i,m)$ is simple if it is lax idempotent and satisfies
  the following property:
  given  morphisms $f$, $u$, $v$ as displayed below, composition with the
  comparison morphism $k$ defined in Notation~\ref{not:1} induces a bijection between
  2-cells $\xi$ as on the left and 2-cells $\zeta$ as on the right. In other
  words, for each $\zeta$ there exists a unique $\xi$ such that $k\cdot \xi=\gamma$.
  \begin{equation}
    \label{eq:128}
    \xymatrixrowsep{.5cm}
    \diagram
    A\ar[d]_v\ar@(r,u)[dr]^u
    &\\
    Tf\downarrow i_B\rtwocell<\omit>{<-2.5>\xi}\ar[r]_-i&T(Tf\downarrow i_B)
    \enddiagram
    \quad\quad
    \diagram
    A\ar[d]_v\ar@(r,u)[dr]^{k\cdot{}u}&\\
    Tf\downarrow i_B\rtwocell<\omit>{<-2.5>\hole\zeta}\ar[r]_-{k\cdot{}i}&TTf\downarrow Ti_B
    \enddiagram
  \end{equation}
\end{prop}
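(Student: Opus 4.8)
The plan is to verify, \emph{via} Proposition~\ref{prop:16} applied to the Eilenberg--Moore adjunction $F\dashv U\colon\mathsf{T}\text-\mathrm{Alg}_s\to\K$, the existence of a coretract adjunction $TLf\dashv\rho_f$ that is $2$-natural in $f\colon A\to B$, where $\rho_f=m_A\cdot Tq_f\colon TKf\to TA$ is the right adjoint written $m_{K'Ff}\cdot Tq_f$ in~\eqref{eq:180}, $Kf=Tf\downarrow i_B$, and $q_f,Rf,\mu_f$ are the data of~\eqref{eq:159} (so $q_f\cdot Lf=i_A$, $Rf\cdot Lf=f$, $\mu_f\cdot Lf=1$). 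Since $\mathsf{T}$ is lax idempotent, by the remark following~\eqref{eq:180} this suffices. Now $TLf$ and $\rho_f$ are strict morphisms between free $\mathsf{T}$-algebras, with Kleisli representatives $i_{Kf}\cdot Lf$ and $q_f$, and $2$-cells between strict morphisms of free algebras correspond bijectively to $2$-cells of $\K$ between their Kleisli representatives (full faithfulness of the free functor $\mathrm{Kl}(\mathsf{T})\to\mathsf{T}\text-\mathrm{Alg}_s$). Hence giving the required adjunction amounts to giving a single $2$-cell $\epsilon_f\colon TLf\cdot q_f\Rightarrow i_{Kf}\colon Kf\to TKf$ with $\epsilon_f\cdot Lf=1$ and $m_A\cdot Tq_f\cdot\epsilon_f=1$ (the unit is already an identity because $q_f\cdot Lf=i_A$); such an $\epsilon_f$ is automatically an algebra $2$-cell, so the resulting adjunction lives in $\mathsf{T}\text-\mathrm{Alg}_s$ --~this is where the redundancy of the $\mathsf{T}\text-\mathrm{Alg}_s$ condition is used.

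First I would examine the two composites $k\cdot TLf\cdot q_f$ and $k\cdot i_{Kf}$, where $k\colon TKf=T(Tf\downarrow i_B)\to TTf\downarrow Ti_B$ is the comparison morphism of Notation~\ref{not:1}. Writing $\bar q,\bar R,\bar\mu$ for the data of $TTf\downarrow Ti_B$ (so $\bar q\cdot k=Tq_f$, $\bar R\cdot k=TRf$, $\bar\mu\cdot k=T\mu_f$), these composites are the cones with components $(Ti_A\cdot q_f,\ Tf\cdot q_f,\ 1)$ and $(i_{TA}\cdot q_f,\ i_B\cdot Rf,\ i_{TB}\cdot\mu_f)$, computed from the identities above and $2$-naturality of $i$. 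By the universal property of the comma object, a $2$-cell $\zeta_f$ from the first cone to the second is a pair $\phi\colon Ti_A\cdot q_f\Rightarrow i_{TA}\cdot q_f$, $\psi\colon Tf\cdot q_f\Rightarrow i_B\cdot Rf$ with $(i_{TB}\cdot\mu_f)\cdot(TTf\cdot\phi)=Ti_B\cdot\psi$. The plan is to take $\psi=\mu_f$ and $\phi=\delta_A\cdot q_f$, with $\delta\colon Ti\Rightarrow iT$ the modification of numeral~\ref{item:10} of Definition~\ref{df:12}; the compatibility then reduces, using the modification law $TTf\cdot\delta_A=\delta_B\cdot Tf$, the interchange law, and the axiom $\delta\cdot i=1$, to an identity. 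This defines $\zeta_f\colon k\cdot TLf\cdot q_f\Rightarrow k\cdot i_{Kf}$, manifestly $2$-natural in $f$.

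Then the hypothesis, applied with the comma object $Tf\downarrow i_B$, test object $Kf$, and $u=TLf\cdot q_f$, $v=1_{Kf}$, yields a unique $\epsilon_f\colon TLf\cdot q_f\Rightarrow i_{Kf}$ with $k\cdot\epsilon_f=\zeta_f$. The triangle identities follow by uniqueness. For $m_A\cdot Tq_f\cdot\epsilon_f=1$ no further hypothesis is needed: $Tq_f\cdot\epsilon_f=\bar q\cdot k\cdot\epsilon_f=\bar q\cdot\zeta_f=\phi=\delta_A\cdot q_f$, so $m_A\cdot Tq_f\cdot\epsilon_f=m_A\cdot\delta_A\cdot q_f=1$ by $m\cdot\delta=1$. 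For $\epsilon_f\cdot Lf=1$: since $q_f\cdot Lf=i_A$ this is an endo-$2$-cell of $i_{Kf}\cdot Lf$, and $k\cdot(\epsilon_f\cdot Lf)=\zeta_f\cdot Lf$ has comma components $(\delta_A\cdot q_f\cdot Lf,\ \mu_f\cdot Lf)=(\delta_A\cdot i_A,\ 1)=(1,1)$ again by $\delta\cdot i=1$, so $k\cdot(\epsilon_f\cdot Lf)=1=k\cdot 1$, whence $\epsilon_f\cdot Lf=1$ by the injectivity half of the hypothesis with test object $A$. The $2$-naturality of $\{\epsilon_f\}$ follows the same way: each naturality equation holds after applying the relevant $k$, hence holds by injectivity. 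By Proposition~\ref{prop:16}, this proves $\mathsf{T}$ simple.

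I expect the heart of the argument to be the second paragraph: pinning down the two cones $k\cdot TLf\cdot q_f$ and $k\cdot i_{Kf}$ correctly and recognising that the compatibility forced on the pair $(\phi,\psi)$ is met precisely by $(\delta_A\cdot q_f,\ \mu_f)$ --~that is, that the lax idempotence modification $\delta$ supplies exactly the diagonal data missing after transporting along $k$. Everything else is bookkeeping with comma objects, with Kleisli representatives of morphisms between free algebras, and with the injectivity half of the hypothesis.
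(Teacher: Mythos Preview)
Your proof is correct and follows essentially the same route as the paper's: transpose the sought counit to a $2$-cell $\bar\alpha=\epsilon_f\colon TLf\cdot q_f\Rightarrow i_{Kf}$; use the hypothesis (with $v=1$) to say that giving $\epsilon_f$ is the same as giving $k\cdot\epsilon_f$, which by the universal property of the comma object $TTf\downarrow Ti_B$ is a compatible pair; take this pair to be $(\delta_A\cdot q_f,\,\mu_f)$ and verify compatibility via the modification law for $\delta$, interchange, and $\delta\cdot i=1$; then check the two triangle identities, the first via $m\cdot\delta=1$, the second via the hypothesis with $v=Lf$. Your explicit mention of Kleisli representatives and of the $2$-naturality of $\epsilon_f$ (handled again by injectivity of $k$) makes a couple of points more visible than the paper does, but the mathematical content is the same.
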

\begin{proof}
  As we have done before, we will denote by $\mathsf{R}'$ the free split opfibration 2-monad on
  $\mathsf{T}\text-\mathrm{Alg}_s^\two$. Let $\mathsf{R}$ be the 2-monad
  on $\K^\two$ induced from $\mathsf{R}'$ by the simple adjunction
  $F\dashv U$, according to Theorem~\ref{thm:1}. By the same theorem,
  $\mathsf{R}$ is the
  codomain-preserving coreflection of the 2-monad $U^\two\mathsf{R}'F^\two$. The
  right part ${R}f$ of the factorisation of a morphism $f$ in \K\ induced by
  $\mathsf{R}$ is given by a comma object
  \begin{equation}
    \label{eq:116}
    \xymatrixrowsep{.5cm}
    \diagram
    Kf\ar[r]^-{q_f}\ar[d]_{{R}f}\drtwocell<\omit>{\hole\mu_f}&
    TA\ar[d]^{Tf}\\
    B\ar[r]_-{i_B}&
    TB
    \enddiagram
  \end{equation}
  and the left part ${L}f\colon A\to Kf$ is the unique morphism such that
  $\mu_f\cdot{}{L}f=1$. As explained at the beginning of the present section, we
  must exhibit a coretract adjunction \eqref{eq:180} in \K; this adjunction is
  automatically an adjunction in $\mathsf{T}\text-\mathrm{Alg}_s$, since
  $\mathrm{T}$ is lax idempotent.

  In order
  to define a counit $\alpha\colon T{L}f\cdot{}m_{K'Ff}\cdot{}Tq_f\Longrightarrow 1$ we can give its
  transpose under the free $\mathsf{T}$-algebra 2-adjunction, which is a
  2-cell $\bar\alpha\colon T{L}f\cdot{}q_f \Longrightarrow i_{Kf}$ in \K.

  The morphism $k$ of~\eqref{eq:118} is the unique such that satisfies the equality
  \begin{equation}
    \label{eq:120}
    \xymatrixrowsep{.5cm}
    \diagram
    TKf\ar[r]^-{Tq_f}\ar[d]_{T{R}f}\drtwocell<\omit>{\hole\hole T\mu_f}&
    T^2A\ar[d]^{T^2f}\\
    TB\ar[r]_-{Ti_B}&
    T^2B
    \enddiagram
    \quad
    =
    \quad
    \xymatrixrowsep{.5cm}
    \diagram
    TKf\ar[r]^-{k}&
    T^2f\downarrow Ti_B\ar[r]^-{d_0}\ar[d]_{d_1}\drtwocell<\omit>{\theta}&
    T^2A\ar[d]^-{T^2f}\\
    &TB\ar[r]_-{Ti_B}& T^2B
    \enddiagram
  \end{equation}
  To give $\bar\alpha$ is to equally give a pair of 2-cells, corresponding to
  $d_0\cdot{}k\cdot{}\bar\alpha$ and $d_1\cdot{}k\cdot{}\bar\alpha$:
  \begin{gather}
    \bar\alpha_1\colon Tq_f\cdot{}T{L}f\cdot{}q_f= Ti_A\cdot{}q_f
    \Longrightarrow i_{TA}\cdot{}q_f= Tq_f\cdot{}i_{Kf}
    \\
    \bar\alpha_2\colon T{R}f\cdot{}T{L}f\cdot{}q_f= Tf\cdot{}q_f \Longrightarrow
    i_B\cdot{}{R}f=T{R}f\cdot{}i_{Kf}
  \end{gather}
  compatible with $\theta$, in the sense that the following two compositions of
  2-cells must be equal.
  \begin{equation}
    \label{eq:76}
    T^2f\cdot Ti_A\cdot q_f\xrightarrow{T^2f\cdot\bar\alpha_1}
    T^2 f\cdot Tq_f\cdot i_{Kf}
    =
    T^2 f\cdot d_0\cdot k \cdot i_{Kf}
    \xrightarrow{\theta\cdot k\cdot i_{Kf}}
    Ti_B\cdot d_1\cdot  k \cdot i_{Kf}
  \end{equation}
  \begin{equation}
    \label{eq:82}
    T^2f\cdot d_0\cdot k\cdot T{L}f\cdot q_f
    \xrightarrow{\theta\cdot k\cdot T{L}f\cdot q_f}
    Ti_B\cdot d_1\cdot k\cdot T{L}f\cdot q_f
    % =
    % Ti_B\cdot T\rho_f \cdot T\lambda_f\cdot q_f
    =
    Ti_B\cdot Tf \cdot q_f
    \xrightarrow{Ti_B\cdot\bar\alpha_2} Ti_B\cdot i_B\cdot{R}f
  \end{equation}

  Set $\bar\alpha_1=\delta_A\cdot{}q_f$ and $\bar\alpha_2=\mu_f$, where $\delta\colon
  Ti\Rightarrow iT$ is the modification given by the lax idempotent structure of
  $\mathsf{T}$.
  % The fact that these 2-cells are compatible with the
  % comma object $\theta$ can easily be verified using the following string of
  % equalities, which hold, respectively, because $\delta$ is a modification,
  % naturality of $i$, functoriality of horizontal composition, the axiom
  % $\delta\cdot{}i=1$, and $\mu_f\cdot\lambda_f=1$.
  We must verify the
  pair of 2-cells displayed above are equal. Using that $\theta\cdot k=T\mu_f$,
  the verification takes the following form, where the first equality is the
  modification property for $\delta$ and the 2-naturality of $i$, the second is
  the interchange law in a 2-category, the third holds since $\delta\cdot i=1$,
  and the last holds since $\mu_f\cdot{L}f=1$.
  \begin{multline}
    \label{eq:123}
    \big(T\mu_f\cdot{}i_{Kf}\big)\big(T^2f\cdot{}\delta_A\cdot{}q_f\big)
    = \big(i_{TB}\cdot \mu_f \big)\big(\delta_B\cdot{}Tf\cdot{}q_f\big)
    = \big(\delta_B\cdot i_B\cdot{R}f\big)\big(Ti_B\cdot\mu_f\big)
    =\\
    = Ti_B\cdot{}\mu_f
    = (Ti_B\cdot{}\mu_f)(T\mu_f\cdot{}T{L}f\cdot{}q_f)
  \end{multline}

  It remains to verify the triangle identities of an adjunction. One of them is
  $m_A\cdot{}Tq_f\cdot{}\alpha=1$, equivalent to
  $m_A\cdot{}Tq_f\cdot{}\bar\alpha=1$, and by definition of $\bar\alpha$, equivalent to
  $m_A\cdot{}\bar\alpha_1=1$. This latter equality clearly holds, since $m_A\cdot{}\delta=1$,

  Up to now we have only used the hypothesis in the case when $v$ is an identity
  morphism. In order to prove the other triangle identity
  $\alpha\cdot{}(T{L}f)=1$ we shall need the hypothesis in its general form, more
  precisely, for $v={L}f$.  The triangle equality is equivalent to
  $\bar\alpha\cdot{}{L}f=1$, which holds since
  $\delta_A\cdot{}q_f\cdot{}{L}f=\delta_A\cdot{}i_A=1$ and $\mu_f\cdot{}{L}f=1$, finishing
  the proof.
\end{proof}
The proposition will be usually used in the following, less powerful form.
\begin{cor}
  \label{cor:2}
  A 2-monad $\mathsf{T}=(T,i,m)$ is simple if it is lax idempotent and composing
  with~\eqref{eq:118} induces a bijection between 2-cells $u\Rightarrow
  i_{f\downarrow g}\cdot{}v$ and $k\cdot{}u\Rightarrow k\cdot{}i_{f\downarrow g}\cdot{}v$, where
  $f\colon A\to B\leftarrow C:g$ are arbitrary morphisms.
\end{cor}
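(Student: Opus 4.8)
The plan is to derive Corollary~\ref{cor:2} directly from the Simplicity criterion, Proposition~\ref{prop:9}. The hypothesis assumed here is strictly more demanding than the one needed there — and lax idempotence, the other standing assumption, is common to both — so no new construction is required; it suffices to observe that the hypothesis of Proposition~\ref{prop:9} is a special case of the one postulated in the corollary, and then quote that proposition.

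Concretely, I would fix a morphism $f\colon A\to B$ in \K\ and apply the assumed bijection to the cospan $TA\xrightarrow{Tf}TB\xleftarrow{i_B}B$. For this cospan one has $Tf\downarrow i_B=Kf$, the left-hand part of the transferred factorisation, and the comparison morphism of Notation~\ref{not:1} instantiates to the morphism $k\colon TKf\to T^2f\downarrow Ti_B$ occurring in the proof of Proposition~\ref{prop:9} (it is the one characterised by~\eqref{eq:120}). The corollary's hypothesis, being stated for an arbitrary cospan and an arbitrary common domain of $u$ and $v$, then tells us that for every object $X$ and every pair $u\colon X\to TKf$, $v\colon X\to Kf$, composition with $k$ is a bijection between 2-cells $u\Rightarrow i_{Kf}\cdot{}v$ and 2-cells $k\cdot{}u\Rightarrow k\cdot{}i_{Kf}\cdot{}v$. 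Specialising the domain $X$ — to $A=\dom(f)$, and also, as the proof of Proposition~\ref{prop:9} uses it, to $X=Kf$ with $v$ an identity and to $X=A$ with $v={L}f$ — this is exactly the data required by the Simplicity criterion. Applying Proposition~\ref{prop:9} then yields that $\mathsf{T}$ is simple.

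The only step calling for a moment's care — and the closest thing here to an obstacle — is the bookkeeping identifying the triangle of 2-cells $\xi$ of Proposition~\ref{prop:9} (sitting over the unit of $\mathsf{T}$ at the comma object $Tf\downarrow i_B$) with the pattern $u\Rightarrow i_{f\downarrow g}\cdot{}v$ of the corollary, and checking that ``composing with the comparison morphism $k$'' denotes the same operation $\xi\mapsto k\cdot{}\xi$ in both places. This is immediate once the definitions are unwound, so I expect the proof to reduce to a single short paragraph invoking Proposition~\ref{prop:9}.
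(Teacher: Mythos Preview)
Your proposal is correct and matches the paper's intent: the corollary is stated immediately after Proposition~\ref{prop:9} with the remark that it is a ``less powerful form'' of that proposition, and the (implicit) proof is exactly the specialisation you describe---instantiate the arbitrary cospan of the corollary at $TA\xrightarrow{Tf}TB\xleftarrow{i_B}B$ to recover the hypothesis of Proposition~\ref{prop:9}, then invoke that result. Your bookkeeping remarks about identifying $Tf\downarrow i_B$ with $Kf$ and the comparison morphism with the $k$ of~\eqref{eq:120} are precisely the only details that need unwinding, and they are immediate.
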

Let $\mu\colon{}h\cdot{}j\Rightarrow g$ be a left extension in a 2-category with comma
objects.  Recall that $\mu$ is a \emph{pointwise left extension} if, whenever
pasted with a comma object as depicted on the left hand side below, the resulting 2-cell is a left
extension.
Recall that if a 2-monad $\mathsf{T}=(T,i,m)$ is lax idempotent then the
identity 2-cell in~\eqref{eq:143} below exhibits $Tf$ as a left extension --~not necessarily a
pointwise extension~-- of $i_B\cdot{}f$ along $i_A$ --~Section~\ref{sec:kz-2-monads}.
\begin{equation}
  \label{eq:143}
  \xymatrixrowsep{.6cm}
  \diagram
  j\downarrow w\ar[r]\ar[d]
  \dtwocell<\omit>{^<-5>}
  &
  W\ar[d]^w
  \\
  X\ar[r]^-j\ar[dr]_g
  &
  Y\duppertwocell<0>^h{^<2>\mu}
  \\
  &
  Z
  \enddiagram
  \qquad
  \diagram
  A\ar[d]_f\ar[r]^{i_A}&
  TA\ar[d]^{Tf}\\
  B\ar[r]^{i_B}&TB
  \enddiagram
\end{equation}

\begin{thm}
  \label{thm:3}
  A lax idempotent 2-monad $\mathsf{T}$ is simple if it satisfies:
  \begin{itemize}
  \item the identity 2-cell on the right hand side of~\eqref{eq:143} exhibits
    $Tf$ as a pointwise left extension of $i_B\cdot{}f$ along $i_A$, for all
    $f$;
  \item and the components of the unit $i\colon{}1\to T$ are fully faithful.
  \end{itemize}
  \end{thm}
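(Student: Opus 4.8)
The plan is to verify the hypothesis of the simplicity criterion, Corollary~\ref{cor:2}. Since $\mathsf{T}$ is lax idempotent by assumption, what remains is to show that for an arbitrary cospan $f\colon A\to B\leftarrow C\colon g$ in \K\ and arbitrary morphisms $u\colon X\to T(f\downarrow g)$ and $v\colon X\to f\downarrow g$, composition with the comparison morphism $k\colon T(f\downarrow g)\to Tf\downarrow Tg$ of Notation~\ref{not:1} is a bijection from the set of 2-cells $u\Rightarrow i_{f\downarrow g}\cdot v$ to the set of 2-cells $k\cdot u\Rightarrow k\cdot i_{f\downarrow g}\cdot v$. Write $p$, $q$ for the projections and $\nu\colon f\cdot p\Rightarrow g\cdot q$ for the universal 2-cell of $f\downarrow g$; by Notation~\ref{not:1} the projections of $Tf\downarrow Tg$ compose with $k$ to $Tp=T(p)$ and $Tq=T(q)$ and its universal 2-cell composes with $k$ to $T\nu$, and by 2-naturality of the unit $i$ one sees that $k\cdot i_{f\downarrow g}$ is the canonical comparison $\hat\iota\colon f\downarrow g\to Tf\downarrow Tg$ determined by $p$, $q$, the 2-cell $i_B\cdot\nu$, and the identities $Tf\cdot i_A=i_B\cdot f$ and $Tg\cdot i_C=i_B\cdot g$.

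The pivotal observation is that the pointwiseness hypothesis, being demanded of \emph{every} morphism, says precisely that for each $h\colon Y\to Z$ the identity 2-cell exhibits $T(h)$ as a \emph{pointwise} left extension of $i_Z\cdot h$ along $i_Y$. Taking $h=\id_{f\downarrow g}$ gives the density-type statement that the identity exhibits $\id_{T(f\downarrow g)}$ as a pointwise left extension of $i_{f\downarrow g}$ along $i_{f\downarrow g}$; taking $h=p$ and $h=q$ gives that $Tp$ and $Tq$ are pointwise left extensions of $i_A\cdot p$, respectively $i_C\cdot q$, along $i_{f\downarrow g}$. I would then form the comma object $\mathscr D$ of $i_{f\downarrow g}$ and $u$, with projections $\pi_0\colon\mathscr D\to f\downarrow g$ and $\pi_1\colon\mathscr D\to X$ and universal 2-cell $\lambda\colon i_{f\downarrow g}\cdot\pi_0\Rightarrow u\cdot\pi_1$, and paste each of these three pointwise left extensions with $\mathscr D$.

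Pasting the density statement with $\mathscr D$ shows that $\lambda$ exhibits $u$ as a left extension of $i_{f\downarrow g}\cdot\pi_0$ along $\pi_1$; hence 2-cells $u\Rightarrow i_{f\downarrow g}\cdot v$ correspond to 2-cells $i_{f\downarrow g}\cdot\pi_0\Rightarrow i_{f\downarrow g}\cdot v\cdot\pi_1$, and, since $i_{f\downarrow g}$ is fully faithful, to 2-cells $\pi_0\Rightarrow v\cdot\pi_1$ in $f\downarrow g$. Pasting the extensions $Tp$ and $Tq$ with $\mathscr D$ shows likewise that 2-cells $Tp\cdot u\Rightarrow i_A\cdot p\cdot v$ (respectively $Tq\cdot u\Rightarrow i_C\cdot q\cdot v$) correspond, by full faithfulness of $i_A$ (respectively $i_C$), to 2-cells $p\cdot\pi_0\Rightarrow p\cdot v\cdot\pi_1$ (respectively $q\cdot\pi_0\Rightarrow q\cdot v\cdot\pi_1$) in \K. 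On the other hand, the universal property of the comma object $Tf\downarrow Tg$ presents a 2-cell $k\cdot u\Rightarrow\hat\iota\cdot v$ as exactly such a pair of 2-cells, subject to one compatibility equation built from $T\nu\cdot u$ and $i_B\cdot\nu\cdot v$; after the identifications just made, and using $Tf\cdot Tp=T(fp)$, $Tg\cdot Tq=T(gq)$, $T\nu\cdot i_{f\downarrow g}=i_B\cdot\nu$, and one final appeal to the faithfulness of $i_B$ to descend 2-cells into $TB$ to 2-cells in \K, this compatibility equation becomes precisely the condition for the pair $\bigl(p\cdot\pi_0\Rightarrow p\cdot v\cdot\pi_1,\ q\cdot\pi_0\Rightarrow q\cdot v\cdot\pi_1\bigr)$ to assemble into a 2-cell $\pi_0\Rightarrow v\cdot\pi_1$ in $f\downarrow g$. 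So both sides are identified with the set of 2-cells $\pi_0\Rightarrow v\cdot\pi_1$ in $f\downarrow g$; tracing the identifications shows this common identification is the map $k\cdot(-)$, which is therefore a bijection, and Corollary~\ref{cor:2} applies.

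The step I expect to be the main obstacle is the last one: verifying that the two chains of bijections — the one computing 2-cells $u\Rightarrow i_{f\downarrow g}\cdot v$ and the one computing 2-cells $k\cdot u\Rightarrow\hat\iota\cdot v$ — are compatible with composition by $k$, and, inside that, rewriting the comma-object compatibility equation of $Tf\downarrow Tg$ in the pointwise form supplied by $\mathscr D$. This amounts to organising a handful of interchange identities around $\lambda$ and the various instances of the unit $i$. Everything else is routine: the reduction to Corollary~\ref{cor:2}, the identification of $k\cdot i_{f\downarrow g}$ with $\hat\iota$, and the passages through the fully faithful components of $i$. As a side remark not needed above, full faithfulness of $i$ also makes $\hat\iota$ itself fully faithful, which already disposes of the bijection when $u$ lies in the image of $i_{f\downarrow g}$.
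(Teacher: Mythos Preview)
Your proof is correct and follows essentially the same route as the paper's: both verify Corollary~\ref{cor:2} by forming the comma object of $i_{f\downarrow g}$ and $u$, applying the pointwise-extension hypothesis to the two projections and---for the final identification---to the identity on $f\downarrow g$, and then descending through the fully faithful unit components. The density instance for $h=\id_{f\downarrow g}$, which you call out explicitly, is precisely what underlies the paper's closing appeal to ``the description of $u$ as a left extension.''
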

\begin{proof}
  We will verify the hypothesis of Corollary~\ref{cor:2}.
  Given a comma
  object $h\downarrow g$
  depicted on the left below, denote by $k\colon
  T(h\downarrow g)\to Th\downarrow Tg$ the comparison morphism. Given a
  morphism $u\colon X\to T(h\downarrow g)$, we consider the diagram
  on the right hand side, where the unlabelled 2-cell is a comma object.
  This pasting exhibits $(Td_n)\cdot{}u$ as a left extension, since $Td_n$ is a
  pointwise left extension.
  \begin{equation}
    \label{eq:117}
    \xymatrixrowsep{.6cm}
    \diagram
    {h\downarrow g}
    \ar[r]^-{d_1}\ar[d]_{d_0}\drtwocell<\omit>{^\gamma}
    &
    {B}
    \ar[d]^{g}
    \\
    {A}
    \ar[r]_-{h}
    &
    {C}
    \enddiagram
    \qquad
    \diagram
    \cdot\ar[r]^{e_1}\ar[d]_{e_0}\drtwocell<\omit>{^}&
    X\ar[d]^u\\
    h\downarrow g\ar[r]_-{i_{h\downarrow g}}\ar[d]_{d_n}&
    T(h\downarrow g)\ar[d]^{Td_n}\\
    \cod(d_n)\ar[r]^-{i}&
    T(\cod(d_n))
    \enddiagram
  \end{equation}
  Given a morphism $v\colon X\to h\downarrow g$, we will show that
  2-cells $\alpha\colon k\cdot{}u\Rightarrow k\cdot{} i_{h\downarrow g}\cdot{}v$ are in bijection
  with 2-cells $u\Rightarrow i_{h\downarrow g}\cdot{}v$.

  We begin by observing that 2-cells $\alpha$ are in bijection with
  pairs of 2-cells
  \begin{equation}
    \alpha_0\colon (Td_0)\cdot{}u\Rightarrow (Td_0)\cdot{}i_{h\downarrow g}\cdot{}v
    \qquad \text{and}\qquad
    \alpha_1:(Td_1)\cdot{}u\Rightarrow (Td_1)\cdot{}i_{h\downarrow g}\cdot{}v
  \end{equation}
  compatible with $T\gamma$ in the sense that
  $(Tg\cdot{}\alpha_1)(T\gamma\cdot{}u)=(T\gamma\cdot{}i_{h\downarrow g}\cdot{}v)(Th\cdot{}\alpha_0)$ holds.

  By the universal property of extensions, $\alpha_n$ is in bijection with
  2-cells $i\cdot{}d_n\cdot{}e_0\Rightarrow (Td_n)\cdot{}i_{h\downarrow g}\cdot{}v\cdot{}e_1=i_{\cod(d_n)}\cdot{}d_n\cdot{}v\cdot{}e_1$, and
  since $i$ has fully faithful components, with 2-cells $\beta_n\colon
  d_n\cdot{}e_0\Rightarrow d_n\cdot{}v\cdot{}e_1$. The compatibility between $\alpha_0$, $\alpha_1$,
  and $T\gamma$ translates into
  $(g\cdot{}\beta_1)(\gamma\cdot{}e_0)=(\gamma\cdot{}v\cdot{}e_1)(h\cdot{}\beta_0)$.
  By the universal property of $\gamma$, the pair $\beta_0$, $\beta_1$ is in
  bijection with 2-cells $e_0\Rightarrow v\cdot e_1$, and thus with 2-cells
  $i_{h\downarrow g}\cdot{}e_0\Rightarrow i_{h\downarrow g}\cdot{}v\cdot{}e_1$. Finally, by the
  description of $u$ as a left extension, these 2-cells are in
  bijection with 2-cells $u\Rightarrow i_{h\downarrow g}\cdot{}v$, as required.
\end{proof}
The theorem can be used to prove that, for a class of $\mathbf{Set}$-colimits,
the 2-monad on \Cat\ whose algebras are categories with chosen colimits of that
class is simple. Section~\ref{sec:exampl-compl-v} proves this fact in another
way, that applies to enriched categories.

\section{Example: completion of \V-categories under a class of colimits}
\label{sec:exampl-compl-v}
This section is divided in four parts. The first compiles the basic facts about
completions under a class of colimits that will be needed to prove. In the
second part, it is shown that the 2-monad
whose algebras are \V-categories with chosen colimits of a class is simple,
therefore inducing a lax orthogonal \textsc{awfs} $(\mathsf{L},\mathsf{R})$ on \VCat. The
third part exhibits the example when the colimits involved are initial objects.
The last part shows that the algebras for $\mathsf{R}$ are, at least when
$\V=\mathbf{Set}$, split opfibrations whose fibres are equipped with chosen
colimits and whose push forward functors strictly preserve
them. Intuition should dictate that
this type of split opfibration should coincide with the
$\mathsf{R}$-algebras; however, in general they do not, as we show at
the end of the section.

\begin{notation}
  In this section we will tend to think of categories enriched in \V\ as
  objects of a 2-category, in this case $\VCat$. Instead of denoting \V-categories
  by calligraphic letters, we opt to maintain the notation we have used for
  2-categories, where objects are denoted by capital roman letters and morphisms
  by lowercase letters. As a result, \V-categories will be denoted by $A$, $B$,
  etc, and \V-functors by $f\colon A\to B$, etc.
\end{notation}

\subsection{Completion under colimits}
\label{sec:compl-under-colim}
Throughout the section \V\ will be a base of enrichment, in our case, a complete
and cocomplete symmetric monoidal closed category. As argued
in~\cite{Kelly:BCECT}, the usual notion of colimit is not well adapted to the
context of enriched categories and must be extended to that of \emph{weighted
  colimit}. A weight is just a \V-functor $\phi\colon J^{\mathrm{op}}\to\V$ with $J$ a small
\V-category. Then a $\phi$-weighted colimit of a functor $G\colon J\to C$ is
expressed by a \V-natural isomorphism
\begin{equation}
  \label{eq:121}
  C(\operatorname{colim}(\phi,G),c)\cong[J^{\mathrm{op}},V](\phi,C(G-,C)).
\end{equation}

The free completion of a \V-category $C$ under small colimits can be
constructed as the \V-category $\mathscr{P}C$ with objects small presheaves --
ie \V-functors $C^\mathrm{op}\to\V$ that are a left Kan extension of
its own restriction to a small subcategory of $C^{\mathrm{op}}$ -- and enriched
homs given by $\mathscr PC(\phi,\psi)=\int_c[\phi c,\psi c]$. This extends to a
pseudomonad on \VCat, whose unit has components the Yoneda embedding $y_C\colon
C\to\mathscr PC$, and whose multiplication we denote by $m^{\mathscr P}$.
A number of properties of $\mathscr{P}C$, in
particular its completeness, are studied in~\cite{Day2007651}.
% In the same paper it is observed that
% $\mathscr Pf\colon\mathscr PC\to\mathscr PD$ has a right adjoint precisely when
% each of the presheaves $D(f-,d)$ are small; this right adjoint is necessarily
% given by $\psi\mapsto\psi f^{\mathrm{op}}$.

A class of colimits is a set of weights $\Phi=\{\phi_i\colon
J^{\mathrm{op}}_i\to\Cat\}_{i\in I}$. The free completion of $C$ under colimits
of the class $\Phi$, or $\Phi$-colimits, can be constructed as the smallest
full sub-\V-category of $\mathscr PC$ that is closed under $\Phi$-colimits and
contains the representable presheaves. We follow the notation
of~\cite{Kelly:BCECT} and denote this \V-category by $\Phi C$. One obtains a
pseudomonad, also denoted by $\Phi$, with unit the corestricted Yoneda embedding
$y_C$ and multiplication $m^\Phi$, together with a pseudomonad morphism
$\Phi\to\mathscr P$ that has fully faithful components.

Given a class of colimits $\Phi$, \cite{Kelly:MonadicityChosenColim}~shows the
existence of a 2-monad on $\VCat$, that we denote by $\mathsf{T}_\Phi$, whose
algebras are the \V-categories with chosen $\Phi$-colimits, and whose strict
morphisms are \V-functors that strictly preserve these. Furthermore, the same
article shows that this is a lax idempotent 2-monad. Earlier, less general,
versions of this monad appeared, for example, in~\cite{MR1474564}.

It is convenient to recall some aspects of the construction of
$\mathsf{T}_\Phi$ from~\cite{Kelly:MonadicityChosenColim}. Part of this
construction is an equivalence $t_A\colon T_\Phi A\to\Phi A$ for each \V-category
$A$, which form a pseudonatural transformation $\mathsf{T}\to\Phi$, and
moreover, a pseudomonad morphism.

\begin{lemma}
  \label{l:1}
  Denote by $\Phi$ a small class of \V-enriched colimits and the associated
  pseudomonad on \VCat. Let $f\colon A\to B$ be a fully faithful \V-functor into a
  $\Phi$-cocomplete \V-category, and denote by $\tilde f\colon\Phi(A)\to B$ a
  left Kan extension of $f$ along the corestricted Yoneda embedding $y_A\colon
  A\to\Phi A$. Then the morphisms
  \begin{equation}
    \label{eq:13}
    \Phi(A)(\phi,y_A(a))\longrightarrow B(\tilde f(\phi),f(a))
  \end{equation}
  induced by $\tilde f$
  are isomorphisms for all $\phi\in\Phi(A)$ and $a\in A$.
\end{lemma}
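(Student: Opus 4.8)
The plan is to combine the universal property of $\Phi A$ as the free completion of $A$ under $\Phi$-colimits with a standard ``closure under colimits'' argument. First I would recall that, since $B$ is $\Phi$-cocomplete, the left Kan extension $\tilde f$ exists, is determined up to isomorphism, and is moreover $\Phi$-cocontinuous: this is the defining universal property of the corestricted Yoneda embedding $y_A\colon A\to\Phi A$, under which $\V$-functors $A\to B$ correspond to $\Phi$-cocontinuous $\V$-functors $\Phi A\to B$ (see~\cite{Kelly:BCECT,Kelly:MonadicityChosenColim}); concretely $\tilde f$ sends a presheaf $\phi\in\Phi A$ to the $\phi$-weighted colimit of $f$. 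Since $y_A$ is fully faithful, standard properties of pointwise left Kan extensions give $\tilde f\cdot y_A\cong f$, and under this isomorphism the map~\eqref{eq:13} is the composite of the hom-action $\Phi A(\phi,y_A a)\to B(\tilde f\phi,\tilde f y_A a)$ of $\tilde f$ with the isomorphism $B(\tilde f\phi,\tilde f y_A a)\cong B(\tilde f\phi,f a)$.

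Fix $a\in A$ and consider the two $\V$-functors $P,Q\colon(\Phi A)^{\mathrm{op}}\to\V$ defined by $P(\phi)=\Phi A(\phi,y_A a)$ and $Q(\phi)=B(\tilde f\phi,f a)$, together with the $\V$-natural transformation $\theta\colon P\Rightarrow Q$ whose component at $\phi$ is~\eqref{eq:13}. Both $P$ and $Q$ carry $\Phi$-colimits computed in $\Phi A$ to the corresponding weighted limits in $\V$: $P$ because it is representable, and $Q$ because $\tilde f$ preserves $\Phi$-colimits while $B(-,f a)$ carries colimits to limits. Since $\theta$ is compatible with these limit presentations, any $\Phi$-weighted limit of components of $\theta$ that are isomorphisms is again an isomorphism.

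Let $\mathcal E\subseteq\Phi A$ be the full sub-$\V$-category spanned by the objects $\phi$ for which $\theta_\phi$ is invertible. By the previous paragraph, $\mathcal E$ is closed under $\Phi$-colimits formed in $\Phi A$. It also contains every representable $y_A a'$: we have $\Phi A(y_A a',y_A a)\cong A(a',a)$ by the Yoneda lemma and $B(\tilde f y_A a',f a)\cong B(f a',f a)\cong A(a',a)$ using $\tilde f y_A\cong f$ together with the full faithfulness of $f$, and a direct check identifies $\theta_{y_A a'}$ with this composite isomorphism. Since $\Phi A$ is, by construction, the smallest full sub-$\V$-category of $\mathscr P A$ that is closed under $\Phi$-colimits and contains the representables, it follows that $\mathcal E=\Phi A$, which is exactly the assertion.

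The step I expect to be the real work, rather than any conceptual difficulty, is the bookkeeping around naturality: confirming that $\tilde f$ is genuinely $\Phi$-cocontinuous for the particular construction of the pseudomonad $\Phi$ in use (which I would take from the cited free-cocompletion results rather than reprove), and checking that the map~\eqref{eq:13} is, on representables, the composite of the canonical isomorphisms above and, on a $\Phi$-colimit, the corresponding weighted limit of the maps ``below'' it --~exactly the compatibilities that make the closure argument valid.
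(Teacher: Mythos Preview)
Your argument is correct, but it takes a considerably more elaborate route than the paper's. The paper gives a one-line direct computation that exploits the fact that $\Phi A$ is by construction a \emph{full} sub-$\V$-category of $[A^{\mathrm{op}},\V]$: hence $\Phi A(\phi,y_A a)=[A^{\mathrm{op}},\V](\phi,A(-,a))$. Full faithfulness of $f$ identifies $A(-,a)$ with $B(f-,fa)$, and then the defining isomorphism of the weighted colimit $\tilde f(\phi)\cong\operatorname{colim}(\phi,f)$ gives $[A^{\mathrm{op}},\V](\phi,B(f-,fa))\cong B(\tilde f\phi,fa)$ immediately, for \emph{every} $\phi$ at once. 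No induction, no appeal to $\Phi$-cocontinuity of $\tilde f$, and no closure argument are needed.

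Your approach instead treats $\Phi A$ abstractly as ``representables closed under $\Phi$-colimits'' and runs an induction: verify the claim on representables, then propagate it using that both hom-functors send $\Phi$-colimits to limits. This is sound, and it has the virtue of working in any setting where $\Phi A$ is characterised by that closure property rather than by an explicit presheaf model; but here it is doing more work than necessary, since the presheaf description is exactly what the paper has set up in the preceding paragraphs. The bookkeeping you flag (compatibility of $\theta$ with the limit presentations) is genuine but routine; the paper's route simply avoids it.
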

\begin{proof}
  The morphism~\eqref{eq:13} can be written as the composition of
  $\Phi(A)(\phi,f)$ from $\Phi(A)(\phi,y_A(a))$ to $\Phi(A)(\phi,B(f-,f(a)))$
  and the isomorphism between the latter and $B(\col(\phi,f),f(a))$. The result
  is an isomorphism since $f$ is full and faithful and $\tilde f(\phi)\cong\col(\phi,f)$.
\end{proof}

An explicit description of $\mathsf{T}_\Phi A$ is only possible in particular
instances. In theory, one can give an inductive description of the objects, but
in practice this is not very useful. Instead, we will use $\Phi A$ and its
relationship to $\mathsf{T}_\Phi A$.

\subsection{Simplicity of completion under a class colimits}
\label{sec:simpl-compl-under}
Before proving that the 2-monads $\mathsf{T}_{\Phi}$ are simple, we need the
following easy lemma.
\begin{lemma}
  \label{l:6}
  Suppose given commutative diagrams of\/ \V-functors whose horizontal arrows $u$,
  $v$ and $w$ are full and faithful.
  \begin{equation}
    \label{eq:122}
    \diagram
    A\ar[r]^-u\ar[d]_f&A'\ar[d]^{f'}\\
    C\ar[r]^-v&C'
    \enddiagram
    \qquad
    \diagram
    B\ar[r]^-w\ar[d]_g&B'\ar[d]^{g'}\\
    C\ar[r]^-v&C'
    \enddiagram
  \end{equation}
  Then, the \V-functor $h\colon (f\downarrow g)\longrightarrow (f'\downarrow g')$,
  defined on objects by $(a,b,\alpha)\mapsto(u(a),w(b),v(\alpha))$, is full and
  faithful.
\end{lemma}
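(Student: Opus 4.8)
The plan is to argue entirely through the explicit description of comma \V-categories, reducing the claim to the stability of pullbacks under a morphism of cospans with invertible components. Recall that for \V-functors $f\colon A\to C$ and $g\colon B\to C$ the comma object $f\downarrow g$ in $\VCat$ has objects the triples $(a,b,\alpha)$ with $a\in A$, $b\in B$ and $\alpha\colon fa\to gb$ a morphism of $C$, and hom-objects
\begin{equation*}
(f\downarrow g)\bigl((a,b,\alpha),(a',b',\alpha')\bigr)=A(a,a')\times_{C(fa,gb')}B(b,b'),
\end{equation*}
the pullback in \V\ of $C(fa,\alpha')\cdot f_{a,a'}\colon A(a,a')\to C(fa,gb')$ and $C(\alpha,gb')\cdot g_{b,b'}\colon B(b,b')\to C(fa,gb')$. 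The \V-functor $h$ of the statement is the comparison \V-functor into $f'\downarrow g'$ produced by the universal property of comma objects from the cone $(u\cdot d_0,\ w\cdot d_1,\ v\cdot\gamma)$, where $d_0,d_1,\gamma$ are the projections and universal 2-cell of $f\downarrow g$; on objects it acts as described, and on the hom-object above it acts as the canonical map $H$ between the two relevant pullbacks characterised by $p_{A'}\cdot H=u_{a,a'}\cdot p_A$ and $p_{B'}\cdot H=w_{b,b'}\cdot p_B$, where the $p$'s are the pullback legs.

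First I would observe that, since $vf=f'u$ and $vg=g'w$, we have $C'(f'ua,g'wb')=C'(vfa,vgb')$, so the target hom-object $(f'\downarrow g')\bigl((ua,wb,v\alpha),(ua',wb',v\alpha')\bigr)$ is the pullback in \V\ of a cospan with vertex $C'(vfa,vgb')$. Then I would identify $H$ as the map induced on pullbacks by the triple of \V-morphisms $u_{a,a'}\colon A(a,a')\to A'(ua,ua')$, $w_{b,b'}\colon B(b,b')\to B'(wb,wb')$ and $v_{fa,gb'}\colon C(fa,gb')\to C'(vfa,vgb')$. Because $u$, $v$ and $w$ are fully faithful, each of these three maps is an isomorphism in \V; hence, once it is checked that the triple forms a genuine morphism of cospans, $H$ is an isomorphism, and since $(a,b,\alpha)$ and $(a',b',\alpha')$ are arbitrary this is exactly the assertion that $h$ is fully faithful.

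The only substantive point, then, is to verify that $(u_{a,a'},v_{fa,gb'},w_{b,b'})$ is a morphism of cospans, i.e. that
\begin{equation*}
v_{fa,gb'}\cdot C(fa,\alpha')\cdot f_{a,a'}=C'(f'ua,v\alpha')\cdot f'_{ua,ua'}\cdot u_{a,a'}
\end{equation*}
together with the symmetric identity on the $B$-side. For this one combines $v_{fa,fa'}\cdot f_{a,a'}=f'_{ua,ua'}\cdot u_{a,a'}$, which is just $vf=f'u$ read off on hom-objects, with the compatibility of the \V-functor $v$ with post-composition, $v_{fa,gb'}\cdot C(fa,\alpha')=C'(vfa,v\alpha')\cdot v_{fa,fa'}$, a direct consequence of the \V-functoriality axiom for $v$ applied to the name $I\to C(fa',gb')$ of $\alpha'$; the $B$-side is symmetric, using $vg=g'w$ and compatibility of $v$ with pre-composition. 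These are routine diagram chases in \V\ which I would not write out in full — they are the main, and mild, obstacle. Once they are in place the morphism of cospans has invertible legs, so $H$ is invertible and the lemma follows.
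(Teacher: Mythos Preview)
Your argument is correct, but the paper takes a different, more structural route. Rather than computing hom-objects explicitly, the paper first uses full faithfulness of $v$ to identify $f\downarrow g$ with $(vf)\downarrow(vg)=(f'u)\downarrow(g'w)$, and then factors the comparison into $f'\downarrow g'$ through a grid of pullback squares in \VCat, pulling $f'\downarrow g'$ back along $u$ and $w$; since fully faithful \V-functors are stable under pullback, the diagonal of that grid is fully faithful. Your approach has the advantage of being entirely elementary and self-contained: it only needs the explicit pullback description of comma hom-objects and the trivial fact that an isomorphism of cospans induces an isomorphism of pullbacks. The paper's approach avoids unpacking hom-objects but relies on two external facts (that $v$ fully faithful implies $f\downarrow g\simeq(vf)\downarrow(vg)$, and pullback-stability of fully faithful \V-functors), which are themselves most easily checked by the kind of hom-object computation you carry out directly.
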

\begin{proof}
  Of the routes one may take to prove this result, we choose a fairly direct one.
  The diagram displayed on the left exhibits $f\downarrow g$ as the comma category
  $(v\cdot f)\downarrow(v\cdot g)$; this is a direct consequence of the construction
  of comma categories and the fact that $v$ is full and faithful.
  \begin{equation}
    \label{eq:158}
    \diagram
    f\downarrow g\ar[d]\ar[r]\drtwocell<\omit>{}&A\ar[d]^f&\\
    B\ar[r]_g&C\ar[r]_v&C'
    \enddiagram
    \quad
    \xymatrixrowsep{.5cm}
    \diagram
    (f'\cdot u)\downarrow (g'\cdot w)\ar@{>->}[r]\ar@{>->}[d]
    \ar@{}[dr]|{\mathrm{pb}}&
    \bullet\ar[r]\ar@{>->}[d]\ar@{}[dr]|{\mathrm{pb}}&
    A\ar@{>->}[d]^{u}\\
    \bullet\ar@{>->}[r]\ar[d]\ar@{}[dr]|{\mathrm{pb}}&
    f'\downarrow g'\ar[r]\ar[d]\drtwocell<\omit>{}&
    A'\ar[d]^{f'}\\
    B\ar@{>->}[r]_-w&
    B'\ar[r]_{g'}&
    C'
    \enddiagram
  \end{equation}
  Using the commutativity of the diagrams in the statement, we see that
  $f\downarrow g$ can be constructed as $(f'\cdot u)\downarrow (g'\cdot w)$, and
  the latter comma category can be constructed from $f'\downarrow g'$ by taking
  pullbacks, as shown in the diagram on the right. Since full and faithful
  \V-functors are stable under pullback, all the arrows denoted $\rightarrowtail{}$
  are fully faithful functors. The composition of the isomorphism
  $f\downarrow g\cong (f'\cdot u)\downarrow(g'\cdot w)$ with the diagonal
  $(f'\cdot u)\downarrow(g'\cdot w)\longrightarrow f'\downarrow g'$ is precisely
  the \V-functor $f\downarrow g\longrightarrow f'\downarrow g'$ of the
  statement, which is, therefore, full and faithful.
\end{proof}

Let $\Phi$ be a small class of colimits, and $\mathsf{T}_\Phi$ the 2-monad on
\VCat\ whose algebras are small \V-categories with chosen colimits of the class
$\Phi$.
\begin{thm}
  \label{thm:6}
  The 2-monads $\mathsf{T}_\Phi$ are simple -- in the sense of Section
  \ref{sec:simple-2-monads} -- therefore inducing a lax orthogonal {\normalfont\textsl{\textsc{awfs}}}
  $(\mathsf{L}_\Phi,\mathsf{R}_\Phi)$ on \VCat.
\end{thm}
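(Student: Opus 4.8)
The plan is to verify the simplicity criterion in the form of Corollary~\ref{cor:2}. Since $\mathsf{T}_\Phi$ is lax idempotent by~\cite{Kelly:MonadicityChosenColim}, it remains to show that for an arbitrary cospan $f\colon A\to B\leftarrow C\colon g$ in $\VCat$ and morphisms $v\colon X\to f\downarrow g$, $u\colon X\to T_\Phi(f\downarrow g)$, precomposition with the comparison \V-functor $k\colon T_\Phi(f\downarrow g)\to T_\Phi f\downarrow T_\Phi g$ of Notation~\ref{not:1} gives a bijection between 2-cells $u\Rightarrow i_{f\downarrow g}\cdot{}v$ and 2-cells $k\cdot{}u\Rightarrow k\cdot{}i_{f\downarrow g}\cdot{}v$. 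The target $i_{f\downarrow g}\cdot{}v$ of these 2-cells takes only representable values, and both the components and the \V-naturality constraints of a \V-natural transformation with such a target live in hom-objects of the form $T_\Phi(f\downarrow g)(\psi, i_{f\downarrow g}(c))$; hence the desired bijection follows once we know that $k$ is \emph{fully faithful into representables}, i.e.\ that
\begin{equation*}
  T_\Phi(f\downarrow g)(\psi, i_{f\downarrow g}(c))\longrightarrow
  (T_\Phi f\downarrow T_\Phi g)(k\psi,\; k\,i_{f\downarrow g}(c))
\end{equation*}
is an isomorphism in \V\ for every $\psi\in T_\Phi(f\downarrow g)$ and $c\in f\downarrow g$.

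First I would record that each unit component $i_A\colon A\to T_\Phi A$ is fully faithful: this is immediate from the pseudonatural equivalence $t_A\colon T_\Phi A\to\Phi A$ (a pseudomonad morphism) together with the unit compatibility $t_A\cdot{}i_A\cong y_A$, since the corestricted Yoneda embedding $y_A$ is fully faithful and $t_A$ is an equivalence. As $i$ is a strict 2-natural transformation, the squares $T_\Phi f\cdot{}i_A=i_B\cdot{}f$ and $T_\Phi g\cdot{}i_C=i_B\cdot{}g$ commute strictly, so Lemma~\ref{l:6} applies and the \V-functor
\begin{equation*}
  h\colon(f\downarrow g)\longrightarrow(T_\Phi f\downarrow T_\Phi g),\qquad
  (a,c,\alpha)\longmapsto(i_A a,\; i_C c,\; i_B\alpha)
\end{equation*}
is fully faithful. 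Comparing with the universal property of the comma object $T_\Phi f\downarrow T_\Phi g$ and using the 2-naturality of $i$ identifies $h$ with $k\cdot{}i_{f\downarrow g}$; in particular $k\,i_{f\downarrow g}(c)=h(c)$.

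To prove the displayed isomorphism I would transfer the situation along the equivalences $t$ to the pseudomonad $\Phi$. Under $t_{f\downarrow g}$ and the componentwise equivalence $T_\Phi f\downarrow T_\Phi g\simeq\Phi f\downarrow\Phi g$ induced by $t_A,t_B,t_C$ and the pseudonaturality of $t$, the comparison $k$ corresponds to the comparison $k'\colon\Phi(f\downarrow g)\to\Phi f\downarrow\Phi g$ and $i_{f\downarrow g}$ corresponds to $y_{f\downarrow g}$, so it suffices to prove that $k'$ is fully faithful into representables. Now $\Phi f\downarrow\Phi g$ is $\Phi$-cocomplete, since $\Phi f$ preserves $\Phi$-colimits, $\Phi g$ is arbitrary, and $\Phi$-colimits in a comma \V-category whose left leg is cocontinuous are jointly reflected by the two projections; for the same reason $k'$ is cocontinuous, because its composites with these projections are $\Phi d_0$ and $\Phi d_1$ for the projections $d_0\colon f\downarrow g\to A$, $d_1\colon f\downarrow g\to C$, and $\Phi$ of a \V-functor is cocontinuous. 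Moreover $k'\cdot{}y_{f\downarrow g}\cong h'$, the transfer of $h$, which is fully faithful with $\Phi$-cocomplete codomain. By the universal property of the free $\Phi$-cocompletion $\Phi(f\downarrow g)$, the cocontinuous $k'$ is then a left Kan extension of $h'$ along $y_{f\downarrow g}$, and Lemma~\ref{l:1} --~applied with the fully faithful $h'$ in place of $f$~-- yields precisely that $\Phi(f\downarrow g)(\psi,y_{f\downarrow g}(c))\to(\Phi f\downarrow\Phi g)(k'\psi,h'(c))$ is an isomorphism. Transferring back gives the displayed isomorphism, hence $\mathsf{T}_\Phi$ is simple, and Theorem~\ref{thm:1} produces the lax orthogonal \textsc{awfs} $(\mathsf{L}_\Phi,\mathsf{R}_\Phi)$ on $\VCat$.

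The step I expect to be the main obstacle is the transfer along $t$: one must make precise that $k$ and $i_{f\downarrow g}$ correspond to $k'$ and $y_{f\downarrow g}$, which amounts to checking the compatibility of the pseudonatural equivalence $t$ with the comma-object comparison morphisms of Notation~\ref{not:1} and with the units, and one must also spell out carefully that ``fully faithful into representables'' together with the reduction of the first paragraph genuinely delivers the bijection of 2-cells required by Corollary~\ref{cor:2}. The identification of $k'$ as a left Kan extension --~the point at which Lemma~\ref{l:1} is invoked~-- is the conceptual heart of the argument, but once $k'$ is known to be cocontinuous this is a formal consequence of the freeness of $\Phi(f\downarrow g)$.
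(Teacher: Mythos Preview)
Your proof is correct and follows essentially the same route as the paper: both verify Corollary~\ref{cor:2} by showing that the comparison $k$ is fully faithful on homs into representables, using Lemma~\ref{l:6} to see that $h=k\cdot i_{f\downarrow g}$ is fully faithful and then invoking Lemma~\ref{l:1}. The only real difference is in how the hypotheses of Lemma~\ref{l:1} are secured: you transfer explicitly along the pseudomonad equivalence $t\colon T_\Phi\simeq\Phi$ and argue cocompleteness of the comma object and cocontinuity of $k'$ by hand, whereas the paper observes directly that the forgetful 2-functor from $\mathsf{T}_\Phi\text{-}\mathrm{Alg}_s$ creates comma objects --- so $T_\Phi f\downarrow T_\Phi g$ is automatically a $\mathsf{T}_\Phi$-algebra and $k$ a strict morphism out of a free one, hence a left extension by lax idempotency. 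This shortcut lets the paper apply Lemma~\ref{l:1} in one line without unpacking the transfer you flag as the main obstacle; your version makes that step honest but is otherwise the same argument.
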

\begin{proof}%[Proof of Theorem~\ref{thm:6}]
  Let the 2-monad $\mathsf{T}$ in Corollary~\ref{cor:2} be $\mathsf{T}_\Phi$,
  and assume that we
  are given \V-functors $f$ and $g$ as in the statement of Corollary~\ref{cor:2}. The
  \V-category $Tf\downarrow Tg$ is $\Phi$-cocomplete, as the forgetful 2-functor
  from $\mathsf{T}$-algebras creates comma objects. The comparison morphism of the
  corollary is the left Kan extension of the \V-functor $h\colon f\downarrow
  g \to Tf\downarrow Tg$ induced by $i_{A}$ and $i_B$. Since $h$ is full
  and faithful, then $k$ is full and faithful on homs of the form
  $T(f\downarrow g)(u,i_{f\downarrow g}(v))$ by Lemma~\ref{l:1}, so we have
  indeed the bijection of 2-cells required in Corollary~\ref{cor:2}.
\end{proof}

\subsection{Completion under initial objects}
\label{sec:compl-under-init}

  Suppose that the base of enrichment is the category $\mathbf{Set}$ of sets, so
  we work with locally small categories, and that the class of colimits has only
  one member, $\Phi=\{\emptyset\to\mathbf{Set}\}$. Then, $\Phi$-colimits are
  initial objects, and the 2-monad $\mathsf{T}_\Phi$ can be described as having
  endo-2-functor $T_\Phi$ that sends a category $X$ to the category constructed
  by adding to $X$ an object $0$ and adding one arrow $0\to x$ for all $x\in
  X$. Then the morphism ${R}f\colon Kf\to B$, the right part of the
  factorisation of $f$, can be described as the split opfibration with fibre
  over $b\in B$ equal to $T_\Phi(f/b)$ and with push-forward functor
  $T_\Phi(f/b)\to T_\Phi(f/b')$ induced by a morphism $\beta\colon b\to b'$
  equal to $T_\Phi(\beta_*)$ where $\beta_*\colon f/b\to f/b'$ is the
  push-forward functor of the free split opfibration $f/B$.
  \begin{equation}
    \label{eq:161}
    \xymatrixrowsep{.5cm}
    \diagram
    Kf\ar[d]_{{R}f}\ar[r]\drtwocell<\omit>{}&T_\Phi A\ar[d]^{T_\Phi f}\\
    B\ar[r]_-{i_B}&T_\Phi B
    \enddiagram
  \end{equation}
  An $\mathsf{R}$-algebra is a split opfibration $A\to B$ whose fibres $A_b$,
  for $b\in B$, are categories equipped with a chosen initial object, and whose
  push-forward functors $\beta_*\colon A_b\to A_{b'}$, for $\beta\colon b\to b'$
  in $B$, strictly preserve the initial objects.

\subsection{Split opfibrations with fibrewise chosen
  $\Phi$-colimits }
\label{sec:monad-split-opfibr}
Given a small class of $\mathbf{Set}$-colimits $\Phi$, denote by
$\mathbf{Op}\mathbf{Fib}_s \text-\Phi\text-\mathbf{Colim}_s$ the 2-category with objects split
opfibrations in \Cat\ whose fibres are small categories with chosen colimits
of the class $\Phi$ and whose push-forward functors strictly preserve
these. Morphisms from $p\colon E\to B$ to $p'\colon E'\to B'$ are strict
morphisms $(h,k)\colon p\to p'$ of split fibrations --~indicated by the first
\emph{s} in the notation~-- such that the restriction of $h$
to fibres strictly preserves the chosen $\Phi$-colimits --~indicated by the
second \emph{s} in the notation. The 2-cells are those
of $\Cat^\two$.

\begin{thm}
  \label{thm:7}
  Let $\Phi$ be a class of $\mathbf{Set}$-colimits and $(\mathsf{L},\mathsf{R})$ be
  the lax orthogonal {\normalfont\textsl{\textsc{awfs}}} induced by the
  completion under $\Phi$-colimits.
  There is a 2-functor over $\Cat^\two$
  \begin{equation}
    \label{eq:163}
    \mathsf{R}\text-\mathrm{Alg}_s \longrightarrow
    \mathbf{Op}\mathbf{Fib}_s{\text-}\Phi{\text-}\mathbf{Colim}_s.
  \end{equation}
\end{thm}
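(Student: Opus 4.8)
The plan is to refine the split opfibration 2-functor that the general theory already furnishes. Write $\mathsf{T}=\mathsf{T}_\Phi$; it is lax idempotent and, by Theorem~\ref{thm:6}, simple. With $F\dashv U\colon\mathsf{T}\text-\mathrm{Alg}_s\to\Cat$ the Eilenberg--Moore 2-adjunction and $\mathsf{R}'$ the free split opfibration 2-monad on $\mathsf{T}\text-\mathrm{Alg}_s^\two$, Theorem~\ref{thm:1} identifies $\mathsf{R}$ with the codomain-preserving coreflection of $U^\two\mathsf{R}'F^\two$, and Theorem~\ref{thm:15} produces a 2-functor $W\colon\mathsf{R}\text-\mathrm{Alg}_s\to\mathbf{Op}\mathbf{Fib}_s(\Cat)$ over $\Cat^\two$, induced by a comparison 2-monad morphism $\mathsf{R}\to\mathsf{R}'_{\Cat}$. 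It then remains to lift $W$ along the forgetful 2-functor $\mathbf{Op}\mathbf{Fib}_s\text-\Phi\text-\mathbf{Colim}_s\to\mathbf{Op}\mathbf{Fib}_s(\Cat)$: for each $\mathsf{R}$-algebra $g\colon A\to B$ one must equip the fibres of the split opfibration $g$ with chosen $\Phi$-colimits that are strictly preserved by the push-forward functors, 2-naturally in $g$.

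For the fibres I would invoke Corollary~\ref{cor:9}. Its hypothesis is met: in $\Cat$ the unit component $i_1\colon 1\to UF1=T_\Phi 1$ picks out an object isomorphic to the representable presheaf, which is terminal in the $\Phi$-cocompletion $\Phi 1\simeq T_\Phi 1$, so $i_1$ is right adjoint to $T_\Phi 1\to 1$. Hence the restriction of $\mathsf{R}$ to $\Cat/1\cong\Cat$ is isomorphic to $\mathsf{T}_\Phi$, and, as recorded just before Lemma~\ref{l:27}, for every object $b\colon 1\to B$ the fibre $A_b$ (the pullback of $g$ along $b$) becomes a $\mathsf{T}_\Phi$-algebra, that is, a category with chosen $\Phi$-colimits, with $(z_b,b)\colon {!}\to g$ a morphism of $\mathsf{R}$-algebras, where $!\colon A_b\to 1$. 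Pulling a morphism $(h,k)\colon g\to g'$ of $\mathsf{R}$-algebras back along $b$ (which preserves $\mathsf{R}$-algebra morphisms, by the fibration property used in Theorem~\ref{thm:15} and Lemma~\ref{cor:7}) shows the induced functor $A_b\to A'_{kb}$ to be a strict $\mathsf{T}_\Phi$-morphism; and pulling the structure map $(p_g,1_B)\colon Rg\to g$ back along $b$ shows the induced functor $p_b\colon(Kg)_b\to A_b$ to be a strict $\mathsf{T}_\Phi$-morphism, with a section $\ell_b\colon A_b\to(Kg)_b$ induced by $Lg$.

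The step I expect to be the main obstacle is that the push-forward $\beta_*\colon A_b\to A_{b'}$ along an arrow $\beta\colon b\to b'$ of $B$ strictly preserves the chosen $\Phi$-colimits. I would first settle this for free $\mathsf{R}$-algebras via the comma-object description of $\mathsf{R}$ in Remark~\ref{rmk:8}: the fibre of $Rf\colon Kf\to B$ over $b$ is the comma category $T_\Phi f\downarrow i_B(b)$, on which the projection $q_f\colon Kf\to T_\Phi A$ restricts to a strict $\mathsf{T}_\Phi$-morphism by Lemma~\ref{l:27}, while the push-forward furnished by $W$ is postcomposition of the comma coordinate with $i_B(\beta)$; since this postcomposition commutes with the mediating maps out of $\Phi$-colimits, $\beta_*$ preserves the chosen structure strictly in this case. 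For a general $\mathsf{R}$-algebra $(g,p_g)$ the morphism $(p_g,1_B)$ is also a morphism of split opfibrations, whence $\beta_*^{A}\cdot p_b=p_{b'}\cdot\beta_*^{Kg}$ on fibres; the right-hand side is a composite of strict $\mathsf{T}_\Phi$-morphisms by the free case and the previous paragraph, so a short chase using the section $\ell_b$ and the strictness of $p_b$ shows that $\beta_*^{A}$ itself strictly preserves the chosen $\Phi$-colimits. The only real work is the careful bookkeeping of the chosen cocones through these transfers.

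This assembles into the desired 2-functor. On objects it sends $g$ to $Wg$ together with the fibrewise $\Phi$-colimit structure above; on morphisms $(h,k)\colon g\to g'$ --~which are strict morphisms of split opfibrations by Theorem~\ref{thm:15}~-- the induced functors on fibres are strict $\mathsf{T}_\Phi$-morphisms by the second paragraph, so $(h,k)$ is a morphism of $\mathbf{Op}\mathbf{Fib}_s\text-\Phi\text-\mathbf{Colim}_s$; and since the 2-cells of both 2-categories are exactly those of $\Cat^\two$, the assignment is automatically a 2-functor, visibly over $\Cat^\two$, which gives~\eqref{eq:163}. Functoriality is guaranteed by the naturality of all the ingredients used: the isomorphism of Corollary~\ref{cor:9}, the morphisms $(z_b,b)$, and the comparison $\mathsf{R}\to\mathsf{R}'_{\Cat}$.
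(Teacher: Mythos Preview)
Your proposal is correct and follows essentially the same route as the paper: invoke Theorem~\ref{thm:15} for the split-opfibration part, Corollary~\ref{cor:9} for the fibrewise $\mathsf{T}_\Phi$-structure, reduce the push-forward compatibility to free $\mathsf{R}$-algebras via the section induced by $Lg$, and handle the free case through the identification $(Kg)_b\cong T_\Phi g\downarrow i_B(b)$ together with Lemma~\ref{l:27}.

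The one place where the paper is more explicit is the free-case step you flag as ``careful bookkeeping''. Your justification that $\beta_*$ strictly preserves the chosen $\Phi$-colimits on $(Kg)_b$ because it is postcomposition on the comma coordinate and ``commutes with the mediating maps'' is correct in spirit, but it tacitly uses that the projection $\mathrm{pr}_b\colon T_\Phi g\downarrow i_B(b)\to T_\Phi A$ not only strictly preserves but \emph{creates} the chosen $\Phi$-colimits: one needs that the chosen colimit in the comma fibre is uniquely determined by its image in $T_\Phi A$ together with the induced comma arrow, so that equality of projections (which your triangle gives) forces $\beta_*(\col(\phi,D))=\col(\phi,\beta_*D)$ on the nose. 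The paper packages this as a separate lemma (Lemma~\ref{l:16}) about comma categories $H/e$ over a $\Phi$-cocontinuous functor $H$, and then appeals to its third clause. Your argument would go through once you record this creation property; without it, knowing merely that both $\mathrm{pr}_b$ and $\mathrm{pr}_{b'}$ are strict $\mathsf{T}_\Phi$-morphisms and that $\mathrm{pr}_{b'}\cdot\beta_*=\mathrm{pr}_b$ is not quite enough to conclude strict preservation by $\beta_*$.
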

\begin{proof}
  We have shown in Theorem~\ref{thm:6} that the 2-monad $\mathsf{T}_{\Phi}$,
  whose algebras are categories with chosen $\Phi$-colimits, is simple; this
  means that the free algebra adjunction $F_\Phi\dashv U_\Phi\colon
  \mathsf{T}_\Phi\text-\mathrm{Alg}\to\Cat^\two$ is simple. The
  2-monad $\mathsf{R}$ is, by construction, the codomain-preserving coreflection
  of $U_\Phi R' F_\Phi$, where $R'$ is the split opfibration 2-monad on
  $\mathsf{T}_\Phi\text-\mathrm{Alg}_s$. We are, thus, in a position of applying
  Theorem~\ref{thm:15} to deduce the existence of a 2-functor from
  $\mathsf{R}\text-\mathrm{Alg}_s$ to the 2-category $\mathbf{Op}\mathbf{Fib}_s$ of split
  opfibrations; this means that each $\mathsf{R}$-algebra is a split opfibration
  and each morphism of $\mathsf{R}$-algebras is a morphism of split
  opfibrations. It remains to prove that:
  \begin{enumerate}[label=(\alph*)]
  \item \label{item:5} the fibres of any $\mathsf{R}$-algebra are equipped with
    chosen $\Phi$-colimits;
  \item \label{item:13} the push-forward functors between fibres strictly
    preserve them;
  \item \label{item:14} any morphism of $\mathsf{R}$-algebras strictly preserves
    them.
  \end{enumerate}
  The fibre of
  $g\colon A\to B$ over $b\in B$ is the pullback of $g$ along
  $b\colon\mathbf{1}\to B$; thus, this fibre is an $\mathsf{R}$-algebra
  $A_b\to\mathbf{1}$. We know from Corollary~\ref{cor:9} that the restriction of
  $\mathsf{R}$ to $\Cat/\mathbf{1}\cong \Cat$ is isomorphic to
  $\mathsf{T}_{\Phi}$, so the fibres of $\mathsf{R}$-algebras are
  $\mathsf{T}_\Phi$-algebras, and the restriction of any morphism of
  $\mathsf{R}$-algebras to fibres is a morphism of
  $\mathsf{T}_\Phi$-algebras. This verifies the literals~\ref{item:5}
  and~\ref{item:14}.

  It remains to prove \ref{item:13}, ie that for any morphism $\beta\colon b\to
  b'$ in $B$ the push-forward functor $A_b\to A_{b'}$ preserves the chosen
  colimits. The strategy we follow to prove this claim is an usual one: it
  suffices to prove it for free
  $\mathsf{R}$-algebras and use that any $\mathsf{R}$-algebra is a canonical
  coequaliser of free ones.

  There is a split coequaliser in $\Cat/B$
  \begin{equation}
    \label{eq:285}
    \diagram
    K{R}{g}
    \ar@<12pt>[r]^-{\pi_{g}}\ar@<-3pt>[r]^-{K(p,1)}
    \ar@<-9pt>@{<-}[r]_{{L}({{R}g})}&
    Kg\ar[r]^-{p}\ar@<-5pt>@{<-}[r]_{{L}g}&
    A
    \enddiagram
  \end{equation}
  --~where $p$ denotes the $\mathsf{R}$-algebra structure of $g$~-- which then
  lifts to a (non-split) coequaliser in the 2-category of split
  opfibrations. Taking the fibre over $b\in B$ of this split coequaliser, we
  obtain a coequaliser in $\mathsf{T}\text-\mathrm{Alg}_s$ that splits in
  \Cat. In particular, for any functor $d$ into $A_b$
  \begin{equation}
    \label{eq:287}
    \col(\phi,d)=p_b(\col(\phi,({L}g)_b\cdot{}d))
  \end{equation}
  because $p$ strictly preserves the chosen colimits.
  Taking fibres over the domain $b$ and the codomain $b'$ of the morphism
  $\beta$ in $B$, we have a commutative square in
  \Cat\, where all the categories have chosen $\Phi$-colimits and the horizontal
  functors strictly preserve them.
  \begin{equation}
    \label{eq:286}
    \xymatrixrowsep{.4cm}
    \diagram
    (Kg)_b\ar[r]^-{p_b}\ar[d]_{\beta_*}&A_b\ar[d]^{\beta_*}\\
    (Kg)_{b'}\ar[r]^-{p_{b'}}&A_b
    \enddiagram
  \end{equation}
  Therefore, if the push-forward functors of $Rg$ preserve the chosen
  $\Phi$-colimits, then so do the ones of $g$, as shown by the following string
  of equalities. By commutativity of the square,
  $\beta_*\cdot{}p_b$ strictly preserves $\Phi$-colimits, and
  \begin{multline}
    \label{eq:288}
    \beta_*(\col(\phi,d))=\beta_*p_b(\col(\phi,({L}g)_b\cdot{}d))=
    \\=
    \col(\phi,\beta_*\cdot{}p_b\cdot{}({L}g)_b\cdot{}d)
    = \col(\phi,\beta_*\cdot{}d).
  \end{multline}
  The first equality holds by~\eqref{eq:287}, the second equality holds because
  the diagram~\eqref{eq:286} shows that $\beta_*\cdot p_b$ strictly preserves
  $\Phi$-colimits, and the last equality is a consequence of
  $p\cdot{L}g=1$.

  We now prove that the push-forward functors of a free $\mathsf{R}$-algebra
  $Rg$ strictly preserve chosen $\Phi$-colimits. By the description of $Kg$ as a
  comma object \eqref{eq:116}, its objects are triples $(x,b,\xi)$ where
  $x\in TA$, $b\in B$ and $\xi\colon (Tg)(x)\to i_B(b)$ is a morphism in
  $TB$. If we denote by $z_b\colon (Kg)_b\to Kg$ the inclusion of the fibre over
  $b\in B$ and
  $q_g\colon Kg\to TA$ the projection of the comma
  object, we showed in Lemma~\ref{l:27} that $q_g\cdot{}z_b\colon (Kg)_b\to TA$
  strictly preserves $\Phi$-colimits. It is clear that the triangle on the left
  hand side commutes, since $\beta_*(x,b,\xi)=(x,b',i_B(\beta)\cdot{}\xi)$.
  \begin{equation}
    \label{eq:289}
    \xymatrixrowsep{0.4cm}
    \diagram
    (Kg)_b\ar[d]_{\beta_*}\ar[r]^-{q_g\cdot{}z_b}&TA\\
    (Kg)_{b'}\ar[ur]_{q_g\cdot{}z_{g'}}&
    \enddiagram
    \qquad
    \diagram
    Tg/i_B(b)\ar[r]^-{\mathrm{pr}_b}\ar[d]_{\beta_*}&TA\\
    Tg/i_B(b')\ar[ur]_{\mathrm{pr}_{b'}}&
    \enddiagram
  \end{equation}
  But $(Kg)_b$ is the slice category $Tg/i_B(b)$, and $q_g\cdot{}z_b$ is the
  projection into $TA$, so we have a commutative triangle as in the right hand
  side. We can now apply the Lemma~\ref{l:16}, that follows the present proof,
  to deduce that $\mathrm{pr}_{b}=\beta_*\cdot\mathrm{pr}_{b'}$ preserves and
  creates chosen $\Phi$-colimits, so $\beta_*$ strictly preserves chosen
  $\Phi$-colimits.

  This concludes the proof, since the 2-cells are
  automatically taken care of because the two 2-categories
  of the statement are locally full and faithful over $\Cat^\two$.
\end{proof}

\begin{lemma}
  \label{l:16}
  \begin{enumerate}
  \item Let $C$ be a category with $\Phi$-colimits, $H\colon C\to E$ a
    $\Phi$-cocontinuous functor, $e$ an object of $E$ and $Q\colon H/e\to C$ the
    projection. For any functor $D\colon J\to H/e$ and any colimiting cylinder
    $\eta\colon\phi\Rightarrow C(QD-,c)$ with $\phi\in\Phi$, there exists a
    unique $\epsilon \colon Hc\to e$ in $E$ and a unique colimiting cylinder
    $\nu\colon \phi\Rightarrow H/e(D-,(c,\epsilon))$ such that $Q(\nu)=\eta$.

  \item Moreover, if $C$ is equipped with chosen $\Phi$-colimits, then there
    exists a unique choice of $\Phi$-colimits on $H/e$ that is strictly
    preserved by $Q$.

  \item Suppose $S\colon A\to H/e$ is a functor, where $A$ has chosen
    $\Phi$-colimits. Then $S$ strictly preserves $\Phi$-colimits if and only if
    $QS\colon A\to C$ does so.
  \end{enumerate}
\end{lemma}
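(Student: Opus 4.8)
The plan is to prove clause~(1) in detail and then read off clauses~(2) and~(3) formally. The key observation for~(1) is that $H/e$ is canonically the category of elements $\int\Psi$ of the presheaf $\Psi=E(H-,e)\colon C^{\mathrm{op}}\to\mathbf{Set}$, with $Q$ the domain projection: objects are pairs $(c,\epsilon\colon Hc\to e)$ and a morphism $(c,\epsilon)\to(c',\epsilon')$ is an $f\colon c\to c'$ with $\Psi(f)(\epsilon')=\epsilon$. Thus a functor $D\colon J\to H/e$ is a functor $QD\colon J\to C$ together with a cone $(\epsilon_j)_j$ over $\Psi(QD-)$ valued in the terminal set, i.e.\ the compatible family $\epsilon_j\colon HQD(j)\to e$. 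Since $H$ is $\Phi$-cocontinuous we have $Hc\cong\col(\phi,HQD)$, and applying $E(-,e)$, which sends colimits to limits, shows that the maps $\Psi(\eta_j(p))\colon\Psi c\to\Psi(QD(j))$ (for $p\in\phi(j)$) exhibit $\Psi c=E(Hc,e)$ as the $\phi$-weighted limit of $\Psi(QD-)$. I would then take $\epsilon\colon Hc\to e$ to be the unique element of $\Psi c$ which, under this limit description, corresponds to the weighted cocone obtained from the conical cone $(\epsilon_j)$ by reindexing along the unique maps $\phi(j)\to 1$; and let $\nu$ be the cylinder into $(c,\epsilon)$ whose components are the $\eta_j$, so that $Q\nu=\eta$. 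That $\nu$ is colimiting and $(\epsilon,\nu)$ is the unique such pair follows by transporting the colimit iso $C(c,x)\cong[J^{\mathrm{op}},\mathbf{Set}](\phi,C(QD-,x))$ and the limit iso for $\Psi c$ through the hom-description $(H/e)((c,\epsilon),(x,\xi))=\{\,f\colon c\to x\mid\Psi(f)(\xi)=\epsilon\,\}$: the constraint $\Psi(f)(\xi)=\epsilon$ turns into precisely the statement that the reindexed family $(p\mapsto f\cdot\eta_j(p))$ factors through $(H/e)(D(j),(x,\xi))$, yielding $(H/e)((c,\epsilon),(x,\xi))\cong[J^{\mathrm{op}},\mathbf{Set}](\phi,(H/e)(D-,(x,\xi)))$. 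This is the weighted-colimit form of the familiar fact that the projection from a comma category creates the colimits the functor preserves. A corollary worth isolating, used below, is that $Q$ also \emph{reflects} $\Phi$-colimits: a $\phi$-weighted cylinder in $H/e$ whose $Q$-image is colimiting is itself colimiting, with vertex forced to be the $(c,\epsilon)$ just built, since the structure map $\epsilon$ is uniquely determined by the base data.

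Clause~(2) is then immediate: given a choice of $\Phi$-colimits on $C$, declare the chosen colimit of $D\colon J\to H/e$ to be the pair $(c,\epsilon)$ of~(1) where $c$ is the chosen colimit of $QD$ in $C$. By construction $Q$ strictly preserves these, and any choice on $H/e$ strictly preserved by $Q$ must have these underlying objects and cylinders, hence these structure maps by the uniqueness in~(1); so it is the one just described. For clause~(3), equip $H/e$ with these chosen colimits. If $S$ strictly preserves $\Phi$-colimits then so does $QS$, as $Q$ does by~(2). Conversely, suppose $QS$ strictly preserves them, let $\col(\phi,d)$ be a chosen colimit in $A$ with chosen cylinder $\eta^A$, and apply $S$: then $QS\eta^A$ is the chosen colimiting cylinder of $QSd$ in $C$, so by the reflection corollary $S\eta^A$ is colimiting with vertex the $(c,\epsilon)$ of~(1) built from this cylinder — which, by the definition in~(2), is the chosen colimit $\col(\phi,Sd)$ in $H/e$ — and $S\eta^A$ is its chosen cylinder because it is the unique lift. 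Hence $S$ strictly preserves $\Phi$-colimits.

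The only genuine work is in clause~(1): verifying, for $\mathbf{Set}$-weighted colimits rather than conical ones, that the projection out of the category of elements $\int\Psi$ strictly creates exactly the colimits that $\Psi$ sends to limits, and in particular pinning down the forced structure map $\epsilon$ together with the resulting universal property of $\nu$. Once that is done, clauses~(2) and~(3) are bookkeeping with the uniqueness statements already obtained.
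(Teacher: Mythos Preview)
Your argument is correct and follows essentially the same route as the paper. Both construct $\epsilon$ from the universal property of $Hc=\col(\phi,HQD)$ applied to the cone $\phi\Rightarrow\Delta 1\xrightarrow{\delta} E(HQD-,e)$ coming from $D$, and then lift $\eta$ to $\nu$; your framing via the category of elements of $\Psi=E(H-,e)$ is just a repackaging of the paper's comma-category diagram, and you supply the verification of the colimit property of $\nu$ and the reflection step for clause~(3) that the paper leaves to the reader.
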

\begin{proof}
  Since $H$ preserves colimits, the top horizontal natural transformation in the
  diagram is a
  colimiting cylinder. The functor $D\colon J\to H/e$ can be given by a natural
  transformation $\delta$ from the constant functor on the terminal set to
  $D(HQD-,e)$. By the universal property of the colimit $Hc$, there exists a
  unique morphism $\epsilon\colon Hc\to e$ that makes the diagram commute.
  \begin{equation}
    \label{eq:161}
    \xymatrixrowsep{.5cm}
    \diagram
    \phi\ar@{=>}[r]^-{\eta} \ar@{=>}[dr]_{!}&
    C(QD-,c)\ar@{=>}[r]^-{H}&
    E(HQD-,Hc)\ar@{..>}[d]^{E(1,\epsilon)}\\
    &\Delta 1\ar@{=>}[r]^-{\delta}&
    E(HQD-,e)
    \enddiagram
  \end{equation}
  The functor $H/e(D-,(c,\epsilon))$ is the equaliser of the natural
  transformations
  \begin{gather}
    C(QD-,c)\xRightarrow{H}E(HQD-,Hc)\xRightarrow{E(1,\epsilon)} E(HQD-,e)
    \\
    C(QD-,c)\Longrightarrow \Delta 1\xRightarrow{\delta} E(HQD-,e)
  \end{gather}
  from where it follows that $\eta$ factors uniquely through a certain natural
  transformation $\nu\colon \phi\Rightarrow H/e(D-,(c,\epsilon))$. This
  transformation can easily be shown to have the universal property of a
  colimiting cylinder, a verification that we leave to the reader. In
  particular, $(c,\epsilon)$ is a colimit of $D$ weighted by $\phi$.

  To prove the second part of the statement, if $\eta$ exhibits $c$ as
  $\col(\phi,DQ)$, then we can choose the colimit $\col(\phi,D)$ as
  $(c,\epsilon)$, and this is the unique possible choice that makes $Q$ preserve
  this colimit in a strict way, by the argument of the previous paragraph.
  The last part of the statement easily follows from the second part.
\end{proof}

In many instances, the 2-functor of Theorem~\ref{thm:7} is an isomorphism. For example,
it is not hard to verify this when $\Phi$ is the class for initial objects
$\{\emptyset\to\mathbf{Set}\}$; see Section~\ref{sec:compl-under-init}.

\begin{prop}
  \label{prop:6}
  The 2-functor of Theorem~\ref{thm:7} is not always an isomorphism.
\end{prop}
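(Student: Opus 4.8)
The plan is to exhibit one explicit counterexample, necessarily over $\V=\mathbf{Set}$, and to show by direct computation that it is an object of $\mathbf{Op}\mathbf{Fib}_s\text-\Phi\text-\mathbf{Colim}_s$ which does not underlie any $\mathsf{R}$-algebra. The starting point is the comma-object description of the \textsc{awfs} recorded in Remark~\ref{rmk:8}: for $g\colon A\to B$ the object $Kg$ is the comma category $T_\Phi g\downarrow i_B$, so its fibre over $b\in B$ has as objects the pairs $(x,\xi)$ with $x\in T_\Phi A$ and $\xi\colon T_\Phi g(x)\to b$ in $T_\Phi B$, and this fibre is the free $\Phi$-completion of the comma category $g/b$. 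The crucial point is that $g/b$ depends on the whole slice $B/b$, not merely on the fibre $A_b$: an object of $g/b$ is a pair $(a,\beta\colon g(a)\to b)$ with $a$ lying in \emph{any} fibre $A_{b'}$, $b'=g(a)$. Hence an $\mathsf{R}$-algebra structure, which amounts to a functor $Kg\to A$ over $B$ together with a multiplication compatible with $\pi_g$ of~\eqref{eq:68}, must coherently realise $\Phi$-colimits of diagrams whose vertices are spread over several fibres of $g$; by contrast a fibrewise choice of $\Phi$-colimits together with strict preservation under reindexing records only $\Phi$-colimits of diagrams confined to a single fibre. The idea is to choose $B$ and $\Phi$ so that these two kinds of data genuinely diverge.

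Concretely I would take $B$ to be a small category with some non-invertible, and possibly parallel, morphisms --- the walking parallel pair, or a pushout cospan --- and $\Phi$ a small class whose diagram shape is connected and non-discrete, so that $g/b$ acquires $\Phi$-colimits invisible to $A_b$. I would then engineer a split opfibration $g\colon A\to B$ in which the reindexing functors into $A_b$ offer incompatible realisations of the relevant diagram, and verify that the only candidate for an $\mathsf{R}$-algebra structure --- constrained, up to coherent isomorphism, by Remark~\ref{rmk:16} --- fails the associativity axiom $p\cdot\pi_g=p\cdot K(p,1)\colon KRg\to A$: on one side an iterated $\Phi$-colimit is flattened inside a single fibre, while on the other it is realised in stages and transported along reindexing, and for a suitable $A$ the two disagree. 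One must of course first check that $g$ genuinely belongs to $\mathbf{Op}\mathbf{Fib}_s\text-\Phi\text-\mathbf{Colim}_s$, i.e. that its fibres carry the asserted chosen $\Phi$-colimits and the reindexing functors preserve them on the nose.

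The main obstacle will be the bookkeeping. One has to describe $T_\Phi$ explicitly on the chosen base, then unwind $Kg$, $KRg$, the projections $q_g$, $q_{Rg}$ and the multiplication $\pi_g$ of~\eqref{eq:68} concretely enough to evaluate both composites $p\cdot\pi_g$ and $p\cdot K(p,1)$, all the while tracking which objects of $Kg$ sit over which object of $B$. A subtler point to keep in view is that, up to coherent isomorphism, the fibrewise data \emph{does} assemble into a weaker (pseudo) structure on $g$, so the counterexample cannot be purely formal: the construction of $A$ must be arranged so that the comparison isomorphism between the staged and the flattened realisation of the relevant iterated colimit is not an identity, and it is precisely this that obstructs a \emph{strict} $\mathsf{R}$-algebra structure and hence the 2-functor being an isomorphism.
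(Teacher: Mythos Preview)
Your proposal is a plan rather than a proof, and it rests on a factual error. You assert that the fibre $(Kg)_b$ is the free $\Phi$-completion $T_\Phi(g/b)$ of the comma category $g/b$. This is false in general: the fibre consists of pairs $(x,\xi)$ with $x\in T_\Phi A$ and $\xi\colon T_\Phi g(x)\to i_B(b)$ in $T_\Phi B$, as you correctly say, but $T_\Phi A$ is the completion of the \emph{whole} of $A$, so $x$ may be a formal $\Phi$-colimit of a diagram spanning several fibres of $g$, and slicing by $\xi$ does not force such $x$ to arise from $g/b$. Concretely, take $B=\mathbf{1}+\mathbf{1}$ discrete: then $g/\ast=A_\ast$, yet $(Kg)_\ast$ can contain objects built from $A_\bullet$. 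If your identification held, the 2-functor would be an isomorphism over discrete bases, which is precisely what the paper refutes.

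The paper's argument is both simpler and different in shape. Rather than exhibit an object of $\mathbf{Op}\mathbf{Fib}_s\text-\Phi\text-\mathbf{Colim}_s$ admitting no $\mathsf{R}$-algebra structure, it compares \emph{free} objects. Over the discrete base $B=\mathbf{1}+\mathbf{1}$, the free object of $\mathbf{Op}\mathbf{Fib}_s\text-\Phi\text-\mathbf{Colim}_s$ on $g\colon A_\ast+A_\bullet\to\mathbf{1}+\mathbf{1}$ visibly has total category $T_\Phi(A_\ast)+T_\Phi(A_\bullet)$; were the 2-functor an isomorphism, this would coincide with $Kg$. The paper then takes $\Phi$ to consist of the single weight $\Delta\emptyset\colon\mathbf{1}\to\mathbf{Set}$, for which $T_\Phi A$ is $A$ together with a new initial object $(a,n)$ for each $a\in A$ and each $n>0$, and observes that $(Kg)_\ast$ contains objects $((a,n),\xi)$ with $a\in A_\bullet$ and $n>0$ --- since $(g(a),n)$ is initial in $T_\Phi B$, the morphism $\xi$ to $i_B(\ast)$ exists --- which are not in the image of $T_\Phi(A_\ast)$. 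No unwinding of $\pi_g$, no associativity check, and no non-discrete base is needed.

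Your underlying intuition --- that the fibres of $Kg$ see more than the fibres of $g$ --- is correct, but the mechanism is that $T_\Phi$ applied to the total category is larger than the sum of $T_\Phi$ applied to the fibres, not that reindexing along morphisms of $B$ creates an obstruction. The walking parallel pair and connected weights are unnecessary complications.
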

\begin{proof}
  To save space, let us write $\mathscr F$ instead of
  $\mathbf{Op}\mathbf{Fib}_s\text-\Phi\text-\mathbf{Colim}_s$. One can without much
  problem show that the forgetful 2-functor $\mathscr F\to\Cat^\two$ is monadic,
  but instead we will consider the free object of $\mathscr F$ over a functor
  $g\colon A\to \mathbf{1}+\mathbf{1}$; the codomain is the discrete category
  with two objects that we denote by $\ast$ and $\bullet$. Another way of
  describing $g$ is as a pair of categories $A_\ast$ over $\{\ast\}$ and
  $A_\bullet$ over $\{\bullet\}$. It is not hard to see that the free object of
  $\mathscr F$ on $g$ is $\tilde g\colon \tilde A\to \mathbf{1}+\mathbf{1}$,
  with $\tilde A_\ast=T_{\Phi}(A_\ast)$ and $\tilde
  A_\bullet=T_{\Phi}(A_\bullet)$; this is due to the fact that a split
  opfibration over a discrete category amounts to just a functor. The details of
  this point are left to the reader. The universal property of $\tilde g$ implies
  the existence of a unique morphism $(h,1)\colon \tilde g\to Rg$ such that
  \begin{equation}
    \label{eq:164}
    \xymatrixrowsep{.5cm}
    \diagram
    A\ar[r]^{{L}g}\ar[d]_g&Kg\ar[d]_{Rg}\\
    \mathbf{1}+\mathbf{1}\ar@{=}[r]&\mathbf{1}+\mathbf{1}
    \enddiagram
    =
    \diagram
    A_\ast+A_\bullet\ar[r]^-{i_{A_\ast}+i_{A_\bullet}} \ar[d]&
    T_\Phi(A_\ast)+T_{\Phi}(A_\bullet)\ar[r]^-h\ar[d]&
    Kg\ar[d]^{Rg}\\
    \mathbf{1}+\mathbf{1}\ar@{=}[r]&
    \mathbf{1}+\mathbf{1}\ar@{=}[r]&
    \mathbf{1}+\mathbf{1}
    \enddiagram
  \end{equation}
  If the 2-functor of Theorem~\ref{thm:7} were an isomorphism, the morphism $h$
  would be an isomorphism, as both $\tilde g$ and $Rg$ would be the free object
  on $g$. To complete the proof we must give an example where $h$ is not an
  isomorphism.

  Consider the class of colimits with one sole element $\Phi=\{\Delta\emptyset\colon \mathbf
  1\to\mathbf{Set}\}$ consisting of the functor that picks out the empty set. The colimit of a functor $v\colon1\to \mathbf{Set}$ that
  picks out a set $v$, weighted by $\Delta\emptyset$ --~known as the \emph{tensor product} of $v$
  by $\emptyset$~-- is $\col(\Delta\emptyset,v)=\emptyset$. The completion
  of a small category $A$ under these colimits consists of the full subcategory
  $\Phi A\subseteq[A^{\mathrm{op}},\mathbf{Set}]$ defined by the
  representables together with the initial object. A choice of $\Phi$-colimits
  on a category $A$ amounts to an assignment of an initial object $0(a)\in
  A$ for each object $a\in A$.

  We can explicitly describe the 2-monad
  $\mathsf{T}$ associated to $\Phi$. If $A$ is a category, let $T_\Phi(A)$ have
  objects of the form $(a,n)\in\operatorname{ob}A\times\mathbb{N}$, and have
  morphisms defined by the following two clauses: there is a full and faithful
  functor $i_A\colon A\to T_\Phi(A)$ given on objects by $a\mapsto (a,0)$; and,
  each object $(a,n)$ for $n>0$ is an initial object. We equip $T_\Phi(A)$ with
  the chosen $\Phi$-colimits given by $0(a,n)=(a,n+1)$.

  The fibre $(Kg)_\ast$ of $Rg$ over $\ast$ is $T_{\Phi}(A)\downarrow
  i_{\mathbf{1}+\mathbf{1}}(\ast)$. In particular some of its objects are of the
  form $((a,n),\xi)$ where $a\in \operatorname{ob}A$, $n>0$ and $\xi\colon T_\Phi(g)(a,n)=(g(a),n)\to
  i_{\mathbf{1}\mathbf{1}}(\ast)$ is a morphism in
  $T_\Phi(\mathbf{1}+\mathbf{1})$. The domain of $\xi$ is an initial
  object, so $\xi$ carries no information at all. On the other hand, the
  restriction of the morphism $h$ to fibres $h_\ast\colon T_\Phi(A_\ast)\to
  (Kg)_\ast$ does not reach objects of form $((a,n),\xi)\in (Kg)_\ast$ unless $a\in
  A_\ast$. Therefore, $h_\ast$, and thus $h$, is not surjective on objects,
  completing the proof.
\end{proof}

\section{Further work and examples}
\label{sec:furth-work-exampl}
There are a number of examples and theoretical questions that have been left out
of the present article and will benefit from a fuller explanation in forthcoming
companion articles. Examples pertaining to the world of topological spaces will
be treated in \emph{Lax orthogonal factorisations in topology}. As a way of
illustration, we mention the lax orthogonal \textsc{awfs} arising from the
filter monad on the category of $T_0$ topological spaces; this \textsc{awfs} has an
underlying \textsc{wfs} that was mentioned in our Introduction and studied in~\cite{MR2927175}, where information
about the filter monad can be found. This factorisation $f=Rf\cdot Lf$ has the
property that $Lf$ is a topological embedding and $Rf$ is ``fibrewise
a continuous lattice'' in the appropriate sense.

The main theoretical aspect of lax orthogonal \textsc{awfs}s left out from the
present article is the cofibrant generation thereof. This will have a full
treatment in the forthcoming paper \emph{Cofibrantly}
\textsl{\textsc{kz}}-\emph{generated algebraic weak factorisation systems}.

\bibliographystyle{abbrv}
\bibliography{kznwfs.bib}
\end{document}